\def\journal@id{~}
\def\journal@name{~}
\def\journal@url{~}
\def\wilh@nomsection{section}}
\def\wilh@nomsection{chapter}}
\def\thenomenclature{%
  \begin{multicols}{2}[%
\setlength\columnsep{1cm}
    \csname\wilh@nomsection\endcsname*{\nomname}
    \if@intoc\addcontentsline{toc}{\wilh@nomsection}{\nomname}\fi
    \nompreamble]
  \list{}{%
    \labelwidth\nom@tempdim
    \leftmargin\labelwidth
    \advance\leftmargin\labelsep
    \itemsep\nomitemsep
    \let\makelabel\nomlabel}%
}
\def\endthenomenclature{%
  \endlist
  \end{multicols}
  \nompostamble}
\renewcommand*{\nompreamble}{\footnotesize}
\newcommand{\nom}{\nomenclature}                  % nom shortcut
\renewcommand{\nomname}{List of notation}         % nom name
\newcommand \Dotfill {\leavevmode \leaders \hb@xt@ 0.5em{\hss .\hss }\hfill \kern \z@} \makeatother    % special dots 1
\newtheorem{theorem}{Theorem}[section]
\newtheorem{conjecture}[theorem]{Conjecture}
\newtheorem{lemma}[theorem]{Lemma}
\newtheorem{proposition}[theorem]{Proposition}
\theoremstyle{definition}
\newtheorem{definition}[theorem]{Definition}
\theoremstyle{remark}
\newtheorem{remark}[theorem]{Remark}
\numberwithin{equation}{section}
\def\R{{\mathbb R}}
\def\N{{\mathbb N}}
\def\1{{\mathbbm 1}}
\def\E{{\mathbb E}}
\def\P{{\mathbb P}}
\newcommand{\nrm}[1]{\Vert #1 \Vert}
\newcommand{\lpnrm}[1]{\nrm{#1}_{n,p}}
\newcommand{\ltwonrm}[1]{\nrm{#1}_{n,2}}
\def\ra{\rightarrow}
\def\eqdist{\stackrel{(d)}{=}}
\renewcommand{\|}{\Vert}
\def\lp{\ell^p}
\def\bnp{{\mathbb B}_{n,p}}
\def\sphn{\mathbb{S}^{n-1}}
\def\seq{\mathbb{S}}
\def\sx{X}
\def\xnp{\sx^{(n,p)}}
\def\bxn{{ \sx}^{(n,2)}}
\def\xng{\sx^{(n,\gamma)}}
\def\thn{\theta^{(n)}}
\def\bth{\Theta}
\def\bthn{\Theta^{(n)}}
\def\sw{W}
\def\wnpth{\sw_\theta^{(n,p)}}
\def\wnpbth{\sw^{(n,p)}}
\def\wnpz{\widehat{W}^{(n,p)}}
\def\twnpz{\widetilde{W}^{(n,p)}}
\def\sy{\mathsf{Y}}
\def\ynp{Y^{(n,p)}}
\def\boly{\sy}
\def\bolyp{\sy^{(p)}}
\def\sz{\mathsf{Z}}
\def\zn{Z^{(n)}}
\def\bolz{\sz}
\def\sfq{\mathsf{qu}}
\def\sfa{\mathsf{an}}
\def\iq{\mathbb{I}^\sfq}
\def\ia{\mathbb{I}^\sfa}
\def\jc{\mathbb{I}^{\mathsf{cr}}}
\def\lm{\Lambda} 
\def\lmq{\Psi}
\def\lma{\Phi}
\def\lmbar{\widetilde\Phi}
\def\onen{\iota^{(n)}}
\begin{document}

%%%%%%%%%%%%%%%%%%
% FRONTMATTER
%%%%%%%%%%%%%%%%%%
\begin{frontmatter}

\title{Large deviations for random projections of \lowercase{$\ell^p$} balls}
\runtitle{LDP\MakeLowercase{s} for random projections }

\begin{aug}
  \author{\fnms{Nina}  \snm{Gantert}\thanksref{t1} \ead[label=e1]{gantert@ma.tum.de}},
  \author{\fnms{Steven Soojin} \snm{Kim} \thanksref{t2,t4} \ead[label=e2]{steven\_kim@brown.edu}},
  \and
  \author{\fnms{Kavita}  \snm{Ramanan} \thanksref{t3,t4} \ead[label=e3]{kavita\_ramanan@brown.edu}}

  \thankstext{t1}{NG and KR would like to thank ICERM, Providence, for an invitation to the program ``Computational Challenges in Probability", where some of this work was initiated.}
  \thankstext{t2}{SSK was partially supported by a Department of Defense NDSEG fellowship.}  
  \thankstext{t3}{KR was partially supported by ARO grant W911NF-12-1-0222 and NSF grant DMS 1407504.}
  \thankstext{t4}{SSK and KR would also like to thank Microsoft Research New England for their hospitality during the Fall of 2014, when some of this work was carried out.}

  \runauthor{Nina Gantert, Steven Soojin Kim, and Kavita Ramanan}

\smallskip

  \address{Fakult\"at f\"ur Mathematik,\\
Technische Universit\"at M\"unchen\\
          \printead{e1}}

  \address{Division of Applied Mathematics,\\
          Brown University\\
          \printead{e2,e3}}
          
\end{aug}

\begin{abstract} 
Let $p\in[1,\infty]$. Consider the projection of a uniform random vector from a suitably normalized $\ell^p$ ball in $\mathbb{R}^n$ onto an independent random vector from the unit sphere. We show that sequences of such random projections, when suitably normalized, satisfy a large deviation principle (LDP) as the dimension $n$ goes to $\infty$, which can be viewed as an \emph{annealed} LDP. We also establish a \emph{quenched} LDP (conditioned on a fixed sequence of projection directions) and show that  for $p\in(1,\infty]$ (but \emph{not} for $p=1$), the corresponding rate function is ``universal", in the sense that it coincides for ``almost every" sequence of projection directions. We also analyze some  exceptional sequences of directions in the ``measure zero" set, including the directions corresponding to the classical Cram\'er's theorem, and show that those directions yield LDPs with rate functions that are distinct from the universal rate function of the quenched LDP. Lastly, we identify a variational formula that relates the annealed and quenched LDPs, and analyze the minimizer of this variational formula. These large deviation results complement the central limit theorem for convex sets, specialized to the case of sequences of $\ell^p$ balls.  
\end{abstract}

\begin{keyword}[class=MSC]
\kwd[Primary ]{60F10} % large deviations
\kwd{52A23}  % asymptotics of convex 
\kwd[; secondary ]{52A20} % convex sets in $n$ dimensions
\kwd{60K37} % processes in random environments
\kwd{60D05.} % geometric probability
\end{keyword}

\begin{keyword}
\kwd{large deviations}
\kwd{random projections}
\kwd{$\ell^p$-balls}
\kwd{annealed and quenched large deviations}
\kwd{self-normalized}
\kwd{central limit theorem for convex sets}
\kwd{variational formula}
\end{keyword}

\end{frontmatter}

\vspace*{-8mm}

\setcounter{tocdepth}{1}
\tableofcontents

%{\footnotesize \ttfamily \textcolor{magenta}{ THIS VER: \today \quad  \currenttime}} % DRAFT PURPOSE ONLY 
%\vspace*{-3mm}  

%%%%%%%%%%%%%%%%%%
% INTRO
%%%%%%%%%%%%%%%%%%
\section{Introduction}\label{sec-intro} % [X]

Consider the projection of a random $n$-dimensional vector $X^{(n)}$  onto some lower dimensional subspace. Our broad goal is to understand and analyze distributional properties of the projections of high-dimensional random vectors (i.e., large $n$), given certain natural assumptions on the law of $X^{(n)}$. In this paper, we focus on projections onto one-dimensional subspaces,  and we write the \emph{projection}  of $x\in \R^n$  onto the direction $v\in \sphn$  (the unit sphere in $\mathbb{R}^n$), to refer to $\langle x,v\rangle_n \doteq \sum_{i=1}^n x_i v_i \in \R$; this is for the sake of brevity, since to be precise, the preceding quantity is  the scalar component of the projected vector $\langle x,v\rangle_nv\in \R^n$.
\nom[0]{$\langle \cdot, \cdot\rangle_n$}{Euclidean inner product on $\R^n$}

One prior result in this vein is the central limit theorem (CLT) for projections of convex bodies: if $X^{(n)}$ is sampled from a log-concave measure (e.g., the uniform measure on a convex body) that is also isotropic, then for sufficiently large $n$, and ``most" $\thn\in\sphn$, the projection of $X^{(n)}$ onto $\thn$ satisfies $  \langle X^{(n)}, \thn \rangle_n \approx N(0,1)$ in some suitable sense. This result is established via a concentration estimate in \cite{klartag2007central}, drawing from a classical idea of \cite{diaconis1984asymptotics, sudakov1978typical} and a conjecture stated in \cite{anttila2003central, brehm2000asymptotics}. Similar central limit results hold for directions of projection $\bthn$ drawn from the unique rotationally invariant measure on $\sphn$, and for projections onto $k$-dimensional subspaces, for $1 \le  k \ll \sqrt{\log n}$ \cite{meckes2012approximation, meckes2012projections}. In this class of results, the source of the Gaussian approximation may be attributed to geometric properties (specifically, log-concavity) of the original measure.

It is natural to ask if existing CLT results for typical projections of high-dimensional random vectors from a convex set can be complemented by analyzing deviations beyond the central limit fluctuation scale. In this work, we initiate such an analysis by investigating \emph{large deviation principles (LDPs)} for sequences of random one-dimensional projections of a certain class of convex bodies, the so-called $\ell^p$ balls.

One of our motivations for investigating LDPs is to understand which aspects of random projections can be used to distinguish between different convex bodies. From a central limit perspective, convex bodies cannot be distinguished by their random projections; that is, given \emph{any} isotropic convex body in high dimension, its typical random projections will be approximately standard Gaussian. In fact, Gaussian asymptotics arise not only at the ``central limit" scale, but also across the ``moderate deviation" scale \cite{sodin2007tail}. These universal results are quite powerful, but from another point of view, it is also of interest to precisely identify how a random projection can encode distinct distributional information about the original vector. Our results demonstrate that the {large deviation} behavior of a random projection of a convex body depends on the geometry of the underlying convex body. In particular, we demonstrate sharply different large deviation behavior for random projections of $\ell^p$ balls for different values of $p\in[1,\infty)$. That is, compare Theorem \ref{th-aldp} for $p\in[2,\infty)$ against Theorem \ref{th-aldp12}  for $p\in[1,2)$ (which we comment on further in Remark \ref{rmk-p12}), and also compare Theorem \ref{th-qldp} against Theorem \ref{th-qldp1}, where the anomalous LDP is for $p=1$, the only $\ell^p$ ball  for $p\in[1,\infty)$ with a non-smooth ``corner".

Unlike for central limit theorems, where one can quantify the closeness of a random vector in a fixed high-dimensional space $\R^n$ to the $n$-dimensional Gaussian using various metrics, the statement of an LDP for random projections requires an infinite sequence of convex bodies defined for all dimensions $n\in\N$. This motivates our analysis of the uniform measures on $\ell^p$ balls, which offer a natural, fundamental, yet non-trivial (in particular, non-product) example of a \emph{sequence} of isotropic, log-concave measures. Specifically, we address the following question:

\begin{quote}\normalsize\itshape
Do LDPs hold for (suitably normalized) random projections of vectors uniformly distributed on the $\ell^p$ ball of $\R^n$? If so, then at which speed and with what rate function? Moreover, how do these LDPs vary with $p\in[1,\infty)$?
\end{quote}

These questions have the flavor of the study of LDPs in random environments (see, e.g., \cite{comets2000quenched}), where in our case the random ``environment" is governed by the random sequence of projection directions. In this setting, it is natural to consider both the case when one conditions on a fixed sequence of random projection directions (the so-called \emph{quenched} case) and also when one incorporates the randomness of the projection directions (the so-called \emph{annealed} case).

Our main results on this question are the following:
\begin{enumerate}[leftmargin=9em, rightmargin=2em, align=right, labelwidth=6em, itemsep=0.5em] 
\item [Theorems \ref{th-aldp} \& \ref{th-aldp12}:]   annealed LDPs,  for $p\in[2,\infty)$ and $p\in[1,2)$, respectively.
\item [Theorems \ref{th-qldp} \& \ref{th-qldp1}:] quenched LDPs, for $p\in(1,\infty)$ and $p=1$, respectively. Moreover, for $p\in(1,\infty)$, but \emph{not} for $p=1$, this LDP holds with a ``universal" rate function that coincides for ``almost every" sequence of directions.
\item [Theorem \ref{th-compar}:] for $p\in(2,\infty)$, a variational formula that relates the annealed and quenched rate functions via the entropy of an underlying measure.  
\item [Theorem \ref{th-atyp}:] a proof of the observation that the particular sequence of directions $(\iota^{(n)})_{n\in\N}$ defined in \eqref{onedefn} below (which corresponds to Cram\'er's theorem in the case of product measures) leads to an ``atypical" large deviation rate function.
\end{enumerate}

% mention Meckes and compare level of fluctuation w/ dimension of projection
Observe that in the preceding summary of our main results (stated precisely in Sect. \ref{sec-main} and proved in Sects. \ref{sec-annealed}--\ref{sec-atyp}), we only discuss $p< \infty$, and omit the case $p=\infty$.  However, all of our results have corresponding versions for general product measures satisfying certain tail conditions (including the uniform measure on the $\ell^\infty$ ball),  in fact with simpler proofs than in the non-product ($p<\infty$) case. We compile all of the corresponding statements for product measures in Sect. \ref{sec-prod}, where we also provide brief sketches of the proofs.

We make the distinction between $p<\infty$ and $p=\infty$ because a secondary motivation for our work is to investigate to what extent large deviation results extend beyond the classical setting of sums of independent and identically distributed (i.i.d.) random variables to the more general setting of  generic projections of log-concave measures. More precisely, let $X^{(n,p)}$ be distributed uniformly on the $\ell^p$ ball of $\R^n$. Consider the direction  $\iota^{(n)}\in\sphn$ defined by
\begin{equation}\label{onedefn}
  \iota^{(n)} \doteq \tfrac{1}{\sqrt{n}}(\underbracket{1,1,\dots, 1}_{n \text{ times }}) \in \sphn.
\nom[iota]{$\iota^{(n)}$}{direction $\tfrac{1}{\sqrt{n}}(1,\dots,1) \in \sphn$}
\end{equation}
The classical Cram\'er's theorem yields an LDP for the sequence of suitably normalized projections $n^{-1/2} \langle X^{(n,\infty)}, \, \iota^{(n)}\rangle$, $n\in \N$. In contrast, our work establishes an LDP for $n^{-1/2} \langle X^{(n,p)}, \thn \rangle$, $n\in \N$, for $p\in[1,\infty)$ and general $\thn\in\sphn$. Figure \ref{fig-projwhole} illustrates our setup. 

\begin{figure}[bht]
\begin{subfigure}{0.45\textwidth}
\centering
	\begin{tikzpicture}[scale=1.75]
	\node[green, scale = 0.90] at (-1.3,0.9) {$\R^n$};
\draw [gray!50]  (0,-1.15) -- (0,1.15) ;
\draw [gray!50]  (-1.15,0) -- (1.15,0) ;
    \draw [color=green, fill opacity=0.35, fill=green!25, pattern=bricks, pattern color = green] (-1,-1) -- (-1, 1) -- (1, 1) -- (1, -1) -- (-1, -1);
    
\draw [blue!50, very thick, densely dotted] plot [smooth cycle, tension=1] coordinates {(-1,0) (0,1) (1,0) (0,-1)};
    \node[blue, scale=0.90] at (0.53,-0.8) {$\sphn$};

    \draw [red, thick, densely dotted, ->] (0, 0) -- (0.70710,0.70710) ;
    \node[scale=0.90, red] at (0.5,0.34) { $\onen$};
    
    \draw [violet, densely dashed] (-0.16,-0.16) -- (-0.94,0.60);
    \draw [violet,very thick] (0,0) -- (-0.16,-0.16);
    \draw [violet](-0.16 ,-0.16 ) -- (-0.16 - 0.08,-0.16 + 0.08*38/39) --(-0.08 -0.08, -0.08 + 0.08*38/39)  --  (-0.08,-0.08) ;
    \node[fill=violet, thin, diamond, scale=0.3] at (-0.16,-0.16) { };
    \node[scale=0.90, violet] at (-0.2, -0.3) { $\langle X^{(n)}, \onen \rangle$ };
    
    \node[fill=teal, thin, diamond, scale=0.3] at (-0.94,0.60) { };
    \node[scale=0.90, teal] at (-0.825,0.75) { $X^{(n)}$};
    \draw [teal] (0,0) -- (-0.94,0.60);
	\end{tikzpicture}
 \caption{project $\ell^\infty$ ball of $\R^n$ onto fixed $\onen$.}
 \label{fig-projo}
\end{subfigure} \hfill
\begin{subfigure}{0.45\textwidth}
\centering
	\begin{tikzpicture}[scale=1.75]
	\node[green, scale = 0.90] at (-1.3,0.9) {$\R^n$};
\draw [gray!50]  (0,-1.15) -- (0,1.15) ;
\draw [gray!50]  (-1.15,0) -- (1.15,0) ;
    \draw [color=green, fill opacity=0.35, fill=green!25, pattern=bricks, pattern color = green] plot [smooth cycle, tension=1.7] coordinates {(-1,0) (0,1) (1,0) (0,-1)};
    
\draw [blue!50, very thick, densely dotted] plot [smooth cycle, tension=1] coordinates {(-1,0) (0,1) (1,0) (0,-1)};
    \node[blue, scale=0.90] at (0.53,-0.8) {$\sphn$};

    \draw [red, thick, densely dotted, ->] (0, 0) -- (-0.96824,-1/4) ;
    \node[scale=0.90, red] at (-1.1,-1/4) { $\bthn$};
    \draw [red, opacity=0.85 ,densely dotted, ->] (0, 0) -- (0.8, 0.6) ;
    \draw [red,opacity=0.85 ,densely dotted,  ->] (0, 0) -- (-0.3, -0.9539) ;
    \draw [red,opacity=0.85 ,densely dotted,  ->] (0, 0) -- (-0.5, 0.866) ;
    \draw [red,opacity=0.85 ,densely dotted,  ->] (0, 0) -- (0.75, 0.6614) ;
    \draw [red,opacity=0.85 ,densely dotted,  ->] (0, 0) -- (0.9949, -0.1) ;
    \draw [red,opacity=0.85 , densely dotted, ->] (0, 0) -- (+0.18,-0.9836) ;
    
    \draw [violet, densely dashed] (-0.736,-0.19) -- (-0.94,0.60);
    \draw [violet,very thick] (0,0) -- (-0.736,-0.19);
    \draw [violet] (-0.736, -0.19) -- (-0.736 -0.04 ,-0.19 + 3.8725*0.04) -- ( -0.736 -0.04 + 0.16*0.96, -0.19 + 3.8725*0.04 + 0.16/4 ) --  (-0.736 +0.16*0.96, -0.19+0.16/4) ;
    \node[fill=violet, thin, diamond, scale=0.3] at (-0.736,-0.19) { };
    \node[scale=0.90, violet] at (-0.4, -0.3) { $\langle X^{(n)}, \bthn \rangle$ };
    
    \node[fill=teal, thin, diamond, scale=0.3] at (-0.94,0.60) { };
    \node[scale=0.90, teal] at (-0.825,0.75) { $X^{(n)}$};
    \draw [teal] (0,0) -- (-0.94,0.60);
	\end{tikzpicture}
 \caption{project $\ell^p$ ball of $\R^n$ onto random $\bthn$.}
 \label{fig-projr}
\end{subfigure}
\caption{Projection of $X^{(n)}$ onto an element of $\sphn$.}
\label{fig-projwhole}
\end{figure}

Note that in the central limit setting, projections onto general $\thn\in\sphn$ exhibit the same properties (Gaussian fluctuations) as projections onto the specific direction $\iota^{(n)}\in\sphn$; in the large deviation setting,  Theorem \ref{th-atyp} indicates that this is not the case. We elaborate on this in Sect.\ \ref{ssec-atyp}.

Lastly,  we are also interested in LDPs because they can yield not only the asymptotic likelihood of a rare event, but also insight into \emph{how} a rare event occurs. In particular, large deviation analysis typically yields variational formulas whose minimizer(s) admit a probabilistic interpretation. This paper initiates an investigation of a particular kind of ``geometric" rare event (large value of a projection), and we establish an associated variational formula in Theorem \ref{th-compar}. We also provide some analysis of a simpler variational formula (for the case $p=\infty$) in Sect. \ref{sec-analysis}.

It would be interesting to extend this large deviation analysis to more general sequences of probability measures beyond the uniform measures on $\ell^p$ balls. An even broader goal is to determine precisely which geometric aspects of the underlying probability measures affect the large deviation behavior of random projections. We defer these questions for future work.

The outline of this paper is as follows. In the remainder of Sect.\ \ref{sec-intro}, we review related work and set up the preliminaries for our own results.  In Sect.\ \ref{sec-main}, we precisely state our main results and provide explicit formulas for the case of $p=2$. In Sect.\ \ref{sec-equiv}, we appeal to certain probabilistic representations of the $\ell^p$ balls which simplify our analysis.  Sects.\ \ref{sec-annealed}--\ref{sec-atyp} contain the proofs of our results. In Sect. \ref{sec-prod}, we discuss analogous results for product measures. Lastly, in Sect.\ \ref{sec-analysis}, we analyze the variational problem established in Theorem \ref{th-compar}. A list of notation can be found on page \pageref{notation}.

%%%%%%%%%%%%%%%%%%
% EXISTING
%%%%%%%%%%%%%%%%%%
\subsection{Relation to prior work}\label{ssec-prior}

Random projections of high-dimensional random vectors arise in a variety of applications. In the statistics and machine learning literature, projections onto random lower-dimensional subspaces are employed for the purposes of dimensionality reduction \cite{bingham2001random, lin2003dimensionality}, clustering \cite{fern2003random}, regression \cite{maillard2012linear}, and topic discovery \cite{ding2013topic} in the setting of high-dimensional data. The main idea is that a practitioner would like to restrict statistical analysis to a low-dimensional space, but it may be computationally expensive to try to select an ``optimal" subspace (using, e.g., Principal Component Analysis), and that under certain assumptions, selecting a random subspace may perform ``nearly" as well. 

On a more theoretical side, there is significant interest in  $\ell^p$ balls due to their central role in convex geometry. As a small fraction of the extensive literature, we note results on sections \cite{meyer1988sections}, hyperplanes \cite{barthe2002hyperplane}, extremal slabs \cite{barthe2003extremal}, probabilistic representations \cite{barthe2005probabilistic}, and cone/surface measures \cite{naor2003projecting, naor2007surface, kr1}. The $\ell^p$ balls also arise in computer science in the context of sketches and low-distortion embeddings \cite{indyk2000stable}. An LDP for projections of $\ell^p$ balls onto canonical basis directions can be found in \cite{BarGamLozRou10}, but our work provides the first LDPs for projections onto general $\thn\in\sphn$ and random $\bthn$.

In Sect.\ \ref{sec-equiv}, we show how our LDPs are related to LDPs for weighted sums of certain i.i.d.\ random variables. For a partial survey of large deviation results in the setting of \emph{deterministically} weighted sums of i.i.d.\ random variables, we refer to  \cite[\S2.1]{gkr3}, which details the arguments for quenched LDPs for projections of product measures.

Also in Sect.\ \ref{sec-equiv}, it becomes apparent that our LDP is related to LDPs for \emph{self-normalized} sums, as developed in \cite{Shao97}. A similar question as our quenched LDP (Theorem \ref{th-qldp}) in the case $p=2$  can be found in \cite[\S3.3, Ex. 5,6]{de2009self}, but without specifying the form of the rate function (which can be found in Sect.\ \ref{ssec-p2}). We discuss these connections to self-normalized sums in greater detail in Remark\ \ref{rmk-shao}.

%%%%%%%%%%%%%%%%%%
% INTRO \ SETUP
%%%%%%%%%%%%%%%%%%
\subsection{Setup and notation} \label{ssec-setup}

Let $\mathbb{A} \doteq \prod_{n\in \N} \R^n$ denote the space of infinite triangular arrays. That is, $z\in\mathbb{A}$ if $z=(z^{(1)},z^{(2)},\dots)$ where $z^{(n)} \in \R^n$ for all $n\in \N$. 
\nom[aaaaa]{$\mathbb{A}$}{infinite triangular arrays}

We assume that all random variables  are defined on a common probability space $(\Omega, {\mathcal F}, \P)$, and let $\E$ denote the corresponding expectation. Let $\mathcal{X}$ be some measurable space, and let $\mathcal{P}(\mathcal{X})$ be the space of probability measures on $\mathcal{X}$. For a random variable $\xi:\Omega\ra \mathcal{X}$, and a measure $\mu\in\mathcal{P}(\mathcal{X})$, we write $\xi \sim \mu$ if the law of $\xi$ is $\mu$; that is, if $\mathbb{P}\circ \xi^{-1} = \mu$. 
\nom[px]{$\mathcal{P}(\mathcal{X})$}{probability measures on $\mathcal{X}$}

For  $p \in [1,\infty)$, $n \in \N$, and $x \in \R^n$,   let $\nrm{x}_{n,p} \doteq \left(\sum_{i=1}^n |x_i|^p \right)^{1/p}$    denote the $\lp$ norm on $\R^n$. For $p=\infty$, let $\nrm{x}_{n,\infty} \doteq \sup_{i \in \{1, \ldots, n\}} |x_i|$ denote the $\ell^{\infty}$ norm on $\R^n$. Let $\bnp$  be the unit $\lp$ ball in $\R^n$: 
\[  \bnp \doteq  \left\{  x \in \R^n: \lpnrm{x} \leq 1 \right\}, \quad p \in [1,\infty],  \]
and let $\xnp =(\xnp_1,\dots, \xnp_n)$  be a random vector that is  distributed according to the uniform probability measure on  $\bnp$. Whenever we define a probability measure on a subset $A\subset \R^n$, we mean a probability measure on the Borel subsets of $A$.

\nom[a]{$\lVert \cdot \rVert_{n,p}$}{$\ell^p$ norm of $\R^n$} 
\nom[bnp]{$\bnp$}{unit $\ell^p$ ball of $\R^n$}
\nom[xnp]{$\xnp$}{uniform point from $\bnp$}

Let $\sphn$ denote the unit sphere in $\R^n$:  
\[   \sphn \doteq \left\{ x \in \R^n:  x_1^2 + x_2^2 + \dots + x_n^2  = 1 \right\} =  \left\{ x \in \R^n:  \ltwonrm{x} = 1 \right\}, \]
We write  $\sigma_{n}$ for the unique rotationally invariant probability measure on $\sphn$. For $n\in \N$, let $\bthn$ denote a random vector that is distributed according to the uniform measure $\sigma_n$ on $\sphn$, independent of $\xnp$.
\nom[sn]{$\sphn$}{unit sphere in $\R^n$}
\nom[szsig]{$\sigma_n$}{rotation inv.\ measure on $\sphn$ }
\nom[theta1]{$\bthn$, $\thn$}{random / fixed direction in $\sphn$}

For background on large deviations, we refer to \cite{DemZeibook}. In particular, recall the definition of large deviation principles:
 
\begin{definition}
Let  $\Sigma$ be a topological space. A sequence of $\Sigma$-valued random variables $(\xi_n)_{n\in \N}$ is said to satisfy a \emph{large deviation principle (LDP)} with \emph{speed} $s:\N\ra \R$ and a \emph{rate function} $I:\Sigma\ra[0,\infty]$ if $I$ is lower semi-continuous, and for all Borel measurable subsets $\Gamma\subset \Sigma$, 
\begin{equation*}
  -\inf_{x\in \Gamma^\circ} I(x) \le  \liminf_{n\ra\infty} \tfrac{1}{s(n)} \log \P(\xi_n\in \Gamma^\circ) \le \limsup_{n\ra\infty} \tfrac{1}{s(n)} \log \P(\xi_n\in \bar \Gamma) \le -\inf_{x\in \bar{\Gamma}} I(x),
\end{equation*}
where $\Gamma^\circ$ and $\bar\Gamma$ denote the interior and closure of $\Gamma$, respectively. Furthermore, $I$ is said to be a \emph{good rate function} if it has compact level sets. When no speed is explicitly stated, we take the convention that the default speed is $s(n) = n$.
\end{definition}

In the large deviation setting, we are frequently interested in geometric properties of an LDP rate function, such as convexity, or the following weakened form of convexity.

\begin{definition}\label{def-quasi}
A function $f:\R\ra(-\infty,+\infty]$ is said to be \emph{quasiconvex} if its level sets $\{x \in \R : f(x) \le c\}$ are convex for all $c\in \R$.
\end{definition}

Practically, this definition is useful because quasiconvex functions have an equivalent characterization: $f$ is quasiconvex if and only if there exists some $x_0\in \R$ such that $f$ is non-increasing for $x < x_0$ and non-decreasing for $x > x_0$. A general review of quasiconvex functions can be found, for example, in \cite[\S 3.4]{boyd2004convex}. As a further link to convexity, we recall the following transform which arises in Cram\'er's theorem, and will also play a role in our results.

\begin{definition}
Given a function $\Lambda:\mathbb{R}^n\ra (-\infty,+\infty]$, the \emph{Legendre transform} of $\Lambda$ is the function $\Lambda^*: \mathbb{R}^n \ra (-\infty,+\infty]$ defined by
\begin{equation*}
  \Lambda^*(\tau) \doteq  \sup_{t \in \mathbb{R}^n} \{ \langle t, \tau\rangle_n - \Lambda(t) \}, \quad \tau\in \mathbb{R}^n.
\nom[aa]{$(\,\cdot\,)^*$}{Legendre transform}
\end{equation*}
\end{definition}

We also define a class of measures which are intimately tied to the $\ell^p$ balls, as we will demonstrate in Sect.\ \ref{sec-equiv}. For $p\in [1,\infty)$, let $\mu_p \in \mathcal{P}(\mathbb{R})$ have density $f_p$, where
\begin{equation}\label{fpdef}
f_p(y) \doteq  \frac{1}{2p^{1/p}\Gamma(1+\frac{1}{p})} e^{-|y|^p/p}, \quad y\in \R.
\end{equation}
This is the density of the \emph{generalized normal distribution} (also known as the \emph{exponential power distribution}) with location 0, scale $p^{1/p}$, and shape $p$. When $p=2$, $\mu_2$ corresponds to the standard Gaussian distribution. 
\nom[fp]{$f_p$}{generalized normal density}
\nom[mzup]{$\mu_p$}{generalized normal distribution}

%%%%%%%%%%%%%%%%%%
% RES
%%%%%%%%%%%%%%%%%%
\section{Main results}\label{sec-main} % [X]

In Sects. \ref{ssec-ann}--\ref{ssec-p2}, we precisely state our main results.

%%%%%%%%%%%%%%%%%%
% RES \ ANNEALED
%%%%%%%%%%%%%%%%%%
\subsection{Annealed LDP}\label{ssec-ann}

 Let $\wnpbth$ be the normalized (scalar) projection of $\xnp$ onto a random direction $\bthn$, defined as
\begin{equation} \label{def-bwnpbth}
\wnpbth \doteq   \frac{n^{1/p}}{n^{1/2}}\langle  \xnp, \bthn   \rangle_n  = \frac{1}{n} \sum_{i=1}^n (n^{1/p}\xnp_i) (n^{1/2}\bthn_i), \quad n \in \N,
\end{equation}
where for $p=\infty$, we abide by the convention  $n^{1/\infty} \equiv 1$. Our first result establishes an LDP for $(\wnpbth)_{n\in\N}$.
\nom[wnpth]{$\wnpbth$}{$n^{(1/p)-(1/2)} \langle \xnp, \bthn\rangle_n$}

\begin{remark}\label{rmk-scaling}
As Theorem \ref{th-aldp} and Theorem \ref{th-qldp} below show, the scaling $n^{(1/p)-(1/2)}$ in \eqref{def-bwnpbth} --- and also later in \eqref{def-wthetan} --- turns out to be appropriate for large deviation analysis. The heuristic reasoning behind this scaling is that the variance of $\wnpbth$ should be of ``order $1/n$" in order to prove non-trivial large deviation principles. To this end, note that both $n^{1/p}\xnp_i$ and $n^{1/2}\bthn_i$  are typically of order $1$, since they are coordinates of points on $n^{1/p}\bnp$ and $n^{1/2}\sphn$, respectively. Thus, the sum over all $i=1,\dots,n$ is of order $n$, and upon multiplying by $1/n$ (which scales the variance by a factor of $1/n^2$), we find that $\wnpbth$ is of the appropriate scale. 

For an alternative perspective, recall the corresponding central limit results briefly discussed in Sect. \ref{sec-intro}. Note that $n^{1/p}$ is the scaling appropriate for central limit fluctuations. To be precise, let $c_{n,p}$ be the isotropic constant (see, e.g., \cite[p.71]{ball1988logarithmically} for a definition) for the law of $n^{1/p}X^{(n,p)}$. A straightforward calculation shows that $\lim_{n\ra\infty} c_{n,p} = [p^{1/p} \Gamma(3/p)/\Gamma(1/p)]^{1/2}$, a numerical constant depending on $p$. That is, the $n^{1/p}$ factor ensures that the isotropic constants of $n^{1/p}\bnp$ are normalized to be at the same scale for all dimensions $n\in \N$. From this point of view, the scaling $n^{(1/p)-(1/2)}$ is natural for large deviations, as it is just the CLT scaling multiplied by $n^{-1/2}$. 
\end{remark}

For classical sums of i.i.d. random variables, Cram\'er's theorem gives the LDP rate function as the Legendre transform of the logarithmic moment generating function (log mgf) of the common distribution. In our setting of random projections, certain analogs of the log mgf arise, which we now define. For $p\in[2,\infty)$, let
\begin{equation}\label{chklampdefn}
\lma_{p}(t_0, t_1, t_2) \doteq \log \int_\R \int_{\mathbb{R}} e^{t_0z^2 + t_1zy + t_2 |y|^p} \,  \mu_2(dz)\mu_p(dy), \quad t_0,t_1,t_2\in \R.
\nom[phip]{$\lma_p$}{log mgf for annealed}
\end{equation}
Note that $\lma_p(t_0,t_1,t_2) < \infty$ if and only if  $t_0 < \frac{1}{2}, t_1\in \R,t_2 < \frac{1}{p}$. Our rate function is defined in terms of the Legendre transforms of $\lma_{p}$: for $w\in \R$, let
\begin{align} \label{iadefn}
\ia_{p}(w) &\doteq  \inf_{\substack {\tau_0 > 0, \tau_1\in \R, \tau_2 > 0\,:\\ \tau_0^{-1/2}\tau_1\tau_2^{-1/p} = w}} \lma_{p}^*(\tau_0,\tau_1,\tau_2).
\nom[ipa]{$\ia_p$}{annealed rate function}
\end{align}

\begin{theorem}[Annealed LDP, $p\in[2,\infty)$]\label{th-aldp}
Let $p \in [2,\infty)$. The sequence $(\wnpbth)_{n\in\N}$ satisfies an LDP with the quasiconvex, symmetric, good rate function  $\ia_{p}$.
\end{theorem}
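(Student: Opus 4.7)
My plan is to use the probabilistic (Schechtman--Zinn type) representation of the uniform law on $\bnp$ promised in Section \ref{sec-equiv} to rewrite $\wnpbth$ as a smooth function of empirical means of i.i.d.\ random variables, and then to apply Cram\'er's theorem in $\R^3$ together with the contraction principle. Specifically, the representation provides mutually independent i.i.d.\ sequences $(Y_i) \sim \mu_p$ and $(Z_i) \sim \mu_2$, together with $U\sim\mathrm{Unif}[0,1]$ independent of both, such that $\xnp \eqdist U^{1/n} Y/\|Y\|_{n,p}$ and $\bthn \eqdist Z/\|Z\|_{n,2}$. Substituting into \eqref{def-bwnpbth} and cancelling the $n^{1/p}$ and $n^{1/2}$ normalizations (exactly as motivated in Remark \ref{rmk-scaling}) yields
\[
\wnpbth \eqdist U^{1/n}\cdot \frac{B_n}{A_n^{1/2}\,C_n^{1/p}},
\qquad A_n := \tfrac{1}{n}\sum_{i=1}^n Z_i^2, \ \ B_n := \tfrac{1}{n}\sum_{i=1}^n Y_i Z_i, \ \ C_n := \tfrac{1}{n}\sum_{i=1}^n |Y_i|^p.
\]

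For $p \ge 2$, $\lma_p$ from \eqref{chklampdefn} is precisely the logarithmic moment generating function of the i.i.d.\ triple $(Z^2, YZ, |Y|^p)$, and it is finite on the open neighborhood $\{t_0 < 1/2,\, t_2 < 1/p,\, t_1 \in \R\}$ of the origin. Cram\'er's theorem in $\R^3$ then delivers an LDP for $(A_n, B_n, C_n)$ with good rate function $\lma_p^*$, whose effective domain is contained in $\{a>0,\,c>0\}$ since $Z^2, |Y|^p > 0$ almost surely. The map $g(a,b,c) := b\,a^{-1/2}\,c^{-1/p}$ is continuous on this open set, so the contraction principle produces an LDP for $Q_n := g(A_n,B_n,C_n)$ with good rate function exactly $\ia_p$ of \eqref{iadefn}. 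The symmetry $Y \eqdist -Y$ forces $\lma_p(t_0,t_1,t_2) = \lma_p(t_0,-t_1,t_2)$ and hence $\lma_p^*(a,b,c) = \lma_p^*(a,-b,c)$; since $\lma_p^*$ is convex, the slice $b \mapsto \lma_p^*(a,b,c)$ is convex and symmetric about $0$, hence non-decreasing in $|b|$. The constraint $b = w\,a^{1/2}c^{1/p}$ scales $|b|$ monotonically in $|w|$, so taking infima shows $\ia_p$ is symmetric and non-decreasing in $|w|$, i.e., quasiconvex.

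It remains to absorb the $U^{1/n}$ factor. A direct calculation on the density $n u^{n-1}$ of $U^{1/n}$ on $(0,1]$ shows that $(U^{1/n})_{n\in\N}$ satisfies an LDP at speed $n$ with good rate function $I_U(u) = -\log u$ on $(0,1]$ and $+\infty$ elsewhere. Independence of $U^{1/n}$ from $(A_n, B_n, C_n)$, combined with a final contraction through the continuous product map, gives an LDP for $\wnpbth$ with rate $I_W(w) = \inf_{u \in (0,1]}\{-\log u + \ia_p(w/u)\}$. For every $u \in (0,1]$, $|w/u| \ge |w|$, so the monotonicity and symmetry of $\ia_p$ derived in the previous paragraph yield $\ia_p(w/u) \ge \ia_p(w)$; combined with $-\log u \ge 0$, the infimum is attained at $u=1$, giving $I_W = \ia_p$ as claimed.

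\textbf{Main obstacle.} The pivotal structural fact is the symmetry and quasiconvexity of $\ia_p$: without these, the $-\log u$ contribution from the $U^{1/n}$ factor would genuinely deform the rate function and prevent identification with the clean Legendre-type formula \eqref{iadefn}. A secondary technical point is verifying that the effective domain of $\lma_p^*$ lies strictly inside the open set $\{a>0,\,c>0\}$ of continuity of $g$, so that the contraction principle applies verbatim and the goodness of the rate function is preserved.
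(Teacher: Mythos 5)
Your proposal is correct and follows essentially the same route as the paper: the Schechtman--Zinn representation, Cram\'er's theorem in $\R^3$ for the triple of empirical means with log mgf $\lma_p$, the contraction principle through $(\tau_0,\tau_1,\tau_2)\mapsto \tau_0^{-1/2}\tau_1\tau_2^{-1/p}$, and absorption of the $U^{1/n}$ factor via its $-\log u$ rate function together with the symmetry and quasiconvexity of $\ia_p$ (the paper's Lemmas \ref{lem-uniform}, \ref{lem-unifequiv}, and \ref{lem-reduction}). The only cosmetic difference is that the paper isolates the $U^{1/n}$ and quasiconvexity arguments into standalone lemmas reused elsewhere, whereas you fold them into the proof.
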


The proof of Theorem \ref{th-aldp} is given in Sect. \ref{ssec-anng2}.

\smallskip

For $p <2$, random projections display significantly different large deviation behavior. For $p\in [1,2)$, define 
\begin{align}
  \ia_p(w) &\doteq \tfrac{1}{r_p}|w|^{r_p}, \quad w\in \R, \label{anp12}\\
  r_p &\doteq \tfrac{2p}{2+p}. \label{rp}
\end{align}
Note that $r_p < 1$ for $p < 2$, so the following large deviation principle holds with a speed $n^{r_p}$, slower than the speed $n$ associated with the case $p\ge 2$.
\nom[rp]{$r_p$}{the exponent/scale $2p/(2+p)$}

\begin{theorem}[Annealed LDP, $p\in[1,2)$]\label{th-aldp12}
Let $p\in[1,2)$. The sequence $(\wnpbth)_{n\in \N}$ satisfies an LDP with speed $n^{r_p}$ and the quasiconvex, symmetric, good rate function $\ia_p$.
\end{theorem}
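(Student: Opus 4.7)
The plan is to use a probabilistic representation of $\xnp$ and $\bthn$ to reduce $\wnpbth$ to a normalized sum of i.i.d.\ random variables with stretched exponential (Weibull-type) tails, and then to invoke a one-big-jump LDP for such sums, whose speed and rate function are dictated by the tail of a single summand rather than by a full log mgf.

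\emph{Step 1: Reduction via probabilistic representation.} By the Schechtman--Zinn-type representation (to be recalled in Sect.\ \ref{sec-equiv}), I would write $\xnp \eqdist U^{1/n}\, Y/\|Y\|_{n,p}$, where $Y = (Y_1,\ldots,Y_n)$ has i.i.d.\ $\mu_p$ coordinates and $U$ is an independent $\mathrm{Unif}[0,1]$ random variable, and $\bthn \eqdist Z/\|Z\|_{n,2}$ with $Z = (Z_1,\ldots,Z_n)$ i.i.d.\ $\mu_2$, all independent. Substituting into \eqref{def-bwnpbth} yields
\begin{equation*}
  \wnpbth \eqdist \frac{U^{1/n}}{(\|Y\|_{n,p}/n^{1/p})\,(\|Z\|_{n,2}/n^{1/2})}\cdot \frac{1}{n}\sum_{i=1}^n Y_i Z_i.
\end{equation*}
Since $\E|Y_1|^p = \E Z_1^2 = 1$, and both $|Y_1|^p$ and $Z_1^2$ have log mgf finite in a neighborhood of $0$, Cram\'er's theorem gives exponential concentration of $\|Y\|_{n,p}/n^{1/p}$ and $\|Z\|_{n,2}/n^{1/2}$ at $1$ at speed $n$; the factor $U^{1/n}$ also concentrates at $1$ at speed $n$. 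Because $n \gg n^{r_p}$, the entire prefactor is exponentially equivalent to $1$ at speed $n^{r_p}$, so $\wnpbth$ is exponentially equivalent to $S_n/n$, where $S_n \doteq \sum_{i=1}^n Y_i Z_i$.

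\emph{Step 2: LDP for the i.i.d.\ sum $S_n/n$ at speed $n^{r_p}$.} The summands $Y_i Z_i$ are i.i.d.\ and symmetric. Using the Gaussian tail $\P(|Z_1|>s)\asymp s^{-1}e^{-s^2/2}$ and Laplace's method,
\begin{equation*}
  \P(|Y_1 Z_1|>t) \asymp \int_0^\infty \frac{y}{t}\,e^{-y^p/p - t^2/(2y^2)}\,dy,
\end{equation*}
and the saddle point $y_* = t^{2/(p+2)}$ makes the exponent equal to $-(1/p + 1/2)\,t^{2p/(p+2)} = -t^{r_p}/r_p$. Hence $-\log \P(|Y_1 Z_1|>t) \sim t^{r_p}/r_p$ as $t\to\infty$. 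Since $r_p<1$, the distribution is sub-exponential, Cram\'er's theorem fails, but classical one-big-jump LDP results for sums of i.i.d.\ random variables with Weibull-type tails yield
\begin{equation*}
  \lim_{n\to\infty} n^{-r_p}\log \P(S_n/n>w) = -w^{r_p}/r_p, \quad w>0,
\end{equation*}
with the analogous statement for $w<0$ by symmetry. Combined with the exponential equivalence from Step~1, this is exactly the claimed LDP for $\wnpbth$ with rate function $\ia_p(w)=|w|^{r_p}/r_p$; quasiconvexity, symmetry, and goodness are immediate from the closed form.

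\emph{Main obstacle.} The principal technical difficulty lies in Step~2: one must both compute the tail of $Y_1 Z_1$ sharply enough to identify the constant $1/r_p$, and rigorously apply (or adapt) a sub-exponential LDP to this specific product distribution. The one-big-jump lower bound $\P(S_n>nw) \gtrsim n\,\P(Y_1 Z_1>nw)$ is direct, but the matching upper bound requires controlling the probability that two or more summands are simultaneously moderately large --- standard but delicate when the tail of $Y_1 Z_1$ carries polynomial prefactors. The exponential equivalence in Step~1 is easier but still requires care, since the random denominators $\|Y\|_{n,p}/n^{1/p}$ and $\|Z\|_{n,2}/n^{1/2}$ must be shown to stay bounded away from $0$ with probability $1 - o(e^{-cn^{r_p}})$ in order to transfer the LDP cleanly through the ratio.
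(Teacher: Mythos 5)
Your proposal follows essentially the same route as the paper: reduce via the Schechtman--Zinn representation, prove the tail asymptotic $-\log\P(Y_1Z_1>t)\sim t^{r_p}/r_p$ by the same saddle-point computation, obtain the speed-$n^{r_p}$ LDP for $\tfrac1n\sum Y_iZ_i$ from a known stretched-exponential (one-big-jump) result (the paper cites Arcones' theorem for exactly this), and then transfer to $\wnpbth$ by the same $\epsilon$-splitting exponential-equivalence argument, with the $U^{1/n}$ factor and the norm normalizations discarded because they deviate only at speed $n\gg n^{r_p}$. The obstacles you flag are precisely the steps the paper resolves by citation and by the $\delta/\epsilon$ decomposition, so the plan is sound.
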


The proof of Theorem \ref{th-aldp12} is given in Sect. \ref{ssec-annpl2}.

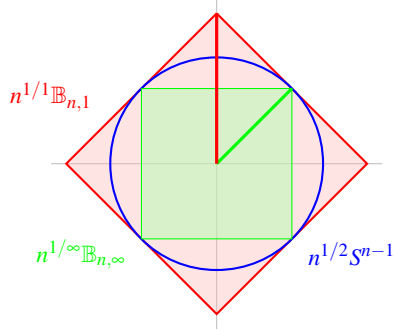
\begin{figure}[htb]
	\begin{tikzpicture}
	\node [red] at (-2.2,0.9) {\footnotesize$n^{1/1}\mathbb{B}_{n,1}$};
	\node [color=green] at (-1.8,-1.2) {\footnotesize$n^{1/\infty}\mathbb{B}_{n,\infty}$};
	\node [blue] at (1.8,-1.2) {\footnotesize$n^{1/2}S^{n-1}$};
\draw [gray!50]  (0,-2.2) -- (0,2.2) ;
\draw [gray!50]  (-2.2,0) -- (2.2,0) ;
    \draw [red, fill opacity=0.35, fill=red!30, thick] (0,2) -- (2, 0) -- (0,-2) -- (-2,0) -- (0,2) ;
    \draw [blue, thick] plot [smooth cycle, tension=1] coordinates {(-1.4142,0) (0,1.4142) (1.4142,0) (0,-1.4142)};
    \draw [green, fill opacity=0.5, fill=green!30] (-1,-1) -- (-1, 1) -- (1, 1) -- (1, -1) -- (-1, -1);
    \draw [green, very thick] (0,0) -- (1,1);
    \draw [red, very thick] (0,0) -- (0, 2);
	\end{tikzpicture}
\caption{Scaled $\ell^1$ ball vs. scaled $\ell^\infty$ ball.}
\label{fig-1inf}
\end{figure}

\vspace*{-1em}

\begin{remark}\label{rmk-p12}
Note that Theorem \ref{th-aldp} and Theorem \ref{th-aldp12} reveal a sharp difference between the LDPs for $p > 2$ and $p <2$. Due to the rotational invariance of the law of $\bthn$, a large deviation of $\langle \xnp, \bthn \rangle_n$ depends crucially on  a large deviation of the Euclidean norm of $\xnp$. The difference between the cases $p>2$ and $p <2$ is a consequence of the geometry of the $\ell^p$ balls, highlighted in Figure \ref{fig-1inf}, which portrays the scaled balls $n^{1/p}\bnp$, with  \textcolor{red}{$p=1$ in red}, and \textcolor{green}{$p=\infty$ in green}. For $p>2$, the vectors in $n^{1/p}\bnp$ that attain maximal Euclidean norm are the ``corners" $(\pm 1,\pm 1,\dots,\pm 1)$.  Meanwhile, for $p <2$, the vectors in $n^{1/p}\bnp$ that attain maximal Euclidean norm are again ``corners", but this time the corners are in canonical basis directions $(\pm 1,0,\dots,0)$, $(0,\pm1, 0,\dots, 0)$, etc. In particular, this means that for $p>2$, a large deviation of the Euclidean norm  occurs due to a combined large deviation of each coordinate. On the other hand, for $p<2$, the large deviation event is caused by the large deviation of a single coordinate. The behavior in the $p<2$ case is similar to the observation that for random walks with heavy-tailed increments, a large deviation is caused by an extreme of the sample \cite[\S4]{mikosch1998large}, which is also referred to as the ``principle of the big jump" \cite{foss2007discrete}.
\end{remark}

%%%%%%%%%%%%%%%%%%
% RES \ QUENCHED
%%%%%%%%%%%%%%%%%%
\subsection{Quenched LDP}\label{ssec-que} 

We now consider the case where we condition on a fixed sequence of directions $\bthn=\thn$, $n\in \N$. Let $\seq\doteq \prod_{n \in \N} \sphn$. Given a sequence of projection directions $\theta = (\theta^{(1)}, \theta^{(2)}, \dots) \in \seq$,  consider the sequence of random variables $\wnpth, n \in \N,$ defined by 
\begin{equation}\label{def-wthetan}
  \wnpth \doteq   \frac{n^{1/p}}{n^{1/2}}\langle \xnp, \thn   \rangle_n  = \frac{1}{n} \sum_{i=1}^n (n^{1/p}\xnp_i)(n^{1/2}\thn_i), \quad n \in \N.
\end{equation}
 Observe that $\wnpth$ denotes the normalized (scalar) projection of $\xnp$ onto a \emph{particular} direction $\thn$, whereas $\wnpbth$ of \eqref{def-bwnpbth} denotes the normalized (scalar) projection of $\xnp$ onto a \emph{random} direction $\bthn$. The scaling $n^{(1/p)-(1/2)}$ follows from the same rationale as in the annealed case, discussed in Remark \ref{rmk-scaling}. 
\nom[wnpthq]{$\wnpth$}{$n^{(1/p)-(1/2)} \langle \xnp, \thn\rangle_n$}
\nom[snn]{$\seq$}{sequences $\seq= \prod_{n\in\N} \sphn$}

In the case of \emph{fixed} directions of projection $\theta\in\seq$ (or conditioning on $\bth = \theta$), the corresponding analog of the log mgf is as follows. For $p\in(1,\infty)$, $\nu\in\mathcal{P}(\R)$, define
\begin{align}
\lm_{p}( t_1, t_2) &\doteq  \log\left( \int_\mathbb{R} e^{ t_1 y +  t_2 |y|^p} \mu_p(dy)  \right); \label{lampdefn0} \\
\lmq_{p,\nu}( t_1, t_2) &\doteq  \int_\mathbb{R} \lm_p(t_1u,t_2)  \nu(du)\, , \quad t_1,t_2\in \R. \label{lampdefn} 
\nom[lzzamp]{$\lm_p$}{log mgf of $(Y,\lvert Y \rvert^p)$ for $Y\sim\mu_p$}
\nom[psip]{$\lmq_{p,\nu}$}{log mgf for quenched}
\end{align}
Note that $\lmq_{p,\nu}(t_1,t_2) < \infty$ for $t_2 < 1/p$, and is equal to infinity, otherwise. We define the associated rate function in terms of the Legendre transform of $\lmq_{p,\nu}$: for $w\in \mathbb{R}$, let
\begin{align}\label{ieqdefn}
\iq_{p,\nu}(w) &\doteq  \inf_{\substack{ \tau_1\in \R, \tau_2 > 0\,: \\  \tau_1\tau_2^{-1/p} = w }} \lmq_{p,\nu}^*(\tau_1,\tau_2).
\nom[ipqu]{$\iq_{p,\nu}$}{quenched rate function}
\end{align}

Let $\pi_n:\seq\ra \sphn$ be the coordinate map such that for $\theta \in \seq$, we have $\pi_n(\theta) = \thn$.  
Let $\sigma$ be any probability measure on (the Borel sets of) $\seq$ such that for all $n\in \N$,
\begin{equation}\label{sigproj}
\sigma \circ \pi_n^{-1} = \sigma_{n}.
\end{equation}
For example, the product measure $\sigma=\bigotimes_{n\in \N} \sigma_{n}$ satisfies \eqref{sigproj}.  Our second result establishes an LDP for $(\wnpth)_{n\in \N}$ which holds for $\sigma$-a.e.\ $\theta \in \seq$.
\nom[szsig2]{$\sigma$}{ a measure on $\seq$  }

\begin{theorem}[Quenched LDP, $p \in (1,\infty)$]\label{th-qldp}
Let $p\in(1,\infty)$. For  $\sigma$-a.e.\ $\theta \in \seq$,   the sequence $(\wnpth)_{n\in\mathbb{N}}$  satisfies an LDP with the quasiconvex, symmetric, good rate function $\iq_{p,\mu_2}$.
\end{theorem}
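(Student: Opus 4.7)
The approach is to exploit the probabilistic representation of $\xnp$, reduce to a joint LDP for a two-dimensional statistic via Gärtner--Ellis, and conclude by the contraction principle. Section \ref{sec-equiv} provides a Schechtman--Zinn-style representation: one can write $\xnp \eqdist U^{1/n}(Y_1,\ldots,Y_n)/\|Y\|_{n,p}$, where $(Y_i)_{i\in\N}$ are i.i.d.\ with common law $\mu_p$ and $U\sim \mathrm{Unif}[0,1]$ is independent. Setting $a_i^{(n)} := \sqrt{n}\,\thn_i$, so that $\frac{1}{n}\sum_i(a_i^{(n)})^2 = 1$, substitution into \eqref{def-wthetan} yields
\begin{equation*}
\wnpth \eqdist U^{1/n}\cdot \frac{\frac{1}{n}\sum_{i=1}^n a_i^{(n)} Y_i}{\bigl(\frac{1}{n}\sum_{j=1}^n |Y_j|^p\bigr)^{1/p}}.
\end{equation*}
This exposes the natural statistic $T_n := \bigl(\frac{1}{n}\sum_i a_i^{(n)} Y_i,\, \frac{1}{n}\sum_i |Y_i|^p\bigr)$, and the target rate function $\iq_{p,\mu_2}$ in \eqref{ieqdefn} is precisely the contraction of a rate function for $T_n$ along the map $(\tau_1,\tau_2)\mapsto \tau_1\tau_2^{-1/p}$.

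\textbf{Selecting good directions.} Let $\mathcal{E}\subseteq \seq$ denote the set of sequences $\theta$ for which the empirical measure $L_n^\theta := \frac{1}{n}\sum_{i=1}^n \delta_{a_i^{(n)}}$ converges weakly to $\mu_2$ and moreover $\int |u|^r\, L_n^\theta(du)\to \int |u|^r\, \mu_2(du)$ for every $r\ge 1$. Using the Gaussian-polar representation $\bthn \eqdist G^{(n)}/\|G^{(n)}\|_{n,2}$ with $G^{(n)}\sim\mathcal{N}(0,I_n)$, or alternatively L\'evy-type concentration on $\sphn$, combined with Borel--Cantelli, one shows that any $\sigma$ satisfying \eqref{sigproj} assigns full measure to $\mathcal{E}$. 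Henceforth fix $\theta\in \mathcal{E}$.

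\textbf{Joint LDP and contraction.} By independence of the $Y_i$, the normalized log mgf of $T_n$ is
\begin{equation*}
\tfrac{1}{n}\log\E\!\left[\exp\Bigl(t_1 \textstyle\sum_i a_i^{(n)} Y_i + t_2 \sum_i |Y_i|^p\Bigr)\right] = \int_\R \lm_p(t_1 u, t_2)\, L_n^\theta(du).
\end{equation*}
For $p>1$, $s\mapsto \lm_p(s,t_2)$ is smooth and grows polynomially of order $p/(p-1)$ on the open half-plane $\{t_2 < 1/p\}$; combined with the moment convergence from $\mathcal{E}$, this yields pointwise convergence to $\lmq_{p,\mu_2}(t_1,t_2)$. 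After verifying essential smoothness and steepness of $\lmq_{p,\mu_2}$ at the boundary $\{t_2 = 1/p\}$ (inherited from $\lm_p$), Gärtner--Ellis provides an LDP for $T_n$ with good rate function $\lmq_{p,\mu_2}^*$. Since $U$ is independent of $(Y_i)$ and $U^{1/n}$ satisfies an LDP at speed $n$ with rate $I_U(x) = -\log x$ on $(0,1]$ (from $\P(U^{1/n}\le x) = x^n$), the joint $(U^{1/n}, T_n)$ satisfies an LDP; applying the contraction principle under $(x,\tau_1,\tau_2)\mapsto x\tau_1\tau_2^{-1/p}$ gives an LDP for $\wnpth$ with rate
\begin{equation*}
J(w) = \inf\bigl\{-\log x + \lmq_{p,\mu_2}^*(\tau_1,\tau_2) : x\tau_1\tau_2^{-1/p} = w,\ x\in(0,1],\ \tau_2 > 0\bigr\}.
\end{equation*}
Symmetry of $\lmq_{p,\mu_2}^*$ in $\tau_1$ (from $Y_i \eqdist -Y_i$ under $\mu_p$) and quasiconvexity force the optimum to be attained at $x=1$, collapsing $J$ to $\iq_{p,\mu_2}$; the advertised properties (symmetry, quasiconvexity, goodness) then follow from general properties of Legendre transforms.

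\textbf{Main obstacle.} I expect the principal difficulty to lie in the analytic input to Gärtner--Ellis: (i) upgrading weak convergence of $L_n^\theta$ to convergence of $\int \lm_p(t_1 u, t_2)\, dL_n^\theta$, which requires control of polynomial moments of $L_n^\theta$ up to order $p/(p-1)$ and therefore sharp concentration of $\sqrt{n}\bthn$ about its Gaussian limit; and (ii) verifying essential smoothness of $\lmq_{p,\mu_2}$ at the critical boundary $t_2 = 1/p$. These are precisely the places where the hypothesis $p>1$ is indispensable: for $p=1$, $\lm_1(t_1 u, t_2) = +\infty$ whenever $|t_1 u| \ge 1 - t_2$, and since $\mu_2$ has unbounded support this forces $\lmq_{1,\mu_2}(t_1,t_2) = +\infty$ for every $t_1\ne 0$, so the present approach necessarily fails---consistent with the distinct behavior captured by Theorem \ref{th-qldp1}.
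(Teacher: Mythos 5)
Your proposal follows essentially the same route as the paper's proof: the Schechtman–Zinn representation, the reduction to the two-dimensional statistic $T_n = R_\theta^{(n,p)}$, the G\"artner–Ellis theorem with limit log mgf $\lmq_{p,\mu_2}$, the contraction principle, the Glivenko–Cantelli/moment-convergence argument for the empirical measure of $\sqrt{n}\thn$ (which the paper carries out as Lemma \ref{lem-gliv} plus the Gaussian-polar identity), and the quasiconvexity/symmetry argument to absorb the $U^{1/n}$ factor (the paper's Lemma \ref{lem-unifequiv}). The obstacles you flag — upgrading weak convergence of $L_n^\theta$ to convergence of $\int \lm_p(t_1 u, t_2)\,dL_n^\theta$ via moment/Wasserstein control at order $p/(p-1)$, and establishing steepness of $\lmq_{p,\mu_2}$ at $t_2 = 1/p$ — are precisely the content of the paper's Lemmas \ref{lem-tailcont}, \ref{lem-subgsn}, and \ref{lem-quenchedess}, so your plan is on target.
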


The proof of Theorem \ref{th-qldp} is given in Sect.\ \ref{sec-quenched}. 

\smallskip

Interestingly, note that  almost every sequence of directions of projection yields the same exponential rate of decay! That is, for $\sigma$-a.e.\ $\theta \in \seq$, the rate function $\iq_{p,\mu_2}$ does not depend on  the particular choice of $\theta\in \seq$. This is not obvious at first sight, because in principle, the rate function for $(\wnpth)_{n\in\N}$ should depend on the particular choice of $\theta\in\seq$. Note that the rate function is measurable with respect to the tail sigma-algebra generated by the sequence $(\theta^{(1)},\theta^{(2)},\dots)$. Hence, if $\sigma$ were the product measure $\sigma=\bigotimes_{n\in \N} \sigma_{n}$, then  the lack of dependence of $\iq_{p,\mu_2}$ on $\theta$ would follow from the Kolmogorov 0--1 law. However, our result holds for any $\sigma\in\mathcal{P}(\seq)$ satisfying \eqref{sigproj}. We refer to \cite[Remark 3.3]{gkr3} for further comment. The key is that the $\sigma$-a.e.\ asymptotic behavior of $\sqrt{n}\thn$ which is relevant for the proof of Theorem \ref{th-qldp} depends only on the row-wise behavior of the array $\theta$ specified by \eqref{sigproj}. 

A natural question to ask is whether there exists a subset of $\seq$ of measure zero that displays ``atypical" behavior; that is, for which an LDP still holds, but with a rate function that is different from the universal quenched rate function $\iq_{p,\mu_2}$. We address this question in Sect.\ \ref{ssec-atyp}, for $p \in(1,\infty)$.

On another note, for $p=2$, we can strengthen Theorem \ref{th-qldp} to hold for \emph{all} $\theta \in \seq$, not just for $\sigma$-a.e.\ $\theta \in \seq$. This and other unique aspects of the $p=2$ case will be explored further in Sect.\ \ref{ssec-p2}.

The preceding discussion applies only to the case $p\in(1,\infty)$. For $p=1$,  the integrated log mgf $\lmq_{1,\mu_2}(t_1,t_2)$ is infinite if $t_1\ne 0$, and the same techniques as in the case $p\in(1,\infty)$ do not apply. Instead, for $p=1$ and $c > 0$, define
\begin{equation}\label{iq1def}
  \iq_{1,c}(w) \doteq \frac{|w|}{c}, \quad w\in \R.
\end{equation}

\begin{theorem}[Quenched LDP, $p =1$]\label{th-qldp1}
Fix $\theta\in\seq$ such that 
\begin{equation}\label{thmax}
\lim_{n\ra\infty} \sqrt{\frac{n}{\log n}} \max_{1\le i \le n} \theta_i^{(n)}   = c.
\end{equation}
Then, $(\sw_\theta^{(n,1)})_{n\in\mathbb{N}}$  satisfies an LDP with speed $n/\sqrt{\log n}$ and the good rate function $\iq_{1,c}$.
\end{theorem}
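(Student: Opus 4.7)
By the Schechtman--Zinn-type probabilistic representation recalled in Section \ref{sec-equiv},
\[
X^{(n,1)} \eqdist \frac{U^{1/n}}{\|Y\|_{n,1}}(Y_1,\dots,Y_n),
\]
where $Y_1,\dots,Y_n$ are i.i.d.\ with the symmetric Laplace density $f_1(y)=\tfrac12 e^{-|y|}$ and $U\sim\mathrm{Unif}[0,1]$ is independent of $Y$. Consequently
\[
\sw_\theta^{(n,1)} \eqdist \frac{n U^{1/n}}{\|Y\|_{n,1}}\cdot V_n,\qquad V_n \doteq \frac{1}{\sqrt{n}}\sum_{i=1}^n \theta_i^{(n)} Y_i.
\]
Since $\|Y\|_{n,1}/n$ and $U^{1/n}$ each satisfy classical Cram\'er LDPs at the standard speed $n$, the prefactor $nU^{1/n}/\|Y\|_{n,1}$ is exponentially equivalent to $1$ at the slower speed $s_n\doteq n/\sqrt{\log n}$, and it suffices to establish the LDP for $(V_n)_n$ at speed $s_n$ with rate $|w|/c$.

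\textbf{Upper bound via a Gärtner--Ellis computation.} Using $\E e^{sY}=(1-s^2)^{-1}$ for $|s|<1$, I compute
\[
\Lambda_n(\lambda) \doteq s_n^{-1}\log \E e^{s_n\lambda V_n} = -s_n^{-1}\sum_{i=1}^n \log\!\bigl(1-\lambda^2(\theta_i^{(n)})^2\cdot n/\log n\bigr).
\]
By \eqref{thmax}, $\max_i(\theta_i^{(n)})^2\cdot n/\log n\to c^2$, so for $|\lambda|<1/c$ every summand is finite; Taylor-expanding $-\log(1-x)=x+O(x^2)$ together with $\sum_i(\theta_i^{(n)})^2=1$ and $\max_i(\theta_i^{(n)})^2=O(\log n/n)$ gives
\[
\Lambda_n(\lambda) = \tfrac{\sqrt{\log n}}{n}\Bigl[\lambda^2\cdot \tfrac{n}{\log n}+O\!\bigl(\lambda^4\cdot \tfrac{n}{\log n}\bigr)\Bigr] = O\!\bigl(1/\sqrt{\log n}\bigr)\to 0,
\]
while for $|\lambda|>1/c$ the $i_n^*$-term violates $|s|<1$ and $\Lambda_n(\lambda)=+\infty$ for all large $n$. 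Hence the pointwise limit is $\Lambda(\lambda)=0$ on $[-1/c,1/c]$ and $+\infty$ outside, with Legendre transform $\Lambda^*(w)=|w|/c=\iq_{1,c}(w)$. The upper-bound half of the Gärtner--Ellis theorem (e.g.\ \cite[Thm.\ 2.3.6(a)]{DemZeibook}) then yields the required upper bound on closed sets.

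\textbf{Lower bound via a single big jump.} Since $\Lambda$ fails to be essentially smooth at $\pm 1/c$, the standard Gärtner--Ellis lower bound is unavailable and the rare event must be exhibited by hand. Fix $w>0$, $\delta\in(0,w)$, let $i_n^*$ attain the maximum in \eqref{thmax}, and write $V_n = \theta_{i_n^*}^{(n)}Y_{i_n^*}/\sqrt{n}+\widetilde V_n$, where $\widetilde V_n$ depends only on $\{Y_i:i\neq i_n^*\}$ and is therefore independent of $Y_{i_n^*}$. Applying the upper-bound computation to $\widetilde V_n$ gives $\P(|\widetilde V_n|\le \delta)\to 1$. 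Noting that $\sqrt{n}/\theta_{i_n^*}^{(n)}=(1+o(1))\,s_n/c$ by \eqref{thmax} and the Laplace tail $\P(Y>t)=\tfrac12 e^{-t}$ for $t\ge 0$, independence yields
\[
\P(V_n > w) \ge \P\!\bigl(Y_{i_n^*}>(w+\delta)\sqrt{n}/\theta_{i_n^*}^{(n)}\bigr)\cdot \P(|\widetilde V_n|\le\delta) = \tfrac{1+o(1)}{2}\exp\!\bigl(-(w+\delta)s_n/c\bigr).
\]
Taking $s_n^{-1}\log\liminf$ and then $\delta\downarrow 0$ gives the matching lower bound $-w/c$; symmetry of $Y_i$ handles $w<0$, and the case $w=0$ is trivial.

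\textbf{Main obstacle.} The delicate step is the upper-bound computation of $\Lambda_n$, where assumption \eqref{thmax} must be leveraged in two competing ways simultaneously: the single extremal coordinate $i_n^*$ must produce a sharp cutoff at $|\lambda|=1/c$ (thereby generating the piecewise-linear rate $|w|/c$), while the $\ell^2$ identity $\sum_i (\theta_i^{(n)})^2=1$ together with the uniform bound $\max_i(\theta_i^{(n)})^2=O(\log n/n)$ must ensure that the bulk $\sum_{i}\lambda^2(\theta_i^{(n)})^2\,n/\log n=\lambda^2 n/\log n$ is killed by the factor $s_n^{-1}=\sqrt{\log n}/n$. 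This competition between a single big jump of order $s_n/c$ and a negligible essentially Gaussian bulk is the mechanism that produces the anomalous speed $n/\sqrt{\log n}$ and the linear rate function of \eqref{iq1def}.
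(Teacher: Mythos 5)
Your proposal is correct and follows essentially the same route as the paper's proof: the same reduction to the weighted i.i.d.\ Laplace sum $V_n$ (discarding the $U^{1/n}$ and $\|Y\|_{n,1}/n$ factors, which deviate only at the faster speed $n$), an upper bound by exponential tilting at the near-critical scale $t\approx 1/(c\sqrt{\log n})$ using $\E[e^{tY}]=(1-t^2)^{-1}$ together with $\sum_i(\theta_i^{(n)})^2=1$ and \eqref{thmax}, and a lower bound by a single big jump of the coordinate attaining the maximum, with the remainder controlled by symmetry/concentration. The G\"artner--Ellis packaging of the upper bound (with the harmless ambiguity at $\lambda=\pm 1/c$, which does not affect $\Lambda^*$) is just a reformulation of the paper's Chernoff argument, so no substantive difference.
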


The proof of Theorem \ref{th-qldp1} is given in Sect. \ref{ssec-qp1}. Note that unlike the ``universal" rate function $\iq_{p,\mu_2}$ of Theorem \ref{th-qldp}, which  is the LDP rate function for $\sigma$-a.e $\theta\in\seq$ and any $\sigma$ satisfying \eqref{sigproj}, the quenched LDP for $p=1$ (Theorem \ref{th-qldp1}) depends on the particular sequence $\theta\in\seq$ through the condition \eqref{thmax}. We discuss the condition \eqref{thmax} further in Remark \ref{rmk-whysig}.

%%%%%%%%%%%%%%%%%%
% RES \ RELATIONSHIP
%%%%%%%%%%%%%%%%%%
\subsection{Relationship between the annealed and quenched LDPs}\label{ssec-rel}

Let $m_q$ denote the $q$-th absolute moment of a measure,
\begin{equation}\label{qmomdef}
  m_q(\nu) \doteq \int_\mathbb{R} |x|^q \nu(dx), \quad \nu \in \mathcal{P}(\mathbb{R}).
\end{equation}
\nom[mq]{$m_q(\cdot)$}{$q$-th  absolute moment}
Let $H(\cdot | \cdot)$ denote the relative entropy between two measures; that is, for $\nu,\mu\in \mathcal{P}(\R)$,
\begin{equation*}
  H(\nu | \mu) \doteq \left\{\begin{array}{cl}
\displaystyle\int_\R \log \left(\frac{d\nu}{d\mu}\right) d\nu, & \text{ if } \nu \ll \mu, \\
+\infty, & \text{ else.}
\end{array}\right. 
\end{equation*}
We identify a variational formula that relates the annealed and quenched rate functions.
\nom[Hnumu]{$H(\cdot | \cdot)$}{relative entropy}

\begin{theorem}[Relationship between annealed and quenched LDPs]\label{th-compar}
Let $p\in [2,\infty)$. Then, for all $w\in \R$,
\begin{align}
  \ia_{p}(w) &= \inf_{\substack{\nu \in \mathcal{P}(\R):\\ m_2(\nu)\le 1}} \left\{\iq_{p,\nu}(w) + H(\nu | \mu_2) + \tfrac{1}{2}\left(1- m_2(\nu)\right)  \right\}. \label{varform1}
\end{align}In particular, this implies that  $\ia_{p}(w) \le \iq_{p,\mu_2}(w)$ for all $w\in \R$.
\end{theorem}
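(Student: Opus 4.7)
The plan is to interpret the annealed LDP as a two-level combination of a Sanov principle for the Gaussian coordinates of the random direction with a conditional quenched LDP, and then to recast the resulting variational formula in the form of the theorem by a scaling argument. Concretely, under the probabilistic (Schechtman--Zinn type) representations to be developed in Section \ref{sec-equiv}, $\wnpbth$ is asymptotically equal to $T_n/(R_n^{1/2} S_n^{1/p})$ where
\[
T_n \doteq \tfrac{1}{n}\sum_{i=1}^n Y_i Z_i,\qquad R_n \doteq \tfrac{1}{n}\sum_{i=1}^n Z_i^2,\qquad S_n \doteq \tfrac{1}{n}\sum_{i=1}^n |Y_i|^p,
\]
for i.i.d.\ $(Y_i,Z_i)\sim\mu_p\otimes\mu_2$. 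Cram\'er's theorem gives the joint LDP for $(R_n, T_n, S_n)$ with rate $\lma_p^*$, and contraction identifies $\ia_p$ with the infimum in its definition. I then refine this Cram\'er LDP by combining Sanov's theorem for $L_n^Z \doteq \tfrac{1}{n}\sum_i \delta_{Z_i}$ (rate $H(\cdot\mid\mu_2)$) with the conditional weighted-sums LDP for $(T_n, S_n)$ given $L_n^Z$ (rate $\lmq_{p,\nu}^*$, exactly the one used in the proof of Theorem \ref{th-qldp}), and contract out $L_n^Z$ via $L_n^Z\mapsto m_2(L_n^Z) = R_n$ to deduce the intermediate identity
\begin{equation*}
\lma_p^*(\tau_0,\tau_1,\tau_2) \;=\; \inf_{\substack{\nu\in\mathcal{P}(\R):\\ m_2(\nu)=\tau_0}}\bigl\{H(\nu\mid\mu_2) + \lmq_{p,\nu}^*(\tau_1,\tau_2)\bigr\}.
\end{equation*}

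Substituting into the contraction formula defining $\ia_p$, eliminating $\tau_0$ via the constraint $\tau_0 = m_2(\nu)$, and invoking the definition of $\iq_{p,\nu}$, I obtain the equivalent form
\begin{equation*}
\ia_p(w) \;=\; \inf_{\nu\in\mathcal{P}(\R)}\bigl\{\iq_{p,\nu}\bigl(\sqrt{m_2(\nu)}\,w\bigr) + H(\nu\mid\mu_2)\bigr\}.
\end{equation*}
To recast this as the theorem's variational formula I use two scaling identities. For $\nu$ with $m_2(\nu) = s > 0$, let $\tilde{\nu}$ be the pushforward of $\nu$ under $u\mapsto u/\sqrt{s}$, so that $m_2(\tilde\nu)=1$. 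A change of variable in the definition of $\lmq_{p,\nu}$ gives $\lmq_{p,\nu}(t_1,t_2) = \lmq_{p,\tilde\nu}(\sqrt{s}\,t_1, t_2)$, which upon Legendre transforming and taking the constrained infimum yields $\iq_{p,\nu}(w) = \iq_{p,\tilde\nu}(w/\sqrt{s})$; and a direct Gaussian density calculation yields $H(\nu\mid\mu_2) = H(\tilde\nu\mid\mu_2) + \tfrac{1}{2}(s-1-\log s)$. Since $s-1-\log s \ge 0$ for $s>0$ with equality at $s=1$, and since $\iq_{p,\tilde\nu}$ is nondecreasing in $|\cdot|$ (as a quasiconvex, symmetric rate function with minimum at $0$), both the above formula for $\ia_p(w)$ and the theorem's expression reduce to $\inf_{\nu:m_2(\nu)=1}\{\iq_{p,\nu}(w) + H(\nu\mid\mu_2)\}$: the former because the chi-squared-type penalty $\tfrac{1}{2}(s-1-\log s)$ forces $s=1$; the latter because for $\nu$ with $m_2(\nu) = s \le 1$ the scaling identities reduce the theorem's value to $\iq_{p,\tilde\nu}(w/\sqrt{s}) + H(\tilde\nu\mid\mu_2) - \tfrac{1}{2}\log s \ge \iq_{p,\tilde\nu}(w) + H(\tilde\nu\mid\mu_2)$, so again $s=1$ is optimal. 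This proves the variational identity, and the ``in particular'' bound then follows by evaluating the theorem's formula at $\nu = \mu_2$, for which $m_2(\mu_2) = 1$ and $H(\mu_2\mid\mu_2) = 0$.

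The main technical obstacle is the rigorous composition of the Sanov LDP with the conditional quenched LDP in the intermediate identity for $\lma_p^*$; this requires uniform exponential tightness of the conditional laws of $(T_n, S_n)$ given $L_n^Z$ together with continuity of $\nu \mapsto \lmq_{p,\nu}^*$ in an appropriate topology, both of which can be supplied from exponential moment bounds on $\mu_p$ (valid since $p\ge 2$) and the explicit integral form of $\lmq_{p,\nu}$. An alternative that avoids this composition is to start from the Donsker--Varadhan identity
\[
\lma_p(t_0,t_1,t_2) \;=\; \log\int_\R e^{t_0 z^2 + \lm_p(t_1 z, t_2)}\mu_2(dz) \;=\; \sup_\nu\bigl\{t_0 m_2(\nu) + \lmq_{p,\nu}(t_1,t_2) - H(\nu\mid\mu_2)\bigr\}
\]
and to verify the minimax interchange needed to compute $\lma_p^*$, which is straightforward here since the inner function is linear in $\nu$ except for the convex entropy term $H(\cdot\mid\mu_2)$, whose level sets are weakly compact.
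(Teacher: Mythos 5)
Your scaling identities (for $\iq_{p,\nu}$ and for $H(\cdot|\mu_2)$ under dilation), the Gibbs/Donsker--Varadhan identity for $\lma_p$, and the final reduction of both variational expressions to $\inf_{m_2(\nu)=1}\{\iq_{p,\nu}(w)+H(\nu|\mu_2)\}$ are all correct, and your overall architecture (a Sanov-type level plus a quenched level, glued by a minimax) is close in spirit to the paper's. The gap is in the intermediate identity for $\lma_p^*$, and it is not a removable technicality. The step ``contract out $L_n^Z$ via $m_2$'' cannot be carried out: in every topology in which Sanov's theorem for the Gaussian empirical measure holds with rate $H(\cdot|\mu_2)$ (weak, or Wasserstein-$r$ with $r<2$), the map $\nu\mapsto m_2(\nu)$ is not continuous, while Sanov in Wasserstein-$2$ fails for $\mu_2$ (it requires $\int e^{\lambda x^2}\mu_2(dx)<\infty$ for all $\lambda$). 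Nor can the discontinuity be repaired by exponentially good approximations: truncating, $\P\bigl(\tfrac{1}{n}\sum_{i\le n}(Z_i^2-K^2)^+>\delta\bigr)$ decays at rate only about $\delta/2$ \emph{uniformly in} $K$, i.e.\ second-moment mass can escape to infinity at exponential cost $\tfrac12$ per unit. This is precisely the origin of the defect term $\tfrac12(1-m_2(\nu))$ in $\mathbb{H}$ and in \eqref{varform1}; the identity your composition argument would legitimately target is the penalized one, $\lma_p^*(\tau_0,\tau_1,\tau_2)=\inf_{\nu:\,m_2(\nu)\le\tau_0}\{H(\nu|\mu_2)+\tfrac12(\tau_0-m_2(\nu))+\lmq_{p,\nu}^*(\tau_1,\tau_2)\}$. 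Your equality-constrained version happens to coincide with it (this can be checked by the same dilation argument you use later, moving $m_2(\nu)$ up to $\tau_0$), but that coincidence is an additional statement that your write-up neither notices nor proves.

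The fallback you call ``straightforward'' --- computing $\lma_p^*$ by a minimax interchange in the Donsker--Varadhan representation --- is in fact the technical heart of the matter, and in your unnormalized formulation the hypotheses of Sion's theorem are not available: neither side is compact (the $t$-side is all of $D_{\lma_p}$, and $\nu$ ranges over all of $\mathcal{P}(\R)$ with no moment bound), and for $t_0\in(0,\tfrac12)$ the map $\nu\mapsto -t_0 m_2(\nu)$ is only upper semi-continuous in the weak or Wasserstein-$r$ ($r<2$) topologies, destroying the lower semicontinuity needed on the $\nu$-side; these $t_0$ cannot be discarded, since the suprema defining $\lma_p^*$ are approached with $t_0$ near $\tfrac12$. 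The paper gets compactness precisely by working with the \emph{normalized} weights $\sqrt{n}\bthn_i$, so that the relevant measures are confined to $K_2=\{m_2(\nu)\le 1\}$, which is Wasserstein-$r$ compact (Lemma \ref{lem-compact}); it also needs the cone-measure Sanov LDP with rate $\mathbb{H}$ (Proposition \ref{prop-sanovcone}), Varadhan's lemma with an exponential-moment bound supplied by sub-independence (Lemma \ref{lem-subindep}), and a separate proof that the rate function of $(R^{(n,p)})_{n\in\N}$ is convex (Lemma \ref{lem-spherindep}, Proposition \ref{prop-altldpconvex}) before Legendre duality can be invoked. If you want to keep your unnormalized route, you would have to supply substitutes for all of these ingredients, or prove the penalized identity directly and then your scaling reduction. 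Finally, your argument does not address $p=2$, which is included in the statement: there $p/(p-1)=2$ is exactly the borderline at which the continuity of $\nu\mapsto\lmq_{p,\nu}$ fails, and the paper handles this case separately by spherical symmetry (Lemma \ref{lem-var2}).
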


We prove this theorem in Sect. \ref{ssec-varadh}, as a consequence of the groundwork laid in Sect.\ \ref{ssec-rnp} and Sect. \ref{ssec-empcone}. We also discuss the minimizers of this variational problem in Sect.\ \ref{sec-analysis}.

\smallskip

As established in Proposition \ref{prop-quegen}, the term $\iq_{p,\nu}$ in \eqref{varform1} is the large deviation rate function for  projections of the random point $\xnp$ onto a particular outcome of fixed directions of projection $\Theta = \theta$ (i.e., a quenched ``environment") corresponding to the measure $\nu$. On the other hand, we will see in Sect.\ \ref{ssec-empcone} that $H(\cdot | \mu_2) + \frac{1}{2}(1-m_2(\cdot))$  is the large deviation rate function for the underlying environment $\Theta$. That is, an annealed large deviation arises precisely due to the combination of: (i) a deviation of the environment; and (ii) the deviation of a projection within such an environment.

\begin{remark}
Although quenched and annealed LDPs have been considered in other contexts such as random walks in random environments (RWRE), with the exception of \cite[Eqn.\ (9)]{comets2000quenched}, there appear to be relatively few results that relate quenched and annealed rate functions via a variational formula in the spirit of Theorem \ref{th-compar}. See also \cite[Eqn. (1.9)]{aidekon2010large} for a weaker comparison. As one would expect due to the different contexts, the proofs in the RWRE setting are quite different in nature from our proof.
\end{remark}

%%%%%%%%%%%%%%%%%%
% RES \ p=2
%%%%%%%%%%%%%%%%%%
\subsection{Atypical directions of projection}\label{ssec-atyp}

As noted in the discussion following Theorem \ref{th-qldp}, the LDP rate function $\iq_{p,\mu_2}$ is the same for $\sigma$-a.e.\ sequence of directions $\theta \in \seq$. In this section, we compare  the $\sigma$-a.e.\ sequences of directions with sequences in the set of measure zero for which Theorem \ref{th-qldp} does not hold.

One particular sequence to consider is $\iota = (\iota^{(1)},\iota^{(2)},\dots) \in \seq$, where $\iota^{(n)}$ is the vector of 1's as in \eqref{onedefn}. Then, $\sw_\iota^{(n,p)}$ denotes the projection of $\xnp$ onto a particular ``corner" direction. In order to make a comparison between the particular sequence $\iota$ and ``generic" sequences $\theta$ for which the quenched LDP holds, we define the following rate functions:
\begin{align}\label{cramratedefn}
  \jc_p(w) &\doteq \inf_{\substack{\tau_1\in \R, \tau_2 > 0 : \\ \tau_1\tau_2^{-1/p} = w}} \lm_p^*(\tau_1,\tau_2), \quad  w\in \R. 
  \nom[ipcr]{$\jc_p$}{Cram\'er rate function}
\end{align}
As we elaborate in Remark \ref{rmk-shao}, this rate function is related to large deviations for \emph{self-normalized} random variables.

\begin{theorem}[Atypicality, $p\in (1,\infty)$]\label{th-atyp}
For $p\in (1,\infty)$, the sequence $(\sw_\iota^{(n,p)})_{n\in \N}$ satisfies an LDP with the quasiconvex, symmetric, good rate function $\jc_p$. Moreover, for $w\in (-1,1)$, we have the following:
\begin{enumerate}[itemsep=4pt]
\item for $p > 2$, $\iq_{p,\mu_2}(w) \ge \jc_p(w)$, with equality if and only if $w=0$;
\item for $p =2$, $\iq_{p,\mu_2}(w) = \jc_p(w)$;
\item for $p <2$, $ \iq_{p,\mu_2}(w) \le \jc_p(w)$, with equality if and only if $w =0$.
\end{enumerate}
\end{theorem}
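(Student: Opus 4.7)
My plan splits into (1) establishing the LDP for $\sw_\iota^{(n,p)}$ and (2) handling the three comparison cases in Part 2.

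\textbf{Part 1.} The key observation is that for $\iota^{(n)}=\tfrac{1}{\sqrt n}(1,\ldots,1)$, one has $\sqrt n\,\iota_i^{(n)}\equiv 1$, so the empirical measure $\tfrac{1}{n}\sum_{i=1}^n\delta_{\sqrt n\,\iota_i^{(n)}}$ equals $\delta_1$ for every $n$. I would invoke the general quenched LDP expected to underlie Theorem \ref{th-qldp} (that whenever this empirical measure converges to $\nu$ in a suitable sense, the rate function is $\iq_{p,\nu}$) to read off the rate function as $\iq_{p,\delta_1}$. Since $\lmq_{p,\delta_1}(t_1,t_2)=\int \lm_p(t_1 u,t_2)\,\delta_1(du)=\lm_p(t_1,t_2)$, comparing \eqref{ieqdefn} with \eqref{cramratedefn} gives $\iq_{p,\delta_1}=\jc_p$. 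If a self-contained derivation is preferred, the representation $X^{(n,p)}\stackrel{d}{=}U^{1/n}Y/\|Y\|_{n,p}$ from Sect.\ \ref{sec-equiv} (with $Y_i$ i.i.d.\ $\mu_p$ and $U$ independent uniform on $[0,1]$) expresses $\sw_\iota^{(n,p)}$ as $U^{1/n}$ times a self-normalized ratio; one then applies Cram\'er's theorem in $\R^2$ to the i.i.d.\ pairs $(Y_i,|Y_i|^p)$ (whose joint log mgf is exactly $\lm_p$), contracts through $(\tau_1,\tau_2)\mapsto\tau_1\tau_2^{-1/p}$ to obtain $\jc_p$, and checks that the independent factor $U^{1/n}\in(0,1]$ (of rate $-\log u$ at speed $n$) does not change the rate, since the auxiliary minimization $\inf_{u\in[|w|,1]}[\jc_p(w/u)-\log u]$ is attained at $u=1$ by symmetry and monotonicity of $\jc_p$ on $\R_+$. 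Quasiconvexity, symmetry, and goodness of $\jc_p$ transfer from those of $\lm_p^*$ through the constrained infimum.

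\textbf{Part 2.} Both $\iq_{p,\mu_2}$ and $\jc_p=\iq_{p,\delta_1}$ share the variational form \eqref{ieqdefn}, so the task reduces to comparing $\lmq_{p,\mu_2}(t_1,t_2)=E_{U\sim\mu_2}[\lm_p(t_1 U,t_2)]$ against $\lm_p(t_1,t_2)$ and translating the comparison across the Legendre transform and the constrained infimum. Case (ii), $p=2$, is immediate: $\mathbb{B}_{n,2}$ is rotationally invariant, so every direction yields the same projection distribution, or equivalently the explicit expression $\lm_2(t_1,t_2)=\tfrac{t_1^2}{2(1-2t_2)}-\tfrac{1}{2}\log(1-2t_2)$ reproduces itself upon averaging $\lm_2(t_1u,t_2)$ against $\mu_2(du)$ via $\int u^2\mu_2(du)=1$. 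For cases (i) and (iii), the function $s\mapsto\lm_p(s,t_2)$ is even (by symmetry of $\mu_p$) and convex, and its $2k$-th derivative at $s=0$ equals the $2k$-th cumulant of $Y$ under the tilt with density proportional to $e^{t_2|y|^p}\mu_p(dy)$, which is itself a generalized normal of shape $p$ (only the scale changes). The excess kurtosis $\tfrac{\Gamma(5/p)\Gamma(1/p)}{\Gamma(3/p)^2}-3$ of the generalized normal is negative for $p>2$ and positive for $p<2$, so combined with $E_{\mu_2}[U^4]=3>1=E_{\delta_1}[U^4]$, a Taylor expansion of $\lmq_{p,\mu_2}-\lm_p$ around $t_1=0$ has leading term of sign opposite to the excess kurtosis; Legendre inversion then delivers the announced rate-function inequalities in a neighborhood of $w=0$, and equality at $w=0$ is automatic since both rate functions vanish at the shared LLN value.

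\textbf{The main obstacle} I anticipate is extending these local comparisons to all of $w\in(-1,1)\setminus\{0\}$, since higher even cumulants of the generalized normal need not share the sign of the fourth cumulant (for example, a direct check at $p=4$ gives $\kappa_4<0$ but $\kappa_6>0$), ruling out any purely termwise Taylor argument. My intended remedy is to work directly at the level of the probabilistic representations --- a self-normalized ratio of i.i.d.\ $Y_i$ for $\iota$, versus a ratio with Gaussian-weighted numerator $\sum_i Y_iU_i$ for typical $\theta$ --- and to combine Jensen's inequality and Gaussian integration-by-parts (Stein's identity) with convexity and symmetry of $\lm_p(\cdot,t_2)$ to propagate the sign of $\lmq_{p,\mu_2}-\lm_p$ across the full effective domain without requiring signed control of individual cumulants.
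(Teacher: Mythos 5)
Your identification that $\sqrt{n}\,\iota^{(n)}_i\equiv 1$ gives empirical measure $\delta_1$, that $\lmq_{p,\delta_1}=\lm_p$, and the self-contained derivation via the representation of Sect.\ \ref{sec-equiv}, Cram\'er in $\R^2$ for $\tfrac{1}{n}\sum_i(Y_i,|Y_i|^p)$, contraction through $\bar T_p$, and dismissal of the $U^{1/n}$ factor is essentially the paper's argument (cf.\ Remark \ref{rmk-shao} and the first paragraph of the proof in Sect.\ \ref{ssec-atypq}). The $p=2$ case of Part 2 is also correctly handled by spherical invariance.

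\textbf{For $p\neq 2$ there is a genuine gap in your proposal, and you correctly flag where it begins but misidentify the fix.} You rightly observe that the Taylor/excess-kurtosis computation only gives the sign of $\lmq_{p,\mu_2}(t_1,t_2)-\lm_p(t_1,t_2)$ in a neighborhood of $t_1=0$, since the higher even cumulants of the generalized normal do not all share the sign of $\kappa_4$. But your proposed remedy --- Jensen plus Stein's identity using the ``convexity and symmetry of $\lm_p(\cdot,t_2)$'' --- does not by itself give the global pointwise comparison. Ordinary convexity of $t_1\mapsto\lm_p(t_1,t_2)$ (a property shared by all log mgfs) combined with Jensen and $\E_{\mu_2}[U]=0$ gives a lower bound $\lmq_{p,\mu_2}(t_1,t_2)\ge\lm_p(0,t_2)$, which is the wrong inequality in the wrong variable; Stein's identity does not obviously rescue this, and neither tool distinguishes $p>2$ from $p<2$. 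The paper's proof hinges on a different and much stronger convexity fact, namely Theorem 8 of \cite{barthe2003extremal} (reproduced here as Lemma \ref{lem-mommon}): the map $t\mapsto\log\mathrm{M}_{\mu_{p,\beta}}(\sqrt t)$ on $\R_+$ is concave for $p\ge 2$ and convex for $p\le 2$, and is non-linear except when $p=2$ (Lemma \ref{lem-strict}). Applying Jensen in the variable $U^2$ rather than $U$ (using $m_2(\mu_2)=1$, hence $\E[t_1^2U^2]\le t_1^2$ along with monotonicity in $|t_1|$) is exactly what yields $\lmq_{p,\mu_2}(t_1,t_2)\le\lm_p(t_1,t_2)$ for all $(t_1,t_2)$ when $p>2$, with equality iff $t_1=0$, and the reverse when $p<2$ (Lemma \ref{lem-mgfineq}). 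This concavity-in-$t_1^2$ phenomenon --- not the sign pattern of individual cumulants --- is the missing structural ingredient.

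\textbf{A second, smaller gap} is the passage from the pointwise mgf inequality to the inequality of rate functions with the ``iff $w=0$'' characterization. This step is not mere ``Legendre inversion'': the paper plugs the unique maximizer $(t_1^{\tau_1,\tau_2},t_2^{\tau_1,\tau_2})$ of the Legendre transform for $\lm_p^*$ into the Legendre supremum defining $\lmq_{p,\mu_2}^*$, and then needs the characterization $t_1^{\tau_1,\tau_2}=0\iff\tau_1=0$ (essential smoothness and strict convexity of $\lm_p$, Lemma \ref{lem-lamfacts}) plus the attainment of the constrained infimum in \eqref{ieqdefn} (Lemma \ref{lem-attained}) to conclude strictness exactly when $w\neq 0$. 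Your draft elides this bookkeeping, which is where the biconditional actually gets proven.
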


The proof of Theorem \ref{th-atyp} is given in Sect.\ \ref{sec-atyp}. 

\smallskip

An analogous result for product measures is the focus of  \cite{gkr3}, as we briefly recall in Sect. \ref{ssec-queprod}. See Figure \ref{fig-comp} for a sketch of how the universal quenched rate function compares to the exceptional rate function associated with $\iota$,  in the case of projections of a random variable uniformly distributed on the $\ell^\infty$ ball.

\begin{figure}[hbt]
\includegraphics[scale=0.55, trim=2.5in 3.85in 2.5in 3.85in]{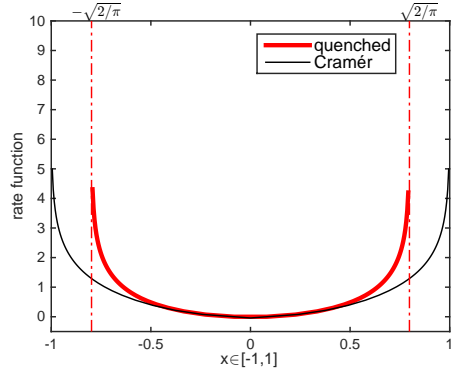}
\caption{Quenched vs. Cram\'er rate functions for $p=\infty$.}
\label{fig-comp}
\end{figure}

\begin{remark}
A similar notion of atypicality can be found in the work of \cite{barthe2003extremal}, where the authors were interested in slabs of convex bodies. In particular, the authors proved a Cram\'er-type upper bound for the volume of slabs of certain convex bodies, and noted that this upper bound was asymptotically attained by the sequence of ``extremal" slabs orthogonal to the main diagonal $(1,1,\dots,1)$. Our result Theorem \ref{th-atyp} shows that the sequence of directions $(1,1,\dots,1)$ is not only extremal, but also particularly distinct, in that almost every other sequence of directions yields a universal rate function different from that of the extremal direction. 
\end{remark}

\begin{remark}
Another particular sequence of directions to consider is the sequence of canonical basis vectors $e_1 = (e_1^{(1)}, e_1^{(2)},\dots) \in \seq$, where
\begin{equation*}
  e_1^{(n)}  \doteq (1,\underbracket{0,\dots, 0}_{n-1 \text{ times }}) \in \sphn.
\end{equation*}
That is, project $\xnp$ onto its first coordinate. In the language of \cite{barthe2003extremal}, this is the volume of the ``canonical slab" of the $\ell^p$ ball. It is known due to \cite[Theorem 3.4]{BarGamLozRou10} that the sequence $(\langle \xnp, e_1^{(n)}\rangle_n)_{n\in \N}$ satisfies an LDP with speed $n$ and rate $J_p(x) = -\frac{1}{p} \log(1-x^p)$. Note, however, that this sequence lacks the $\frac{n^{1/p}}{n^{1/2}}$ scaling found in $\wnpth$, so the sequence $e_1$ is also atypical in its own sense, at least for $p\ne 2$.
\end{remark}

\nom[e1]{$e_1^{(n)}$}{first coord. $(1,0,\dots,0) \in \sphn$}

%%%%%%%%%%%%%%%%%%
% RES \ p=2
%%%%%%%%%%%%%%%%%%
\subsection{Special case of $p=2$ }\label{ssec-p2}

As a brief digression, we consider  the special case of $p=2$. First, define the rate function for $w\in \R$:
\begin{equation}\label{j2defn}
J_2(w) \doteq \left\{\begin{array}{cl} - \frac{1}{2} \log(1-w^2), & w\in (-1,1);\\ +\infty, & \text{ else.}\end{array}\right.
\end{equation}
\nom[j2]{$J_2$}{rate function for $p=2$}
Then, our results can be summarized as follows:
\begin{theorem}\label{th-p2}
For $p=2$, the quenched LDP of Theorem \ref{th-qldp} holds for all $\theta \in \seq$. Moreover,
\begin{equation}\label{equalrates}
\ia_{2} = \iq_{2,\mu_2} = J_2.
\end{equation}
\end{theorem}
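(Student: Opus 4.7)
The plan is to exploit rotational invariance, which makes the $p=2$ case dramatically simpler than the general cases treated in Theorems \ref{th-aldp} and \ref{th-qldp}. Since the uniform measure on $\mathbb{B}_{n,2}$ is invariant under orthogonal transformations of $\R^n$, for every unit vector $\theta^{(n)} \in \sphn$ the scalar $\langle X^{(n,2)}, \theta^{(n)}\rangle_n$ has the same law as the first coordinate $X_1^{(n,2)} = \langle X^{(n,2)}, e_1^{(n)}\rangle_n$. Because the scaling factor $n^{1/p}/n^{1/2}$ equals $1$ when $p=2$, this implies that the law of $\wnpth$ does not depend on $\theta$ at all. Consequently, the quenched LDP of Theorem \ref{th-qldp} trivially holds for every $\theta \in \seq$ (not just for $\sigma$-a.e.\ $\theta$), and since $\wnpbth$ is obtained by integrating $\wnpth$ against $\sigma_n$ and $\wnpth$ is independent of $\theta$, we also obtain $\wnpbth \stackrel{(d)}{=} \wnpth$ and therefore $\ia_2 = \iq_{2,\mu_2}$ as rate functions.

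It remains to identify the common rate function as $J_2$. I see two equally short routes. The first is to observe that, by the previous paragraph, $\wnpth$ is equal in law to $\langle X^{(n,2)}, e_1^{(n)}\rangle_n$ (which already has the correct scaling since $n^{1/p}/n^{1/2}=1$), and then to invoke the known LDP for projections of $\ell^p$ balls onto a canonical basis direction from \cite[Theorem~3.4]{BarGamLozRou10}. For $p=2$ this rate function is $w \mapsto -\tfrac{1}{2}\log(1-w^2) = J_2(w)$, which is exactly the claim.

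The second route is a direct computation from the variational definition \eqref{ieqdefn}. Using that $\mu_2$ is standard Gaussian and $\int u^2\,\mu_2(du) = 1$, a short Gaussian integration yields
\begin{equation*}
\lmq_{2,\mu_2}(t_1,t_2) \;=\; \int_{\R}\lm_2(t_1 u, t_2)\,\mu_2(du) \;=\; \frac{t_1^2}{2(1-2t_2)} - \tfrac{1}{2}\log(1-2t_2), \qquad t_2 < \tfrac{1}{2}.
\end{equation*}
Solving the stationarity equations in $(t_1,t_2)$ gives the explicit Legendre transform $\lmq_{2,\mu_2}^*(\tau_1,\tau_2) = \tfrac{1}{2}(\tau_2-1) - \tfrac{1}{2}\log(\tau_2-\tau_1^2)$ on $\{\tau_2 > \tau_1^2\}$ (and $+\infty$ elsewhere). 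Imposing the constraint $\tau_1 = w\sqrt{\tau_2}$ from \eqref{ieqdefn} reduces the problem to minimizing $\tfrac{1}{2}(\tau_2 - 1) - \tfrac{1}{2}\log \tau_2 - \tfrac{1}{2}\log(1-w^2)$ over $\tau_2 > 0$, which has a unique critical point at $\tau_2=1$ and yields $\iq_{2,\mu_2}(w) = -\tfrac{1}{2}\log(1-w^2) = J_2(w)$ for $w \in (-1,1)$ (and $+\infty$ otherwise).

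There is no real obstacle: rotational invariance does all the structural work in the first paragraph, and either identification of the rate function is a routine calculation. What is noteworthy is precisely that this simplicity is \emph{special} to $p=2$; the proofs of Theorems~\ref{th-aldp}, \ref{th-aldp12}, \ref{th-qldp}, and~\ref{th-qldp1} for $p \neq 2$ must genuinely contend with the lack of rotational invariance of the law of $X^{(n,p)}$, which is also what makes the universality of the quenched rate function for $p \in (1,\infty)$ in Theorem~\ref{th-qldp} a nontrivial statement rather than a tautology.
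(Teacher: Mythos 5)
Your proposal is correct and follows essentially the same route as the paper: spherical symmetry of the law of $X^{(n,2)}$ reduces both the annealed and quenched statements to the projection onto $e_1^{(n)}$, whose LDP with rate function $J_2$ is then cited from \cite[Theorem 3.4]{BarGamLozRou10}. Your explicit Legendre-transform computation of $\iq_{2,\mu_2}$ is a correct worked-out version of the ``direct calculation'' the paper only mentions in passing.
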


\begin{proof}
Note that $\sx^{(n,2)}$ is distributed uniformly over the Euclidean ball, so its distribution is spherically symmetric in the sense that for all $n$ and all $\eta,\eta'\in \sphn$,
\begin{equation}\label{sphersymm}
  \langle \bxn, \eta\rangle_n  \eqdist \langle \bxn, \eta' \rangle_n.
\end{equation}
In particular, this implies that for $e_1^{(n)}= (1,0,\dots,0) \in \sphn$, 
\begin{equation*}
  \P(\langle \bxn, \bthn\rangle_n \in \cdot ) = \P(\langle \bxn, \thn \rangle_n \in \cdot ) = \P(\langle \bxn, e_1^{(n)} \rangle_n \in \cdot \, ).
\end{equation*}
The upshot is that to analyze either the annealed LDP for $(\sw^{(n,2)})_{n\in\N}$, or the quenched LDP for $(\sw_\theta^{(n,2)})_{n\in\N}$, it suffices to consider the LDP of $(\sw_{e_1}^{(n,2)})_{n\in\N}$, the sequence of projections onto the first coordinate. In this case, it is known from \cite[Theorem 3.4]{BarGamLozRou10} that this sequence satisfies an LDP with good rate function $J_2$. It is also possible to prove the equality \eqref{equalrates} by direct calculation.
% e.g., using explicit formula for the volume of a sphere; or Cai Fan Jiang
\end{proof}

Note that the key part in the preceding proof is spherical symmetry, a property which we will use again to a different end in Sect.\ \ref{ssec-rnp}. It is this spherical symmetry which leads to the ``for all" claim in Theorem \ref{th-p2}, as opposed to the ``$\sigma$-a.e." claim in Theorem \ref{th-qldp}. 

\begin{remark}
While Theorem \ref{th-p2} shows that the quenched and annealed rate functions are identical when $p=2$, Proposition \ref{prop-nongsn} shows that the quenched and annealed rate functions do \emph{not} coincide when $p=\infty$.
\end{remark}

%%%%%%%%%%%%%%%%%%
% ALTREP 
%%%%%%%%%%%%%%%%%%
\section{An equivalent formulation}\label{sec-equiv} % [X]

When $p< \infty$, the non-trivial dependence between the coordinates that is induced by the uniform measure on $\bnp$ makes a direct large deviation analysis difficult. To resolve this, we invoke a more convenient representation for the uniform measure on $\bnp$ to reduce the analysis of $\wnpbth$ and $\wnpth$ to that of more tractable objects. Furthermore, this representation will also clarify the role of the density $f_p$ introduced in \eqref{fpdef}.

%%%%%%%%%%%%%%%%%%
% ALTREP \ REPRESENTATION
%%%%%%%%%%%%%%%%%%
\subsection{A probabilistic representation for the uniform measure on $\bnp$}\label{ssec-repn}

Let $n\in \N$ and  $p \in [1,\infty)$.  Consider the following random variables, defined on the same common probability space $(\Omega,\mathcal{F}, \P)$ as in Sect.\ \ref{ssec-setup}:
\begin{itemize}
\item  $U$  is uniformly distributed on $[0,1]$;
\item $ \bolyp = ( \ynp )_{n\in\mathbb{N}} = (\,(\ynp_1,\dots,\ynp_n)\,)_{n\in\mathbb{N}}$ is a triangular array of i.i.d. real-valued random variables, with common distribution $\mu_p$ defined by \eqref{fpdef};
\item $ \bolz = ( \zn )_{n\in\mathbb{N}} = (\, (\zn_1,\dots,\zn_n)\, )_{n\in\mathbb{N}}$ is a triangular array of independent $N(0,1)$ random variables;
\item $U$, $\bolyp$, and $\bolz$ are independent.
\end{itemize}
Then, the following properties are well known --- see, e.g., \cite[Lemma 1]{schechtman1990volume}, \cite[\S3]{rachev1991approximate}.
\nom[u]{$U$}{uniform r.v.\ on $[0,1]$}
\nom[ya]{$\boly$, $\ynp$}{array / vector of i.i.d.  $\sim \mu_p$}
\nom[za]{$\bolz$, $\zn$ }{array / vector of i.i.d. $N(0,1)$}

\begin{lemma}\label{lem-jointrep}
For $p\in[1,\infty)$,
\begin{equation}
\label{bthn-rep}
\left(  \xnp, \bthn\right) \eqdist  \left(U^{1/n}  \frac{\ynp}{\lpnrm{\ynp}}, \frac{\zn}{\ltwonrm{\zn}}\right).
\end{equation}
Moreover, $\ynp / \lpnrm{\ynp}$ is independent of $\lpnrm{\ynp}$, and $\zn/\ltwonrm{\zn}$ is independent of $\ltwonrm{\zn}$.
\end{lemma}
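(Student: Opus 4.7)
The plan is to prove the two distributional identities separately, since $X^{(n,p)}$ and $\Theta^{(n)}$ are independent on the left-hand side, and the three families $U$, $\boly^{(p)}$, $\bolz$ are independent on the right-hand side.

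\textbf{Gaussian part.} For the identity $\Theta^{(n)} \eqdist Z^{(n)}/\|Z^{(n)}\|_{n,2}$, I will use the rotational invariance of the standard Gaussian on $\R^n$. For any orthogonal $O$, the vector $OZ^{(n)}$ has the same law as $Z^{(n)}$, so the law of $Z^{(n)}/\|Z^{(n)}\|_{n,2}$ is a rotationally invariant probability measure on $\sphn$ and hence equals $\sigma_n$. The same rotational invariance argument, together with the fact that $\|Z^{(n)}\|_{n,2}$ is itself rotation-invariant, yields the independence of $Z^{(n)}/\|Z^{(n)}\|_{n,2}$ and $\|Z^{(n)}\|_{n,2}$: conditionally on $\|Z^{(n)}\|_{n,2} = r$, the vector $Z^{(n)}/r$ is still rotation-invariant and so is distributed according to $\sigma_n$, independent of $r$.

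\textbf{$\ell^p$ part.} For the Schechtman--Zinn-type identity $X^{(n,p)} \eqdist U^{1/n}\, Y^{(n,p)}/\|Y^{(n,p)}\|_{n,p}$, I would carry out the following polar-type change of variables. The joint density of $Y^{(n,p)}$ is
\[
\prod_{i=1}^n f_p(y_i) \;=\; \bigl(2p^{1/p}\Gamma(1+1/p)\bigr)^{-n}\exp\!\bigl(-\|y\|_{n,p}^p/p\bigr),
\]
which depends on $y$ only through $\|y\|_{n,p}$. Introducing the polar-type coordinates $r = \|y\|_{n,p}$ and $s = y/r$, with $s$ on the unit $\ell^p$-sphere equipped with the cone measure $\mu_{n,p}^{\mathrm{cone}}$, a standard computation yields a Jacobian of the form $r^{n-1}$, so the joint density of $(r,s)$ factors as $g_n(r)\,\mu_{n,p}^{\mathrm{cone}}(ds)$, where $g_n(r) \propto r^{n-1}e^{-r^p/p}\mathbf{1}_{\{r\ge 0\}}$. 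This factorization gives both the independence of $Y^{(n,p)}/\|Y^{(n,p)}\|_{n,p}$ from $\|Y^{(n,p)}\|_{n,p}$, and the identification of the law of $Y^{(n,p)}/\|Y^{(n,p)}\|_{n,p}$ as the cone measure $\mu_{n,p}^{\mathrm{cone}}$.

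Next, I would compute the law of $U^{1/n}\cdot Y^{(n,p)}/\|Y^{(n,p)}\|_{n,p}$: by the above independence, this product has the law of $R\,S$, where $S\sim\mu_{n,p}^{\mathrm{cone}}$ is independent of $R := U^{1/n}$, which has density $n r^{n-1}\mathbf{1}_{[0,1]}(r)$. Applying the same polar decomposition to the uniform measure on $\bnp$ shows its density in polar coordinates is also proportional to $r^{n-1}\mathbf{1}_{[0,1]}(r)\,\mu_{n,p}^{\mathrm{cone}}(ds)$; matching normalization constants then gives $X^{(n,p)}\eqdist U^{1/n}\,Y^{(n,p)}/\|Y^{(n,p)}\|_{n,p}$.

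\textbf{Joint identity.} Since on both sides the $\ell^p$-factor and the spherical factor are independent, the two marginal identities combine to give the joint identity in~\eqref{bthn-rep}. The main technical obstacle is justifying the $\ell^p$ polar decomposition and tracking the $r^{n-1}$ Jacobian to identify the cone measure, together with checking that the uniform measure on $\bnp$ admits the same factorization; once this is in hand, everything else is a routine verification via rotational invariance. I note that this representation is classical (see, e.g., Schechtman--Zinn \cite{schechtman1990volume} or Rachev--R\"uschendorf \cite{rachev1991approximate}), so the proof above is really a reduction to those references.
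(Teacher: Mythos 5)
Your proof sketch is correct and faithfully reproduces the standard Schechtman--Zinn-type argument: rotational invariance for the Gaussian factor, $\ell^p$-polar coordinates with the $r^{n-1}$ Jacobian and cone measure for the other factor, and assembly via independence. The paper itself gives no proof of this lemma, deferring entirely to \cite[Lemma 1]{schechtman1990volume} and \cite[\S3]{rachev1991approximate}, which are exactly the references you cite and whose arguments you have spelled out.
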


Define the sequences of random variables $(\wnpz)_{n\in\N}$  and $(\wnpz_\theta)_{n\in\N}$ as follows: for $n\in \N$ and $\theta \in \seq$,
\begin{align}
  \wnpz &\doteq \frac{n^{1/p}}{n^{1/2}}  U^{1/n} \frac{ \sum_{i=1}^n  \ynp_i \zn_i}{\lpnrm{\ynp}\ltwonrm{\zn}}; \label{altseq}\\
  \wnpz_\theta  &\doteq \frac{n^{1/p}}{n}  U^{1/n} \frac{ \sum_{i=1}^n  \ynp_i \sqrt{n}\thn_i}{\lpnrm{\ynp}} \label{altseq2}.
\nom[wnpz1]{$\wnpz$}{representation of $W^{(n,p)}$}
\end{align}
The definitions \eqref{def-bwnpbth} and \eqref{def-wthetan} together with \eqref{bthn-rep}, \eqref{altseq} and \eqref{altseq2} show that for $n\in \N$ and $\theta\in\seq$,
\begin{align}
 \wnpbth &\eqdist \wnpz \label{bwthn-rep};\\
 \wnpth &\eqdist \wnpz_\theta \label{bwthn-rep2}.
\end{align}

%%%%%%%%%%%%%%%%%%
% ALTREP \ EQUIVALENCE
%%%%%%%%%%%%%%%%%%
\subsection{Concentration on the boundary}\label{ssec-bdry}

Continue to assume $p\in [1,\infty)$. In this section, we show that, for the purposes of both the annealed and quenched LDPs, it is possible to ignore the contribution of the ``radial" term $U^{1/n}$ in the definition of $\wnpz$ given by \eqref{altseq}. This is related to the fact that the uniform measure on high-dimensional isotropic convex bodies concentrates strongly on the boundary. Note that unlike in the central limit setting, our asymptotic result  as $n\rightarrow \infty$ does not rely on the delicate ``thin-shell" estimates derived for finite $n$ dimensions \cite{klartag2007central}.

\begin{lemma}\label{lem-unifequiv}
Suppose that a sequence of $\R$-valued random variables $(\xi_n)_{n\in\N}$ satisfies an LDP with a good rate function $I_\xi(\cdot)$.  Let $U$ be an independent random variable uniformly distributed on $[0,1]$. If $I_\xi$ is quasiconvex and symmetric, then the sequence $(U^{1/n}\xi_n)_{n\in \N}$ satisfies an LDP with good rate function $I_\xi$.
\end{lemma}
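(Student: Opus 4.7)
The plan is to express $U^{1/n}\xi_n$ as a continuous image of the independent pair $(U^{1/n}, \xi_n)$ and then invoke the contraction principle. To begin, I would establish an LDP for $(U^{1/n})_{n \in \N}$: since $U$ is uniform on $[0,1]$, the direct calculation $\P(U^{1/n} \le a) = a^n$ for $a \in [0,1]$ shows that $(U^{1/n})_{n \in \N}$ satisfies an LDP on $\R$ with the good rate function $J(u) \doteq -\log u$ for $u \in (0,1]$ and $J(u) \doteq +\infty$ otherwise (its level sets $[e^{-c},1]$ are compact).

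Next, since $U$ is independent of $(\xi_n)_{n\in\N}$ and both $J$ and $I_\xi$ are good rate functions, the standard product-space LDP for independent sequences (cf.\ \cite{DemZeibook}) yields a joint LDP for $((U^{1/n}, \xi_n))_{n \in \N}$ on $\R^2$ with the good rate function $K(u,w) \doteq J(u) + I_\xi(w)$. Since the map $\phi : \R^2 \to \R$ given by $\phi(u,w) \doteq uw$ is continuous, the contraction principle then produces an LDP for $(U^{1/n}\xi_n)_{n \in \N}$ with the good rate function
\[
  \tilde{I}(v) = \inf\bigl\{ -\log u + I_\xi(w) : u \in (0,1],\ w \in \R,\ uw = v \bigr\}.
\]

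It remains to verify that $\tilde{I} = I_\xi$, and this is where the quasiconvexity and symmetry of $I_\xi$ are essential. Together, convexity of level sets (quasiconvexity) and the identity $I_\xi(-w) = I_\xi(w)$ (symmetry) force each sublevel set $\{w : I_\xi(w) \le c\}$ to be a symmetric interval about $0$; equivalently, $I_\xi$ is a non-decreasing function of $|w|$. For any $v \in \R$ and $u \in (0,1]$ one then has $|v/u| = |v|/u \ge |v|$, and hence $I_\xi(v/u) \ge I_\xi(v)$. Combined with $-\log u \ge 0$, this gives $\tilde{I}(v) \ge I_\xi(v)$, with equality attained at $u = 1$ (taking $w = v$, or $w = 0$ if $v = 0$). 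Thus $\tilde{I} = I_\xi$.

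The main obstacle is this last identification $\tilde{I} = I_\xi$: without the quasiconvex and symmetric hypotheses there is no reason that rescaling $v$ outward to $v/u$ with $u \in (0,1]$ cannot \emph{decrease} $I_\xi$, in which case the contraction could yield a strictly smaller rate function than $I_\xi$ on parts of $\R$. The monotonicity in $|w|$ furnished by the two hypotheses is exactly what rules this out; the other steps are essentially bookkeeping with standard large deviation machinery.
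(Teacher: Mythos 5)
Your proof is correct and follows essentially the same route as the paper: establish the LDP for $(U^{1/n})_{n\in\N}$ with rate function $u\mapsto -\log u$ on $(0,1]$, pass to a joint LDP by independence, contract along the product map, and then use quasiconvexity plus symmetry (i.e., monotonicity of $I_\xi$ in $|w|$) to show the infimum is attained at $u=1$. Your way of wrapping up the identification of the contracted rate function --- via the two inequalities $I_\xi(v/u)\ge I_\xi(v)$ and $-\log u\ge 0$ --- is a slightly cleaner packaging of what the paper does by splitting into the cases $\tilde{x}>0$ and $\tilde{x}<0$, but it is the same argument.
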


To prove Lemma \ref{lem-unifequiv}, we begin by appealing to the large deviation behavior of $U^{1/n}$ as $n\ra\infty$.

\begin{lemma}\label{lem-uniform}
The sequence $(U^{1/n})_{n\in \N}$ satisfies an LDP with the good rate function
\begin{equation*}
  I_U(u) \doteq \left\{ \begin{array}{ll}
 - \log u & u\in(0,1] ; \\
   +\infty & \textnormal{ else. }
 \end{array}\right.
\end{equation*}
\end{lemma}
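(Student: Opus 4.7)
The plan is to exploit the explicit distribution of $U^{1/n}$: since $U$ is uniform on $[0,1]$, for every $u \in [0,1]$,
\begin{equation*}
\P(U^{1/n} \le u) \;=\; \P(U \le u^n) \;=\; u^n.
\end{equation*}
This exact formula reduces the verification of the LDP to elementary one-line computations, and it also shows that $U^{1/n} \in (0,1]$ almost surely. Goodness of $I_U$ is immediate: $I_U$ is continuous on $(0,1]$ with $I_U(1)=0$, equals $+\infty$ on the complement, and the sub-level sets $\{u : I_U(u) \le c\} = [e^{-c}, 1]$ are compact.

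For the upper bound, I would fix a closed set $F \subset \R$. If $F \cap (0,1] = \emptyset$ then $\P(U^{1/n} \in F) = 0$ and $\inf_F I_U = +\infty$, so the bound is vacuous. Otherwise, since $I_U$ is strictly decreasing on $(0,1]$ and equal to $+\infty$ elsewhere, $\inf_{u \in F} I_U(u) = -\log m$ where $m \doteq \sup(F \cap [0,1])$; closedness of $F$ ensures $m \in F$. Then
\begin{equation*}
\P(U^{1/n} \in F) \;\le\; \P(U^{1/n} \le m) \;=\; m^n,
\end{equation*}
and taking $\tfrac{1}{n}\log$ and letting $n \to \infty$ gives $\limsup \tfrac{1}{n}\log \P(U^{1/n} \in F) \le \log m = -\inf_F I_U$.

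For the lower bound, I would fix an open set $G \subset \R$; if $G \cap (0,1] = \emptyset$, there is nothing to prove. Otherwise, pick any $u_0 \in G \cap (0,1]$ and $\epsilon \in (0, u_0)$ small enough that $(u_0 - \epsilon, u_0] \subset G$. Then
\begin{equation*}
\P(U^{1/n} \in G) \;\ge\; u_0^n - (u_0 - \epsilon)^n \;=\; u_0^n\bigl(1 - (1 - \epsilon/u_0)^n\bigr),
\end{equation*}
so $\liminf \tfrac{1}{n} \log \P(U^{1/n} \in G) \ge \log u_0 = -I_U(u_0)$. Optimizing over $u_0 \in G \cap (0,1]$ yields the desired bound.

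The proof is essentially routine; the only mild subtleties are the handling of the boundary point $u_0 = 1$ (where one takes a left neighborhood) and empty intersections with $(0,1]$, both handled above. A conceptually cleaner alternative is to introduce $V \doteq -\log U \sim \mathrm{Exp}(1)$, so that $U^{1/n} = e^{-V/n}$; the tail identity $\P(V/n \ge a) = e^{-na}$ for $a \ge 0$ gives an LDP for $V/n$ with rate function $I_V(v) = v$ on $[0,\infty)$, and applying the contraction principle under the continuous map $v \mapsto e^{-v}$ produces $I_U(u) = -\log u$ on $(0,1]$.
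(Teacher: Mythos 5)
Your direct verification of the upper and lower bounds from the exact formula $\P(U^{1/n}\le u)=u^n$ is essentially the paper's own proof, up to notational choices: the paper works with a general Borel set and its closure/interior, writing $u_1=\sup\{u\in A : u<1\}$ where you use $m=\sup(F\cap[0,1])$, but the estimates $\P(U^{1/n}\in F)\le m^n$ and $\P(U^{1/n}\in(u_0-\epsilon,u_0])=u_0^n-(u_0-\epsilon)^n$ are the same ones the paper uses. The alternative you sketch at the end --- writing $U^{1/n}=e^{-V/n}$ with $V\sim\mathrm{Exp}(1)$, reading off the LDP for $V/n$ with rate $I_V(v)=v$ on $[0,\infty)$ from $\P(V/n\ge a)=e^{-na}$, and contracting under the continuous map $v\mapsto e^{-v}$ --- is a genuinely different and arguably cleaner route that the paper does not take; it dispenses with the explicit open/closed set casework and delivers goodness of the rate function automatically through the contraction principle.
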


\begin{proof}
Let $A$ be a Borel set in $\mathbb{R}$. First, we prove the large deviation upper bound; that is,  $\limsup_{n\ra\infty} \frac{1}{n} \log \P(U^{1/n} \in A) \le -\inf_{u\in \bar{A}} I_U(u)$. If $1\in \bar{A}$, then $\inf_{u \in \bar A} I_U(u) = 0$, so the upper bound in this case is automatic. Otherwise, let $u_1 = \sup\{ u \in A : u < 1 \}$ and  $u_2 = \inf\{ u \in A : u > 1\}$. Since $I_U$ is convex with minimum at 1, and infinite outside $(0,1]$,
\begin{align*}
\inf_{u\in \bar A} I_U(u) &= I_U(u_1).
\end{align*}
Using the fact that $\bar{A} \subset (-\infty,u_1] \cup [u_2, \infty)$, and $\P(U^{1/n} \ge u_2) = 0$, we find that
\begin{align*}
\limsup_{n\ra\infty} \frac{1}{n} \log \P(U^{1/n} \in \bar{A}) &\le \limsup_{n\ra\infty} \frac{1}{n} \log \left( \P(U^{1/n} \in (-\infty,u_1]) + \P(U^{1/n} \in [u_2,\infty))\right)\\
   &= \limsup_{n\ra\infty} \frac{1}{n} \log \P(U \le u_1^n)\\
   &= \left\{ \begin{array}{cc}
 \log u_1 & \text{ if } u_1 > 0\\
 -\infty & \text{ else }
 \end{array}\right.\\
   &= -\inf_{u\in \bar A} I_U(u).
\end{align*}

Now we prove the large deviation lower bound, $\liminf_{n\ra\infty} \frac{1}{n} \log \P(U^{1/n} \in A) \ge -\inf_{u\in A^\circ} I_U(u)$. If $A^\circ \cap (0,1] = \emptyset$, then $\inf_{u\in A^\circ} I_U(u) = \infty$, so the lower bound in this case is automatic. Otherwise, let $\delta > 0$, and let $\bar{u} \in A^\circ \cap (0,1]$ such that $I_U(\bar{u}) \le \inf_{u \in A^\circ} I_U(u) + \delta$. Since $A^\circ$ is open, there exists $\epsilon \in (0, \bar{u})$ such that $(\bar u- \epsilon, \bar u] \subset A^\circ \cap (0,1]$. Thus,
\begin{align*}
\liminf_{n\ra\infty} \frac{1}{n} \log \P(U^{1/n} \in A^\circ ) &\ge \liminf_{n\ra\infty} \frac{1}{n} \log \P(U^{1/n} \in (\bar u - \epsilon , \bar{u}])\\
    &=  \liminf_{n\ra\infty} \frac{1}{n} \log \P(U \in ((\bar{u}-\epsilon)^n, \bar{u}^n])\\
    &= \liminf_{n\ra\infty} \frac{1}{n} \log (\bar{u}^n - (\bar{u} - \epsilon)^n)\\
    &= \log \bar{u}\\
    &= -I_U(\bar{u})\\
    &\ge - \inf_{u\in A^\circ} I_U(u) - \delta.
\end{align*}
This holds for arbitrary $\delta > 0$, so the lower bound follows.
% can also compute via GE MGF
\end{proof}

\begin{proof}[Proof of Lemma \ref{lem-unifequiv}]
By independence, the sequence $(U^{1/n},\,\,\xi_n)_{n\in \N}$ satisfies a joint LDP with rate function $I_{U, \xi }(u,w) =I_U(u) + I_\xi(x)$, where $I_U$ is the rate function computed in Lemma \ref{lem-uniform}. By the contraction principle, the sequence of products $(U^{1/n}\xi_n)_{n\in \N}$ satisfies an LDP with the rate function $I$, where for $\tilde{x}\in \R$,
\begin{align*}
  I(\tilde{x}) &= \inf\{ I_U(u) + I_{\xi}(x) :u,x\in \R, ux = \tilde x\},\\
       &= \inf\{ -\log u + I_{ \xi }(x) :  u\in(0,1], x\in \R, ux = \tilde x\}.
\end{align*}
Let $\tilde x > 0$. By assumption, $I_{\xi}$ is quasiconvex and symmetric, so it is minimized at $x=0$ and non-decreasing for $x > 0$. Using the fact that $x\mapsto \log x$ is increasing, the infimum is attained at $u=1$ and $x=\tilde x$. Therefore, $I(\tilde x) = I_\xi(\tilde x)$. Likewise, when $\tilde{x} < 0$, similar calculations show once again that $I(\tilde{x}) = I_\xi(\tilde{x})$.
\end{proof}

For $p<\infty$, the equivalence of the LDPs given by Lemma \ref{lem-unifequiv} motivates the analysis of the sequences $(\twnpz)_{n\in \N}$ and $(\twnpz_\theta)_{n\in \N}$ defined as follows: for $n\in \N$ and $\theta\in\seq$,
\begin{align} 
  \twnpz &\doteq  \frac{n^{1/p}}{n^{1/2}}  \frac{ \sum_{i=1}^n  \ynp_i \zn_i}{\lpnrm{\ynp}\ltwonrm{\zn}}, \label{wnou}\\
\twnpz_\theta &\doteq \frac{n^{1/p}}{n}  \frac{ \sum_{i=1}^n  \ynp_i \sqrt{n}\thn_i}{\lpnrm{\ynp}}.  \label{wnou2}
\nom[wnpz2]{$\twnpz$}{$\widehat{W}^{(n,p)}$ sans $U^{1/n}$ factor}
  \end{align}
In the following lemma, we claim that it suffices to analyze the sequences defined by \eqref{wnou} and \eqref{wnou2}.

\begin{lemma}\label{lem-reduction}
If the sequence $(\twnpz)_{n\in\N}$ satisfies an LDP with good rate function $\ia_p$, then $(\wnpbth)_{n\in \N}$ satisfies an LDP with the same rate function. Similarly, if  the sequence $(\twnpz_\theta)_{n\in \N}$ satisfies an LDP with good rate function $\iq_{p,\mu_2}$ for $\sigma$-a.e.\ $\theta \in \seq$, then the sequence $(\wnpth)_{n\in\N}$ satisfies an LDP with the same rate function for $\sigma$-a.e.\ $\theta \in \seq$.
\end{lemma}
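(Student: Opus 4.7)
\smallskip

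\textbf{Proof proposal.} The plan is to observe that each of the two sequences $(\wnpz)_{n\in\N}$ and $(\wnpz_\theta)_{n\in\N}$ is obtained from its ``hat'' counterpart by multiplication with the independent factor $U^{1/n}$, and then to invoke Lemma \ref{lem-unifequiv} to transfer the LDP through this multiplication, before finally transporting the result back to $\wnpbth$ and $\wnpth$ via the distributional equalities \eqref{bwthn-rep} and \eqref{bwthn-rep2}.

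More concretely, I would first compare definitions \eqref{altseq}--\eqref{wnou} and \eqref{altseq2}--\eqref{wnou2} to read off the identities
\[
  \wnpz = U^{1/n}\,\twnpz, \qquad \wnpz_\theta = U^{1/n}\,\twnpz_\theta,
\]
and note that by the independence assumption on $(U,\bolyp,\bolz)$ stated in Sect.\ \ref{ssec-repn}, the factor $U^{1/n}$ is independent of $\twnpz$ and (for each fixed $\theta\in\seq$) also independent of $\twnpz_\theta$. Lemma \ref{lem-unifequiv} then applies directly to each sequence, provided the hypothesised rate function is quasiconvex and symmetric: in the annealed case the hypothesis of the present lemma gives an LDP for $(\twnpz)_{n\in\N}$ with good rate function $\ia_p$, and in the quenched case, for $\sigma$-a.e.\ $\theta\in\seq$, an LDP for $(\twnpz_\theta)_{n\in\N}$ with good rate function $\iq_{p,\mu_2}$. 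Hence $(\wnpz)_{n\in\N}$ and, for $\sigma$-a.e.\ $\theta$, $(\wnpz_\theta)_{n\in\N}$, satisfy LDPs with the same respective rate functions.

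To conclude, I would invoke the distributional identities $\wnpbth \eqdist \wnpz$ and $\wnpth \eqdist \wnpz_\theta$ from \eqref{bwthn-rep}--\eqref{bwthn-rep2}; since the LDP depends only on the laws of the random variables, the LDPs transfer from $(\wnpz)$ and $(\wnpz_\theta)$ to $(\wnpbth)$ and $(\wnpth)$ with the same good rate functions $\ia_p$ and $\iq_{p,\mu_2}$.

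The only non-trivial point is verifying the quasiconvexity and symmetry hypotheses needed to apply Lemma \ref{lem-unifequiv}, but this is built into the statements of Theorems \ref{th-aldp} and \ref{th-qldp}. In fact both properties are visible directly from the variational definitions \eqref{iadefn} and \eqref{ieqdefn}: symmetry of $\ia_p$ and $\iq_{p,\mu_2}$ in $w$ follows from the invariance of $\lma_p$ and $\lmq_{p,\nu}$ under $t_1\mapsto -t_1$ (because $\mu_p$ and $\mu_2$ are symmetric about $0$), together with the observation that the constraint set $\{\tau_1\tau_2^{-1/p}=w\}$ is mapped to $\{\tau_1\tau_2^{-1/p}=-w\}$ under $\tau_1\mapsto -\tau_1$; quasiconvexity follows because the rate function is minimized at $w=0$ (where one can take $\tau_1=0$) and standard convex-analytic manipulation of the infimum in \eqref{iadefn}--\eqref{ieqdefn} yields monotonicity on each half-line. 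No step appears to present a substantive obstacle: once the representation $\wnpz = U^{1/n}\twnpz$ with independence is in hand, Lemma \ref{lem-unifequiv} does all the work.
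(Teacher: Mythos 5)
Your proposal is correct and follows essentially the same route as the paper: reduce to $\wnpz=U^{1/n}\twnpz$ and $\wnpz_\theta=U^{1/n}\twnpz_\theta$ via the distributional identities \eqref{bwthn-rep}--\eqref{bwthn-rep2}, apply Lemma \ref{lem-unifequiv}, and verify quasiconvexity and symmetry of $\ia_p$ and $\iq_{p,\mu_2}$ directly from the variational definitions \eqref{iadefn} and \eqref{ieqdefn} using the symmetry of $\mu_p$, $\mu_2$ and the convexity of the Legendre transforms. One caution: do not lean on the phrase that these properties are ``built into the statements of Theorems \ref{th-aldp} and \ref{th-qldp}'' --- those theorems are themselves proved via this lemma, so that justification would be circular; your subsequent direct verification from the definitions is the correct (and the paper's) argument.
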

\begin{proof}
Due to \eqref{bwthn-rep} and \eqref{bwthn-rep2}, $\wnpbth$ and $\wnpth$ are equal in distribution to $\wnpz$ and $\wnpz_\theta$, respectively. Thus, it suffices to show that an LDP for $(\twnpz)_{n\in\N}$ (resp., $(\twnpz_\theta)_{n\in\N}$) implies an LDP for $(\wnpz)_{n\in\N}$ (resp., $(\wnpz_\theta)_{n\in\N}$) with the same rate function. However, this would follow from Lemma \ref{lem-unifequiv} if  $\ia_p$ and $\ia_{p,\mu_2}$ could be shown to be quasiconvex and symmetric.

For $\ia_p$, note that by \eqref{iadefn},
\begin{align}
\ia_{p}(w) &= \inf_{\tau_0,\tau_2 > 0} \lma_{p}^*(\tau_0,w\tau_0^{1/p}\tau_2^{1/p},\tau_2), \quad w\in \R. \label{iarepn}
\end{align}
Since $\mu_p$ and $\mu_2$ are symmetric distributions, $\lma_p$ (and thus, $\lma_p^*$) is symmetric in the second variable. Then, the representation  \eqref{iarepn} implies that $\ia_{p}$ is symmetric. As for quasiconvexity, we know that $\lma_{p}^*$ is convex by definition of the Legendre transform. Combined with the symmetry of $\lma_{p}^*$ in the second argument, we see that for fixed $\tau_0,\tau_2> 0$, $\lma_{p}^*(\tau_0,\tau_1, \tau_2)$ is minimized at $\tau_1 = 0$, non-decreasing for $\tau_1 > 0$, and non-increasing for $\tau_1 < 0$. Thus, for $w' > w > 0$, \eqref{iarepn} shows that
\begin{align*}
\ia_{p}(w') &= \inf_{\tau_0,\tau_2 > 0} \lma_{p}^*(\tau_0,w'\tau_0^{1/2}\tau_2^{1/p},\tau_2) \ge  \inf_{\tau_0,\tau_2 > 0} \lma_{p}^*(\tau_0,w\tau_0^{1/2}\tau_2^{1/p},\tau_2) = \ia_{p}(w).
\end{align*}
Similar calculations for $w' < w < 0$ show that for all $c>0$, the set $\{w \in \R: \ia_p(w) \le c\}$ is a closed interval containing 0.  Thus, $\ia_p$ is quasiconvex (see Definition \ref{def-quasi}). The argument is essentially identical for $\iq_{p,\mu_2}$, and hence, left to the reader.
\end{proof}

%%%%%%%%%%%%%%%%%%
% ANNEALED
%%%%%%%%%%%%%%%%%%
\section{The annealed LDP}\label{sec-annealed} % [X]

In this section, we prove Theorem \ref{th-aldp}, the annealed LDP for random projections of $\ell^p$ balls. When $p\in[2,\infty)$, the recipe  is roughly as follows: we employ the representations of $X^{(n,p)}$ and $\bthn$ given in Sect.\ \ref{sec-equiv}, apply Cram\'er's theorem for a sum of i.i.d.\ random variables in $\R^3$, and then complete the proof with the contraction principle.  The case $p\in[1,2)$ is slightly different, in that we must prove an LDP at a different speed; for this case, we still employ the representations of Sect.\ \ref{sec-equiv}, but show that deviations of the ``numerator" are relevant for the LDP, whereas the deviations of the ``denominator" do not matter.

%%%%%%%%%%%%%%%%%%
% ANNEALED \ P >=2
%%%%%%%%%%%%%%%%%%
\subsection{Annealed proof for $p\in[2,\infty]$}\label{ssec-anng2}

For $p\in [2,\infty)$, we define the following sum of i.i.d. $\mathbb{R}^3$-valued random variables,
\begin{equation*}
 S^{(n,p)} \doteq  \frac{1}{n}\sum_{i=1}^n \left( |Z^{(n)}_i|^2,   \, \ynp_i Z_i^{(n)}, \,  |\ynp_i|^p   \right), \quad n \in \N.
\end{equation*}
Note that $\lma_p$ of \eqref{chklampdefn} is the log mgf of the summands, $( |Z^{(n)}_i|^2,   \, \ynp_i Z_i^{(n)}, \,  |\ynp_i|^p   )$. We write out this sum because $\twnpz$ of \eqref{wnou} can be written  as a function of $S^{(n,p)}$. In our proof below, we will have to recall the following definition.

\begin{definition}\label{def-domain}
Consider a convex function $\Lambda:\mathbb{R}^d\ra (-\infty,\infty]$. The \emph{effective domain} of $\Lambda$ is the set
\begin{equation*}
  D_\Lambda \doteq \{x \in \R^d  :  \Lambda(x) < \infty \}.
\end{equation*}
When there is no confusion, we refer to $D_\Lambda$ as the \emph{domain} of $\Lambda$.
\nom[D]{$D_{\cdot}$}{(effective) domain of a function}
\end{definition}

\begin{proof}[Proof of Theorem \ref{th-aldp}]
For $t_0 < \tfrac{1}{2}$, $t_1\in \R$, $t_2 < \tfrac{1}{p}$,
\begin{align*}
  \lma_p(t_0,t_1,t_2) &=  \log \int_{\R}\int_{\R} e^{t_1 zy } \tfrac{1}{2p^{1/p}\Gamma(1+\tfrac{1}{p})} e^{-(1-pt_2)|y|^p/p} dy\,  \tfrac{1}{\sqrt{2\pi}}e^{-(1-2t_0)z^2/2} dz \\
    &= -\tfrac{1}{p} \log(1-pt_2) - \tfrac{1}{2}\log(1-2t_0)\\
    & \quad \quad +  \log \int_{\R}  \exp\left(\tfrac{1}{2} t_1^2(1-pt_2)^{-2/p}(1-2t_0)^{-1}y^2 \right)   \mu_p(dy) 
\end{align*}
Note that the preceding quantity is finite for $p > 2$. Thus, $D_{\lma_p}^\circ = (-\infty,\tfrac{1}{2})\times \R \times (-\infty, \tfrac{1}{p}) \ni 0$. Thus, by  Cram\'er's theorem, the sequence $(S^{(n,p)})_{n\in \N}$ satisfies an LDP in $\R^3$ with the good rate function given by the Legendre transform $\lma_{p}^*(\tau_0,\tau_1,\tau_2)$. Note that $D_{\lma_p^*} \subset (0,\infty)\times \R \times (0,\infty)$, and the map $T_p: (0,\infty)\times \R  \ra \R$ defined by
\begin{equation*}
T_p(\tau_0,\tau_1,\tau_2) \doteq  \tau_0^{-1/2}\tau_1\tau_2^{-1/p},
\end{equation*}
is continuous. Since $\twnpz = T(S^{(n,p)})$, we can apply the contraction principle to obtain an LDP for $(\twnpz)_{n\in\N}$ with the rate function
\begin{equation*}
 \inf_{\tau_0^{-1/2}\tau_1\tau_2^{-1/p} = w}  \lma_{p}^*(\tau_0,\tau_1,\tau_2)  = \ia_{p}(w), \quad w\in \R.
\end{equation*}
Due to Lemma \ref{lem-reduction}, this implies that the same LDP holds for $(W^{(n,p)})_{n\in\N}$.
\end{proof}

%%%%%%%%%%%%%%%%%%
% ANNEALED \ P < 2
%%%%%%%%%%%%%%%%%%
\subsection{Annealed proof for $p\in [1,2)$} \label{ssec-annpl2}

First, note that we cannot approach Theorem \ref{th-aldp12} in the same way as Theorem \ref{th-aldp} due to the fact that for $p < 2$,  $\lma_p(t_0,t_1,t_2) = \infty$ for $t_1 \ne 0$. This suggests that the LDP, if it exists, occurs at a different speed  slower than $n$. To identify the appropriate speed, we begin with a lemma giving upper and lower bounds for the tails of $\mu_p$.

\begin{lemma}\label{lem-ulbd}
Let $p \in[1,2)$. Then, for all $x \ge  0$,
\begin{equation*}
   \frac{x}{x^p+1}e^{-x^p/p} \le \int_x^\infty e^{-y^p/p}dy \le \frac{1}{x^{p-1}} e^{-x^p/p}.
\end{equation*}
\end{lemma}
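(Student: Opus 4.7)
\emph{Plan of proof.} Both inequalities concern the tail of the density $y \mapsto e^{-y^p/p}$, and the key observation is the identity $\tfrac{d}{dy}e^{-y^p/p} = -y^{p-1}e^{-y^p/p}$.

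\textbf{Upper bound.} First I would dispose of the case $x=0$: the bound is trivially infinite for $p \in (1,2)$, and an equality for $p=1$. For $x>0$, I use the fact that $p-1 \ge 0$, so $y^{p-1} \ge x^{p-1}$ on $[x,\infty)$. Writing the integrand as $y^{-(p-1)} \cdot y^{p-1}e^{-y^p/p}$ and bounding $y^{-(p-1)} \le x^{-(p-1)}$, I get
\begin{equation*}
\int_x^\infty e^{-y^p/p}\,dy \;\le\; \frac{1}{x^{p-1}}\int_x^\infty y^{p-1} e^{-y^p/p}\,dy \;=\; \frac{1}{x^{p-1}}\bigl[-e^{-y^p/p}\bigr]_x^\infty \;=\; \frac{e^{-x^p/p}}{x^{p-1}}.
\end{equation*}

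\textbf{Lower bound.} This is the more delicate direction, since the naive integration by parts goes the ``wrong way'' and produces a correction term of the wrong sign. My plan is to argue by monotonicity. Define
\begin{equation*}
F(x) \doteq \int_x^\infty e^{-y^p/p}\,dy, \qquad G(x) \doteq \frac{x}{x^p+1} e^{-x^p/p},
\end{equation*}
so that the claim is $F(x)-G(x) \ge 0$ on $[0,\infty)$. Both $F$ and $G$ tend to $0$ as $x\to\infty$, so it suffices to show $F-G$ is non-increasing. A direct computation using the quotient and product rules yields
\begin{equation*}
G'(x) \;=\; e^{-x^p/p}\cdot \frac{1 - p x^p - x^{2p}}{(x^p+1)^2},
\end{equation*}
and combined with $F'(x)=-e^{-x^p/p}$ one obtains, after combining fractions,
\begin{equation*}
F'(x) - G'(x) \;=\; -\,e^{-x^p/p}\cdot \frac{(2-p)x^p + 2}{(x^p+1)^2}.
\end{equation*}
For $p \in [1,2)$ the numerator $(2-p)x^p + 2$ is strictly positive for every $x\ge 0$, so $F-G$ is strictly decreasing. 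Since $\lim_{x\to\infty}(F(x)-G(x))=0$, this forces $F(x) \ge G(x)$ for all $x \ge 0$, giving the lower bound.

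\textbf{Main obstacle.} The only non-routine step is coming up with the right candidate prefactor $x/(x^p+1)$ for the lower bound. It is chosen precisely so that $(2-p)x^p + 2$ appears with a positive sign when $p<2$; this is also why the inequality is sensitive to the range $p \in [1,2)$ rather than $p \ge 1$. Once this ansatz is in hand, the proof is an elementary monotonicity argument. I would double-check the sign analysis carefully at $x=0$ (where $G(0)=0$ and $F(0)$ is a positive constant) to confirm that the ``decreasing to $0$'' argument is valid as stated.
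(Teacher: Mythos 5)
Your proof is correct and follows essentially the same route as the paper: the upper bound by inserting the factor $y^{p-1}/x^{p-1}\ge 1$ and integrating the exact derivative, and the lower bound by showing that $x\mapsto \int_x^\infty e^{-y^p/p}dy - \frac{x}{x^p+1}e^{-x^p/p}$ has derivative $-e^{-x^p/p}\bigl((2-p)x^p+2\bigr)/(x^p+1)^2<0$ and vanishes at infinity. Your final worry about $x=0$ is harmless: the decreasing-to-zero argument already gives nonnegativity there (indeed the difference is strictly positive at $0$), exactly as in the paper.
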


\begin{proof}
First, we prove the upper bound. For $x\ge 0$,
\begin{equation*}
  \int_x^\infty e^{-y^p/p}dy \le \int_x^\infty \frac{y^{p-1}}{x^{p-1}} e^{-y^p/p}dy \le \frac{1}{x^{p-1}} e^{-x^p/p}.
\end{equation*}
As for the lower bound, let
\begin{equation*}
  f(x) \doteq \int_x^\infty e^{-y^p/p}dy - \frac{x}{x^p+1} e^{-x^p/p}, \quad x\ge 0.
\end{equation*}
Note that $f(0) > 0$ and $\lim_{x\ra\infty} f(x) = 0$. Lastly, since $p<2$,
\begin{equation*}
  f'(x) = -\frac{e^{-x^p/p}}{(x^p+1)^2}\left( (2-p)x^p + 2\right) < 0,
\end{equation*}
and so $f(x) \ge 0$ for all $x \ge 0$, thus proving the lower bound.
\end{proof}

\begin{lemma}\label{lem-tailyz}
Let $p \ge 1$, $Y\sim \mu_p$, and $Z\sim \mu_2$, and let $Y$ and $Z$ be independent. Then,
\begin{equation*}
\lim_{t\ra\infty}  \frac{1}{t^{r_p}} \log \P(YZ \ge t) = - r_p^{-1},
\end{equation*}
where $r_p = \frac{2p}{2+p}$ as in \eqref{rp}.
\end{lemma}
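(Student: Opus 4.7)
\smallskip
\textbf{Proof plan.} The plan is to reduce $\P(YZ\ge t)$ to a one-dimensional integral and evaluate it by Laplace's method (saddle-point). Since both $\mu_p$ and $\mu_2$ are symmetric about $0$, we have
$$\P(YZ\ge t)=2\,\P(Y>0,Z>0,\,YZ\ge t)=2\int_0^\infty \P\!\left(Y\ge t/z\right) f_2(z)\,dz,$$
so the asymptotics are governed by the trade-off between the Gaussian tail of $Z$ and the stretched-exponential tail of $Y$.

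Using Lemma \ref{lem-ulbd} together with the normalization of $f_p$, there exist $c_1,c_2>0$ and $N\ge 1$ such that, for $x\ge N$,
$$\frac{c_1}{x^{p-1}}e^{-x^p/p}\ \le\ \P(Y\ge x)\ \le\ \frac{c_2}{x^{p-1}}e^{-x^p/p}.$$
For the portion of the integral with $z\ge t/N$ (where the tail bound does not apply), I would just bound $\P(Y\ge t/z)\le 1$, so its contribution is at most $\P(Z\ge t/N)\asymp \exp(-t^2/(2N^2))$, which is negligible compared with $\exp(-t^{r_p}/r_p)$ because $r_p<1<2$. On $z\in(0,t/N]$, the sandwich above reduces the problem, up to polynomial prefactors in $z$ and $t$, to studying $\int_0^{t/N}\exp(-h_t(z))\,dz$ with
$$h_t(z)\ \doteq\ \frac{z^2}{2}+\frac{t^p}{p\,z^p}.$$

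A direct computation shows that $h_t$ has a unique minimizer on $(0,\infty)$ at $z^\ast=t^{p/(p+2)}$, with
$$h_t(z^\ast)=\Bigl(\tfrac{1}{2}+\tfrac{1}{p}\Bigr)\,t^{2p/(p+2)}=\frac{t^{r_p}}{r_p}\qquad\text{and}\qquad h_t''(z^\ast)=p+2.$$
For the upper bound I would use $e^{-h_t(z)}\le e^{-h_t(z^\ast)}$ pointwise and absorb the polynomial into $O(\log t)$; for the lower bound I would restrict to a fixed-length neighborhood $[z^\ast,z^\ast+1]$ on which $h_t$ stays within $O(1)$ of $h_t(z^\ast)$ (since $h_t''(z^\ast)=p+2$ is $t$-independent). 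Both bounds yield $\log\P(YZ\ge t)=-t^{r_p}/r_p+O(\log t)$, and dividing by $t^{r_p}$ and letting $t\to\infty$ gives the stated limit $-r_p^{-1}$.

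The proof presents no serious obstacle: once Lemma \ref{lem-ulbd} is in hand, this is a routine saddle-point calculation. The only points that need genuine care are (i) verifying that the polynomial prefactors in the tail bound contribute only $O(\log t)$ on the logarithmic scale, and (ii) checking that the region $z>t/N$ is indeed exponentially dominated; both are immediate from the scale separation $r_p<2$.
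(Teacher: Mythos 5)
Your argument is correct and follows essentially the same route as the paper: reduce to the integral $\int_0^\infty \P(Y\ge t/z)f_2(z)\,dz$, apply the tail bounds of Lemma \ref{lem-ulbd}, and run a Laplace/saddle-point analysis around $z^\ast=t^{p/(p+2)}$, which is exactly the saddle underlying the paper's optimal threshold $s_t=t^{2/(2+p)}$ on the $Y$ side. The only cosmetic difference is in the lower bound, where you integrate over a unit window around the saddle while the paper simply uses the product event $\P(Y\ge s_t)\,\P(Z\ge t/s_t)$; both yield the same exponent $-t^{r_p}/r_p$.
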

\begin{proof}
First, we prove the lower bound. Fix $t > 0$. For all $s$ such that $0 < s < t$, by the independence of $Y$ and $Z$, and the lower bound of Lemma \ref{lem-ulbd},
\begin{equation*}
  \P(YZ \ge t) \ge \P(Y \ge s) \P(Z \ge \tfrac{t}{s})  \ge C_p\,\frac{s}{s^p+1} e^{-s^p/p} \frac{1}{(t/s) + (s/t)} e^{-t^2/(2s^2)},
\end{equation*}
where $C_p <\infty$ represents a constant that depends on $p$ but not on $t$ nor $s$. Then, pick the optimal $s$ for the lower bound,
\begin{equation*}
s_t = \arg\min_s\left\{ \tfrac{s^p}{p} + \tfrac{t^2}{2s^2}\right\} =t^{2/(2+p)} = t^{r_p/p}.
\end{equation*}
Therefore, 
\begin{equation*}
  \liminf_{t\ra\infty} \frac{1}{t^{r_p}} \log \P(YZ\ge t) \ge \liminf_{t\rightarrow\infty} \frac{1}{t^{r_p}} \left(-\tfrac{s_t^p}{p} - \tfrac{t^2}{2s_t^2}\right) = -\left(\tfrac{1}{p} + \tfrac{1}{2}\right) = - r_p^{-1}.
\end{equation*}

Now  we prove the upper bound. By Lemma \ref{lem-ulbd}, for some different  constant $\tilde{C}_p < \infty$,
\begin{align*}
\P(YZ \ge t) &= \tilde{C}_p\int_0^\infty \P(Y \ge \tfrac{t}{s}) \exp\left(-\tfrac{s^2}{2}\right) ds\\
  &\le \tilde{C}_p\int_0^\infty \frac{1}{(t/s)^{p-1} } \exp\left(-\tfrac{1}{p}(\tfrac{t}{s})^p - \tfrac{s^2}{2}\right) ds \\
\text{\footnotesize($s^p = t^{p^2/(2+p)}u$)} \quad  &= \tilde{C}_p\frac{1}{pt^{p-1}t^{p^2/(2+p)}}  \int_0^\infty  \exp\left(-\tfrac{t^p}{pt^{p^2/(2+p)} u} - \tfrac{t^{2p/(2+p)} u^{2/p}}{2}\right) du.
\end{align*}
Then, using Laplace's method,
\begin{align*}
\limsup_{t\ra\infty} \frac{1}{t^{r_p}} \log\P(YZ \ge t) &\le \limsup_{t\ra\infty} \frac{1}{t^{r_p}} \log \int_0^\infty \exp\left( -t^{r_p}\left(\tfrac{1}{pu} + \tfrac{u^{2/p}}{2}\right)\right) du\\
   &= -\min_{u> 0} \left\{ \tfrac{1}{pu} + \tfrac{u^{2/p}}{2}\right\} \\
   &= -\left(\tfrac{1}{p} + \tfrac{1}{2}\right) = -r_p^{-1}.
\end{align*}

\end{proof}

We state an intermediate large deviation result. As in Sect.\ \ref{ssec-repn} (but, for ease of notation, omitting the superscripts $^{(n)}$ and $^{(n,p)}$), let $Y_1,\dots, Y_n$ be i.i.d. with common distribution $\mu_p$, and let $Z_1,\dots, Z_n$ be i.i.d. with common distribution $\mu_2$. Define the empirical mean of i.i.d.\ random variables,
\begin{equation*}
V^{(n,p)} \doteq \frac{1}{n}\sum_{i=1}^n Y_i\,Z_i.
\end{equation*}

\begin{proposition}\label{prop-stretched}
Let $p\in [1,2)$. Then, with $r_p =\frac{2p}{2+p}$, the sequence $(V^{(n,p)})_{n\in \N}$ satisfies an LDP with speed $n^{r_p}$ and the quasiconvex good rate function $\ia_p(w) = \frac{1}{r_p} |w|^{r_p}$.
\end{proposition}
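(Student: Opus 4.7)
By Lemma \ref{lem-tailyz}, the common distribution of $Y_i Z_i$ is symmetric with stretched-exponential (Weibull-like) tails of index $r_p \in (0,1)$, i.e., $-\log \P(|Y_1Z_1|\ge t) = t^{r_p}/r_p + o(t^{r_p})$. Consequently $\E[e^{\lambda Y_1 Z_1}] = \infty$ for every $\lambda\neq 0$, so Cram\'er's theorem is unavailable. The expected mechanism is the \emph{principle of one big jump}: a deviation $V^{(n,p)}\ge w$ arises when a single summand is of order $nw$ while the rest of the sum is typical. This directly suggests the rate $\ia_p(w)=|w|^{r_p}/r_p$, whose quasiconvexity, symmetry, and goodness are immediate from its explicit form.

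\textbf{Lower bound.} Fix $w>0$ and $\epsilon>0$. Using independence and the fact that $\E[Y_i Z_i]=0$ (by symmetry of $\mu_p$ and $\mu_2$), isolate one ``big jump'':
\[
\P(V^{(n,p)}\ge w)\;\ge\;\P\bigl(Y_1 Z_1\ge n(w+\epsilon)\bigr)\cdot \P\Bigl(\tfrac{1}{n}\sum_{i=2}^n Y_i Z_i \ge -\epsilon\Bigr).
\]
By the weak law of large numbers, the second factor tends to $1$. Lemma \ref{lem-tailyz} applied to $t=n(w+\epsilon)$ then gives
\[
\liminf_{n\to\infty}\tfrac{1}{n^{r_p}}\log \P(V^{(n,p)}\ge w)\;\ge\; -\tfrac{(w+\epsilon)^{r_p}}{r_p},
\]
and $\epsilon\downarrow 0$ yields the right bound; the case $w<0$ follows by symmetry of $Y_i Z_i$. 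A standard infimum-over-open-sets argument then gives the LDP lower bound.

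\textbf{Upper bound.} Fix $w>0$ and $\delta\in(0,w)$, and decompose on the magnitude of $M_n\doteq \max_{i\le n}|Y_i Z_i|$:
\[
\P(V^{(n,p)}\ge w)\;\le\;\P(M_n\ge n(w-\delta))\;+\;\P\bigl(V^{(n,p)}\ge w,\;M_n<n(w-\delta)\bigr).
\]
The first term is controlled by a union bound: $\P(M_n\ge n(w-\delta))\le 2n\,\P(Y_1Z_1\ge n(w-\delta))$, and since $\log n=o(n^{r_p})$, Lemma \ref{lem-tailyz} gives $\limsup_n n^{-r_p}\log \P(M_n\ge n(w-\delta)) \le -(w-\delta)^{r_p}/r_p$. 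For the second term, the summands are truncated at level $n(w-\delta)$. A naive exponential Chebyshev is too crude here because the truncation level is large relative to the variance, and one instead applies a Fuk--Nagaev-type inequality (see, e.g., Borovkov's treatment of subexponential random walks): partition each $Y_i Z_i$ according to a small threshold $y_n$ with $y_n\to\infty$ and $n\P(|Y_1 Z_1|>y_n)\to 0$, apply Bennett's inequality to the bounded part, and bound the ``medium'' excursion probabilities by iterating a union bound over scales. The result is that this second probability decays \emph{faster} than any power of $e^{-n^{r_p}}$, and hence contributes $-\infty$ at speed $n^{r_p}$. Combining the two estimates and then letting $\delta\downarrow 0$ yields
\[
\limsup_{n\to\infty}\tfrac{1}{n^{r_p}}\log \P(V^{(n,p)}\ge w)\;\le\;-\tfrac{w^{r_p}}{r_p};
\]
symmetry handles $w<0$. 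Matching upper and lower bounds on general Borel sets via the standard compact/closed reduction (using the goodness of $\ia_p$) completes the LDP.

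\textbf{Main obstacle.} The delicate point is the upper bound for the ``truncated'' event: once all summands are bounded by $n(w-\delta)$, the standard exponential Chebyshev inequality yields only polynomial decay, because the effective $M^2/\sigma^2$ ratio is too large. One must instead deploy a Fuk--Nagaev-style truncation at a \emph{small} threshold $y_n$ and show that the required ``Gaussian-like'' deviation of the doubly-truncated sum is super-exponentially unlikely at scale $n^{r_p}$, while the probability of seeing even one summand in the intermediate band $[y_n, n(w-\delta)]$ is already beyond the target rate $-w^{r_p}/r_p$ by Lemma \ref{lem-tailyz}. The balance between these two contributions, mediated by the choice of $y_n$, is where the heavy-tailed character of $\mu_p$ for $p<2$ enters the proof.
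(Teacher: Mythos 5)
The paper's proof is a one-line citation: it verifies the hypotheses of Theorem~2.1 of Arcones (2002) (which proves LDPs for sums of i.i.d.\ random variables with stretched-exponential tails of index $r_p<1$), where the needed tail asymptotic is supplied by Lemma~\ref{lem-tailyz}, and notes the result also follows from Gantert--Kim--Ramanan. Your proof instead attempts to reconstruct Arcones's theorem from first principles via the ``one big jump'' mechanism. That mechanism is indeed the heart of the cited result, and your lower bound --- isolating one summand of order $n(w+\epsilon)$ and letting the remaining mean tend to zero by the LLN --- is correct and essentially matches what appears inside such proofs.

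Your upper bound, however, contains a genuine gap. You claim that $\P\bigl(V^{(n,p)}\ge w,\ M_n<n(w-\delta)\bigr)$ ``decays faster than any power of $e^{-n^{r_p}}$, and hence contributes $-\infty$ at speed $n^{r_p}$,'' and that ``the probability of seeing even one summand in the intermediate band $[y_n, n(w-\delta)]$ is already beyond the target rate $-w^{r_p}/r_p$.'' Both claims are false. First, two jumps of sizes roughly $n(w-\delta)$ and $n\delta$ (both below the truncation) conspire to produce the event, and by independence and Lemma~\ref{lem-tailyz} this has probability roughly $e^{-\left((w-\delta)^{r_p}+\delta^{r_p}\right)n^{r_p}/r_p}$, a \emph{finite} rate at speed $n^{r_p}$; since $x\mapsto x^{r_p}$ is strictly subadditive for $r_p<1$, this rate is strictly larger than $w^{r_p}/r_p$ but is certainly not $+\infty$. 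Second, with a small threshold $y_n$ satisfying $n\P(|Y_1Z_1|>y_n)\to 0$ (e.g.\ $y_n\sim(\log n)^{1/r_p}$), the probability of seeing one summand in $[y_n,n(w-\delta)]$ is only polynomially small, not beyond the target rate. So the Fuk--Nagaev step as you describe it does not deliver the conclusion; the truncated event must instead be shown to decay at a rate $\ge\bigl((w-\delta)^{r_p}+\delta^{r_p}\bigr)/r_p$, and only after sending $\delta\downarrow 0$ does one recover $w^{r_p}/r_p$. Carrying this out carefully requires tracking all multi-jump scenarios --- precisely the bookkeeping that Arcones's theorem packages. The clean route here is to check Arcones's hypotheses (as the paper does) rather than reprove them.
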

\begin{proof}
This follows from \cite[Theorem 2.1]{arcones2002large}, where $p$, $b_n$, and $a$ there correspond to $r_p$, $n$, and $r_p^{-1}$ here, respectively. The condition $\frac{n}{n^{2-r_p}} \rightarrow 0$ as $n\rightarrow\infty$ holds since $r_p < 1$ for $p < 2$, and the condition  $\frac{n}{n+1}\rightarrow 1$ as $n\rightarrow\infty$ holds trivially. Then, the symmetry of $\mu_p$ and the tail asymptotics of  Lemma \ref{lem-tailyz} imply the desired LDP. Note that this result can also be deduced from \cite[Theorem 1]{gantert2014large}.
\end{proof}

We now show that at the large deviation scale, $\twnpz$ of \eqref{wnou} is comparable to $V^{(n,p)}$ in the following sense.

\begin{definition}\label{def-expequiv}
Let $(\xi_n)$  and $(\tilde{\xi}_n)$ be two sequences of $\R$-valued random variables such that for all $\delta > 0$, and some speed $s(n)$,
\begin{equation*}
  \limsup_{n\ra\infty} \frac{1}{s(n)} \log \P( |\xi_n - \tilde{\xi}_n| > \delta ) = -\infty;
\end{equation*}
then, $(\xi_n)$ and $(\tilde{\xi}_n)$ are said to be \emph{exponentially equivalent} with speed $s(n)$.
\end{definition}

\begin{proposition}[\cite{DemZeibook}]
\label{prop-expeq}
If $(\xi_n)$ is a sequence of random variables that satisfies an LDP with speed $s(n)$ and good rate function $I$, and $(\tilde{\xi}_n)$ is another sequence that is exponentially equivalent to $(\xi_n)$ with speed $s(n)$, then $(\tilde{\xi}_n)$ satisfies an LDP with speed $s(n)$ and good rate function $I$.
\end{proposition}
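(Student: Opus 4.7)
The plan is to transfer both the upper and lower large deviation bounds from $(\xi_n)$ to $(\tilde\xi_n)$ by exploiting that $\{|\tilde\xi_n - \xi_n| > \delta\}$ is exponentially negligible at speed $s(n)$; this is the standard recipe (Dembo--Zeitouni, Theorem 4.2.13). The key probabilistic input is the elementary inclusion
\begin{equation*}
\{\tilde\xi_n \in A\} \subset \{\xi_n \in A^\delta\} \cup \{|\tilde\xi_n - \xi_n| > \delta\},
\end{equation*}
valid for every Borel $A \subset \R$ and every $\delta > 0$, where $A^\delta = \{x \in \R : \operatorname{dist}(x, A) \leq \delta\}$, together with the symmetric counterpart obtained by swapping the roles of $\xi_n$ and $\tilde\xi_n$.

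For the upper bound on a closed set $F$, I would apply the inclusion with $A = F$, combine with the elementary exponential-scale identity $\limsup s(n)^{-1}\log(a_n + b_n) = \max(\limsup s(n)^{-1}\log a_n,\, \limsup s(n)^{-1}\log b_n)$, and invoke the LDP upper bound for $(\xi_n)$ on the closed set $F^\delta$ while using exponential equivalence to send the second term to $-\infty$ on the logarithmic scale. This produces
\begin{equation*}
\limsup_{n\to\infty} \tfrac{1}{s(n)}\log \P(\tilde\xi_n \in F) \leq -\inf_{x\in F^\delta} I(x),
\end{equation*}
and letting $\delta \downarrow 0$ I would use goodness of $I$ (compact level sets together with lower semicontinuity) to conclude that $\inf_{x\in F^\delta} I(x) \uparrow \inf_{x\in F} I(x)$.

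For the lower bound on an open set $G$, fix $x \in G$ and choose $\delta > 0$ with $B(x, 2\delta) \subset G$. The symmetric inclusion yields
\begin{equation*}
\P(\tilde\xi_n \in G) \geq \P(\xi_n \in B(x,\delta)) - \P(|\tilde\xi_n - \xi_n| > \delta).
\end{equation*}
The LDP lower bound for $(\xi_n)$ makes the first term eventually at least $e^{-(I(x)+\epsilon)s(n)}$ for arbitrary $\epsilon > 0$, while exponential equivalence forces the second term to be at most $e^{-Ks(n)}$ for arbitrarily large $K$; hence the subtraction retains at least half of the first term for large $n$. Taking $\liminf s(n)^{-1}\log$ yields $-I(x)$, and optimizing over $x \in G$ delivers the required bound $-\inf_{x\in G} I(x)$.

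The only mildly delicate step is the subtraction in the lower bound, but this is routine since the discrepancy term decays faster than any exponential at speed $s(n)$ and therefore cannot affect the exponential rate of decay of the dominant term. Goodness of $I$ is invoked only to justify passing from $F^\delta$ infima to $F$ infima in the $\delta \downarrow 0$ limit; no further properties of the underlying random variables or of the speed $s(n)$ are needed.
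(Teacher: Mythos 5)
Your proof is correct and is precisely the standard argument behind the cited result (Dembo--Zeitouni, Theorem 4.2.13), which the paper itself states without proof: the $\delta$-blow-up inclusion, the max-of-rates identity for the upper bound on closed sets, the use of goodness of $I$ to pass from $\inf_{F^\delta} I$ to $\inf_F I$, and the careful subtraction argument in the lower bound are all handled properly, and nothing beyond $s(n)\to\infty$ and the definition of exponential equivalence at speed $s(n)$ is needed. Since the paper only cites the reference, your write-up coincides with the intended proof and no further comparison is needed.
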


\begin{proof}[Proof of Theorem \ref{th-aldp12}]
We will prove that $(\twnpz)_{n\in \N}$ and $(V^{(n,p)})_{n\in \N}$ are exponentially equivalent with speed $n^{r_p}$. For $\delta > 0$, $\epsilon > 0$, 
\begin{align*}
\P & (|V^{(n,p)}- \twnpz| > \delta) \\
  &= 2\, \P\left( \frac{1}{n}\sum_{i=1}^n Y_i\,Z_i \cdot \left(1 - \tfrac{n^{1/2}n^{1/p}}{\|Z^{(n)}\|_{n,2}\|Y^{(n,p)}\|_{n,p} } \right)  > \delta \right) \\
   &\le  2\, \P\left( \frac{1}{n}\sum_{i=1}^n Y_i Z_i  > \frac{\delta}{\epsilon} \right)   + 2\, \P\left( 1 - \tfrac{n^{1/2}n^{1/p}}{\|Z^{(n)}\|_{n,2}\|Y^{(n,p)}\|_{n,p} }  > \epsilon\right)\\
   &\le  2\, \P\left( \frac{1}{n}\sum_{i=1}^n Y_i Z_i  > \frac{\delta}{\epsilon} \right)   + 2\,\P\left(\frac{1}{n}\sum_{i=1}^n Z_i^2 > (1-\epsilon)^{-1}  \right) + 2\,\P\left( \frac{1}{n}\sum_{i=1}^n |Y_i|^p > (1-\epsilon)^{-p/2}  \right).
\end{align*}
Note that by Cram\'er's theorem, the second and third terms decay exponentially with speed $n$ since $\E[|Y_1|^p]^{1/p} = \E[|Z_1|^2]^{1/2}= 1$. Thus, for $p\in [1,2)$, the first term is dominant with speed $n^{r_p}$,  yielding the limit
\begin{align*}
  \limsup_{n\ra \infty} \frac{1}{n^{r_p}} \log \P(|V^{(n,p)}- \twnpz| > \delta)  &\le  \limsup_{n\ra \infty} \frac{1}{n^{r_p}} \log \P\left( \frac{1}{n}\sum_{i=1}^n Y_i Z_i  > \frac{\delta}{\epsilon} \right) = -\tfrac{1}{r_p}\left\lvert\tfrac{\delta}{\epsilon}\right\rvert^{r_p}
\end{align*}
where the last equality follows from Proposition \ref{prop-stretched} and quasiconvexity. Sending $\epsilon \rightarrow 0$, we see that $(V^{(n,p)})_{n\in \N}$ and $(\twnpz)_{n\in\N}$ are exponentially equivalent with speed $n^{r_p}$. The LDP for $(\wnpbth)_{n\in\N}$ then follows from Proposition \ref{prop-stretched}, Proposition \ref{prop-expeq}, and the fact that the $U^{1/n}$ factor  in \eqref{def-bwnpbth} can be ignored since $(U^{1/n})_{n\in\N}$ satisfies a large deviation principle with good rate function at speed $n$ (as given by Lemma \ref{lem-uniform}).
\end{proof}

%%%%%%%%%%%%%%%%%%
% QUENCHED
%%%%%%%%%%%%%%%%%%
\section{The quenched LDP}\label{sec-quenched} % [X]

In this section, we prove Theorem \ref{th-qldp}, the quenched LDP for random projections of $\ell^p$ balls. To do so, we prove LDPs for the weighted sum \eqref{wnou2}, which has \emph{deterministic} weights. This task reduces to proving an LDP for sums of  random variables which are independent but not identically distributed (in our case due to the inhomogeneous weights $\thn_i$), for which the G\"artner-Ellis theorem is well suited (see \cite[\S2.3]{DemZeibook}). We first show  in Sect.\ \ref{ssec-pressure} that the convergence of a certain empirical measure implies the convergence of a certain limiting  log mgf which arises in the G\"artner-Ellis theorem. Then, in Sect.\ \ref{ssec-glivenko}, we prove a slight extension of the Glivenko-Cantelli theorem which establishes convergence of the empirical measure in general settings. We specialize to our case of the surface measure $\sigma$ and complete the proof of the quenched LDP  in Sect.\ \ref{ssec-surface}.

%%%%%%%%%%%%%%%%%%
% QUENCHED \ CONVE
%%%%%%%%%%%%%%%%%%
\subsection{Convergence of log mgfs}\label{ssec-pressure}

In what follows, we require two notions of convergence of probability measures. Let $\Rightarrow$ denote weak convergence, and also recall the Wasserstein topology of probability measures. 
\nom[aaa]{$\Rightarrow$}{weak convergence}

\begin{definition}
Let $r\in[1,\infty)$, let $m_r$ be the $r$-th moment as in \eqref{qmomdef}, and let $\mathcal{P}_r(\R) \doteq \{\mu \in \mathcal{P}(\R) : m_r(\mu) < \infty\}$. The \emph{Wasserstein}-$r$ topology on $\mathcal{P}_r(\R)$ is induced  by the following metric:
\begin{equation*}
  \mathcal{W}_r(\mu,\nu)\doteq  \inf_{ \pi \in \Pi(\mu,\nu)} \iint_{\mathbb{R}^2} |x-y|^r\, \pi(dx,dy),
\end{equation*}
where $\Pi(\mu,\nu)$ denotes the set of probability measures on $\mathbb{R}^2$ with first and second marginals $\mu$ and $\nu$, respectively. 
\nom[pxr]{$\mathcal{P}_r(\R)$}{probability measures on $\R$ with finite $r$-th moment}
\nom[wr]{$\mathcal{W}_r$}{Wasserstein-$r$ metric}
\end{definition}

\begin{lemma}[see, e.g., Definition 6.8 and Theorem 6.9 of \cite{villani2008optimal}]\label{lem-wass}
Let $(\mu_n) \subset \mathcal{P}_r(\mathbb{R})$ and $\mu\in \mathcal{P}_r(\R)$. The following are equivalent:
\begin{enumerate}
\item $\mathcal{W}_r(\mu_n,\mu) \ra 0$;
\item $\mu_n\Rightarrow \mu$ and $m_r(\mu_n) \ra m_r(\mu)$;
\item for all continuous functions $\varphi:\R\ra\R$ bounded by $|\varphi(x)| \le C(1+|x|^r)$, $x\in \R$ for some constant $C \in \R$, we have
\begin{equation*}
  \int_\R \varphi(x) \mu_n(dx) \xrightarrow{n\ra \infty} \int_\R \varphi(x) \mu(dx).
\end{equation*}
\end{enumerate}
\end{lemma}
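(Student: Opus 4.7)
The plan is to cycle through implications $(1) \Rightarrow (2) \Rightarrow (3) \Rightarrow (1)$, with the classical one-dimensional quantile representation of $\mathcal{W}_r$ on $\R$ and a uniform integrability bridge at the heart.

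For $(1) \Rightarrow (2)$, I would extract moment convergence from the identity $\mathcal{W}_r(\mu,\delta_0) = m_r(\mu)$ (where $\delta_0$ is the Dirac mass at the origin): combined with the triangle inequality (applied to $\mathcal{W}_r^{1/r}$, which is a genuine metric), this yields $m_r(\mu_n)^{1/r} \to m_r(\mu)^{1/r}$ and hence $m_r(\mu_n) \to m_r(\mu)$. For weak convergence, I would combine $\mathcal{W}_1 \le \mathcal{W}_r^{1/r}$ (Jensen's inequality on any coupling) with the standard fact that $\mathcal{W}_1$ metrizes weak convergence on families with uniformly bounded first moment, which is guaranteed by the moment bound just established.

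For $(2) \Rightarrow (3)$, the key ingredient is uniform integrability of $\{|x|^r\}$ under $(\mu_n)$. Writing $|x|^r = \min(|x|^r, K) + (|x|^r - K)^+$ and using that $\min(|x|^r, K)$ is bounded and continuous, weak convergence gives $\int \min(|x|^r, K)\, d\mu_n \to \int \min(|x|^r, K)\, d\mu$; combining with $m_r(\mu_n) \to m_r(\mu)$ forces $\sup_n \int (|x|^r - K)^+\, d\mu_n \to 0$ as $K \to \infty$. For continuous $\varphi$ with $|\varphi(x)| \le C(1+|x|^r)$, a truncation at a cutoff $K$ then splits the integrals into a bounded continuous piece controlled by weak convergence and a tail piece controlled by the uniform integrability just established.

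For $(3) \Rightarrow (1)$, I would invoke the one-dimensional representation
\[ \mathcal{W}_r(\mu_n, \mu) = \int_0^1 \bigl| F_n^{-1}(t) - F^{-1}(t) \bigr|^r \, dt, \]
where $F_n, F$ are the cumulative distribution functions and the monotone rearrangement is optimal for the Wasserstein cost on $\R$. Weak convergence yields $F_n^{-1}(t) \to F^{-1}(t)$ at every continuity point of $F^{-1}$, hence Lebesgue-almost everywhere on $[0,1]$; applying $(3)$ to $\varphi(x) = |x|^r$ supplies uniform integrability of $|F_n^{-1}|^r$ on $[0,1]$, and Vitali's convergence theorem completes the passage to the limit. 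The main obstacle is the uniform integrability step in $(2) \Rightarrow (3)$, since weak convergence only controls bounded continuous test functions and the moment hypothesis must be parlayed into tail control via a continuous truncation; the quantile representation in $(3) \Rightarrow (1)$ is a secondary delicate point that exploits the one-dimensional setting in an essential way.
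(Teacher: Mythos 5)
Your argument is correct, but note that the paper itself offers no proof of this lemma: it is imported verbatim from the literature (Definition 6.8 and Theorem 6.9 of Villani's book), so the comparison is with the cited source rather than with an argument in the text. Your route is genuinely different and more elementary than the cited one. Villani's theorem is stated and proved for general Polish spaces, where the implication from weak convergence plus moment convergence to Wasserstein convergence is handled through couplings, tightness, and a uniform-integrability characterization; no quantile functions are available there. You instead exploit the one-dimensional setting: the monotone-rearrangement identity $\mathcal{W}_r(\mu_n,\mu)=\int_0^1 |F_n^{-1}(t)-F^{-1}(t)|^r\,dt$ (which, in the paper's convention, is stated without the $1/r$-th root, exactly as you wrote it) reduces $(3)\Rightarrow(1)$ to a.e.\ convergence of quantile functions plus a Scheff\'e/Vitali step, and your $(1)\Rightarrow(2)$ and $(2)\Rightarrow(3)$ steps (triangle inequality for $\mathcal{W}_r^{1/r}$ against $\delta_0$, truncation plus the uniform integrability of $|x|^r$ extracted from moment convergence) are sound; this buys a short, self-contained proof at the price of not generalizing beyond $\R$, which is all the paper needs. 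Two points to tighten: in $(1)\Rightarrow(2)$ the fact you actually need is only that $\mathcal{W}_1(\mu_n,\mu)\to 0$ implies weak convergence, which follows unconditionally from Kantorovich--Rubinstein duality with (bounded) Lipschitz test functions; the phrase ``$\mathcal{W}_1$ metrizes weak convergence on families with uniformly bounded first moment'' is not literally correct (take $\mu_n=(1-\tfrac1n)\delta_0+\tfrac1n\delta_n$), though the implication you use is. And in $(2)\Rightarrow(3)$ the passage from $\limsup_n\int(|x|^r-K)^+\,d\mu_n\le\int(|x|^r-K)^+\,d\mu$ to a genuine $\sup_n$ requires enlarging $K$ to absorb the finitely many remaining indices --- standard, but worth a sentence.
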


For $\theta\in\seq$, let $L_{n,\theta}$ denote the empirical measure,
\begin{equation*}
  L_{n,\theta} \doteq \frac{1}{n}\sum_{i=1}^n \delta_{\sqrt{n}\thn_i}.
\end{equation*}
The goal of this subsection is to prove the following statement that convergence of $(L_{n,\theta})_{n\in \N}$ implies a quenched LDP.

\begin{proposition}\label{prop-quegen}
Let $p\in(1,\infty)$. Let $\rho\in\mathcal{P}(\mathbb{A})$ be a probability measure on the space of triangular arrays $\mathbb{A}$, let $\nu\in\mathcal{P}_{p/(p-1)}(\R)$, and suppose that for $\rho$-a.e.\ $\theta \in \seq$, we have as $n\ra\infty$,
\begin{equation*}
\mathcal{W}_{p/(p-1)}(  L_{n,\theta}, \nu) \rightarrow 0.
\end{equation*}
Then, for  $\rho$-a.e.\ $\theta \in \seq$,   the sequence $(\wnpth)_{n\in\mathbb{N}}$  satisfies an LDP with the quasiconvex, symmetric, good rate function $\iq_{p,\nu}$ of \eqref{ieqdefn}.
\end{proposition}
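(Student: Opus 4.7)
The plan is to reduce to a G\"artner--Ellis argument for a two-dimensional empirical vector and then apply the contraction principle. By Lemma~\ref{lem-reduction}, it suffices to prove the LDP for $(\twnpz_\theta)_{n\in\N}$ defined in \eqref{wnou2}. Using $\lpnrm{\ynp} = n^{1/p}\bigl(\tfrac{1}{n}\sum_i |\ynp_i|^p\bigr)^{1/p}$, one may rewrite $\twnpz_\theta = F(T^{(n,p)})$ where
\[
  T^{(n,p)} \doteq \left(\tfrac{1}{n}\sum_{i=1}^n \sqrt{n}\,\thn_i\,\ynp_i,\ \tfrac{1}{n}\sum_{i=1}^n |\ynp_i|^p\right)\in\R^2, \qquad F(\tau_1,\tau_2) \doteq \tau_1\,\tau_2^{-1/p}.
\]
Since the coordinates of $T^{(n,p)}$ are empirical means of \emph{independent} (but, due to the weights $\sqrt{n}\,\thn_i$, not identically distributed) random variables, its normalized log mgf factors as
\[
  \tfrac{1}{n}\log\E\bigl[e^{n t_1 T^{(n,p)}_1 + n t_2 T^{(n,p)}_2}\bigr] \;=\; \int_\R \lm_p(t_1 u, t_2)\,L_{n,\theta}(du), \qquad (t_1,t_2)\in\R^2.
\]

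Next I would show pointwise convergence of these log mgfs to $\lmq_{p,\nu}(t_1,t_2)$ under the Wasserstein-$p/(p-1)$ hypothesis. The key estimate is that Young's inequality applied with conjugate exponents $p$ and $q = p/(p-1)$ gives, for any $t_2 < 1/p$ and any $\epsilon>0$ with $1-pt_2-\epsilon>0$, the bound
\[
  \bigl|\lm_p(t_1 u, t_2)\bigr| \;\le\; C_{t_1,t_2}\bigl(1+|u|^{p/(p-1)}\bigr),
\]
with a matching lower bound from Jensen's inequality and the symmetry of $\mu_p$. Since $u\mapsto \lm_p(t_1 u, t_2)$ is continuous with $|u|^{p/(p-1)}$ growth, characterization (3) in Lemma~\ref{lem-wass} yields the desired pointwise convergence on the effective domain $D = \R\times(-\infty,1/p)$; for $t_2 \ge 1/p$, both sides diverge to $+\infty$.

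To apply the G\"artner--Ellis theorem, I would then verify that $\lmq_{p,\nu}$ is lower semi-continuous (being convex), that $0$ lies in $D^\circ$, and that $\lmq_{p,\nu}$ is essentially smooth: differentiability on $D^\circ$ comes from differentiating under the integral (again legitimized by the growth bound), while steepness as $t_2\uparrow 1/p$ follows from the explicit identity $\lm_p(0,t_2) = -\tfrac{1}{p}\log(1-pt_2)$, together with monotone convergence applied to $\partial_{t_2}\lmq_{p,\nu}$. This yields an LDP for $(T^{(n,p)})_{n\in\N}$ with good rate function $\lmq_{p,\nu}^*$. Taking $t_1=0$ and letting $t_2\to -\infty$ shows $\lmq_{p,\nu}^*(\tau_1,\tau_2)=+\infty$ unless $\tau_2>0$, so the discontinuity of $F$ at $\tau_2=0$ is harmless, and the contraction principle delivers the LDP for $(\twnpz_\theta)$ with rate $\iq_{p,\nu}$. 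Finally, the symmetry $\lmq_{p,\nu}(-t_1,t_2) = \lmq_{p,\nu}(t_1,t_2)$ (inherited from $\mu_p$) gives symmetry of $\iq_{p,\nu}$, and quasiconvexity follows exactly as in the proof of Lemma~\ref{lem-reduction}, permitting the final appeal to Lemma~\ref{lem-reduction} to pass from $(\twnpz_\theta)$ back to $(\wnpth)$.

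The main anticipated obstacle is verifying essential smoothness --- specifically, steepness --- at the boundary $t_2 = 1/p$, since the integrand $\lm_p(t_1 u, t_2)$ and its partials must be controlled uniformly in $u$ against a potentially heavy-tailed $\nu$. A secondary concern is making the contraction rigorous when $F$ is only continuous off $\{\tau_2 = 0\}$; this is resolved by confirming that $\lmq_{p,\nu}^*$ is concentrated on $\R\times(0,\infty)$, after which a restriction of $F$ suffices.
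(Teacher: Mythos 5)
Your proposal follows essentially the same route as the paper: reduce via Lemma~\ref{lem-reduction}, identify $\twnpz_\theta$ as the image under $(\tau_1,\tau_2)\mapsto\tau_1\tau_2^{-1/p}$ of the two-dimensional empirical vector $R_\theta^{(n,p)}$, recognize the normalized log mgf as $\int \lm_p(t_1u,t_2)\,L_{n,\theta}(du)$, use the Wasserstein-$p/(p-1)$ hypothesis with a $|u|^{p/(p-1)}$ growth bound (the paper's Lemmas~\ref{lem-taildecay}--\ref{lem-tailcont}) to pass to the limit, verify the G\"artner--Ellis hypotheses, and contract. One small slip: you assert $\lmq_{p,\nu}$ is lower semi-continuous ``being convex,'' but convexity alone does not give lower semi-continuity for extended-real-valued functions at the boundary of the effective domain; the paper supplies this via Fatou's lemma applied to the lsc of $\lm_p$ itself (Lemma~\ref{lem-quenchedess}), which is the fix you would need. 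Everything else --- including your Young's-inequality growth bound, the steepness check at $t_2\uparrow 1/p$, and the observation that $D_{\lmq_{p,\nu}^*}\subset\R\times(0,\infty)$ makes the contraction legitimate --- matches the paper's argument.
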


We defer the proof of Proposition \ref{prop-quegen} to the end of this subsection (see p.\pageref{page-propproof}).

\begin{remark}
A slightly different approach to proving the ``product" version of Theorem \ref{th-qldp} can be found in \cite{gkr3}; that argument does not appeal to the convergence of empirical measures assumed by Proposition \ref{prop-quegen}. However, Proposition \ref{prop-quegen} has the benefit of giving a concrete interpretation of the quenched rate function $\iq_{p,\nu}$ for any $\nu\in\mathcal{P}_{p/(p-1)}(\R)$ associated with a conditioned ``environment" $\theta$.
\end{remark}

We now establish some notation and several preliminary lemmas. For  $\gamma \in \mathcal{P}(\R)$, let 
\begin{equation}\label{logmgfdef}
\mathrm{M}_\gamma(t) \doteq \int_\R e^{ty} \gamma(dy)
\end{equation}
denote the moment generating function (mgf) of $\gamma$. Let $\mathcal{T}_q$ denote the set of probability measures on $\R$ with tails dominated by the tails of $\mu_q$, in the following sense.
\begin{equation}\label{tpdef}
\mathcal{T}_q \doteq \left\{ \gamma\in\mathcal{P}(\R) : \exists \, C < \infty  \text{ s.t. } \forall\, t\in \R, \quad \log\mathrm{M}_\gamma(t) < C|t|^{q/q-1} + C  \right\}.
\end{equation}
Note that $\mathcal{T}_p \supset \mathcal{T}_q$ for $p < q$, and $\mathcal{T}_2$ consists of subgaussian measures.
\nom[mynu]{$\mathrm{M}_\gamma$}{moment generating function}
\nom[tp]{$\mathcal{T}_q$}{measures w/ $f_q$-dominated tails}

\begin{lemma}\label{lem-taildecay}
Suppose $\gamma\in\mathcal{P}(\R)$ has density $f$ and $q\in[1,\infty)$ is such  that there exist constants  $0 < c_\gamma, d_\gamma < \infty$ such that for all $|x| > d_\gamma$,
\begin{equation*}
  f(x) \le  c_\gamma e^{-c_\gamma |x|^q/q}.  
\end{equation*}
Then, $\gamma\in\mathcal{T}_q$. In particular,  for $q\in[1,\infty)$, we have $\mu_q\in\mathcal{T}_q$.
\end{lemma}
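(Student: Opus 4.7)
The plan is to directly estimate the moment generating function $\mathrm{M}_\gamma(t) = \int_\R e^{tx} f(x)\,dx$ by splitting the integration region into a bounded piece $\{|x| \le d_\gamma\}$ and a tail piece $\{|x| > d_\gamma\}$, and to apply Young's inequality on the tail piece to extract the desired $|t|^{q/(q-1)}$ growth. The first piece is trivially bounded by $e^{|t| d_\gamma}$, since $f$ integrates to at most $1$ over any region.

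For the tail piece, I would use the hypothesis $f(x) \le c_\gamma e^{-c_\gamma |x|^q/q}$ to bound
\[
  \int_{|x|>d_\gamma} e^{tx} f(x)\,dx \;\le\; c_\gamma \int_\R \exp\bigl(tx - c_\gamma |x|^q/q\bigr)\,dx,
\]
and then invoke Young's inequality with H\"older conjugate exponents $q$ and $q' = q/(q-1)$ to produce a constant $K_\gamma = K_\gamma(c_\gamma,q)$ such that $|t|\,|x| \le \tfrac{c_\gamma}{2q}|x|^q + K_\gamma |t|^{q'}$. Substituting yields $tx - c_\gamma |x|^q/q \le K_\gamma |t|^{q'} - \tfrac{c_\gamma}{2q}|x|^q$, so the tail integral is bounded by $c_\gamma e^{K_\gamma |t|^{q'}} \int_\R e^{-c_\gamma |x|^q/(2q)}\,dx$, a finite multiple of $e^{K_\gamma |t|^{q'}}$.

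Adding the two estimates gives $\mathrm{M}_\gamma(t) \le e^{|t| d_\gamma} + C_1 e^{K_\gamma |t|^{q'}}$, and since $q' > 1$ the second summand dominates at infinity. Taking logarithms and absorbing constants produces an inequality $\log \mathrm{M}_\gamma(t) \le C|t|^{q'} + C$ valid uniformly in $t\in\R$, which is precisely the membership condition for $\mathcal{T}_q$. The ``in particular'' claim then reduces to checking the hypothesized tail bound for the explicit density $f_q$ of $\mu_q$ defined in \eqref{fpdef}: a constant $c_\gamma \in (0,1)$ taken smaller than the normalizing prefactor $1/(2 q^{1/q}\Gamma(1+1/q))$ handles the multiplicative part, while replacing $|x|^q/q$ by $c_\gamma |x|^q/q$ in the exponent costs a factor $e^{(1-c_\gamma)|x|^q/q}$ which is absorbed by taking $d_\gamma$ sufficiently large.

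There is no serious obstacle here; this is a standard subexponential-tail computation whose only delicate step is the Young-inequality bookkeeping needed to ensure that the single constant $c_\gamma$ simultaneously plays both roles in the hypothesis, and to absorb the $|t| d_\gamma$ term from the bounded piece into the $|t|^{q'}$ term from the tail (valid because $q' > 1$, i.e.\ $q \in (1,\infty)$). The edge case $q = 1$ is interpreted with the convention $q/(q-1) = \infty$, consistent with the monotonicity $\mathcal{T}_p \supset \mathcal{T}_q$ for $p < q$ noted immediately after \eqref{tpdef}.
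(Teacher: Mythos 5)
Your proposal is correct and follows essentially the same route as the paper: the paper's proof simply invokes Young's inequality (deferring details to Lemma 2.3 of \cite{gkr3}), and your splitting of $\mathrm{M}_\gamma(t)$ into the bounded region plus the tail, with Young's inequality applied with exponents $q$ and $q/(q-1)$ on the tail, is exactly that argument written out. The only cosmetic quibbles are in the ``in particular'' step, where one wants $c_\gamma\in(0,1)$ with the prefactor absorbed either by choosing $c_\gamma$ at least as large as it or (as you note) by enlarging $d_\gamma$, and in the $q=1$ edge case, which is degenerate in the definition of $\mathcal{T}_q$ and is not treated more carefully in the paper either.
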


\begin{proof}
The first assertion of the lemma follows from a simple application of Young's inequality (see \cite[Lemma 2.3]{gkr3} for details). The second assertion is a simple consequence of the first.
\end{proof}

\begin{lemma}\label{lem-subgsn}
Let $p\in (1,\infty)$. For $\gamma\in\mathcal{T}_p$ and $t \in\R$, the map 
\begin{equation}\label{gammap}
  \mathcal{P}_{p/(p-1)}(\R)\ni \nu\mapsto \int_\R \log \mathrm{M}_\gamma(tu)\,\nu(du)  \in \R
\end{equation}
is continuous with respect to the Wasserstein-$\tfrac{p}{p-1}$ topology.
\end{lemma}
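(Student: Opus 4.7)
The plan is to invoke characterization (iii) of Lemma \ref{lem-wass}: $\mathcal{W}_{p/(p-1)}$-convergence $\nu_n \to \nu$ is equivalent to $\int \varphi\, d\nu_n \to \int \varphi\, d\nu$ for every continuous $\varphi:\R\to\R$ satisfying $|\varphi(u)| \le C(1+|u|^{p/(p-1)})$. Thus it suffices to verify that the integrand $\varphi_t(u) \doteq \log \mathrm{M}_\gamma(tu)$ is continuous on $\R$ and obeys such a growth bound, with constants that may depend on $t$ and $\gamma$ but not on $u$.

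Continuity is immediate: the hypothesis $\gamma \in \mathcal{T}_p$ (see \eqref{tpdef}) gives a finite upper bound on $\log \mathrm{M}_\gamma(s)$ for every $s \in \R$, so $\mathrm{M}_\gamma$ is finite (in fact analytic) on all of $\R$, and the composition $u \mapsto \log \mathrm{M}_\gamma(tu)$ is therefore continuous. For the \emph{upper} bound on $\varphi_t$, I would read off \eqref{tpdef} directly: there is a constant $C < \infty$ with
\begin{equation*}
  \log \mathrm{M}_\gamma(tu) \le C|tu|^{p/(p-1)} + C \le C\bigl(1 + |t|^{p/(p-1)}\bigr)\bigl(1 + |u|^{p/(p-1)}\bigr),
\end{equation*}
which has the required shape.

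The only point requiring a moment's thought is the matching \emph{lower} bound, since $\log \mathrm{M}_\gamma$ can in principle take large negative values. Here I would use two standard facts: (a) $s\mapsto \log \mathrm{M}_\gamma(s)$ is convex with value $0$ at $s=0$; and (b) $\gamma \in \mathcal{T}_p$ forces $\gamma$ to have finite mean $\bar y \doteq \int_\R y\,\gamma(dy)$ (all moments are finite because the mgf is finite in a neighborhood of $0$). Then Jensen's inequality gives $\log \mathrm{M}_\gamma(s) \ge s\bar y$ for every $s$, hence
\begin{equation*}
  \log \mathrm{M}_\gamma(tu) \ge tu\bar y \ge -|t\bar y|\,|u| \ge -|t\bar y|\bigl(1 + |u|^{p/(p-1)}\bigr),
\end{equation*}
where the last inequality uses $|u| \le 1 + |u|^{p/(p-1)}$ (valid since $p/(p-1) \ge 1$ for $p \in (1,\infty)$). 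Combining the two bounds yields $|\varphi_t(u)| \le C_{t,\gamma}(1+|u|^{p/(p-1)})$, and the lemma follows from Lemma \ref{lem-wass}.

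There is no real obstacle; the main modeling choice is exactly the exponent $p/(p-1)$ used to define the Wasserstein topology on $\mathcal{P}_{p/(p-1)}(\R)$, which is pinned down by matching the growth $|s|^{p/(p-1)}$ appearing in \eqref{tpdef} to the $|u|^{p/(p-1)}$ test-function bound in Lemma \ref{lem-wass}(iii). This is precisely the Young-duality reason for requiring $\gamma \in \mathcal{T}_p$ rather than merely $\mu_p$-dominated tails.
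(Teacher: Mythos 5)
Your proof is correct and follows the same route as the paper's (verify continuity of $u \mapsto \log \mathrm{M}_\gamma(tu)$ plus the polynomial growth bound from the $\mathcal{T}_p$ definition, then invoke Lemma~\ref{lem-wass}(3)). You are in fact slightly more careful than the paper: the paper only records the upper bound $\log \mathrm{M}_\gamma(tu) < C|u|^{p/(p-1)} + C$, leaving the lower bound implicit (it is automatic when $\gamma$ is centered, as in the intended application $\gamma = \mu_p$), while you supply the matching lower bound via Jensen's inequality, which is the right thing to do given that the lemma is stated for general $\gamma \in \mathcal{T}_p$.
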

\begin{proof}
Fix $t \in \R$. Then, the map $u \mapsto \log M_\gamma (tu)$ is clearly 
continuous and the definition of $\mathcal{T}_p$ implies that 
\begin{equation*}
\log \mathrm{M}_\gamma(tu) < C |u|^{p/(p-1)} + C, \quad u\in \R,
\end{equation*}
for some constant $C$ depending on $t$ and $\gamma$, but not $u$. The continuity of  \eqref{gammap} with respect to the Wasserstein-$\tfrac{p}{p-1}$ topology follows from the equivalent formulation of Wasserstein convergence given by Lemma \ref{lem-wass}(3).
\end{proof}

\begin{lemma}\label{lem-tailcont}
Let $p\in [1,\infty)$, and let $\lm_p$ and $\lmq_{p,\nu}$ be as defined in \eqref{lampdefn0} and \eqref{lampdefn}, respectively. Then, 
\begin{equation*}
 \lm_p(t_1,t_2) = -\tfrac{1}{p}\log(1-pt_2) + \log \mathrm{M}_{\mu_p}\left(\tfrac{t_1}{(1-pt_2)^{1/p}}\right), \quad t_1\in \R, t_2<\tfrac{1}{p}. 
\end{equation*}
As a consequence, for $p\in(1,\infty)$, $t_1\in\R$, $t_2<\tfrac{1}{p}$, the map 
\begin{equation*}
  \mathcal{P}_{p/(p-1)}(R)\ni \nu\mapsto \lmq_{p,\nu}(t_1,t_2) \in \R
\end{equation*}
is continuous with respect to the Wasserstein-$\tfrac{p}{p-1}$ topology.
\end{lemma}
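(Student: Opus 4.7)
The plan is to prove the two assertions in sequence, with the first (a closed-form identity) being an elementary rescaling computation that sets up the second (a continuity claim) as an almost immediate consequence of Lemma \ref{lem-subgsn}.

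For the first identity, I would start from the definition
\begin{equation*}
\lm_p(t_1,t_2) = \log \int_\R \exp(t_1 y + t_2 |y|^p)\, \mu_p(dy)
\end{equation*}
and substitute the density $f_p(y) = \frac{1}{2p^{1/p}\Gamma(1+1/p)} e^{-|y|^p/p}$ from \eqref{fpdef}. Combining the two exponentials involving $|y|^p$ produces a factor $\exp(-(1-pt_2)|y|^p/p)$, which (by the assumption $t_2 < 1/p$) still gives an integrable integrand. I would then apply the change of variables $u = (1-pt_2)^{1/p} y$, so that $|y|^p = |u|^p/(1-pt_2)$ and $dy = (1-pt_2)^{-1/p}\, du$. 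The Jacobian factor $(1-pt_2)^{-1/p}$ can be pulled outside the log to yield the $-\frac{1}{p}\log(1-pt_2)$ term, while the remaining integral is exactly
\begin{equation*}
\int_\R \exp\!\Bigl(\tfrac{t_1}{(1-pt_2)^{1/p}}\, u\Bigr)\, \mu_p(du) = \mathrm{M}_{\mu_p}\!\Bigl(\tfrac{t_1}{(1-pt_2)^{1/p}}\Bigr),
\end{equation*}
giving the claimed formula.

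For the continuity of $\nu \mapsto \lmq_{p,\nu}(t_1,t_2)$, I would apply the just-derived identity with $t_1$ replaced by $t_1 u$ inside the integrand of \eqref{lampdefn}, yielding
\begin{equation*}
\lmq_{p,\nu}(t_1,t_2) = -\tfrac{1}{p}\log(1-pt_2) + \int_\R \log \mathrm{M}_{\mu_p}\!\Bigl(\tfrac{t_1}{(1-pt_2)^{1/p}}\, u\Bigr)\, \nu(du).
\end{equation*}
The first term does not depend on $\nu$. For the second, set $t \doteq t_1/(1-pt_2)^{1/p}$, which is a fixed real number. By Lemma \ref{lem-taildecay}, $\mu_p \in \mathcal{T}_p$ (its density decays like $e^{-|y|^p/p}$), so Lemma \ref{lem-subgsn} applied with $\gamma = \mu_p$ and this $t$ gives exactly the required Wasserstein-$p/(p-1)$ continuity of $\nu \mapsto \int_\R \log \mathrm{M}_{\mu_p}(tu)\, \nu(du)$.

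There is no substantive obstacle here: the identity is a one-line scaling computation, and the continuity assertion was engineered in Lemmas \ref{lem-taildecay} and \ref{lem-subgsn} precisely to cover this application. The only minor care needed is to verify that the condition $t_2 < 1/p$ ensures both the integrability used in the change of variables and the finiteness of $\log(1-pt_2)$, which are both immediate.
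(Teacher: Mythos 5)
Your proposal is correct and follows essentially the same route as the paper: the identity is obtained by the same change of variables $u=(1-pt_2)^{1/p}y$ in the definition of $\lm_p$, and the continuity is deduced exactly as in the paper by combining Lemma \ref{lem-taildecay} (to get $\mu_p\in\mathcal{T}_p$) with Lemma \ref{lem-subgsn} applied to $t=t_1/(1-pt_2)^{1/p}$.
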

\begin{proof}
By the change of variables $x = (1-pt_2)^{1/p}y$ and the form of the density of $\mu_p$ given by \eqref{fpdef}, we write
\begin{align*}
\lm_p(t_1,t_2)   &=  \log\int_{\mathbb{R}} e^{t_1y} \tfrac{1}{2p^{1/p}\Gamma(1+\tfrac{1}{p})}   e^{-(1-pt_2) |y|^p/p} dy \\
   &=\log\left( \tfrac{1}{(1-pt_2)^{1/p}} \int_{\mathbb{R}} \exp \left(\tfrac{t_1}{(1-pt_2)^{1/p}}x\right)  \tfrac{1}{2p^{1/p}\Gamma(1+\tfrac{1}{p})}e^{-|x|^p/p} dx\right) \\
   &=-\tfrac{1}{p}\log(1-pt_2) + \log \mathrm{M}_{\mu_p}\left(\tfrac{t_1}{(1-pt_2)^{1/p}}\right).
\end{align*}
We now prove the continuity part of the lemma. Due to Lemma \ref{lem-taildecay}, $\mu_p \in \mathcal{T}_p$, and therefore by Lemma \ref{lem-subgsn}, $\nu\mapsto\int_\R \log \mathrm{M}_{\mu_p}(tu)\,\nu(du)$ is continuous for all $t\in \R$. Combined with the preceding display, this implies that $\nu\mapsto \lmq_{p,\nu}(t_1,t_2)$ is continuous for all $t_1\in\R$ and $t_2< \frac{1}{p}$.
\end{proof}

Whereas Lemma \ref{lem-tailcont} will be applied to establish the convergence of certain log mgfs, Lemma \ref{lem-lamfacts} and Lemma  \ref{lem-quenchedess} will be used to show that the limit log mgf satisfies the hypotheses of the G\"artner-Ellis theorem. We refer to Theorem 2.3.6 of \cite{DemZeibook} for a precise statement, and Definition 2.3.5 of \cite{DemZeibook} for the definition of \emph{essentially smooth}.

\begin{lemma}\label{lem-lamfacts}
Let $p\in(1,\infty)$. Then, $D_{\lm_p} = \R\times (-\infty, \frac{1}{p})$ and $\lm_p$ is strictly convex on its effective domain, lower semi-continuous, and essentially smooth. Furthermore, $\lm_p$ is symmetric in its first argument. Lastly, $\lm_p$ is non-decreasing in its second argument; that is, for fixed $t_1 \in \R$ and $t_2 < t_2'$, 
we have $\lm_p(t_1,t_2) \le \lm_p(t_1,t_2')$.
\end{lemma}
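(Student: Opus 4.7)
My plan is to attack the six properties of $\lm_p$ in roughly the order listed. The setup for everything is the identity already recorded (and to be proved in Lemma \ref{lem-tailcont}):
\[
  \lm_p(t_1,t_2) = -\tfrac1p \log(1-pt_2) + \log \mathrm{M}_{\mu_p}\!\bigl(t_1(1-pt_2)^{-1/p}\bigr), \qquad t_2 < \tfrac1p,
\]
obtained by substituting $x = (1-pt_2)^{1/p} y$ inside the defining integral. Since $\mu_p\in\mathcal{T}_p\subset\mathcal{T}_q$ for all $q\le p$, the function $u\mapsto \mathrm{M}_{\mu_p}(u)$ is finite and smooth on all of $\R$, so this formula is finite on $\R\times(-\infty,\tfrac1p)$ and infinite otherwise (direct inspection of the defining integral: if $t_2\ge 1/p$ the integrand fails to decay at $\pm\infty$ since $p>1$). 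This identifies $D_{\lm_p} = \R\times(-\infty,\tfrac{1}{p})$.

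For convexity, I would view $\lm_p$ as the log moment generating function of the $\R^2$-valued random variable $(Y,|Y|^p)$ with $Y\sim\mu_p$, which gives convexity on $D_{\lm_p}$ by H\"older's inequality in the standard way. Strict convexity follows because the vector $(Y,|Y|^p)$ is not supported on any affine line of $\R^2$ (the curve $\{(y,|y|^p):y\in\R\}$ is not contained in one for $p>1$, and $\mu_p$ has no atoms), so its covariance matrix is positive definite at every interior point. Lower semi-continuity is trivial inside $D_{\lm_p}$ (by dominated convergence), and at a boundary point $(t_1,1/p)$ one applies Fatou inside the log: for any sequence $(t_1^{(n)},t_2^{(n)})\to(t_1,1/p)$ with $t_2^{(n)}<1/p$, the integrand $e^{t_1^{(n)}y+t_2^{(n)}|y|^p}$ has pointwise $\liminf$ equal to $e^{t_1 y + |y|^p/p}$, and integrating the latter against $\mu_p$ gives a divergent integral, whence $\liminf_n \lm_p(t_1^{(n)},t_2^{(n)}) = +\infty$.

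The symmetry $\lm_p(t_1,t_2)=\lm_p(-t_1,t_2)$ is immediate from the evenness of $\mu_p$ (send $y\mapsto -y$ in the defining integral), and the monotonicity in $t_2$ is immediate because $t_2\mapsto e^{t_2|y|^p}$ is non-decreasing for every fixed $y$ (so is the integral). These are the quick items.

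The main obstacle is \emph{essential smoothness}, which requires steepness: $\lvert \nabla \lm_p(t_1^{(n)},t_2^{(n)})\rvert \to \infty$ whenever $(t_1^{(n)},t_2^{(n)})\in D_{\lm_p}^\circ$ approaches the boundary $\R\times\{1/p\}$. Differentiability on $D_{\lm_p}^\circ$ is standard via dominated convergence. For steepness, I would work with
\[
  \partial_{t_2}\lm_p(t_1,t_2) = \mathbb{E}_{Q_{t_1,t_2}}\!\bigl[|Y|^p\bigr],
\]
where $Q_{t_1,t_2}$ is the exponentially tilted measure with density proportional to $e^{t_1 y + t_2|y|^p}$ with respect to $\mu_p$. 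The substitution $x=(1-pt_2)^{1/p}y$ turns this into $(1-pt_2)^{-1}\,\mathbb{E}[\lvert X\rvert^p]$ where $X$ has law with density $\propto e^{t_1 x/(1-pt_2)^{1/p}-|x|^p/p}$. As $t_2\uparrow 1/p$, the prefactor $(1-pt_2)^{-1}$ diverges, while $\mathbb{E}[|X|^p]$ remains bounded below by a positive constant (and grows if $|t_1|$ induces a drift), so $\partial_{t_2}\lm_p\to+\infty$, giving steepness. Combined with differentiability and the nonempty interior of $D_{\lm_p}$, this completes the verification that $\lm_p$ is essentially smooth in the sense of \cite[Def.\ 2.3.5]{DemZeibook}.
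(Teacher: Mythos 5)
Your proposal is correct and takes essentially the same approach the paper intends, namely reading off everything from the change-of-variables representation in Lemma \ref{lem-tailcont} together with standard properties of moment generating functions; the paper simply asserts this in a one-line proof, whereas you supply the details (strict convexity via non-degeneracy of the covariance of the tilted law of $(Y,|Y|^p)$, lower semi-continuity at the boundary via Fatou, and steepness by showing $\partial_{t_2}\lm_p$ blows up because the $(1-pt_2)^{-1}$ prefactor diverges while the tilted $p$-th moment stays bounded away from zero), and all of those details are sound.
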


\begin{proof} 
This is a basic consequence of standard properties of mgfs and the representation of Lemma \ref{lem-tailcont}. 
\end{proof}

\begin{lemma}\label{lem-quenchedess}
Let  $p\in(1,\infty)$ and $\nu\in\mathcal{P}_{p/(p-1)}(\mathbb{R})$. Then, $\lmq_{p,\nu}$ is essentially smooth and lower semi-continuous, and $0 \in D_{\lmq_{p,\nu}}^\circ$.
\end{lemma}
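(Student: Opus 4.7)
\emph{Plan.} I will verify the three claims---(i) $0 \in D_{\lmq_{p,\nu}}^\circ$, (ii) lower semi-continuity, and (iii) essential smoothness---by combining the explicit representation of $\lm_p$ from Lemma \ref{lem-tailcont} with the tail bound $\log \mathrm{M}_{\mu_p}(s) \le C|s|^{p/(p-1)} + C$ guaranteed by $\mu_p \in \mathcal{T}_p$ (Lemma \ref{lem-taildecay}) and the moment hypothesis $m_{p/(p-1)}(\nu) < \infty$.

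For (i), I would identify the domain as $D_{\lmq_{p,\nu}} = \R \times (-\infty, 1/p)$. Using the representation of Lemma \ref{lem-tailcont} and the $\mathcal{T}_p$ bound, for $t_2 < 1/p$ we have
\begin{equation*}
  \lm_p(t_1 u, t_2) \le -\tfrac{1}{p}\log(1-pt_2) + C(1-pt_2)^{-1/(p-1)}|t_1|^{p/(p-1)}|u|^{p/(p-1)} + C,
\end{equation*}
which is $\nu$-integrable by the moment hypothesis. For $t_2 \ge 1/p$, Lemma \ref{lem-lamfacts} gives $\lm_p(\cdot,t_2)\equiv +\infty$, so $\lmq_{p,\nu}(\cdot, t_2)\equiv +\infty$. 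The resulting domain is open and contains the origin.

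For (ii), I note that $\lmq_{p,\nu}$ is convex as an integral of convex functions, and hence continuous on $D_{\lmq_{p,\nu}}^\circ$. For a boundary sequence $(t_1^n, t_2^n) \to (t_1^*, 1/p)$, symmetry of $\mu_p$ and Jensen's inequality give $\log \mathrm{M}_{\mu_p}(s) \ge 0$, so that $\lm_p(t_1 u, t_2) \ge -\tfrac{1}{p}\log(1-pt_2)$ whenever $t_2 < 1/p$. Integrating yields $\lmq_{p,\nu}(t_1^n, t_2^n) \ge -\tfrac{1}{p}\log(1-pt_2^n) \to +\infty$ along the portion of the sequence in $D^\circ$, while outside $D^\circ$ the value is already $+\infty$; hence $\liminf_n \lmq_{p,\nu}(t_1^n,t_2^n) = +\infty$ as required.

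For (iii), non-emptiness of $D^\circ$ follows from (i); differentiability on $D^\circ$ is obtained by differentiating under the integral sign, justified by dominated convergence applied with explicit polynomial-in-$|u|$ bounds of degree $p/(p-1)$ on $|\partial_{t_i}\lm_p(t_1 u, t_2)|$ derived from Lemma \ref{lem-tailcont} and the asymptotics $(\log \mathrm{M}_{\mu_p})'(s)=O(|s|^{1/(p-1)})$. To verify steepness along the boundary $\R \times \{1/p\}$, fix a sequence $(t_1^n, t_2^n) \to (t_1^*, 1/p)$ in $D^\circ$; without loss of generality $t_2^n > 0$ eventually. Convexity of $t_2\mapsto \lmq_{p,\nu}(t_1^n,t_2)$ gives
\begin{equation*}
  \partial_{t_2}\lmq_{p,\nu}(t_1^n, t_2^n) \ge \frac{\lmq_{p,\nu}(t_1^n, t_2^n) - \lmq_{p,\nu}(t_1^n, 0)}{t_2^n},
\end{equation*}
whose numerator diverges by the argument for (ii), while $\lmq_{p,\nu}(t_1^n, 0)$ stays bounded by continuity in $t_1$ and $t_2^n$ stays bounded; hence $|\nabla \lmq_{p,\nu}(t_1^n, t_2^n)| \to \infty$. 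The main obstacle is the uniform integrability bookkeeping for differentiation under the integral, which requires an explicit polynomial-in-$|u|$ control on $\partial_{t_i}\lm_p(t_1 u, t_2)$ obtained from the tilted generalized-normal representation; once these dominants are in hand, the remaining arguments are essentially routine.
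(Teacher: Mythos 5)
Your proof is correct and reaches the same conclusion with essentially the same ingredients (Lemma \ref{lem-tailcont}'s explicit representation, the $\mathcal{T}_p$ tail bound from Lemma \ref{lem-taildecay}, and the moment hypothesis on $\nu$), but the mechanism you use for the lower semi-continuity and steepness steps differs from the paper's. The paper handles both via Fatou's Lemma: for lsc, it passes the lsc of $\lm_p$ through the integral; for steepness, it notes $\partial_{t_2}\lm_p\ge 0$, applies Fatou to $\int \partial_{t_2}\lm_p(t_1 u,t_2)\,\nu(du)$, and imports the steepness of $\lm_p$ from Lemma \ref{lem-lamfacts}. You instead argue directly at the level of $\lmq_{p,\nu}$: lsc follows from finite-dimensional convexity (continuity on the open domain) plus the explicit blow-up $\lmq_{p,\nu}(t_1,t_2)\ge -\tfrac{1}{p}\log(1-pt_2)$ from Jensen's inequality; steepness then follows by a convex secant inequality in $t_2$, driven by that same blow-up. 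Your route is slightly more self-contained (it does not need the steepness of $\lm_p$ as a separate input and it reuses the boundary blow-up estimate twice), while the paper's Fatou argument is more compact and uniform across the two steps. Both treatments of differentiation under the integral are sketches of the same DCT argument; the paper's dominating function $g_{t_1,t_2}(u)=|\nabla\lm_p((t_1-1)u,t_2)|+|\nabla\lm_p((t_1+1)u,t_2)|$ and your explicit $O(|u|^{p/(p-1)})$ bound on the partial derivatives are equivalent in effect, and the integrability check needed in either case is exactly the $m_{p/(p-1)}(\nu)<\infty$ hypothesis you identify, so your flag that this is the main bookkeeping step is apt rather than a gap.
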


\begin{proof}
Recall from Lemma \ref{lem-lamfacts} that $D_{\lm_p} = \R \times (-\infty,\frac{1}{p})$. For $(t_1,t_2)\not\in D_{\lm_p}$, note that $\lmq_{p,\nu}(t_1,t_2) = +\infty$ for all $\nu\in \mathcal{P}(\R)$. Due to Lemmas \ref{lem-subgsn} and \ref{lem-tailcont}, there exists a constant $C < \infty$ such that for all $t_1\in \mathbb{R}$ and $t_2 < \frac{1}{p}$, 
\begin{align*}
  \lmq_{p,\nu}(t_1,t_2) &\le - \tfrac{1}{p}\log\left(  1- pt_2\right)  + \int_\mathbb{R} \left( C\left|\tfrac{ t_1z}{(1- pt_2)^{1/p}}\right|^{p/(p-1)}   + C\right) \nu(dz)\\
     &= - \tfrac{1}{p}\log\left(  1- pt_2\right)  + C\tfrac{|t_1|^{p/(p-1)}}{(1- pt_2)^{1/(p-1)}} m_{p/(p-1)}(\nu)  + C< \infty.
\end{align*}
That is,
\begin{equation*}
D_{\lmq_{p,\nu}}^\circ = \mathbb{R}\times (-\infty,\tfrac{1}{p}) \ni 0.
\end{equation*}

As for essential smoothness, first note that differentiability of $\lmq_{p,\nu}$  in $D_{\lmq_{p,\nu}}^\circ$ follows from the differentiability of $(t_1,t_2)\mapsto \lm_p(t_1u,t_2)$ for all $u\in \R$ and an application of the dominated convergence theorem with the dominating function 
\begin{equation*}
  g_{t_1,t_2}(u) \doteq |\nabla\lm_p((t_1-1)u,t_2)| +  |\nabla\lm_p((t_1+1)u,t_2)|.
\end{equation*}
We refer to Lemma 3.8 of \cite{gkr3} for a similar argument in greater detail. Note by Lemma \ref{lem-lamfacts} that $\partial_{t_2} \lm_p \ge 0$, which implies
\begin{equation*}
 | \nabla \lmq_{p,\nu}(t_1,t_2)| \ge  | \partial_{t_2} \lmq_{p,\nu}(t_1,t_2) |  = \left|\int_\mathbb{R} \partial_{t_2} \lm_p(t_1u,t_2) \nu(du)\right| =  \int_\mathbb{R} \partial_{t_2} \lm_p(t_1u,t_2) \nu(du),
\end{equation*}
Then, by Fatou's Lemma, for $t' \in \mathbb{R}$, 
\begin{align*}
\liminf_{(t_1,t_2) \ra (t',  1/p)}  | \nabla \lmq_{p,\nu}(t_1,t_2)| &\ge   \liminf_{(t_1,t_2) \ra (t',  1/p)} \int_\mathbb{R} \partial_{t_2} \lm_p(t_1u,t_2) \nu(du) \\ 
   &\ge \int_\mathbb{R} \liminf_{(t_1,t_2) \ra (t',  1/p)} \partial_{t_2} \lm_p(t_1u,t_2) \nu(du) = \infty,
\end{align*}
where the last equality follows from the steepness of $\lm_p$ established in Lemma \ref{lem-lamfacts}. This shows that $\lmq_{p,\nu}$ is steep and hence, completes the proof of essential smoothness of $\lmq_{p,\nu}$.

For lower semi-continuity, suppose $(t_1^{(n)},t_2^{(n)})\ra (t_1,t_2)$ as $n\ra\infty$. Then,
\begin{equation*}
  \lmq_{p,\nu}(t_1,t_2) \le  \int_\mathbb{R} \liminf_{n\ra\infty} \lm_p(t_1^{(n)}u,t_2^{(n)}) \nu(du) \le \liminf_{n\ra\infty} \,\lmq_{p,\nu}(t_1^{(n)},t_2^{(n)}),
\end{equation*}
where the first inequality is due to the lower semi-continuity of $\lm_p$ (from Lemma \ref{lem-lamfacts}), and the second inequality is due to Fatou's Lemma.
\end{proof}

\label{page-propproof}
\begin{proof}[Proof of Proposition \ref{prop-quegen}]
We begin by proving a  $\rho$-a.e.\ LDP for the sequence $(R_\theta^{(n,p)})_{n\in\N}$ in $\R^2$, defined as
\begin{equation}\label{rnp2}
  R_\theta^{(n,p)} \doteq \left(\frac{1}{n}\sum_{i=1}^n \sqrt{n}\thn_i\ynp_i, \, \, \frac{1}{n}\sum_{i=1}^n  |\ynp_i|^p \right).
\end{equation}
Consider the G\"artner-Ellis limit log mgf: for $t=(t_1,t_2)\in \R^2$,
\begin{align*}
\lim_{n\ra\infty}\frac{1}{n}\log\E\left[\exp(n\,\langle t, R_\theta^{(n,p)}\rangle)\right] &=  \lim_{n\ra\infty} \frac{1}{n} \log \E\left[ \exp\left(\sum_{i=1}^n  t_1\sqrt{n}\thn_i \ynp_i+ t_2 |\ynp_i|^p  \right)  \right]\\
  &= \lim_{n\ra\infty} \frac{1}{n} \log \prod_{i=1}^n \E\left[ \exp\left( t_1\sqrt{n}\thn_i \ynp_i+ t_2 |\ynp_i|^p  \right)  \right]\\
  & = \lim_{n\ra\infty} \frac{1}{n} \sum_{i=1}^n \lm_p(t_1\sqrt{n}\thn_i, t_2),
\end{align*}
with $\lm_p$ given by \eqref{lampdefn0}. Due to Lemma \ref{lem-tailcont}, for all $t_1\in \R$ and $t_2<\frac{1}{p}$, the map $\nu\mapsto \int \lm_p(t_1 u,t_2) \nu(du)$ is continuous with respect to the Wasserstein-$\tfrac{p}{p-1}$ topology. Since by assumption, the empirical measure $L_{n,\theta}$ converges to $\nu$ in in the Wasserstein-$\tfrac{p}{p-1}$ topology, we have that for $\rho$-a.e.\ $\theta \in \seq$, for all $t_1\in \R$ and $t_2<\frac{1}{p}$,
\begin{equation}\label{gcllnmgf}
  \lim_{n\ra\infty} \frac{1}{n}\sum_{i=1}^n \lm_p(t_1\sqrt{n}\thn_i, \, t_2) = \int_\mathbb{R} \lm_p(t_1u,t_2)\nu(du).
\end{equation}
The same claim holds for all $t_2\ge\frac{1}{p}$, with both sides in the preceding equality valued at $+\infty$.

Due to the lower semi-continuity and essential smoothness of $\lmq_{p,\nu}$ as established in Lemma \ref{lem-quenchedess}, for $\rho$-a.e.\ $\theta\in\seq$, the G\"artner-Ellis theorem (see, e.g., \cite[Theorem 2.3.6]{DemZeibook}) yields the LDP for the sequence $(R_\theta^{(n,p)})_{n\in \N}$, with the good rate function $\lmq_{p,\nu}^*$. 

Note that $D_{\lmq_{p,\nu}^*}\subset \R \times (0,\infty)$, and the map $\bar T: \R \times (0,\infty)\ra\R$ defined by 
\begin{equation}\label{bartp}
\bar T_p(\tau_1,\tau_2) \doteq  \tau_1\tau_2^{-1/p}.
\end{equation}
is continuous. Since $\twnpz_\theta = \bar T_p (R_\theta^{(n,p)})$, we can apply the contraction principle to obtain an LDP for $(\twnpz_\theta)_{n\in \N}$  with the rate function $\iq_{p,\nu}$. Due to Lemma \ref{lem-reduction}, this implies that an identical LDP holds for $(W_\theta^{(n,p)})_{n\in\N}$.
\end{proof}

\begin{remark}
In Proposition \ref{prop-quegen}, we make the assumption $p > 1$ so that the right-hand side of \eqref{gcllnmgf} is well defined. In the case of $p=1$, the effective domain is $D_{\Lambda_1} = (-1,1) \times (-\infty, 1)$, so the integral over $\R$ on the RHS of \eqref{gcllnmgf} is infinite. This issue does not arise for $p >1$ due to Lemma \ref{lem-tailcont}.
\end{remark}

%%%%%%%%%%%%%%%%%%
% QUENCHED \ GLIVE
%%%%%%%%%%%%%%%%%%
\subsection{An extension of the Glivenko-Cantelli theorem}\label{ssec-glivenko}

In view of Proposition \ref{prop-quegen}, it is natural to investigate when the empirical measure convergence holds. Recall the classical Glivenko-Cantelli theorem, which concerns weak convergence of the empirical measure of an i.i.d\ sequence. That is, for $\xi_1,\xi_2,\dots,$ i.i.d.\ with common distribution $\mu$,
\begin{equation*}
  \frac{1}{n}\sum_{i=1}^n \delta_{\xi_i} \Rightarrow \mu, \quad \P\text{-a.s.}.
\end{equation*}
In the lemma below, we state a slight extension of the Glivenko-Cantelli theorem, to triangular arrays with some dependence within rows, and to Wasserstein convergence instead of weak convergence.

\begin{lemma}\label{lem-gliv}
Let $\mu \in \mathcal{P}(\mathbb{R})$, and for $n\in\N$, suppose $(\xi^{(n)})_{n\in \N}$ is a sequence of random variables defined on a common probability space $(\Omega,\mathcal{F},\P)$ such that $\xi^{(n)}\sim \mu^{\otimes n}$. Next, let $f_n:\R^n\rightarrow \R$ be such that
\begin{equation}
  f_n(\xi^{(n)}) \xrightarrow{n\rightarrow\infty} 1, \quad \mathbb{P}\text{-a.s.} \label{convass}
\end{equation}
Let $\eta^{(n)} \doteq \xi^{(n)} / f_n(\xi^{(n)})$, and consider its   empirical measure, 
\begin{equation*}
  L_{n,\eta} \doteq \frac{1}{n}\sum_{i=1}^n\delta_{\eta_i^{(n)}}.
\end{equation*}
If $m_p(\mu) < \infty$ for some $p\in[1,\infty)$, then
\begin{equation*}
 \mathcal{W}_{p/4}( L_{n,\eta} ,\mu) \rightarrow 0, \quad \mathbb{P}\text{-a.s.}.
\end{equation*}
\end{lemma}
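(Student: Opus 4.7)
My plan is to invoke Lemma~\ref{lem-wass}, by which $\mathcal{W}_{p/4}(L_{n,\eta},\mu)\to 0$ a.s.\ follows once I establish a.s.\ weak convergence $L_{n,\eta}\Rightarrow\mu$ together with a.s.\ moment convergence $m_{p/4}(L_{n,\eta})\to m_{p/4}(\mu)$. I will prove these two assertions first for the unscaled empirical measure $L_{n,\xi}\doteq n^{-1}\sum_{i=1}^n\delta_{\xi_i^{(n)}}$, and then transfer them to $L_{n,\eta}$ using the hypothesis $f_n(\xi^{(n)})\to 1$ a.s.

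The two a.s.\ statements for $L_{n,\xi}$ are both instances of a fourth-moment Markov/Borel--Cantelli argument. Since within each row the coordinates $\xi_1^{(n)},\ldots,\xi_n^{(n)}$ are i.i.d.\ $\mu$, for any measurable $\varphi:\R\to\R$ with $\E[\varphi(\xi_1^{(1)})^4]<\infty$ the standard fourth-moment bound on sums of i.i.d.\ centered variables gives
\[
  \P\!\left(\Bigl|\tfrac{1}{n}\sum_{i=1}^n \varphi(\xi_i^{(n)}) - \int\varphi\,d\mu\Bigr| > \epsilon\right) = O(n^{-2}),
\]
which is summable, so Borel--Cantelli yields a.s.\ convergence. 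Specializing to $\varphi(x)=|x|^{p/4}$ requires $m_p(\mu)<\infty$ --- precisely the standing hypothesis, and the source of the exponent $p/4$ --- and produces $m_{p/4}(L_{n,\xi})\to m_{p/4}(\mu)$ a.s. Applying the same argument to every element of a countable family of bounded Lipschitz functions that separates probability measures on $\R$, and intersecting the resulting full-measure events, gives $L_{n,\xi}\Rightarrow\mu$ a.s. Combined with the moment convergence, Lemma~\ref{lem-wass} then yields $\mathcal{W}_{p/4}(L_{n,\xi},\mu)\to 0$ a.s.

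For the transfer to $L_{n,\eta}$, I couple the two empirical measures along indices: the discrete coupling $\pi_n \doteq n^{-1}\sum_{i=1}^n \delta_{(\eta_i^{(n)},\xi_i^{(n)})}$ belongs to $\Pi(L_{n,\eta},L_{n,\xi})$, and since $\eta_i^{(n)}=\xi_i^{(n)}/f_n(\xi^{(n)})$,
\[
  \mathcal{W}_{p/4}(L_{n,\eta},L_{n,\xi}) \le \tfrac{1}{n}\sum_{i=1}^n |\eta_i^{(n)}-\xi_i^{(n)}|^{p/4} = \bigl|1-f_n(\xi^{(n)})^{-1}\bigr|^{p/4}\, m_{p/4}(L_{n,\xi}).
\]
The right-hand side tends to $0$ a.s.\ by the hypothesis $f_n(\xi^{(n)})\to 1$ and the moment convergence just established. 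Combining this with $\mathcal{W}_{p/4}(L_{n,\xi},\mu)\to 0$ a.s.\ via the appropriate triangle inequality for $\mathcal{W}_{p/4}$ (valid directly when $p\le 4$, and via the metric $\mathcal{W}_{p/4}^{4/p}$ when $p>4$) concludes the argument.

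The main difficulty I foresee is that the triangular-array setting precludes a direct appeal to the classical Glivenko--Cantelli theorem or Kolmogorov's SLLN for a single i.i.d.\ sequence; the fourth-moment/Borel--Cantelli mechanism sketched above substitutes for these, and it is this mechanism --- requiring a summable tail estimate --- that dictates the conservative exponent $p/4$, ensuring that the relevant fourth moment $\E[|\xi_1^{(1)}|^{4\cdot p/4}]=m_p(\mu)$ is finite under the stated hypothesis.
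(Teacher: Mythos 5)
Your proof is correct and takes a genuinely different route from the paper's in two respects. For weak convergence of the unscaled empirical measure $L_{n,\xi}$ you derive it from the fourth-moment/Borel--Cantelli mechanism applied to a countable separating family of bounded Lipschitz test functions, whereas the paper cites an off-the-shelf triangular-array Glivenko--Cantelli theorem, which gives the stronger uniform convergence of empirical distribution functions. More notably, you handle the $f_n$-scaling in one clean step via the coupling estimate $\mathcal{W}_{p/4}(L_{n,\eta},L_{n,\xi})\le|1-f_n(\xi^{(n)})^{-1}|^{p/4}\,m_{p/4}(L_{n,\xi})\to0$ and a triangle inequality, whereas the paper threads $f_n(\xi^{(n)})$ through each of the two convergence statements separately: via a cdf decomposition of $\mathbb{G}_n(t)-\mathbf{F}(t)$ for weak convergence, and via the identity $m_{p/4}(L_{n,\eta})=m_{p/4}(L_{n,\xi})/f_n(\xi^{(n)})^{p/4}$ for moment convergence. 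Your coupling transfer is arguably tidier; the paper's version avoids re-proving weak convergence from scratch by appealing to the cited Glivenko--Cantelli result. One small point you rightly flag, and which the paper glosses over just as much, is that $\mathcal{W}_{p/4}$ as defined in the paper (no $4/p$-th root) is not itself a metric when $p>4$, and Lemma \ref{lem-wass} is stated only for exponent $\ge 1$, so the equivalence of Wasserstein convergence with weak-plus-moment convergence at exponent $p/4<1$ is used somewhat informally in both arguments; the underlying fact is true but neither proof pins it down precisely.
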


\begin{proof}
Let $\mathbf{F}$ be the cumulative distribution function (cdf) of $\mu$. Let $\mathbb{F}_n$ and $\mathbb{G}_n$, respectively, denote the empirical distribution functions of the samples $\xi^{(n)}$ and $\eta^{(n)}$:
\begin{align*}
  \mathbb{F}_n(t) &\doteq \frac{1}{n}\, \#\left\{ \xi_i^{(n)} \le t; \,\, i=1\,\dots,n \right\},\\
  \mathbb{G}_n(t) &\doteq \frac{1}{n}\, \#\left\{ \eta_i^{(n)} \le t ; \,\, i =1,\dots, n\right\}.
\end{align*}
First, we prove $\P$-a.s.\ weak convergence of $\mathbb{G}_n$ to $\mathbf{F}$. In other words, we prove that $\P$-a.s., for any point of continuity $t$ of $\mathbf{F}$,
\begin{equation*}
\lim_{n\rightarrow\infty} \mathbb{G}_n(t) = \mathbf{F}(t).
\end{equation*}
Note that we can decompose the preceding difference as follows:
\begin{align}
\mathbb{G}_n(t) - \mathbf{F}(t) &= \left[\mathbb{F}_n(f_n(\xi^{(n)})\, t) - \mathbf{F}(f_n(\xi^{(n)})\, t)\right] + \left[\mathbf{F}(f_n(\xi^{(n)})\,t) - \mathbf{F}(t)\right] \notag \\
   &\le \sup_{x\in \R} |\mathbb{F}_n(x) - \mathbf{F}(x) | + \left|\mathbf{F}(f_n(\xi^{(n)})\,t) - \mathbf{F}(t)\right|\label{goalweak}
\end{align}
The first term of \eqref{goalweak} converges to zero by the extension of the Glivenko-Cantelli theorem to row-wise independent triangular arrays \cite[p.106, Theorem 1]{shorack2009empirical}. The second term of \eqref{goalweak} converges to 0 due to the assumption \eqref{convass}. Therefore, we have that $L_{n,\eta}\Rightarrow \mu$, $\P$-a.s.

Next, we prove convergence of suitable moments in order to strengthen the result to Wasserstein convergence. Due to Lemma \ref{lem-wass}, it suffices to show $\P$-a.s. convergence of the $p/4$-th moments of $L_{n,\eta}$. That is, $\P$-a.s.,
\begin{equation}\label{momconv}
\lim_{n\ra\infty} m_{p/4}(L_{n,\eta}) = m_{p/4}(\mu).
\end{equation}
Note that 
\begin{equation*}
m_{p/4}(L_{n,\eta}) = \frac{1}{n}\sum_{i=1}^n |\eta_i^{(n)}|^{p/4} = \frac{\frac{1}{n}\sum_{i=1}^n |\xi_i^{(n)}|^{p/4}}{f_n(\xi^{(n)})^{p/4}}.
\end{equation*}
Due to the assumption \eqref{convass}, in order to prove \eqref{momconv}, it suffices to show that, $\P$-a.s.,
\begin{equation}\label{xipfour}
\frac{1}{n}\sum_{i=1}^n |\xi_i^{(n)}|^{p/4} \rightarrow  m_{p/4}(\mu).
\end{equation}
Note that the strong law of large numbers (SLLN) does not extend (in general) to row-wise means of i.i.d.\ triangular arrays, but a standard Borel-Cantelli argument shows that the SLLN \emph{does} hold if the common law of the i.i.d.\ elements has finite fourth moment  \cite[Example 5.41]{romano1986counterexamples}. Since $\mu$ has finite $p$-th moment, we have
\begin{equation*}
  \int_\R (|x|^{p/4})^4  \mu(dx) = m_p(\mu) < \infty,
\end{equation*}
and thus, \eqref{xipfour} holds, implying Wasserstein-$p/4$ convergence.
\end{proof}

\begin{remark} 
A weaker version of Lemma \ref{lem-gliv} can be found in \cite[p.235]{spruill2007asymptotic}, where $\mu = \mu_p$ and $f_n = n^{-1/p}\|\cdot\|_{n,p}$, so that $\eta^{(n)} \sim\text{Unif}(\bnp)$; the difference is that the statement in \cite{spruill2007asymptotic} is for convergence in probability (instead of $\P$-a.s.), and weak convergence of measures (instead of Wasserstein). 
\end{remark}

%%%%%%%%%%%%%%%%%%
% QUENCHED \ SIGMA
%%%%%%%%%%%%%%%%%%
\subsection{The measure $\sigma \in \mathcal{P}(\seq)$}\label{ssec-surface}

Recall the measure $\sigma\in\mathcal{P}(\seq)$ which was assumed to satisfy \eqref{sigproj}. It remains to show how  $\sigma$  fits into the framework of Proposition \ref{prop-quegen} and Lemma \ref{lem-gliv}. To do so, we further explore the probabilistic representation for the surface measure on $\sphn$ given in Sect.\ \ref{sec-equiv}.

Let $\mathcal{R}:\mathbb{A}\rightarrow\mathbb{A}$ be the map such that for $z \in \mathbb{A}$, the $n$-th row of $\mathcal{R}(z)$ is
\begin{equation}\label{rdef}
  \mathcal{R}(z)^{(n)} \doteq \frac{z^{(n)}}{\|z^{(n)}\|_{n}}.
\end{equation}
Let $\pi_n:\mathbb{A}\rightarrow \mathbb{R}^n$ denote the coordinate map such that $\pi_n(z) = z^{(n)}$, outputting the $n$-th row of a triangular array.
\nom[RR]{$\mathcal{R}$}{Gaussian to spherical map}

\begin{definition}\label{zetdef}
Let $\zeta\in\mathcal{P}(\mathbb{A})$ be such that $\zeta \circ \pi_n^{-1}$ is the standard Gaussian measure on $\R^n$.
\nom[zzeta]{$\zeta$}{a Gaussian measure on $\mathbb{A}$}
\end{definition}

\begin{proof}[Proof of Theorem \ref{th-qldp}]
Fix $r<\infty$. Then, for $\sigma$-a.e.\ $\theta \in \seq$, we claim that $\mathcal{W}_r(L_{n,\theta},\mu_2)\ra 0$ as $n\rightarrow\infty$. The proof of the quenched LDP follows immediately from the preceding claim and Proposition \ref{prop-quegen} with $\nu=\mu_2$.

To prove the claim, first note that a straightforward application of Lemma \ref{lem-jointrep} shows that if $\sigma$ satisfies \eqref{sigproj}, then for some $\zeta$ as in Definition \ref{zetdef}, we have $\sigma = \zeta \circ \mathcal{R}^{-1}$. The upshot is that  $\sigma$-a.e.\ claims about $\theta \in \seq$ (i.e., Theorem \ref{th-qldp}) can be reduced  to $\zeta$-a.e.\ claims about $\mathcal{R}(z)$ for $z\in\mathbb{A}$.  Thus, it suffices to show that for $\zeta$-a.e\ $z\in\mathbb{A}$, we have
\begin{equation*}
\mathcal{W}_r\left(  \frac{1}{n}\sum_{i=1}^n \delta_{\sqrt{n} z_i^{(n)} / \|z^{(n)}\|_{n,2}} , \, \mu_2\right) \ra 0.
\end{equation*}
 This is a consequence of Lemma \ref{lem-gliv}, with $\mu= \mu_2$ (which has finite moments of all order) and $f_n = n^{-1/2} \|\cdot\|_{n,2}$.

\end{proof}

\subsection{Quenched proof for $p=1$}\label{ssec-qp1}

\begin{proof}[Proof of Theorem \ref{th-qldp1}]
For $\theta\in \seq$ satisfying \eqref{thmax} with  limit $c> 0$, let
\begin{equation*}
  V_\theta^{(n)} \doteq \frac{1}{n}\sum_{i=1}^n Y_i\,\sqrt{n}\theta_i^{(n)},
\end{equation*}
where $Y_1,Y_2,\dots$ are i.i.d.\ random variables with distribution $\mu_1(dy) \doteq \frac{1}{2} e^{-|y|}dy$. To prove the LDP for $(\sw_\theta^{(n,1)})_{n\in \N}$, it suffices to show that  $(V_\theta^{(n)})_{n\in\N}$ satisfies an LDP with speed $n/\sqrt{\log n}$ and the good rate function $\iq_{1,c}$ (see proof of Theorem \ref{th-aldp12} for a similar argument). In fact, due to the symmetry of $\mu_1$ and the monotonicity of $w\mapsto \iq_{1,c}(w)$ for $w > 0$, it suffices to show that for $w> 0$, we have the following upper and lower bounds:
\begin{equation}\label{ldclaim1}
 \limsup_{n\ra\infty} \frac{\sqrt{\log n}}{n}\log \P\left( V_\theta^{(n)} \ge w \right) \le  -\frac{w}{c}; \quad   \quad    \quad \liminf_{n\ra\infty} \frac{\sqrt{\log n}}{n}\log \P\left( V_\theta^{(n)} \ge w \right) \ge  -\frac{w}{c}.
 \end{equation}

First we prove the upper bound in \eqref{ldclaim1}. For $\epsilon \in(0,1)$, let
\begin{equation*}
  t_{n,\epsilon} \doteq \frac{1-\epsilon}{c(1+\epsilon)\sqrt{\log n}}.
\end{equation*}
Due to  \eqref{thmax}, for all $\epsilon >0$, there exists $N_{\epsilon} < \infty$ such that for $n\ge N_\epsilon$, we have  $ \sqrt{n}t_{n,\epsilon}  \max_{1\le i \le n} \theta_i^{(n)} \le 1-\epsilon$. Recall that for $t\in \R$, the mgf of $\mu_1$ is $\E[e^{tY_1}] = (1-t^2)^{-1}$ for $|t| <1$, and equals $+\infty$ otherwise. Combined with the Chernoff bound and the elementary bound $-\log (1-x) \le x + \tfrac{x^2}{2}$ for $x\in[0,1)$, we find that for $n\ge N_\epsilon$,
\begin{align*}
\frac{1}{nt_{n,\epsilon} } \log \P(V_\theta^{(n)} \ge w) &\le \frac{1}{nt_{n,\epsilon} } \sum_{i=1}^n -\log(1-n\,t_{n,\epsilon}^2 (\theta_i^{(n)})^2) - w \\ 
 &\le t_{n,\epsilon}  \sum_{i=1}^n (\theta_i^{(n)})^2 +\frac{\,t_{n,\epsilon}}{2} \sum_{i=1}^n (\theta_i^{(n)})^2 (\sqrt{n} t_{n,\epsilon}\theta_i^{(n)})^2 - w   \\
 &\le   t_{n,\epsilon} +\frac{\,t_{n,\epsilon}}{2}  (1-\epsilon)^2 - w.
\end{align*}
It follows that $\limsup_{n\ra\infty} \frac{\sqrt{\log n}}{n } \log \P(V_\theta^{(n)} \ge w) \le  -\frac{w(1-\epsilon)}{c(1+\epsilon)}$. Letting $\epsilon \ra 0$ yields the upper bound.

Now we prove the corresponding lower bound in \eqref{ldclaim1}. Again due to \eqref{thmax}, there exists some $N_\epsilon < \infty$ such that for $n\ge N_\epsilon$, we have $\sqrt{n} \max_{1\le i \le n} \theta_i^{(n)} \ge c(1-\epsilon) \sqrt{\log n}$. For $n\in \N$, let $j_n \doteq \arg\max_{1\le i\le n} \theta_i^{(n)}$. Then, for $n\ge N_\epsilon$, 
\begin{equation}\label{infbd1}
\P(V_\theta^{(n)} \ge w) \ge \P\left( Y_{j_n} \ge \tfrac{wn}{c(1-\epsilon)\sqrt{\log n}} \right)\cdot \P\left( \sum_{i\ne {j_n}} Y_i \sqrt{n}\theta_i^{(n)} \ge 0 \right),
\end{equation}
The second term in \eqref{infbd1} equals $1/2$, due to the symmetry of $\mu_1$. As for the first term, it follows from Lemma \ref{lem-ulbd} with $p=1$ that
\begin{equation*}
\lim_{\epsilon \rightarrow 0}  \lim_{n\ra\infty} \tfrac{\sqrt{\log n}}{n} \log \P\left( Y_{j_n} \ge \tfrac{wn}{c(1-\epsilon)\sqrt{\log n}} \right) = \lim_{\epsilon \rightarrow 0}   -\frac{w}{c(1-\epsilon)} = -\frac{w}{c}.
\end{equation*}
Combining this with \eqref{infbd1}, one obtains the lower bound.
\end{proof}

\begin{remark}\label{rmk-whysig}
Until now, we have not clarified why the condition \eqref{thmax} is natural, nor why it is not possible to make the same $\sigma$-a.e.\ claim as in  the quenched LDP for $p\in(1,\infty)$. Roughly speaking, ``almost everywhere" statements on row-wise \emph{sums} of triangular arrays are essentially identical to the corresponding statements for sequences; this is clarified in the proof of Lemma \ref{lem-gliv}, and crucial to the proof of Theorem \ref{th-qldp}, the quenched LDP for $p\in(1,\infty)$. This is not the case for  ``almost everywhere" statements on row-wise \emph{maxima} of triangular arrays, which is relevant for the $p=1$ case.

To be precise, first note that the following ``in probability" statement is classical \cite[p.430]{gnedenko1943distribution}: for any distribution on triangular arrays $\zeta\in\mathcal{P}(\mathbb{A})$ such that the law of the $n$-th row is the $n$-dimensional standard Gaussian measure (as in Definition \ref{zetdef}), and for all $\epsilon > 0$, 
\begin{equation}\label{zetprob}
\lim_{n\ra\infty} \zeta\left( z\in \mathbb{A} : \left|\tfrac{1}{\sqrt{\log n}}\max_{1\le i \le n} z_i^{(n)} - \sqrt{2} \right| > \epsilon  \right) = 0.
\end{equation}
In fact, for any $\sigma$ satisfying \eqref{sigproj}, there exists some $\zeta$ as in Definition \ref{zetdef} such that $\sigma = \zeta \circ \mathcal{R}^{-1}$ for $\mathcal{R}$ as in \eqref{rdef}. Thus, for all $\epsilon > 0$,
\begin{equation}\label{sigprob}
\lim_{n\ra\infty} \sigma\left( \theta\in \seq : \left|\sqrt{\tfrac{n}{\log n}} \max_{1\le i \le n} \theta_i^{(n)} - \sqrt{2} \right| > \epsilon  \right) = 0.
\end{equation}
The scaling in this limit motivates the condition  \eqref{thmax}.

We now consider whether the ``almost everywhere" version of \eqref{zetprob} is satisfied:
\begin{equation}\label{zetae}
 \zeta\left( z\in \mathbb{A} : \lim_{n\ra\infty} \tfrac{1}{\sqrt{\log n}}\max_{1\le i \le n} z_i^{(n)} = \sqrt{2}  \right) \stackrel{?}{=} 1.
\end{equation}
The equality in \eqref{zetae} holds for \emph{some} $\zeta\in\mathcal{P}(\mathbb{A})$ satisfying Definition \ref{zetdef}, but is not satisfied for others. 
\begin{enumerate}[label=(\alph*)]
\item Suppose $\zeta$ is such that $\zeta(z\in\mathbb{A} : z_i^{(n)} = z_i^{(i)}, \forall\,  i,n\in \N)$.  That is, for $\zeta$-a.e.\ $z$, the array is constant within columns. Then, the maximum of the $n$-th row of the array $z$ is equivalent to the maximum of the first $n$ terms of the sequence $z_1^{(1)},z_2^{(2)}, \dots$. Under this law, the $\zeta$-a.e.\ convergence in \eqref{zetae} is known to hold \cite[Remark (viii)]{resnick1973almost}.
\item On the other hand, suppose $\zeta$ is such that for a random triangular array $Z\sim \zeta$, the rows of $Z$ are independent (and hence, the elements of $Z$ are i.i.d.\ standard Gaussian random variables). Then, the limit \eqref{zetae} can be shown \emph{not} to hold since the $\zeta$-a.e.\  limit inferior and limit superior differ \cite[p.123]{jiang2005maxima}. In particular, it is possible to show that for $\zeta$-a.e. $z$, all of the points in $[\sqrt{2},2]$ are limit points of the sequence $\max_{1\le i \le n}z_i^{(n)} / \sqrt{\log n}$, $n\in \N$.
\end{enumerate}
Similarly, the ``almost everywhere" analog of \eqref{sigprob} holds for some $\sigma$ satisfying \eqref{sigproj}, but not others. Recall the map $\mathcal{R}$ of \eqref{rdef}, and for $\zeta$ satisfying Definition \ref{zetdef}, let $\sigma = \zeta \circ \mathcal{R}^{-1}$.
\begin{enumerate}[label=(\alph*')]
\item If $\zeta$  is as in example (a) above,  then condition \eqref{thmax} of  Theorem \ref{th-qldp1} holds for $\sigma$-a.e\ $\theta\in \seq$, with $c=\sqrt{2}$.
\item If $\zeta$  is as in example (b) above, then the proof of Theorem \ref{th-qldp1} (which goes through for subsequences) shows that  for $\sigma$-a.e.\ $\theta\in \seq$, the sequence $(\sqrt{\log n}/n)\log \P(V_\theta^{(n)} \ge w)$ has all of the points in $[-w/2,-w/\sqrt{2}]$ as limit points, and hence, does not converge.
\end{enumerate}
The upshot of the two preceding examples is that, unlike for the quenched LDP when $p\in(1,\infty)$, it is not possible to state Theorem \ref{th-qldp1} as a result for $\sigma$-a.e.\ $\theta\in \seq$ and any $\sigma$ satisfying \eqref{sigproj}. Instead, the large deviation behavior of $(W_\theta^{(n,1)})_{n\in\N}$ depends on the particular sequence $\theta$ of projection directions, via the limit \eqref{thmax}.
 \end{remark}

%%%%%%%%%%%%%%%%%%
% QUENCHED<->ANNEALED
%%%%%%%%%%%%%%%%%%
\section{The relationship between the annealed and quenched LDPs}\label{sec-rel} % [X]

Fix $p\in (2,\infty)$. In this section, we prove Theorem \ref{th-compar}, which establishes a connection between the quenched rate function $\iq_{p,\nu}$ and  the annealed rate function $\ia_p$. Additional analysis of this variational problem is deferred to Sect.\ \ref{sec-analysis}.

In Sect.\ \ref{sec-quenched}, we obtained the quenched rate function by establishing an LDP for $R_\theta^{(n,p)}$ of \eqref{rnp2} and then using the fact that $\twnpz_\theta = \bar T_p(R_\theta^{(n,p)})$, where $\bar T_p:\R\times\R_+ \ra \R$ is the map defined in \eqref{bartp}. To establish the variational formula \eqref{varform1}, we will find it convenient to use an exactly analogous representation for the annealed case (as opposed to the approach originally adopted in Sect.\ \ref{sec-annealed}). Let $R^{(n,p)}$ be defined similarly to $R_\theta^{(n,p)}$ of \eqref{rnp2}, but with the deterministic deterministic $\thn$ replaced by random $\bthn$,
\begin{equation}\label{rnpdefn}
  R^{(n,p)} \doteq \left(\frac{1}{n}\sum_{i=1}^n \sqrt{n}\bthn_i\ynp_i, \, \, \frac{1}{n}\sum_{i=1}^n  |\ynp_i|^p \right).
\end{equation}
Then, we have
\begin{equation*}
  \wnpbth \eqdist \bar T_p(R^{(n,p)}).
\end{equation*}
We will prove an LDP for $(\bar T_p(R^{(n,p)}))_{n\in\N}$, and use it to obtain an alternate form for the annealed LDP that directly relates the annealed and quenched rate functions.

In Sect.\ \ref{ssec-rnp}, we establish an LDP for $(R^{(n,p)})_{n\in \N}$ using certain spherical invariance properties similar to those discussed in Sect.\ \ref{ssec-p2}. Then, in Sect.\ \ref{ssec-empcone}, we recall a large deviation principle for the empirical measure induced by the coordinates of a random point on the scaled $\ell^q$ sphere $n^{1/q}\partial \mathbb{B}_{n,q}$. Lastly, in Sect.\ \ref{ssec-varadh}, we apply the aforementioned empirical measure LDP in order to obtain variational formulas for the limit log mgfs associated with $R^{(n,p)}$. Here, we will repeatedly make use of the tail bounds obtained in Lemma \ref{lem-subgsn}.

%%%%%%%%%%%%%%%%%%
% COMPAR \ complet
%%%%%%%%%%%%%%%%%%
\subsection{An LDP for $(R^{(n,p)})_{n\in \N}$  with a convex rate function}\label{ssec-rnp}

In this subsection, we prove that $(R^{(n,p)})_{n\in\N}$ satisfies an LDP with some convex good rate function. For our purposes, although the explicit form of the rate function is irrelevant, its convexity is important. We begin with two elementary lemmas involving convex analysis.

\begin{lemma}[Theorem 5.3 or comment on p.54 of \cite{rockafellar1970convex}] \label{lem-infconvex}
Let $\mathcal{X}$,$\mathcal{Y}$ be real vector spaces. Let $D_F\subset \mathcal{X}\times \mathcal{Y}$ be a convex set, and suppose $F:D_F\ra \R$ is a convex function. Let
\begin{equation*}
  \tilde{F}(x) \doteq \inf_{y \in \mathcal{Y}: (x,y)\in D_F} F(x,y).
\end{equation*}
 Then, $\tilde{F}$ is a convex function.
\end{lemma}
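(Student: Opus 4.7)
The plan is to verify convexity of $\tilde F$ directly from the definition, by lifting pairs of almost-optimal choices in the infimum to a convex combination and exploiting the convexity of both $D_F$ and $F$.

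First I would fix $x_1, x_2 \in \mathcal{X}$ and $\lambda \in [0,1]$, and aim to show
\[
\tilde F(\lambda x_1 + (1-\lambda) x_2) \le \lambda \tilde F(x_1) + (1-\lambda) \tilde F(x_2).
\]
The right-hand side is informative only when both $\tilde F(x_i) < \infty$, i.e., when each slice $\{y \in \mathcal{Y} : (x_i, y) \in D_F\}$ is nonempty; otherwise the inequality holds trivially. Under this assumption, for arbitrary $\epsilon > 0$, I would choose $y_1, y_2 \in \mathcal{Y}$ with $(x_i, y_i) \in D_F$ satisfying $F(x_i, y_i) \le \tilde F(x_i) + \epsilon$ (or, in the event that $\tilde F(x_i) = -\infty$, satisfying $F(x_i, y_i) \le -1/\epsilon$; the argument is cosmetically identical).

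Next, since $D_F$ is convex, the point
\[
\bigl(\lambda x_1 + (1-\lambda) x_2,\ \lambda y_1 + (1-\lambda) y_2\bigr) = \lambda (x_1, y_1) + (1-\lambda)(x_2, y_2)
\]
lies in $D_F$, so it is admissible in the infimum defining $\tilde F(\lambda x_1 + (1-\lambda) x_2)$. Applying first this admissibility and then the convexity of $F$ on $D_F$ yields
\[
\tilde F(\lambda x_1 + (1-\lambda) x_2) \le F\bigl(\lambda x_1 + (1-\lambda) x_2,\ \lambda y_1 + (1-\lambda) y_2\bigr) \le \lambda F(x_1, y_1) + (1-\lambda) F(x_2, y_2),
\]
which is at most $\lambda \tilde F(x_1) + (1-\lambda) \tilde F(x_2) + \epsilon$ by the choice of $y_1, y_2$. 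Letting $\epsilon \downarrow 0$ completes the argument.

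There is essentially no obstacle here; the only minor bookkeeping is handling the cases where $\tilde F(x_i)$ is $+\infty$ (trivial) or $-\infty$ (replace ``$\tilde F(x_i) + \epsilon$'' with a diverging lower bound). Since the statement is standard and is cited to \cite{rockafellar1970convex}, the paper presumably either gives only this short direct argument or omits the proof altogether.
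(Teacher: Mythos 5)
Your proof is correct; it is the standard direct verification of midpoint/Jensen-type convexity by lifting near-optimal witnesses and using the convexity of $D_F$ and of $F$. The paper does not prove the lemma at all — it is cited directly to \cite{rockafellar1970convex} — so there is no in-paper argument to compare against. For what it is worth, Rockafellar's own treatment runs through the epigraph: the (strict) epigraph of $\tilde F$ is the image of the (strict) epigraph of $F$ under the linear projection $(x,y,\alpha)\mapsto(x,\alpha)$, and linear images of convex sets are convex; your $\epsilon$-argument is the pointwise unfolding of that fact and is equally clean. Your handling of the $\pm\infty$ cases is the right bookkeeping given that $F$ is real-valued on $D_F$ while $\tilde F$ may take extended values.
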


\begin{lemma}\label{lem-jointcon}
The map
\begin{equation*}
  \R^2 \ni (x,y) \mapsto J_2(\tfrac{x}{y^{1/2}}) = -\tfrac{1}{2}\log(1-\tfrac{x^2}{y}) \in \R
\end{equation*}
is convex on its domain $\{(x,y) \in \R^2 : y > x^2\}$. 
\end{lemma}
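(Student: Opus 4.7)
The plan is to avoid a direct Hessian computation by decomposing $J_2(x/\sqrt{y})$ as a composition $\phi \circ u$ in which $u$ is convex and $\phi$ is convex and non-decreasing, so that the standard composition rule for convex functions applies.

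First, I would rewrite
\[
  J_2\!\left(\tfrac{x}{\sqrt{y}}\right) \;=\; -\tfrac{1}{2}\log\!\left(1 - \tfrac{x^2}{y}\right),
\]
and observe that on the domain $D := \{(x,y)\in\R^2 : y > x^2\}$, the inequality $y > x^2 \geq 0$ forces $y > 0$ and $x^2/y \in [0,1)$, so each step of the composition below is well defined.

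Next, I would identify the inner map $u : \R \times (0,\infty) \to \R$ defined by $u(x,y) := x^2/y$ as the perspective of the convex function $t \mapsto t^2$; since the perspective operation preserves convexity (see, e.g., Section 3.2.6 of \cite{boyd2004convex}), $u$ is convex on $\R \times (0,\infty)$, hence on $D$. Separately, the outer map $\phi(t) := -\tfrac{1}{2}\log(1-t)$ is convex and non-decreasing on $[0,1)$, as can be read off immediately from $\phi'(t) = \tfrac{1}{2(1-t)} \geq 0$ and $\phi''(t) = \tfrac{1}{2(1-t)^2} \geq 0$.

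Finally, I would invoke the standard fact that the composition of a convex function with a convex, non-decreasing function is convex on the intersection of their domains (see, e.g., Section 3.2.4 of \cite{boyd2004convex}); since $u(D) \subset [0,1)$ by the first step, this gives the convexity of $(x,y) \mapsto \phi(u(x,y)) = J_2(x/\sqrt{y})$ on $D$. There is no real obstacle here beyond bookkeeping the domains, which is immediate from the hypothesis $y > x^2$; the argument is essentially three one-line observations chained together.
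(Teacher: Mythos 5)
Your proof is correct, and it takes a genuinely different route from the paper. The paper computes the Hessian of $(x,y)\mapsto -\tfrac12\log(1-x^2/y)$ directly and applies Sylvester's criterion, checking that the determinant and the top-left entry are both positive on $\{y>x^2\}$. You instead factor the map as $\phi\circ u$ with $u(x,y)=x^2/y$ the perspective of $t\mapsto t^2$ (hence convex on $\R\times(0,\infty)$) and $\phi(t)=-\tfrac12\log(1-t)$ convex and non-decreasing on $[0,1)$, then invoke the composition rule. Both are valid; your version avoids any differentiation and a somewhat delicate determinant calculation, at the cost of appealing to two standard-but-nontrivial facts (perspective preserves convexity; convex non-decreasing $\circ$ convex is convex), whereas the paper's computation is entirely self-contained. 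One small point worth stating explicitly in your write-up is that the domain $D=\{y>x^2\}$ is itself convex (as the strict epigraph of $x\mapsto x^2$), which is needed so that convex combinations stay inside $D$ and the chain-of-inequalities argument for the composition goes through; you implicitly use this but never mention it.
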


\begin{proof}
Let $f(x,y) \doteq -\tfrac{1}{2}\log(1-\tfrac{x^2}{y})$. We compute the Hessian matrix. 
\begin{align*}
(Hf)(x,y) &= \frac{1}{(y-x^2)^2}\begin{pmatrix}
 y+x^2 &  -x \\
 -x & \frac{1}{2y^2}x^2(2y-x^2)
 \end{pmatrix}.
\end{align*}
Note that for $(x,y)$ such that $y > x^2$,
\begin{equation*}
  \det (Hf) = \frac{1}{(y-x^2)^4}\frac{x^4}{2y^2} \left( y- x^2\right) > 0,
\end{equation*}
and also 
\begin{equation*}
  \frac{y+x^2}{(y-x^2)^2} > 0.
\end{equation*}
By Sylvester's criterion, since all leading principal minors are positive, $Hf$ is a positive definite matrix, so $f$ is convex.
\end{proof}

Next, we exploit the spherical symmetry of $\bthn$ in the following lemma, as we did previously in Sect. \ref{ssec-p2}, which will then allow us to prove the desired LDP.

\begin{lemma}\label{lem-spherindep}
Fix $n\in \N$, and let $X^{(n)}=(X_1,\dots, X_n)$ be a random vector in $\R^n$ independent of $\bthn$ which is uniformly distributed on $\sphn$.  Then, 
\begin{equation*}
 \left\langle \sqrt{n}\bthn, \tfrac{X^{(n)}}{\|X^{(n)}\|_{n,2}}\right\rangle_n  \eqdist  \left\langle \sqrt{n}\bthn, e_1^{(n)}\right\rangle_n.
\end{equation*}
Moreover, $\langle \sqrt{n}\bthn, X^{(n)}/\|X^{(n)}\|_{n,2}\rangle_n$ is independent of $X^{(n)}$. 
\end{lemma}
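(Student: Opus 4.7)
The plan is to condition on $X^{(n)}$ and then exploit the rotational invariance of the uniform measure $\sigma_n$ on $\sphn$. On the event $\{X^{(n)} \ne 0\}$, the vector $X^{(n)}/\|X^{(n)}\|_{n,2}$ is a unit vector, so there is a measurable map $X^{(n)} \mapsto O(X^{(n)})$ into the orthogonal group $\mathcal{O}(n)$ (constructed, e.g., by completing $X^{(n)}/\|X^{(n)}\|_{n,2}$ to an orthonormal basis via a measurable Gram--Schmidt procedure) such that
\[
O(X^{(n)})\bigl(X^{(n)}/\|X^{(n)}\|_{n,2}\bigr) = e_1^{(n)}.
\]
Since $Y^{(n,p)}$ in our applications has a density on $\R^n$, the null set $\{X^{(n)}=0\}$ can be ignored; otherwise one can set $O$ arbitrarily there.

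Next, I would verify the joint statement via a generating-class argument. For any bounded measurable $\varphi:\R\to\R$ and $\psi:\R^n\to\R$, conditioning on $X^{(n)}$ and using independence of $\bthn$ and $X^{(n)}$ gives
\[
\E\bigl[\varphi\bigl(\langle \sqrt{n}\bthn, X^{(n)}/\|X^{(n)}\|_{n,2}\rangle_n\bigr)\,\psi(X^{(n)})\bigr]
= \E\Bigl[\psi(X^{(n)})\, \E\bigl[\varphi\bigl(\langle \sqrt{n}\bthn,\, O(X^{(n)})^\top e_1^{(n)}\rangle_n\bigr)\,\big|\, X^{(n)}\bigr]\Bigr].
\]
Because $\sqrt{n}\bthn$ has a rotationally invariant law on $\sqrt{n}\sphn$, for every deterministic $O\in\mathcal{O}(n)$,
\[
\langle \sqrt{n}\bthn, O^\top e_1^{(n)}\rangle_n = \langle \sqrt{n} O\bthn, e_1^{(n)}\rangle_n \eqdist \langle \sqrt{n}\bthn, e_1^{(n)}\rangle_n.
\]
Since $\bthn$ is independent of $X^{(n)}$, this identity in distribution passes to the conditional expectation with $O=O(X^{(n)})$, so the inner conditional expectation equals the deterministic constant $\E[\varphi(\langle \sqrt{n}\bthn, e_1^{(n)}\rangle_n)]$.

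Substituting back gives the factorization
\[
\E\bigl[\varphi\bigl(\langle \sqrt{n}\bthn, X^{(n)}/\|X^{(n)}\|_{n,2}\rangle_n\bigr)\,\psi(X^{(n)})\bigr]
= \E\bigl[\varphi(\langle \sqrt{n}\bthn, e_1^{(n)}\rangle_n)\bigr]\,\E\bigl[\psi(X^{(n)})\bigr].
\]
Taking $\psi\equiv 1$ yields the distributional equality, and the displayed factorization over all bounded measurable $\varphi,\psi$ yields the independence of $\langle \sqrt{n}\bthn, X^{(n)}/\|X^{(n)}\|_{n,2}\rangle_n$ from $X^{(n)}$. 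The only nontrivial step is the measurable choice of $O(\cdot)$, which is standard; the rotational invariance of $\sigma_n$ combined with the independence of $\bthn$ and $X^{(n)}$ does all of the real work.
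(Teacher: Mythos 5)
Your proof is correct and follows essentially the same route as the paper: condition on $X^{(n)}$, use the independence of $\bthn$ and $X^{(n)}$ together with the rotational invariance of the law of $\sqrt{n}\bthn$ to see that the conditional law of the inner product is that of $\langle \sqrt{n}\bthn, e_1^{(n)}\rangle_n$, and then read off both the distributional identity and the factorization. The only difference is cosmetic: your measurable choice of an orthogonal map $O(X^{(n)})$ is not needed, since rotational invariance already gives $\langle \sqrt{n}\bthn, x\rangle_n \eqdist \langle \sqrt{n}\bthn, e_1^{(n)}\rangle_n$ for every fixed unit vector $x$, which is exactly how the paper disintegrates over the law of $X^{(n)}/\|X^{(n)}\|_{n,2}$.
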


\begin{proof}
Due to the spherical symmetry of $\sqrt{n}\bthn$ and since $X^{(n)}/\|X^{(n)}\|_{n,2} \in \sphn$,
\begin{equation}\label{margeq}
 \left\langle \sqrt{n}\bthn, \tfrac{X^{(n)}}{\|X^{(n)}\|_{n,2}} \right\rangle_n \eqdist \langle \sqrt{n}\bthn, x\rangle_n \eqdist \langle \sqrt{n}\bthn, e_1^{(n)}\rangle_n,
 \end{equation}
 for any $x\in \sphn$. It remains to show independence. Let $\pi(\cdot,\cdot)$ denote the joint distribution of $\left( \tfrac{X^{(n)}}{\|X^{(n)}\|_{n,2}}, X^{(n)}\right)$, with first and second marginals $\pi_1$ and $\pi_2$, respectively. For $A \in \mathcal{B}(\R)$ and $B \in \mathcal{B}(\R^n)$,
\begin{align*}
\P\left( \left\langle \sqrt{n}\bthn, \tfrac{X^{(n)}}{\|X^{(n)}\|_{n,2}}\right \rangle_n \in A, X^{(n)} \in B\right ) &= \int_{\R \times \R^n} \P( \langle \sqrt{n}\bthn, x_1\rangle_n \in A) \1_{\{x_2\in B\}} \, \pi(dx_1,dx_2)\\
   &= \int_{\R \times B} \P( \langle \sqrt{n}\bthn, e_1^{(n)}\rangle_n \in A) \, \pi(dx_1,dx_2)\\
   &= \P( \langle \sqrt{n}\bthn, e_1^{(n)}\rangle_n \in A)\, \P(X^{(n)} \in B),\\
   &=\P\left( \left\langle \sqrt{n}\bthn, \tfrac{X^{(n)}}{\|X^{(n)}\|_{n,2}}\right \rangle_n \in A\right) \,\P(X^{(n)} \in B),
\end{align*}
where the second and last equality follow from \eqref{margeq}.
\end{proof}

\begin{proposition}\label{prop-altldpconvex}
Let $p\in(2,\infty)$. Then, the sequence $(R^{(n,p)})_{n\in \N}$ defined by \eqref{rnpdefn}   satisfies an LDP with a convex good rate function. 
\end{proposition}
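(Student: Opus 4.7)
The plan is to use the spherical invariance of $\bthn$ (Lemma~\ref{lem-spherindep}) to rewrite $R^{(n,p)}$ as a continuous function of a triple of independent objects whose joint LDP has a convex good rate function. Applying Lemma~\ref{lem-spherindep} with $X^{(n)} = \ynp$, together with the identity $\langle\bthn,\ynp\rangle_n = \|\ynp\|_{n,2}\,\langle\bthn, \ynp/\|\ynp\|_{n,2}\rangle_n$, yields the distributional equality
\[
R^{(n,p)} \eqdist \bigl(\sqrt{M_n^{(2)}}\,\eta_n,\; M_n^{(p)}\bigr), \qquad M_n^{(q)} \doteq \tfrac{1}{n}\sum_{i=1}^n |\ynp_i|^q,
\]
where $\eta_n$ has the same law as $\bthn_1$ (the first coordinate of a uniform $\sphn$-valued vector) and is \emph{independent} of $\ynp$, and hence of the pair $(M_n^{(2)}, M_n^{(p)})$.

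Next, I would establish a joint LDP for the triple $(\eta_n, M_n^{(2)}, M_n^{(p)})$ with a convex good rate function. Since $p > 2$, the log mgf $\Xi_p(t_1, t_2) \doteq \log\E[\exp(t_1 Y_1^2 + t_2|Y_1|^p)]$ is finite whenever $t_2 < 1/p$ (the $|y|^p$ tail dominates the $y^2$ term), so Cram\'er's theorem gives an LDP at speed $n$ for $(M_n^{(2)}, M_n^{(p)})_{n\in\N}$ with the convex good rate function $\Xi_p^*$. Independently, the density of $\eta_n$ on $[-1,1]$ is proportional to $(1-x^2)^{(n-3)/2}$, so a standard Laplace-method computation gives an LDP for $(\eta_n)_{n\in\N}$ at speed $n$ with good rate function $J_2$ from \eqref{j2defn} (this is the same LDP invoked in the proof of Theorem~\ref{th-p2} via \cite[Theorem~3.4]{BarGamLozRou10}). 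Since $\eta_n$ is independent of $(M_n^{(2)}, M_n^{(p)})$, tensorization produces a joint LDP with convex good rate function $J_2(x) + \Xi_p^*(m_2, m_p)$, and the contraction principle applied to the continuous map $\Psi(x,m_2,m_p)\doteq (\sqrt{m_2}\,x, m_p)$ delivers an LDP for $(R^{(n,p)})_{n\in\N}$ with good rate function
\[
I_R(\tau_1,\tau_2) = \inf_{m_2 > \tau_1^2}\Bigl\{ J_2\bigl(\tau_1/\sqrt{m_2}\bigr) + \Xi_p^*(m_2,\tau_2)\Bigr\}.
\]

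The main obstacle, and the reason for routing through spherical invariance rather than the direct Cram\'er-plus-contraction argument used in Theorem~\ref{th-aldp}, is verifying \emph{convexity} of $I_R$. I would view the objective inside the infimum as a function of $(\tau_1, m_2, \tau_2)$ jointly: by Lemma~\ref{lem-jointcon} the map $(\tau_1, m_2)\mapsto J_2(\tau_1/\sqrt{m_2}) = -\tfrac{1}{2}\log(1-\tau_1^2/m_2)$ is jointly convex on $\{m_2 > \tau_1^2\}$, while $\Xi_p^*$ is convex as a Legendre transform. Their sum is therefore jointly convex in $(\tau_1, m_2, \tau_2)$, and Lemma~\ref{lem-infconvex} yields convexity of $I_R$ in $(\tau_1, \tau_2)$ after taking the infimum over $m_2$. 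By way of contrast, the direct argument via Cram\'er applied to $(|Z^{(n)}_i|^2, \ynp_i Z^{(n)}_i, |\ynp_i|^p)$ and contraction produces the equivalent expression $\inf_{\tau_0 > 0}\lma_p^*(\tau_0, \tau_1\sqrt{\tau_0}, \tau_2)$, whose joint convexity in $(\tau_1, \tau_2)$ is obscured; the spherical-invariance reformulation is precisely what isolates a summand to which Lemma~\ref{lem-jointcon} applies.
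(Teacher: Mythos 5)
Your proposal is correct and follows essentially the same route as the paper's proof: use the spherical invariance and independence of Lemma \ref{lem-spherindep} to reduce $R^{(n,p)}$ in distribution to $(\bthn_1\sqrt{M_n^{(2)}},\,M_n^{(p)})$, combine the LDP for $\bthn_1$ with Cram\'er's theorem in $\R^2$ for $(M_n^{(2)},M_n^{(p)})$, contract under $(x,m_2,m_p)\mapsto(\sqrt{m_2}\,x,m_p)$, and deduce convexity of the resulting rate function from Lemma \ref{lem-jointcon} together with Lemma \ref{lem-infconvex}. The only cosmetic difference is that you make explicit the finiteness of the joint log mgf of $(Y_1^2,|Y_1|^p)$ for $t_2<1/p$, which the paper leaves implicit.
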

\begin{proof}
Due to the independence given by Lemma \ref{lem-spherindep},
\begin{align}
 R^{(n,p)} &=  \left(\frac{1}{n}\sum_{i=1}^n \sqrt{n}\bthn_i \frac{\ynp_i}{\|Y^{(n,p)}\|_{n,2}} \|Y^{(n,p)}\|_{n,2}, \frac{1}{n}\sum_{i=1}^n  |\ynp_i|^p \right)\notag\\
  &= \left(\frac{1}{n}\left\langle  \sqrt{n}\,\bthn,  \tfrac{\ynp}{\|Y^{(n,p)}\|_{n,2}} \right\rangle_n  \|Y^{(n,p)}\|_{n,2}, \frac{1}{n}\sum_{i=1}^n  |\ynp_i|^p \right)\notag\\ 
  &\eqdist \left(\frac{1}{n}\sqrt{n}\,\bthn_1 \|Y^{(n,p)}\|_{n,2}, \frac{1}{n}\sum_{i=1}^n  |\ynp_i|^p \right).\label{rnprep}
\end{align}
Define the following $\R^3$-valued sequence of random variables,
\begin{equation*}
  Q^{(n,p)} \doteq \left(\bthn_1,  \frac{1}{n}\sum_{i=1}^n |\ynp_i|^2, \frac{1}{n}\sum_{i=1}^n  |\ynp_i|^p \right), \quad n \in \N.
\end{equation*}
By Cram\'er's theorem in $\mathbb{R}^2$, the sequence $Q_{2,3}^{(n,p)} \doteq \left(  \frac{1}{n}\sum_{i=1}^n |\ynp_i|^2, \frac{1}{n}\sum_{i=1}^n  |\ynp_i|^p \right)$, $n\in\N$, satisfies an LDP with some convex good rate function, call it $\widehat{J}_p$, with domain $D_{\widehat{J}_p} = \mathbb{R}_+^2$. As obtained in \cite{BarGamLozRou10} and described in Sect.\ \ref{ssec-p2}, $(\bthn_1)_{n\in \N}$ satisfies an LDP with the convex good rate function $J_2(a) = -\frac{1}{2}\log(1-a^2)$ for $|a| <1$ (and $+\infty$ elsewhere). Since $\bthn$ and $\ynp$ are independent, the sequence $(Q^{(n,p)})_{n\in \N}$ satisfies an LDP with the convex good rate function 
\begin{equation*}
J_{Q,p}(a,b,c) \doteq J_2(a) + \widehat{J}_p(b,c)\,, \quad a,b,c\in \R .
\end{equation*}
By \eqref{rnprep} and the contraction principle, $(R^{(n,p)})_{n\in \N}$ satisfies an LDP with the good rate function $J_{R,p}$ defined as follows: for $x\in \R$ and $z \ge 0$,
\begin{align*}
J_{R,p}(x,z) &\doteq \inf \left\{ J_{Q,p}(a,b,c) : |a| <1, b \ge 0, c \ge 0, x= ab^{1/2} , z = c \right\}\\ 
  &= \inf_{y:y>x^2 \ge 0} J_{Q,p}(\tfrac{x}{y^{1/2}},y,z).
\end{align*}
We now show that $J_{R,p}$ is convex. By Lemma \ref{lem-infconvex}, it suffices to prove that
\begin{equation*}
  (x,y,z) \mapsto J_{Q,p}(\tfrac{x}{y^{1/2}}, y,z) = -\tfrac{1}{2}\log(1-\tfrac{x^2}{y}) + \widehat{J}_p(y,z) \,, \quad 0 \le x^2 < y;
\end{equation*}
is (jointly) convex, which follows from Lemma \ref{lem-jointcon}, the convexity of $\widehat{J}_p$, and the fact that the sum of two convex functions is convex.

\end{proof}

%%%%%%%%%%%%%%%%%%
% COMPAR \ LDP CONE
%%%%%%%%%%%%%%%%%%
\subsection{LDP for the empirical measure under the cone measure on $n^{1/q}\partial\mathbb{B}_{n,q}$}\label{ssec-empcone}

The connection between the annealed and quenched LDPs will make critical use of a particular LDP for the following empirical measures. Let $L_{n,\Theta}$ denote the empirical measure of $\sqrt{n}\bthn$,
\begin{equation}\label{empir}
  L_{n,\Theta} \doteq \frac{1}{n}\sum_{i=1}^n\delta_{\sqrt{n}\bthn_i}.
\nom[lnalpha]{$L_{n,\Theta}$}{empirical measure of  $\sqrt{n}\bthn$}
\end{equation}
In Proposition \ref{prop-sanovcone} below, we state a Sanov-type LDP for this sequence of empirical measures, with the rate function $\mathbb{H}:\mathcal{P}(\mathbb{R})\ra[0,\infty]$ defined to be a perturbed version of relative entropy: for $\nu\in\mathcal{P}(\R)$, let 
 \begin{equation} \label{hpdefn}
  \mathbb{H}(\nu) \doteq \left\{ \begin{array}{ll}
 H(\nu | \mu_2) + \tfrac{1}{2} (1 - m_2(\nu)) & \textnormal{ if } m_2(\nu) \le 1, \\
  +\infty & \textnormal{ else,} 
 \end{array}\right.
 \nom[hq]{$\mathbb{H}$}{rate function for $(L_{n,\Theta})_{n\in \N}$}
  \end{equation}
  where $m_2(\nu)$ is the second moment of $\nu$.

\begin{proposition}\label{prop-sanovcone}
Let $r<2$. Then, the empirical measure $(L_{n,\Theta})_{n\in N}$ satisfies an LDP in $\mathcal{P}_r(\R)$ (equipped with the Wasserstein-$r$ topology) with the strictly convex good rate function $\mathbb{H}$ of \eqref{hpdefn}.
\end{proposition}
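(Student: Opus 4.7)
I would reduce the LDP for $(L_{n,\Theta})_{n\in\N}$ to a joint LDP for the empirical measure and the squared norm of a Gaussian vector, and then apply the contraction principle. Using Lemma \ref{lem-jointrep}, write $\sqrt{n}\bthn \eqdist \sqrt{n}\, Z^{(n)}/\|Z^{(n)}\|_{n,2}$, with $Z^{(n)}$ a standard i.i.d.\ Gaussian vector. Setting $L_n = \tfrac{1}{n}\sum_i \delta_{Z^{(n)}_i}$ and $c_n = \|Z^{(n)}\|_{n,2}/\sqrt{n}$, one has $c_n^2 = m_2(L_n)$ and $L_{n,\Theta} \eqdist L_n \circ T_{1/c_n}^{-1}$, where $T_c(x) = cx$.

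The core step is to establish a joint LDP for $(L_n, c_n^2)$ on $\mathcal{P}_r(\R) \times [0,\infty)$ with rate function
\[
\widehat{I}(\nu,\gamma) = H(\nu | \mu_2) + \tfrac{1}{2}\bigl(\gamma - m_2(\nu)\bigr), \quad \gamma \ge m_2(\nu),
\]
and $+\infty$ otherwise. Since $\int e^{\lambda|x|^r}\,\mu_2(dx) < \infty$ for every $\lambda > 0$ when $r<2$, Sanov's theorem in the Wasserstein-$r$ topology furnishes an LDP for $L_n$ with rate $H(\cdot | \mu_2)$. The additional off-diagonal term $\tfrac{1}{2}(\gamma - m_2(\nu))$ captures the Gaussian tail rate $\P(|Z_1| \ge \sqrt{sn}) \asymp e^{-sn/2}$ of a vanishing fraction of outlier coordinates that shift $c_n^2$ upward by $s = \gamma - m_2(\nu)$ without disturbing the $W_r$-bulk of $L_n$. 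Formally, I would verify this via exponential tilting: for bounded continuous $\phi$ and $\alpha < 1/2$, one computes $\lim_n \tfrac{1}{n}\log\E\bigl[\exp(n\langle\phi,L_n\rangle + n\alpha c_n^2)\bigr] = \log\int e^{\phi(z)+\alpha z^2}\mu_2(dz)$, and by Donsker--Varadhan duality the Legendre transform evaluates to $H(\nu|\mu_2) + \sup_{\alpha<1/2}\alpha(\gamma - m_2(\nu))$, which matches $\widehat{I}$. A Chernoff estimate yields the upper bound and a tilted change of measure the lower bound.

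The contraction principle applied via the map $\Psi(\nu,\gamma) = \nu \circ T_{1/\sqrt{\gamma}}^{-1}$, which is continuous on $\mathcal{P}_r(\R)\times(0,\infty)$ (an elementary optimal-coupling estimate), then delivers the LDP for $L_{n,\Theta} \eqdist \Psi(L_n, c_n^2)$ with rate $\mathbb{H}(\mu) = \inf\{\widehat{I}(\nu,\gamma) : \Psi(\nu,\gamma) = \mu\}$. Parametrize the constraint as $\nu = \mu \circ T_{\sqrt{\gamma}}^{-1}$ (so $m_2(\nu) = \gamma\, m_2(\mu)$); a direct change of variables using the explicit Gaussian density gives
\[
H\bigl(\mu \circ T_{\sqrt{\gamma}}^{-1}\, \bigm|\, \mu_2\bigr) = H(\mu|\mu_2) + \tfrac{1}{2}(\gamma - 1)m_2(\mu) - \tfrac{1}{2}\log\gamma,
\]
reducing the inner problem to $\inf_{\gamma>0}\bigl\{H(\mu|\mu_2) + \tfrac{1}{2}(\gamma - m_2(\mu)) - \tfrac{1}{2}\log\gamma\bigr\}$. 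Setting the $\gamma$-derivative to zero yields $\gamma^\star = 1$ and the claimed $\mathbb{H}(\mu) = H(\mu|\mu_2) + \tfrac{1}{2}(1 - m_2(\mu))$; the constraint $\gamma \ge m_2(\nu) = \gamma\, m_2(\mu)$ reduces to $m_2(\mu) \le 1$, explaining the domain, while strict convexity of $\mathbb{H}$ follows from strict convexity of $H(\cdot|\mu_2)$ together with affinity of $\mu \mapsto m_2(\mu)$.

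The main obstacle is the joint LDP step: in $W_r$ for $r<2$, the functional $m_2$ is only lower semicontinuous, so the scalar limit $\gamma$ of $c_n^2 = m_2(L_n)$ may strictly exceed the second moment of the $W_r$-limit of $L_n$ through mass escaping to infinity. This decoupling is precisely the phenomenon producing the $\tfrac{1}{2}(1-m_2(\mu))$ correction, and making it rigorous requires simultaneous control of the $W_r$-convergence of $L_n$ and the scalar convergence of $c_n^2$, most cleanly carried out via the exponential tilting above (or, equivalently, Cram\'er's theorem in a suitable Banach space containing both empirical measures and second-moment functionals).
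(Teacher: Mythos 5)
The paper gives no proof of this proposition: it is cited from \cite[Theorem~6.6]{arous2001aging} (weak topology) and \cite[Theorem~1.4]{kr1} (Wasserstein topology). So there is no ``paper's own proof'' to compare against line by line, and your proposal must be judged on its own merits as a reconstruction.

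Your overall architecture is the natural one, and it is consistent with how this LDP is actually obtained in the literature: represent $\sqrt{n}\bthn_i = Z_i/c_n$ with $c_n = n^{-1/2}\|Z^{(n)}\|_{n,2}$, establish a joint LDP for $(L_n, c_n^2)$, and contract through $\Psi(\nu,\gamma)=\nu\circ T_{1/\sqrt\gamma}^{-1}$. The post-contraction calculation is correct: the scaling identity $H(\mu\circ T_{\sqrt\gamma}^{-1}\mid\mu_2)=H(\mu\mid\mu_2)+\tfrac12(\gamma-1)m_2(\mu)-\tfrac12\log\gamma$ checks out, the inner infimum over $\gamma$ is attained at $\gamma^\star=1$, the admissibility constraint $\gamma\ge m_2(\nu)$ reduces precisely to $m_2(\mu)\le1$, and strict convexity of $\mathbb{H}$ does follow from strict convexity of $H(\cdot\mid\mu_2)$ plus affinity of $m_2$. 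The formal Legendre computation also produces the correct $\widehat I$: decoupling $\phi$ and $\alpha$ gives $H(\nu\mid\mu_2)+\sup_{\alpha<1/2}\alpha(\gamma-m_2(\nu))$, which is $H(\nu\mid\mu_2)+\tfrac12(\gamma-m_2(\nu))$ for $\gamma\ge m_2(\nu)$ and $+\infty$ otherwise.

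The genuine gap — which you explicitly flag yourself — is the joint LDP for $(L_n,c_n^2)$ on $\mathcal{P}_r(\R)\times[0,\infty)$ with rate $\widehat I$. This is the entire content of the proposition; everything after it is bookkeeping. Invoking G\"artner--Ellis via the stated exponential moment generating functional is not automatic in infinite dimensions: the upper bound needs exponential tightness in $W_r\times\R$ (plausible here since $\mu_2$ has finite exponential moments of $|x|^r$ for all $r<2$, so Cram\'er-in-Polish-space applies), but the lower bound requires either a steepness argument for the limiting log-mgf on $C_b(\R)\times(-\infty,\tfrac12)$, or an explicit change of measure that tilts both the profile of $L_n$ and the contribution of a vanishing fraction of outliers to $c_n^2$. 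The decoupling $\psi=\phi+\alpha z^2$ in your Legendre computation is a heuristic that conflates bounded $\phi$ with unbounded $\psi$, so as written it does not close the duality. To make this rigorous you would need to either (i) verify the hypotheses of an abstract Cram\'er theorem for the pair $(L_n,c_n^2)$ directly, or (ii) mimic the argument in the cited references (which in fact carry out a version of your joint-LDP-plus-contraction scheme). In short: the proposal identifies the correct structure and the correct rate function, and correctly isolates where the difficulty lies, but the core step remains a sketch.
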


This LDP can be found in \cite[Theorem 6.6]{arous2001aging} with respect to the weak topology. A strengthening to the Wasserstein topology (and in fact, a mild extension to the surface measure on $\ell^q$ spheres for $q\in[1,\infty]$ other than $q=2$) can be found in \cite[Theorem 1.4]{kr1}.

%%%%%%%%%%%%%%%%%%
% COMPAR \ VARADHAN
%%%%%%%%%%%%%%%%%%
\subsection{Application of Varadhan's integral formula}\label{ssec-varadh} In this section, in order to obtain an expression for the rate function, we will apply the G\"artner-Ellis theorem. In view of this, we introduce the limit log mgf $\lmbar_p : \R^2 \ra \R$. For $t_2 \ge \frac{1}{p}$, let $\lmbar_p(t_1,t_2) \doteq  +\infty$  and for $t_1\in \R$, $t_2 < \frac{1}{p}$, let
\begin{equation} \label{pressfunc}
  \lmbar_{p}(t_1,t_2) \doteq \lim_{n\ra\infty} \frac{1}{n} \log \mathbb{E} \left[\exp\left( \sum_{i=1}^n \left( t_1 \sqrt{n}\bthn_i \ynp_i + t_2 |\ynp_i|^p\right) \right)\right],
\end{equation}
where $\sqrt{n}\bthn$ is  distributed according to the cone measure on $n^{1/2}\partial \mathbb{B}_{n,2}$ (i.e., the rotationally invariant probability measure on $n^{1/2} \sphn$). Before applying Varadhan's lemma, we introduce the following technical lemma.
\nom[phibar]{$\lmbar_p$}{limit log mgf}

\begin{lemma}[Theorem 2.11(2) of \cite{BarGamLozRou10}]\label{lem-subindep}
For all $n\in \N$, the collection of random variables $(|\bthn_1|,\dots, |\bthn_n|)$ is sub-independent. That is, for non-negative non-decreasing functions $g_1,\dots,g_n$,
\begin{equation*}
  \E\left[\prod_{i=1}^n g_i(|\bthn_i|)\right] \le \prod_{i=1}^n \E\left[g_i(|\bthn_i|)\right].
\end{equation*}
\end{lemma}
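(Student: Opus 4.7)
My plan is to reduce the claim, via the Gaussian representation of Lemma~\ref{lem-jointrep}, to a known negative-association property of the Dirichlet distribution, and then extract sub-independence as one of its standard consequences.

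First, I would use Lemma~\ref{lem-jointrep} to write $\bthn \eqdist Z^{(n)}/\|Z^{(n)}\|_{n,2}$, where $Z^{(n)}=(Z_1,\ldots,Z_n)$ has i.i.d.\ $N(0,1)$ entries. Squaring coordinatewise yields
\begin{equation*}
(|\bthn_1|^2,\ldots,|\bthn_n|^2) \eqdist \left(\frac{Z_1^2}{\sum_{j=1}^n Z_j^2},\ldots,\frac{Z_n^2}{\sum_{j=1}^n Z_j^2}\right),
\end{equation*}
and since each $Z_i^2$ is $\Gamma(1/2,1/2)$-distributed and the $Z_i$ are independent, the right-hand side has the symmetric Dirichlet$(\tfrac12,\ldots,\tfrac12)$ law.

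Second, I would invoke the classical fact that any Dirichlet vector---being the normalization of independent gammas by their sum---is negatively associated (Joag-Dev--Proschan, 1983). A basic closure property of negative association is that applying coordinatewise monotone functions in the same direction preserves it; since $\sqrt{\cdot}$ is non-decreasing, $(|\bthn_1|,\ldots,|\bthn_n|)$ is therefore negatively associated as well. A further standard consequence of negative association, obtained by induction on the pairwise covariance inequality for disjoint index blocks, is that for any non-negative non-decreasing $g_1,\ldots,g_n$,
\begin{equation*}
\E\!\left[\prod_{i=1}^n g_i(|\bthn_i|)\right] \le \prod_{i=1}^n \E\!\left[g_i(|\bthn_i|)\right],
\end{equation*}
which is exactly the stated sub-independence.

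The main obstacle in a fully self-contained proof would be verifying negative association of the Dirichlet$(\tfrac12,\ldots,\tfrac12)$ distribution: its density $\propto \prod w_i^{-1/2}$ on the simplex is \emph{not} log-concave, so the cleanest ``conditioning-on-a-linear-constraint-of-log-concave-variables'' route (e.g., Efron's monotonicity / the permutation-distribution argument) does not apply directly, and one must instead run the Joag-Dev--Proschan normalized-gamma argument. Since the lemma is attributed to \cite{BarGamLozRou10}, I would simply cite their Theorem~2.11(2) for this step rather than reproducing the combinatorial core.
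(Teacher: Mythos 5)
The paper offers no proof of this lemma; it simply cites Theorem~2.11(2) of \cite{BarGamLozRou10}, which is exactly where your proposal ultimately lands. Your motivating sketch --- squaring the Gaussian representation to reach a Dirichlet$(\tfrac{1}{2},\dots,\tfrac{1}{2})$ law, invoking its negative association, and reducing sub-independence to the standard NA product inequality for disjoint index blocks --- together with your correct flag of the non-log-concavity obstruction for $\Gamma(\tfrac{1}{2})$ densities, is sound context, but since you defer to the same citation for the core fact, your treatment matches the paper's.
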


Using the preceding technical result, the following lemma introduces the connection between the limit log mgfs of \eqref{pressfunc} and \eqref{pressfunc2} and the entropy-like rate function $\mathbb{H}$ of \eqref{hpdefn}.

\begin{lemma} \label{lem-lmgfvar}
Let $p\in (2,\infty)$. Then, 
\begin{equation} \label{mgfvar}
   \lmbar_{p}(t_1,t_2) = \sup_{\nu \in \mathcal{P}(\mathbb{R})} \left\{ \lmq_{p,\nu}(t_1,t_2) - \mathbb{H}(\nu)\right\}, \quad t_1,t_2\in \R.
\end{equation}
\end{lemma}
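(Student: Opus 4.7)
The plan is to rewrite the expectation defining $\lmbar_p(t_1,t_2)$ in terms of the empirical measure $L_{n,\Theta}$ from \eqref{empir} and then apply Varadhan's integral formula, using the LDP for $L_{n,\Theta}$ in Proposition \ref{prop-sanovcone}. The case $t_2 \ge 1/p$ is trivial since both sides equal $+\infty$ (for the right-hand side, take $\nu=\mu_2$), so I would immediately restrict to $t_2 < 1/p$.

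First, by conditioning on $\bthn$ and using the independence of $\ynp$ from $\bthn$ together with the fact that the $Y^{(n,p)}_i$ are i.i.d.\ $\mu_p$, one computes
\begin{align*}
\E\left[\exp\left(\sum_{i=1}^n t_1\sqrt{n}\bthn_i\ynp_i + t_2|\ynp_i|^p\right) \,\bigg|\, \bthn\right]
&= \prod_{i=1}^n e^{\lm_p(t_1\sqrt{n}\bthn_i,\,t_2)} = \exp\!\left(n\,\lmq_{p,L_{n,\Theta}}(t_1,t_2)\right),
\end{align*}
by the very definition \eqref{lampdefn} of $\lmq_{p,\nu}$. Hence
$$
\tfrac{1}{n}\log \E\left[\exp\Big(\sum_{i=1}^n t_1\sqrt{n}\bthn_i\ynp_i + t_2|\ynp_i|^p\Big)\right] = \tfrac{1}{n}\log\E\!\left[\exp\!\left(n\,\lmq_{p,L_{n,\Theta}}(t_1,t_2)\right)\right].
$$

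Next, I would apply Varadhan's integral formula (see \cite[Theorem 4.3.1]{DemZeibook}) to the functional $F:\mathcal{P}_{p/(p-1)}(\R) \ni \nu \mapsto \lmq_{p,\nu}(t_1,t_2) \in \R$. Since $p>2$ gives $r := p/(p-1) \in (1,2)$, Proposition \ref{prop-sanovcone} provides an LDP for $(L_{n,\Theta})_{n\in\N}$ in $\mathcal{P}_r(\R)$ equipped with the Wasserstein-$r$ topology, with rate function $\mathbb{H}$. Continuity of $F$ in this topology was already established in Lemma \ref{lem-tailcont}. Thus, Varadhan's formula will yield
$$
\lmbar_p(t_1,t_2) = \sup_{\nu \in \mathcal{P}_r(\R)} \{F(\nu) - \mathbb{H}(\nu)\} = \sup_{\nu \in \mathcal{P}(\R)}\{\lmq_{p,\nu}(t_1,t_2) - \mathbb{H}(\nu)\},
$$
provided we can verify the moment (exponential tightness) hypothesis
\begin{equation}\label{varmomcond}
\limsup_{n\ra\infty} \tfrac{1}{n}\log\E\!\left[\exp\!\left(\gamma n\,\lmq_{p,L_{n,\Theta}}(t_1,t_2)\right)\right] < \infty, \quad \text{for some } \gamma>1.
\end{equation}
(The restriction to $\mathcal{P}_r(\R)$ in the intermediate supremum is harmless since $\mathbb{H}(\nu) \ge H(\nu|\mu_2)-\tfrac{1}{2}$ forces $\nu\in\mathcal{P}_2(\R)\subset\mathcal{P}_r(\R)$ whenever the rate is finite.)

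The main technical obstacle is \eqref{varmomcond}. I would proceed as follows: since $\mu_p$ is symmetric, so is $s\mapsto\lm_p(s,t_2)$ (Lemma \ref{lem-lamfacts}), and the explicit formula of Lemma \ref{lem-tailcont} combined with $\mu_p\in\mathcal{T}_p$ (Lemma \ref{lem-taildecay}) yields a constant $C=C(t_1,t_2)<\infty$ with $\lm_p(t_1u,t_2) \le C(1+|u|^{r})$ for all $u\in\R$. Therefore
$$
\gamma\,n\,\lmq_{p,L_{n,\Theta}}(t_1,t_2) = \gamma\sum_{i=1}^n \lm_p(t_1\sqrt{n}\bthn_i,t_2) \le \gamma C n + \gamma C\sum_{i=1}^n |\sqrt{n}\bthn_i|^{r}.
$$
Applying sub-independence of $(|\bthn_1|,\dots,|\bthn_n|)$ (Lemma \ref{lem-subindep}) to the non-decreasing functions $u\mapsto \exp(\gamma C n^{r/2}u^r)$ gives
$$
\E\!\left[\exp\!\left(\gamma C\sum_{i=1}^n |\sqrt{n}\bthn_i|^r\right)\right] \le \prod_{i=1}^n \E\!\left[\exp\!\left(\gamma C |\sqrt{n}\bthn_i|^r\right)\right].
$$
Because $\sqrt{n}\bthn_i$ has an explicit (Beta-type) marginal density whose tails are dominated by Gaussian tails uniformly in $n$, and because $r<2$, each factor above is bounded by a constant $M<\infty$ independent of $n$ and $i$. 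This gives \eqref{varmomcond}, completing the argument. The hardest part is precisely controlling this moment condition uniformly in $n$ using sub-independence in place of independence, which is where the structural assumption $p>2$ (ensuring $r<2$) is essential.
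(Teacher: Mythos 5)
Your proposal is correct and follows essentially the same route as the paper: condition on $\bthn$ to express the pre-limit log mgf as $\tfrac1n\log\E[\exp(n\,\lmq_{p,L_{n,\Theta}}(t_1,t_2))]$, invoke the Sanov-type LDP of Proposition \ref{prop-sanovcone} together with Varadhan's lemma, get continuity from Lemma \ref{lem-tailcont} (using $p/(p-1)<2$), and verify the exponential moment condition via the sub-independence of Lemma \ref{lem-subindep} and the $\mathcal{T}_p$ tail bound on $\lm_p$. The only (inessential) difference is the last step: the paper bounds $\E[\exp(\kappa C|\sqrt{n}\bthn_1|^{p/(p-1)})]$ via the representation $\sqrt{n}\bthn_1\eqdist \sqrt{n}Z_1/\|Z^{(n)}\|_{n,2}$ and Fatou, while you use the uniform-in-$n$ subgaussian bound on the Beta-type density of $\sqrt{n}\bthn_1$, which works equally well since $p/(p-1)<2$.
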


\begin{proof}
The equality in \eqref{mgfvar} is clear for $t_2 \ge \frac{1}{p}$, since then both sides of \eqref{mgfvar} equal $+\infty$. Thus, fix $t_1\in \R$, $t_2 < \frac{1}{p}$. Conditioning on $\Theta$, and using the assumed independence of $\Theta$ and $\bolyp$, as well as the definition of $\lm_p$ from \eqref{lampdefn0},  the expectation on the right-hand side of \eqref{pressfunc} can be rewritten as
\begin{align*}
 \lmbar_{p}(t_1,t_2) &= \lim_{n\ra\infty} \frac{1}{n} \log \mathbb{E} \left[\prod_{i=1}^n\mathbb{E}\left[\left.  \exp\left( \left( t_1 \sqrt{n}\bthn_i \ynp_i + t_2 |\ynp_i|^p\right) \right) \right| \sqrt{n}\bthn \right]\right]  \\
  &= \lim_{n\ra\infty} \frac{1}{n} \log \mathbb{E}\left[\exp\left(   \sum_{i=1}^n \lm_p(t_1\sqrt{n}\bthn_i, t_2) \right)\right]\\
  &= \lim_{n\ra\infty} \frac{1}{n} \log \E\left[ \exp(n\psi_{p,t_1,t_2}(L_{n,\Theta}))\right],
\end{align*}
where $\psi_{p,t_1,t_2}:\mathcal{P}(\R) \ra \R$ is defined as
\begin{equation*}
  \psi_{p,t_1,t_2} (\nu) \doteq \int\lm_p(t_1 a,t_2)\nu(da) = \lmq_{p,\nu}(t_1,t_2), \quad \nu\in\mathcal{P}(\R).
\end{equation*}
Recall from Proposition \ref{prop-sanovcone} that for all $r <2$, the sequence $(L_{n,\Theta})_{n\in \N}$ satisfies an LDP in $\mathcal{P}_r(\R)$ equipped with the Wasserstein-$r$ topology, with the good rate function $\mathbb{H}$. Thus, the variational formula \eqref{mgfvar} would follow from Varadhan's integral formula  \cite[Theorem 4.3.1]{DemZeibook} if we can show that the following hypotheses hold: \begin{enumerate}[label=(\alph*)]
\item for some $r<2$,  $\psi_{p,t_1,t_2}$ is continuous with respect to the Wasserstein-$r$ topology; 
\item for some $\kappa >1$, $\psi_{p,t_1,t_2}$ satisfies the exponential moment condition 
\begin{equation}\label{expmomcond}
  \limsup_{n\ra\infty} \frac{1}{n} \log \E\left[ e^{\kappa n \psi_{p,t_1,t_2} (L_{n,\Theta})} \right] < \infty.
\end{equation}
\end{enumerate}

We first check  condition (a). The continuity of $\psi_{p,t_1,t_2}$   with respect to the Wasserstein-$\tfrac{p}{p-1}$ topology follows from Lemma \ref{lem-tailcont}. Condition (a) follows since for $p>2$, we have $\frac{p}{p-1} < 2$.

We now establish a strong version of condition (b) that shows the exponential moment is finite for any $\kappa >1$. Let $\kappa > 1$. Because $\mu_p$ is symmetric, $\lm_p$ of \eqref{lampdefn0} is symmetric in its first argument, so $\lm_p(t_1a,t_2)$ depends on $a$ only through $|a|$. Moreover, for fixed $t_1\in \R$ and $t_2 < \frac{1}{p}$, the mapping $|a| \mapsto \lm_p(t_1|a|,t_2)$ is non-negative and non-decreasing, as can be seen from the expression for $\lm_p$ given in Lemma  \ref{lem-tailcont}. Thus,  the sub-independence property of Lemma \ref{lem-subindep} and the definition \eqref{tpdef} of $\mathcal{T}_p$ imply that for a constant $C_{p,t_1,t_2}$ not depending on $n,\kappa, \bthn$,
 \begin{align}
 \E\left[\exp\left(\kappa n \psi_{p,t_1,t_2}(L_{n,\Theta}) \right)\right]  &\le \prod_{i=1}^n \E\left[ \exp(\kappa \lm_p(t_1\sqrt{n}\bthn_i, t_2) )\right] \notag\\
    &\le \prod_{i=1}^n \E\left[ \exp( \kappa C_{p,t_1,t_2} +  \kappa C_{p,t_1,t_2} |\sqrt{n}\bthn_i|^{p/(p-1)} )\right] \notag\\
    &=  \exp(n\kappa C_{p,t_1,t_2}) \, \E\left[ \exp(  \kappa C_{p,t_1,t_2} |\sqrt{n}\bthn_1|^{p/(p-1)} )\right]^n.\label{expcond}
\end{align}
Let $(Z_1,Z_2,\dots)$ be a sequence of i.i.d. standard Gaussian random variables, and note that due to Lemma \ref{lem-jointrep} and the strong law of large numbers,
\begin{equation*}
  \sqrt{n}\bthn_1 \eqdist \frac{\sqrt{n} Z_1}{\|Z^{(n)}\|_{n,2}}\xrightarrow[\textnormal{$\P$-a.s.}]{ n\ra\infty } Z_1.
\end{equation*}
Applying logarithms, dividing by $n$, and taking limits in \eqref{expcond}  shows that for $r=\frac{p}{p-1}$,
\begin{align*}
   \limsup_{n\ra\infty} \frac{1}{n} \log  \E\left[\exp\left(\kappa n \psi_{p,t_1,t_2} (L_{n,\Theta}) \right)\right]  &\le  \kappa  C_{p,t_1,t_2}  +  \limsup_{n\ra\infty} \log  \E\left[ \exp(  \kappa C_{p,t_1,t_2} |\sqrt{n}\bthn_1|^{r} )\right]\\
   &\le   \kappa C_{p,t_1,t_2}+  \log  \E\left[ \exp(  \kappa C_{p,t_1,t_2} |Z_1|^{r} )\right] < \infty,
\end{align*}
where the interchange of $\limsup$ and expectation is due to Fatou's lemma, and the last display is finite since $r < 2$ for $p > 2$.
\end{proof}

In the following two lemmas, we establish some properties of the minimizers of the variational problem of Lemma \ref{lem-lmgfvar}. We later massage these results to obtain the variational formula of Theorem \ref{th-compar}.

\begin{lemma}[{\cite[Lemma 2.4]{kr1}}]\label{lem-compact}
Let $K_2 \doteq \{ \nu \in \mathcal{P}(\mathbb{R}) : m_2(\nu) \le 1 \}$. The set $K_2$ is convex, non-empty, and  compact with respect to the Wasserstein-$r$ topology for all $r <2$.
\end{lemma}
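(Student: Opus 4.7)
The plan is to verify each of the three properties in turn, with compactness being the substantive part.

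\textbf{Convexity and non-emptiness.} These are immediate. For any $\nu_1,\nu_2 \in K_2$ and $\lambda \in [0,1]$, linearity of the second moment gives $m_2(\lambda \nu_1 + (1-\lambda)\nu_2) = \lambda m_2(\nu_1) + (1-\lambda)m_2(\nu_2) \le 1$, so $K_2$ is convex. Non-emptiness is witnessed by $\delta_0 \in K_2$, since $m_2(\delta_0) = 0$.

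\textbf{Compactness in the Wasserstein-$r$ topology, $r < 2$.} The idea is to combine tightness with uniform integrability of $|x|^r$, then invoke Lemma \ref{lem-wass}. First, I would verify tightness of $K_2$: by Chebyshev's inequality, for any $\nu \in K_2$ and $M > 0$,
\begin{equation*}
\nu(\{x : |x| > M\}) \le M^{-2} m_2(\nu) \le M^{-2},
\end{equation*}
so $K_2$ is tight. By Prokhorov's theorem, any sequence $(\nu_n) \subset K_2$ has a subsequence, which I will continue to denote $(\nu_n)$, converging weakly to some $\nu \in \mathcal{P}(\mathbb{R})$. By the Portmanteau theorem applied to the lower semi-continuous function $x \mapsto |x|^2$, $m_2(\nu) \le \liminf_n m_2(\nu_n) \le 1$, so $\nu \in K_2$, establishing closure.

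\textbf{Strengthening to Wasserstein-$r$ convergence.} It remains to upgrade the weak convergence to Wasserstein-$r$ convergence for arbitrary $r < 2$. By the characterization of Lemma \ref{lem-wass}, this amounts to showing $m_r(\nu_n) \to m_r(\nu)$, which in turn follows from uniform integrability of $|x|^r$ under the family $\{\nu_n\}$. The uniform $L^2$ bound gives this readily: for $r < 2$ and any $M > 0$,
\begin{equation*}
\int_{\{|x| > M\}} |x|^r \, \nu_n(dx) \le M^{r-2} \int_{\{|x|>M\}} |x|^2 \, \nu_n(dx) \le M^{r-2},
\end{equation*}
which tends to $0$ as $M \to \infty$ uniformly in $n$. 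Combined with weak convergence $\nu_n \Rightarrow \nu$, this yields $m_r(\nu_n) \to m_r(\nu)$, and hence $\mathcal{W}_r(\nu_n,\nu) \to 0$ by Lemma \ref{lem-wass}. This completes the proof of sequential compactness; since $\mathcal{P}_r(\mathbb{R})$ equipped with the Wasserstein-$r$ metric is a metric space, this is equivalent to compactness.

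The main (very mild) subtlety is the passage from the weak topology to the Wasserstein-$r$ topology: the key point is that $r < 2$ creates a gap with the second-moment bound defining $K_2$, which supplies the uniform integrability of $|x|^r$ needed to promote weak convergence to Wasserstein-$r$ convergence. This is exactly the reason the lemma fails at $r = 2$.
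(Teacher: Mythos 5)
Your proof is correct. Note that the paper does not prove this lemma at all --- it is quoted from \cite[Lemma 2.4]{kr1} --- so there is no in-paper argument to compare against; your route (tightness from the uniform second-moment bound via Chebyshev, closedness from lower semi-continuity of $m_2$ under weak convergence, and uniform integrability of $|x|^r$ for $r<2$ to upgrade weak convergence to Wasserstein-$r$ convergence via Lemma \ref{lem-wass}) is the standard one and is exactly the kind of argument the cited reference gives. The only cosmetic caveat is that Lemma \ref{lem-wass} is stated in the paper for $r\in[1,\infty)$, so for $r<1$ you would need to invoke the analogous (and equally standard) characterization of convergence for the metric $\mathcal{W}_r$; in the applications of the lemma (where $r=\tfrac{p}{p-1}\in(1,2)$) this issue does not arise.
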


\begin{lemma}\label{lem-optprops}
Let $p\in (2,\infty)$ and for fixed $(t_1,t_2)\in\R^2$, let $\phi:\mathcal{P}(\R)\ra\R$ denote the functional being maximized in \eqref{mgfvar} ,
\begin{equation*}
  \phi(\nu) \doteq \lmq_{p,\nu}(t_1,t_2) - \mathbb{H}(\nu).
\end{equation*}
Then, $\phi$ is strictly concave and upper semi-continuous (with respect to the Wasserstein-$\tfrac{p}{p-1}$ topology on $\mathcal{P}_{p/(p-1)}(\R)$). As a consequence, the supremum  in \eqref{mgfvar} is uniquely attained at some optimal $\nu^\circ$ such that $m_2(\nu^\circ) \le 1$.
\end{lemma}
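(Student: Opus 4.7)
The plan is to treat strict concavity and upper semi-continuity as two separate assertions, and then deduce existence and uniqueness of the maximizer from a standard compactness argument. Only the case $t_2 < 1/p$ is substantive; when $t_2 \ge 1/p$ we have $\lmq_{p,\nu}(t_1,t_2) \equiv +\infty$, both sides of \eqref{mgfvar} are trivially $+\infty$, and uniqueness is vacuous, so I would restrict attention to $t_2 < 1/p$ from the outset.

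For strict concavity, I would expand $\mathbb{H}(\nu) = H(\nu\,|\,\mu_2) + \tfrac{1}{2} - \tfrac{1}{2}m_2(\nu)$ on the convex set $K_2 = \{\nu : m_2(\nu) \le 1\}$ and write
\[
\phi(\nu) \;=\; \int_\R \lm_p(t_1 u, t_2)\,\nu(du) \;-\; H(\nu\,|\,\mu_2) \;-\; \tfrac{1}{2} \;+\; \tfrac{1}{2}\,m_2(\nu).
\]
The first and last terms are affine (indeed linear) in $\nu$, while $H(\cdot\,|\,\mu_2)$ is strictly convex on its effective domain (a standard property of relative entropy, following from the strict convexity of $x \mapsto x\log x$). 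Hence $\phi$, as the sum of an affine functional and the negative of a strictly convex one, is strictly concave on $K_2 \cap \{H(\,\cdot\,|\,\mu_2) < \infty\}$.

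For upper semi-continuity, I would invoke Lemma \ref{lem-tailcont} to conclude that $\nu \mapsto \lmq_{p,\nu}(t_1,t_2)$ is continuous with respect to the Wasserstein-$p/(p-1)$ topology, and Proposition \ref{prop-sanovcone} to conclude that $\mathbb{H}$ is a good (hence lower semi-continuous) rate function on $\mathcal{P}_r(\R)$ endowed with the Wasserstein-$r$ topology for every $r < 2$. Since $p > 2$ forces $p/(p-1) < 2$, these topologies are compatible, so $\phi$ is the difference of a continuous functional and a lsc one, and is therefore usc in the Wasserstein-$p/(p-1)$ topology.

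For the existence and uniqueness of the maximizer, I would observe that outside $K_2$ the functional $\mathbb{H}$ equals $+\infty$, so the supremum in \eqref{mgfvar} reduces to a supremum over $K_2$, which by Lemma \ref{lem-compact} is non-empty, convex, and Wasserstein-$p/(p-1)$ compact. An upper semi-continuous functional attains its supremum on a non-empty compact set, and strict concavity forces the maximizer $\nu^\circ$ to be unique; by construction $m_2(\nu^\circ) \le 1$. The main subtlety to keep track of is the coordination of topologies: continuity of $\lmq_{p,\cdot}$ is only guaranteed in Wasserstein-$p/(p-1)$, and it is precisely the hypothesis $p > 2$ (equivalently $p/(p-1) < 2$) that makes this topology simultaneously compatible with the lower semi-continuity of $\mathbb{H}$ and the compactness of $K_2$.
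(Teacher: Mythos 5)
Your proof is correct and follows essentially the same route as the paper: restrict to the compact convex set $K_2$ via Lemma \ref{lem-compact}, use Lemma \ref{lem-tailcont} for continuity of $\nu\mapsto\lmq_{p,\nu}(t_1,t_2)$ and Proposition \ref{prop-sanovcone} for lower semi-continuity (and strict convexity) of $\mathbb{H}$ in the Wasserstein-$\tfrac{p}{p-1}$ topology (valid since $p>2$ gives $\tfrac{p}{p-1}<2$), and conclude by maximizing a strictly concave usc functional over a compact convex set. The only cosmetic differences are your explicit dismissal of the degenerate case $t_2\ge\tfrac{1}{p}$ and your locating the strict convexity in the relative-entropy term of $\mathbb{H}$ rather than citing it directly from Proposition \ref{prop-sanovcone}; both are fine.
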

\begin{proof}
From the definition \eqref{hpdefn}, it follows that the domain of $\mathbb{H}$ is the compact set $K_2$ of Lemma \ref{lem-compact}, so it suffices to restrict the supremum in the variational problem \eqref{mgfvar} to $K_2\subset\mathcal{P}(\R)$. For $\nu \in K_2$, we see that  $\phi$ is the sum of a linear functional $\nu\mapsto \lmq_{p,\nu}(t_1,t_2)$, and the negative of the strictly convex rate function $\mathbb{H}$ of \eqref{hpdefn}. As for upper semi-continuity, first note that $\nu\mapsto \lmq_{p,\nu}(t_1,t_2)$ is continuous due to Lemma \ref{lem-tailcont}. Since $\frac{p}{p-1} < 2$ for $p>2$, it follows from Proposition \ref{prop-sanovcone} that  $-\mathbb{H}$ is upper semi-continuous with respect to Wasserstein-$\tfrac{p}{p-1}$. This shows that $\phi$ is strictly concave and upper semi-continuous on the compact convex non-empty set $K_2$, so the supremum of $\phi$ is uniquely attained on $K_2$.
\end{proof}

\begin{theorem}[Minimax Theorem, see Corollary 3.3 of \cite{sion1958general}]\label{th-minmax} 
Let $\mathcal{X},\mathcal{Y}$ be topological vector spaces. Suppose $C\subset \mathcal{X}$ is a compact convex nonempty subset, and $D\subset \mathcal{Y}$ is a convex subset. Let $F:\mathcal{X}\times \mathcal{Y} \ra \mathbb{R}$ be a function such that:
\begin{itemize}
\item for all $y\in D$, $F(\cdot, y)$ is lower semi-continuous and convex on $C$;
\item for all $x\in C$, $F(x,\cdot)$ is upper semi-continuous and concave on $D$.
\end{itemize}
Then,
\begin{equation*}
\inf_{x\in C} \sup_{y\in D} F(x,y) =   \sup_{y\in D} \inf_{x\in C} F(x,y).
\end{equation*}

\end{theorem}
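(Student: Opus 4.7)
The plan is to establish the two inequalities $\sup_{y\in D} \inf_{x\in C} F(x,y) \le \inf_{x\in C} \sup_{y\in D} F(x,y)$ and its reverse separately. The first is routine: for any fixed $x_0 \in C$ and $y_0\in D$, one has $\inf_{x\in C} F(x,y_0) \le F(x_0,y_0) \le \sup_{y\in D} F(x_0,y)$; taking $\sup$ over $y_0$ on the left and $\inf$ over $x_0$ on the right yields the bound. All the work is in the reverse inequality $\inf_{x} \sup_{y} F \le \sup_{y} \inf_{x} F$.

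For this, I would argue by contradiction: suppose $\alpha \in \R$ satisfies $\sup_{y} \inf_{x} F(x,y) < \alpha < \inf_{x} \sup_{y} F(x,y)$. For each $y \in D$, introduce the sublevel set $B_y \doteq \{x \in C : F(x,y) \le \alpha\}$. By the lower semi-continuity and convexity of $F(\cdot, y)$, each $B_y$ is a closed convex subset of the compact set $C$. The strict inequality on the right means that for every $x \in C$ there exists $y \in D$ with $F(x,y) > \alpha$, so $\bigcap_{y\in D} B_y = \emptyset$, and compactness of $C$ together with the finite intersection property produces $y_1,\dots,y_n \in D$ with $B_{y_1} \cap \cdots \cap B_{y_n} = \emptyset$.

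The heart of the proof is to reduce $n$ down to $2$ and then derive a contradiction from the 2-point case. I would induct on $n$: given $n$ sublevel sets with empty intersection, the concavity of $F(x,\cdot)$ and convexity of $D$ allow one to find a point $\tilde y$ on the segment $[y_1, y_2] \subset D$ so that $B_{\tilde y} \cap B_{y_3} \cap \cdots \cap B_{y_n} = \emptyset$, stepping down to $n-1$ sets. The base case $n=2$ is the crux. Concavity in $y$ gives $F(x, \lambda y_1 + (1-\lambda) y_2) \ge \lambda F(x, y_1) + (1-\lambda) F(x, y_2)$ for $\lambda \in [0,1]$, so if $x \notin B_{y_1} \cup B_{y_2}$ then $x \notin B_{y_\lambda}$, where $y_\lambda \doteq \lambda y_1 + (1-\lambda) y_2$. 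Hence $B_{y_\lambda} \subseteq B_{y_1} \cup B_{y_2}$, and since $B_{y_1} \cap B_{y_2} = \emptyset$, the convex (hence connected) set $B_{y_\lambda}$ lies entirely in one of the two. Tracking which alternative holds as $\lambda$ varies in $[0,1]$, the upper semi-continuity of $F(x, \cdot)$ allows one to identify a critical $\lambda^*$ at which the alternatives must switch, and a separation/connectedness argument at $\lambda^*$ produces a point $x^* \in B_{y_{\lambda^*}}$ that simultaneously lies in $B_{y_1}$ and $B_{y_2}$, contradicting $B_{y_1} \cap B_{y_2} = \emptyset$.

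The main obstacle is both the inductive step (constructing $\tilde y$ to preserve emptiness against all remaining $B_{y_i}$ at once) and the 2-point base case, which is delicate: it rests on exploiting the connectedness of convex subsets of a topological vector space in tandem with both semi-continuity hypotheses applied in the correct arguments. A conceptually cleaner alternative, which some treatments adopt, is to bypass the induction entirely by invoking the Knaster--Kuratowski--Mazurkiewicz lemma or a Brouwer/Kakutani fixed-point theorem; however, the elementary level-set route above avoids heavy machinery and fits naturally with the convex-analytic flavor of the rest of the paper.
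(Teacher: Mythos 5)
The paper itself offers no proof of Theorem \ref{th-minmax}: it is imported verbatim, with attribution, from Corollary 3.3 of \cite{sion1958general} and used as a black box in the proofs of Lemmas \ref{lem-varforbarstar} and \ref{lem-varforbarstar2}. So the benchmark is Sion's published argument (or Komiya's elementary simplification of it), not anything internal to the paper; what you have written is a sketch of a reproof of Sion's theorem along exactly that classical level-set/connectedness route. The easy inequality, the reduction by compactness of $C$ and the finite intersection property to finitely many sublevel sets $B_{y_1},\dots,B_{y_n}$ with empty intersection, and the inductive step are all sound in outline (for the induction one applies the two-point lemma relativized to the compact convex set $K=B_{y_3}\cap\cdots\cap B_{y_n}$, which is in effect what you describe).

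The gap is in the two-point base case, which is the crux and is not established as you describe it. First, the dichotomy ``$B_{y_\lambda}$ lies entirely in $B_{y_1}$ or entirely in $B_{y_2}$'' requires $B_{y_\lambda}\neq\emptyset$; this does follow from the left inequality $\sup_y\inf_x F<\alpha$, but it must be invoked, else the tracking argument is vacuous. Second, and more seriously, the advertised contradiction --- a point $x^*\in B_{y_{\lambda^*}}$ lying simultaneously in $B_{y_1}$ and $B_{y_2}$ --- cannot occur, since those two sets are disjoint by the choice of $\alpha$; no argument will produce such a point, so the described endgame is wrong. The correct conclusion is a contradiction with the connectedness of the parameter interval: one shows $I=\{\lambda: B_{y_\lambda}\subset B_{y_1}\}$ and $J=\{\lambda: B_{y_\lambda}\subset B_{y_2}\}$ are disjoint, nonempty, cover $[0,1]$, and are both closed. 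Third, the closedness of $I$ and $J$ is precisely where upper semicontinuity of $F(x,\cdot)$ enters, and it does not follow from the level-$\alpha$ sets alone; the standard device (Komiya) is a second threshold $\beta$ with $\alpha<\beta<\inf_{x\in C}\max\{F(x,y_1),F(x,y_2)\}$, interlacing the $\alpha$- and $\beta$-sublevel sets so that usc moves a point of the $\alpha$-set at the limiting $\lambda$ into the $\beta$-set at nearby $\lambda_n$, whose connectedness then pins it inside the $\beta$-sublevel set of $y_1$. With those repairs your outline becomes the known elementary proof; as written, the key step is missing. For the purposes of this paper, simply citing \cite{sion1958general}, as the authors do, is of course sufficient.
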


\begin{lemma} \label{lem-varforbarstar}
Let $p\in (2,\infty)$. Then, for $\tau_1,\tau_2\in\mathbb{R}$ ,
\begin{equation}\label{mimap}
  \lmbar_{p}^*(\tau_1,\tau_2) = \inf_{\nu \in \mathcal{P}(\mathbb{R})}  \left\{ \lmq_{p,\nu}^*(\tau_1,\tau_2) + \mathbb{H}(\nu)\right\}.
\end{equation}
\end{lemma}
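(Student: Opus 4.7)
The plan is to deduce \eqref{mimap} from Lemma \ref{lem-lmgfvar} via Sion's minimax theorem (Theorem \ref{th-minmax}). Fix $\tau = (\tau_1, \tau_2) \in \R^2$ and let $K_2 \doteq \{\nu \in \mathcal{P}(\R) : m_2(\nu) \le 1\}$ denote the effective domain of $\mathbb{H}$. Set
\[
F(\nu, t) \doteq \langle t, \tau\rangle_2 - \lmq_{p,\nu}(t) + \mathbb{H}(\nu), \qquad (\nu, t) \in K_2 \times \R^2.
\]
Since $\mathbb{H} \equiv +\infty$ off $K_2$, the identity of Lemma \ref{lem-lmgfvar} together with the definition of the Legendre transform yields
\[
\lmbar_p^*(\tau) = \sup_{t \in \R^2} \bigl[\langle t, \tau\rangle_2 - \lmbar_p(t)\bigr] = \sup_{t \in \R^2}\inf_{\nu \in K_2} F(\nu, t),
\]
while directly
\[
\inf_{\nu \in \mathcal{P}(\R)} \bigl[\lmq_{p,\nu}^*(\tau) + \mathbb{H}(\nu)\bigr] = \inf_{\nu \in K_2}\sup_{t \in \R^2} F(\nu, t).
\]
Thus \eqref{mimap} reduces to the minimax equality $\sup_t \inf_\nu F = \inf_\nu \sup_t F$.

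To invoke Theorem \ref{th-minmax}, I take $C = K_2 \subset \mathcal{P}_{p/(p-1)}(\R)$ and $D = \R^2$. Since $p > 2$ gives $p/(p-1) < 2$, Lemma \ref{lem-compact} ensures that $K_2$ is a nonempty, convex, and Wasserstein-$p/(p-1)$ compact subset, and $\R^2$ is trivially convex. The required properties of $F$ follow from ingredients already established: for each fixed $t$, the map $\nu \mapsto F(\nu, t)$ is the sum of an affine functional in $\nu$ and the convex functional $\mathbb{H}$, hence convex, and is lower semi-continuous because $\nu \mapsto \lmq_{p,\nu}(t)$ is Wasserstein-$p/(p-1)$ continuous by Lemma \ref{lem-tailcont} while $\mathbb{H}$ is lower semi-continuous (being the good rate function in Proposition \ref{prop-sanovcone}). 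For each fixed $\nu$, the map $t \mapsto F(\nu, t)$ is concave, since $\lmq_{p,\nu}(\cdot)$ is convex as a log moment generating function, and it is upper semi-continuous since $\lmq_{p,\nu}$ is lower semi-continuous by Lemma \ref{lem-quenchedess}. The conclusion of Theorem \ref{th-minmax} then delivers the desired interchange, proving \eqref{mimap}.

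The only delicate point is pairing the right Wasserstein topology with the compactness of $K_2$: one needs the topology fine enough that $\nu \mapsto \lmq_{p,\nu}(t)$ is continuous (which requires finite $p/(p-1)$-th moments via Lemma \ref{lem-tailcont}), yet coarse enough for $K_2$ to remain compact (which requires the moment exponent to be strictly less than $2$, by Lemma \ref{lem-compact}). Both constraints are simultaneously satisfiable precisely because $p > 2$, which is why the result is restricted to this range; apart from this, the argument is essentially bookkeeping built on top of the variational characterization already obtained in Lemma \ref{lem-lmgfvar}.
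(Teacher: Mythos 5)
Your argument is essentially the paper's own proof: Lemma \ref{lem-lmgfvar} followed by Sion's minimax theorem (Theorem \ref{th-minmax}) applied to the same functional $F(\nu,t)=\langle t,\tau\rangle_2-\lmq_{p,\nu}(t)+\mathbb{H}(\nu)$, with $C=K_2$ compact in the Wasserstein-$\tfrac{p}{p-1}$ topology via Lemma \ref{lem-compact} (using $p>2$), and the same convexity/semicontinuity inputs from Lemmas \ref{lem-tailcont}, \ref{lem-quenchedess}, and \ref{lem-lamfacts}. The one deviation is your choice $D=\R^2$, on which $F(\nu,\cdot)\equiv-\infty$ whenever $t_2\ge\tfrac{1}{p}$, whereas Theorem \ref{th-minmax} as stated requires a real-valued $F$; the paper avoids this by taking $D=\R\times(-\infty,\tfrac{1}{p})$ and noting at the end that points with $t_2\ge\tfrac{1}{p}$ contribute $-\infty$ to every relevant supremum, a one-line adjustment you should incorporate.
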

\begin{proof} 
 We apply the Minimax Theorem \ref{th-minmax} to the following:
\begin{itemize}
\item $\mathcal{X} = \mathcal{M}(\mathbb{R})$, the space of finite signed measures on $\mathbb{R}$, equipped with the Wasserstein-$\tfrac{p}{p-1}$ topology;
\item $\mathcal{Y} = \mathbb{R}^2$;
\item $C = K_2 = \{ \nu \in \mathcal{X} : \nu \in \mathcal{P}(\mathbb{R}), m_2(\nu) \le 1\}$;
\item $D =  \mathbb{R} \times (-\infty,\tfrac{1}{p})$.
\item Fix $\tau_1,\tau_2$. For $\nu \in C$ and $(t_1,t_2) \in D$, let
\begin{equation} \label{minmaxf}
F(\nu, (t_1,t_2)) \doteq t_1\tau_1 + t_2\tau_2 -  \lmq_{p,\nu}(t_1,t_2) + \mathbb{H}(\nu),
\end{equation}
where $\lmq_{p,\nu}$ and $\mathbb{H}$ are defined as in \eqref{lampdefn} and \eqref{hpdefn}, respectively, for $\nu\in\mathcal{P}(\R)$, and set equal to $+\infty$ for $\nu \in \mathcal{M}(\R) \setminus \mathcal{P}(\R)$. 
\end{itemize}
It is clear that $\mathcal{X},\mathcal{Y}, D$ satisfy the hypotheses of the minimax theorem. The hypotheses for $C=K_2$ follow from Lemma \ref{lem-compact}, since $\frac{p}{p-1}< 2$.

To verify the desired properties of $F$, we first fix $(t_1,t_2) \in D$. Then, the lower semi-continuity and convexity of $F(\,\cdot\,,(t_1,t_2))$ follow from Lemma \ref{lem-optprops}. Next, fix $\nu \in C$.  Lower semi-continuity of $\lmq_{p,\nu}$ follows from Lemma \ref{lem-quenchedess}. As for convexity, Lemma \ref{lem-lamfacts} says that $\lm_p$ is convex on $D$, and hence, by linearity of expectation, $\lmq_{p,\nu}$ is convex on $D$. Since $(t_1,t_2) \mapsto t_1\tau_1 + t_2\tau_2$ is continuous and linear, it follows that $F(\nu, \cdot)$ is upper semi-continuous and concave on $D$.

Lastly, substitute the representation obtained in Lemma \ref{lem-lmgfvar} into the expression for the Legendre transform $\lmbar_{p}^*$, and then apply Theorem \ref{th-minmax} to $F$ as defined in \eqref{minmaxf}. 
\begin{align*}
\lmbar_{p}^*(\tau_1,\tau_2) &= \sup_{t_1\in \R, t_2\in R} \left\{t_1\tau_1 + t_2\tau_2 - \lmbar_{p}(t_1,t_2) \right\}\\ 
   &= \sup_{t_1 \in \R, t_2 \in \R} \left\{t_1\tau_1 + t_2\tau_2 - \sup_{\nu \in \mathcal{P}(\mathbb{R})} \left\{ \lmq_{p,\nu}(t_1,t_2) - \mathbb{H}(\nu)\right\} \right\}\\
   &= \sup_{(t_1,t_2) \in D} \inf_{\nu \in C} \left\{t_1\tau_1 + t_2\tau_2 -  \lmq_{p,\nu}(t_1,t_2) + \mathbb{H}(\nu)\right\} \\
   &=  \inf_{\nu \in C} \sup_{ (t_1,t_2) \in D} \left\{t_1\tau_1 + t_2\tau_2 -  \lmq_{p,\nu}(t_1,t_2) + \mathbb{H}(\nu)\right\} \\
   &= \inf_{\nu \in \mathcal{P}(\mathbb{R})}  \left\{ \lmq_{p,\nu}^*(\tau_1,\tau_2) + \mathbb{H}(\nu)\right\},
\end{align*}
where the third and fifth equalities hold since  $\lmq_{p,\nu}(t_1,t_2) = +\infty$ for $t_2 > \frac{1}{p}$, and $\mathbb{H}(\nu) = +\infty$ if either $\nu \in \mathcal{M}(\R) \setminus \mathcal{P}(\R)$ or  $m_2(\nu) > 1$.
 \end{proof}

\begin{lemma}\label{lem-var2}
The variational formula \eqref{varform1} of Theorem \ref{th-compar} holds for $p=2$, and the infimum is attained at $\nu=\mu_2$.
\end{lemma}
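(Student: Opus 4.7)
The approach is to exploit the explicit computability of every ingredient in \eqref{varform1} when $p=2$, reducing the problem to an elementary one-variable calculus. I would first simplify $\iq_{2,\nu}$ to a closed form that depends on $\nu$ only through $s \doteq m_2(\nu)$, then minimize $H(\nu \,|\, \mu_2)$ over $\{\nu : m_2(\nu) = s\}$ via a Gibbs-type entropy inequality, and finally optimize the resulting scalar expression in $s$.

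For the first step, since $\mu_2$ is standard Gaussian, $\lm_2(t_1,t_2) = -\tfrac{1}{2}\log(1-2t_2) + t_1^2/[2(1-2t_2)]$ is quadratic in $t_1$, so integrating against $\nu$ as in \eqref{lampdefn} gives $\lmq_{2,\nu}(t_1,t_2) = -\tfrac{1}{2}\log(1-2t_2) + s\,t_1^2/[2(1-2t_2)]$, depending on $\nu$ only through $s$. A direct Legendre-transform computation (optimizing out $t_1$ and then $t_2$), followed by the constrained infimum in \eqref{ieqdefn} with the substitution $\tau_1 = w\tau_2^{1/2}$, yields the closed form
\begin{equation*}
  \iq_{2,\nu}(w) = -\tfrac{1}{2}\log\!\left(1 - \tfrac{w^2}{s}\right) \quad\text{for}\quad s > w^2,
\end{equation*}
and $\iq_{2,\nu}(w) = +\infty$ otherwise. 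For the second step, I would invoke the classical Gibbs-type fact that $N(0,s)$ uniquely maximizes differential entropy under a second-moment constraint (the mean is automatically zero at the optimum), so the minimizer of $H(\cdot \,|\, \mu_2)$ over $\{\nu : m_2(\nu) = s\}$ is $N(0,s)$, with explicit value $H(N(0,s) \,|\, \mu_2) = -\tfrac{1}{2}\log s + (s-1)/2$.

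Substituting these two formulas into \eqref{varform1} produces a clean cancellation,
\begin{equation*}
  -\tfrac{1}{2}\log\!\left(1 - \tfrac{w^2}{s}\right) - \tfrac{1}{2}\log s + \tfrac{s-1}{2} + \tfrac{1-s}{2} \;=\; -\tfrac{1}{2}\log(s - w^2),
\end{equation*}
which is strictly decreasing in $s$ on $(w^2, 1]$. Hence the infimum is attained at $s = 1$, with value $-\tfrac{1}{2}\log(1-w^2) = J_2(w) = \ia_2(w)$, the last equality by Theorem \ref{th-p2}; and $s=1$ forces the entropy-minimizing Gaussian to be precisely $\mu_2 = N(0,1)$, so the infimum in \eqref{varform1} is attained at $\nu = \mu_2$.

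The only step requiring any subtlety is the entropy minimization, which is a standard consequence of the Gibbs variational principle applied to the Gaussian density with variance $s$; everything else is direct calculation. Note that this approach is self-contained for $p=2$ and does not pass through the minimax argument of Lemma \ref{lem-varforbarstar}, which was developed for $p>2$ and relies on the integrability condition $p/(p-1) < 2$ that just fails when $p=2$.
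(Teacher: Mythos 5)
Your proposal is correct. The computational backbone is the same as the paper's: both hinge on the explicit formula $\iq_{2,\nu}(w) = -\tfrac{1}{2}\log\bigl(1-\tfrac{w^2}{m_2(\nu)}\bigr)$ for $w^2 < m_2(\nu)$ (and $+\infty$ otherwise), which you actually derive and the paper dispatches as ``elementary calculations,'' and both ultimately identify the value of the infimum with $\ia_2(w)$ via Theorem \ref{th-p2} --- the paper leaves this last identification implicit, while you cite it explicitly and correctly note there is no circularity since Theorem \ref{th-p2} rests only on spherical symmetry. Where you diverge is in how the infimum in \eqref{varform1} is evaluated. The paper's argument is a short sandwich: since $\iq_{2,\nu}(w)$ depends on $\nu$ only through $m_2(\nu)$ and is non-increasing in it, and since $\mathbb{H}(\nu)\ge 0$ with equality iff $\nu=\mu_2$, one gets $\iq_{2,\mu_2}(w) \ge \inf_\nu\{\iq_{2,\nu}(w)+\mathbb{H}(\nu)\} \ge \inf_\nu \iq_{2,\nu}(w) = \iq_{2,\mu_2}(w)$, so the infimum equals $\iq_{2,\mu_2}(w)$ and is attained at $\mu_2$ --- no entropy computation needed. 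Your route instead slices by $s=m_2(\nu)$, uses the Gaussian maximum-entropy fact to find that the optimal measure on each slice is $N(0,s)$ with $H(N(0,s)\,|\,\mu_2)=\tfrac{1}{2}(s-1-\log s)$, and reduces the problem to minimizing the scalar profile $-\tfrac{1}{2}\log(s-w^2)$ over $s\in(w^2,1]$. This is a bit longer but buys more information: it exhibits the near-optimal measures on each second-moment shell and makes the monotone dependence on $s$ explicit, which is in the spirit of the paper's later discussion of minimizers in Sect.\ \ref{sec-analysis}. One cosmetic remark: for $|w|\ge 1$ both sides of \eqref{varform1} are $+\infty$ so the formula and the attainment claim are vacuous there; like the paper, you only really treat $w\in(-1,1)$, which is fine but worth a sentence.
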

\begin{proof}
For $p=2$, it follows from elementary calculations that for $w\in \R$ such that $w^2 \ge m_2(\nu)$, we have $\iq_{2,\nu}(w) = +\infty$, and for $w^2 < m_2(\nu)$,
\begin{align*}
\iq_{2,\nu}(w) &= -\tfrac{1}{2}\log(1-\tfrac{w^2}{m_2(\nu)}).
\end{align*}
It is clear that for all $w\in \R$, $\iq_{2,\nu}(w)$ is non-increasing  in $m_2(\nu)\in(w^2,\infty]$. Observe from \eqref{hpdefn} that $\mathbb{H}(\nu)\ge 0$, with equality if and only if $\nu = \mu_2$. Hence, for $w\in \R$,
\begin{equation*}
\iq_{2,\mu_2}(w) = \iq_{2,\mu_2}(w) + \mathbb{H}(\mu_2) \ge  \inf_{\substack{\nu\in \mathcal{P}(\R):\\ m_2(\nu)\le 1}} \left\{ \iq_{2,\nu}(w) + \mathbb{H}(\nu) \right\} \ge  \inf_{\substack{\nu\in \mathcal{P}(\R):\\ m_2(\nu)\le 1}}\iq_{2,\nu}(w)  = \iq_{2,\mu_2}(w).
\end{equation*}
Thus, $\mu_2$ minimizes the variational formula \eqref{varform1}.
\end{proof}

\begin{proof}[Proof of Theorem \ref{th-compar}]

For $p=2$, the theorem follows from Lemma \ref{lem-var2}. As for $p\in(2,\infty)$ consider the quantity $R^{(n,p)}$  of \eqref{rnpdefn}. By Proposition \ref{prop-altldpconvex}, the sequence $(R^{(n,p)})_{n\in\N}$ satisfies an LDP with a convex good rate function, which we denote here by $J_{R,p}$. Note that $\lmbar_p$ of \eqref{pressfunc} satisfies
\begin{equation*}
\lmbar_p(t_1,t_2) = \lim_{n\ra\infty}\frac{1}{n}\log\E\left[\exp\left(n\,\langle (t_1,t_2), R^{(n,p)}\rangle\right)\right], \quad t_1\in \R, t_2 < \tfrac{1}{p}.
\end{equation*}
For $t_1\in \R$, $t_2<\frac{1}{p}$, there exist $\epsilon_1,\epsilon_2 > 0$ such that  $\lmbar_p(t_1(1+\epsilon_1),t_2(1+\epsilon_2)) < \infty$. Therefore, we can apply Varadhan's lemma --- see, e.g., Theorem 4.3.1 and condition (4.3.3) of \cite{DemZeibook}.
\begin{equation}\label{varadh1}
  \lmbar_p(t_1,t_2) = \sup_{\tau_1,\tau_2} \{t_1\tau_1+t_2\tau_2 - J_{R,p}(\tau_1,\tau_2)\}, \quad t_1\in \R, t_2 < \tfrac{1}{p}.
\end{equation}
We claim that the equality \eqref{varadh1} in fact holds for \emph{all} $t_1,t_2\in \R$. It remains to show that the right hand side is infinite for $t_2 \ge \frac{1}{p}$. Due to Cram\'er's theorem, the sequence $(R_2^{(n,p)})_{n\in\N}$, defined by
\begin{equation*}
R_2^{(n,p)} \doteq  \frac{1}{n}\sum_{i=1}^n |\ynp_i|^p,
\end{equation*}
satisfies an LDP with the good rate function $\hat\lm_p^*$,  where
\begin{align*}
\hat\lm_p^*(\tau_2) &\doteq \sup_{t_2\in \R} \{t_2\tau_2 - \hat\lm_p(t_2)\}, \quad \tau_2\in \R;\\
\hat\lm_p(t_2) &\doteq \log\int_\R e^{t_2|y|^p}\mu_p(dy), \quad t_2 \in \R.
\end{align*}
Due to the contraction principle and the continuity of the projection map $(\tau_1,\tau_2)\mapsto \tau_2$, we have
\begin{equation*}
\hat\lm_p^*(\tau_2) = \inf_{\tau_1} J_{R,p}(\tau_1,\tau_2).
\end{equation*}
Note that the infimum is attained at some $\tau_1^*\in\R$, because $J_{R,p}$ is lower semi-continuous (since it is a rate function), and has compact level sets (since it is a good rate function).  Then, we find that for all $t_1,t_2\in \R$,
\begin{align*}
\sup_{\tau_1,\tau_2}\{t_1\tau_1 + t_2\tau_2  -J_{R,p}(\tau_1,\tau_2)\} &\ge \sup_{\tau_2}\{t_2\tau_2 + t_1\tau_1^*-J_{R,p}(\tau_1^*,\tau_2) \} \\
   &= t_1\tau_1^*  + \sup_{\tau_2}\{t_2\tau_2 - \hat \lm_p^*(\tau_2)\}\\
   &= t_1\tau_1^* + \hat\lm_p(t_2).
\end{align*}
But from the definition of $\hat\lm_p$, it is clear that $\hat\lm_p(t_2) = \infty$ for $t_2 \ge \frac{1}{p}$. Thus, \eqref{varadh1} is true for \emph{all} $t_1,t_2\in \R$, so due to the convexity of $J_{R,p}$, Legendre duality (see, e.g., \cite[Lemma 4.5.8]{DemZeibook}) implies that $J_{R,p} = \lmbar_p^*$.

Applying the contraction principle, \eqref{mimap}, and the definition \eqref{ieqdefn} of $\iq_{p,\nu}$, we write the annealed rate function as
\begin{align*}
\ia_{p}(w) &= \inf_{\substack{\tau_1 \in \R,\tau_2 \in \R \,:\\
\tau_1\tau_2^{-1/p}=w}} \lmbar_{p}^*(\tau_1,\tau_2) \\
   &= \inf_{\substack{\tau_1 \in \R,\tau_2 >0 \,:\\ \tau_1\tau_2^{-1/p}=w}}\inf_{\nu \in \mathcal{P}(\mathbb{R})}  \left\{ \lmq_{p,\nu}^*(\tau_1,\tau_2) + \mathbb{H}(\nu)\right\}\\
   &= \inf_{\nu \in \mathcal{P}(\mathbb{R})} \inf_{\substack{\tau_1 \in \R,\tau_2 >0 \,:\\ \tau_1\tau_2^{-1/p}=w}} \left\{   \lmq_{p,\nu}^*(\tau_1,\tau_2) + \mathbb{H}(\nu)\right\}\\
   &= \inf_{\nu \in \mathcal{P}(\mathbb{R})}  \left\{ \iq_{p,\nu}(w) + \mathbb{H}(\nu)\right\}.
\end{align*}
The definition of $\mathbb{H}$ in \eqref{hpdefn} allows the restriction of the variational problem to measures $\nu$ satisfying $m_2(\nu)\le 1$, which completes the proof.
\end{proof}

\begin{remark}
The essence of the proof of Theorem \ref{th-qldp} is a strong law of large numbers for $\tfrac{1}{n} \sum_{i=1}^n \lm_p(t_1\sqrt{n}\bthn_i,t_2)$. Similarly, the essence of the proof of Theorem \ref{th-compar} is a large deviation principle for $\tfrac{1}{n} \sum_{i=1}^n \lm_p(t_1\sqrt{n}\bthn_i,t_2)$. The non-triviality of establishing such an LDP is due to the fact that 
\begin{equation*}
[\lm_p(t_1\sqrt{n}\bthn_i,t_2)]_{i=1,\dots,n;\,n\in\mathbb{N}}
\end{equation*}
 is an infinite triangular array of \emph{dependent} random variables. Similar random structures have previously been analyzed, for example, in \cite{frank1985strong} for LLN and in \cite{eichelsbacher1998exponential, trashorras2002large} for LDP. Note that this triangular array does have rich structure. For example, each row is a finite exchangeable vector. In addition, the Maxwell-Poincar\'e-Borel Lemma (see, e.g., \cite[Lemma 1.2]{ledoux1996isoperimetry}) says that for fixed $k$, the random variables
\begin{equation*}
  (\sqrt{n}\bthn_1, \dots, \sqrt{n}\bthn_k)
\end{equation*}
are asymptotically independent as $n\ra\infty$. Nonetheless, none of the existing literature on  general triangular arrays with such structure appears to be immediately applicable in our setting, which is why we appealed to the empirical measure versions (i.e., for $L_{n,\Theta}$) of the LLN (in the proof of Theorem \ref{th-qldp}) and LDP (Proposition \ref{prop-sanovcone}). As a side note, observe that the corresponding CLT for $L_{n,\Theta}$ (a Donsker-type theorem) can be found in \cite{spruill2007asymptotic}.
\end{remark}

%%%%%%%%%%%%%%%%%%
% ATYPICAL
%%%%%%%%%%%%%%%%%%
\section{Atypical directions of projection}\label{sec-atyp} % [X]

The goal of this section is to prove Theorem \ref{th-atyp}. We state some preliminary results in Sect. \ref{ssec-prel}. Then, we address the quenched case in Sect.\ \ref{ssec-atypq} and the annealed case in Sect.\ \ref{ssec-atypa}.

\begin{remark}\label{rmk-shao}
For $\theta = \iota$ (that is, the sequence of directions $(1,1,\dots,1)\in \sphn$, $n\in \N$), note that $\twnpz_\iota$ corresponds to the following ``self-normalized sum", 
\begin{equation}\label{shaosum}
  \twnpz_\iota =  \frac{\frac{1}{n} \sum_{i=1}^n \ynp_i}{\left( \frac{1}{n} \sum_{i=1}^n |\ynp_i|^p\right)^{1/p} }.
\end{equation}
The quantity $\twnpz_\iota$ when $\ynp_i$ has a general law (not necessarily $\mu_p$) has been analyzed in \cite{bercu2002concentration, dembo1998self, jing2003self, Shao97}. In particular,  \cite{Shao97} establishes upper-tail large deviation asymptotics for $(\twnpz_\iota)_{n\in\N}$ even if the law of $\ynp_i$ does not satisfy any exponential moment conditions. In our setting where $\ynp_i\sim \mu_p$, it is natural to ask how the rate function of \cite{Shao97}  compares with our universal rate function $\iq_{p,\mu_2}$.  A  consequence of Theorem \ref{th-atyp} is that the large deviation rate function for the self-normalized sums $(\twnpz_\iota)_{n\in\N}$ is atypical, in the sense that it does not coincide with   $\iq_{p,\mu_2}$.
 \end{remark}

\subsection{Preliminary properties of the rate functions}\label{ssec-prel}

In this section, we establish some elementary properties of our various rate functions. 

\begin{lemma}\label{lem-domains} The domains of the rate functions $\iq_{p,\mu_2}$ of \eqref{ieqdefn}, $\lm_p^*$ the Legendre transform of the function $\lm_p$ of \eqref{lampdefn0}, and $\jc_p$ of \eqref{cramratedefn} satisfy the following:
\begin{enumerate}
\item  For $p\in [2,\infty)$,  $ D_{\iq_{p,\mu_2}}^\circ  \subset  (-1,1)$;
\item  For $p\in(1,\infty)$, $D_{\lm_p^*}^\circ  = \{(\tau_1,\tau_2)\in \R\times\R_+ : |\tau_1|^p < \tau_2\}$;
\item For $p\in (1,\infty)$, $D_{\jc_p}^\circ = (-1,1)$. 
\end{enumerate}
\end{lemma}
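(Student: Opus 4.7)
The plan is to prove the three claims in the order (2) $\to$ (3) $\to$ (1): claim (2) is the core of the lemma and claim (3) follows from it by a short computation, while claim (1) admits a separate and much shorter argument via a deterministic H\"older bound on the projection itself. I expect claim (2), specifically the reverse inclusion that every pair $(\tau_1,\tau_2)$ with $|\tau_1|^p<\tau_2$ lies in the interior of $D_{\lm_p^*}$, to be the main obstacle.

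For (2), I would combine a Jensen argument with an essential-smoothness argument. For the inclusion $D_{\lm_p^*}^\circ \subset \{|\tau_1|^p < \tau_2\}$, observe that $\lm_p$ is the log mgf of the $\R^2$-valued random vector $(Y,|Y|^p)$ with $Y\sim\mu_p$, so by Cram\'er's theorem $\lm_p^*$ is the rate function for the empirical means $\tfrac{1}{n}\sum_i (Y_i,|Y_i|^p)$; these deterministically lie in the closed convex set $\{|\tau_1|^p \le \tau_2\}$ by Jensen applied to the convex map $y\mapsto |y|^p$. Since any rate function is infinite outside the closed convex hull of the support of the underlying distribution, $D_{\lm_p^*}\subset \{|\tau_1|^p\le \tau_2\}$, and taking interiors gives the desired inclusion. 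For the reverse inclusion, I would invoke Lemma \ref{lem-lamfacts}: $\lm_p$ is strictly convex and essentially smooth on $D_{\lm_p}^\circ = \R\times(-\infty, \tfrac{1}{p})$, so by standard convex duality (see, e.g., Rockafellar \cite{rockafellar1970convex}) $\nabla\lm_p$ is a homeomorphism from $D_{\lm_p}^\circ$ onto $D_{\lm_p^*}^\circ$. The image consists of the moment pairs $(\E_{\nu_{t_1,t_2}}[Y], \E_{\nu_{t_1,t_2}}[|Y|^p])$ under the exponentially tilted measures $\nu_{t_1,t_2}\propto e^{t_1 y + t_2|y|^p}\mu_p$; using the explicit form of $\lm_p$ in Lemma \ref{lem-tailcont} and the change of variables $\alpha = 1-pt_2$, $s = t_1\alpha^{-1/p}$, these moments reduce to a one-parameter family in $s$. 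A short intermediate-value argument, together with the symmetry of $\mu_p$, then shows that every $(\tau_1,\tau_2)$ with $|\tau_1|^p < \tau_2$ is attained, establishing the claim.

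Claim (3) follows directly from (2): by definition, $\jc_p(w) = \inf\{\lm_p^*(\tau_1,\tau_2) : \tau_1\tau_2^{-1/p} = w,\ \tau_2 > 0\}$, so finiteness of $\jc_p(w)$ requires some $(\tau_1,\tau_2)\in D_{\lm_p^*}$ with $\tau_1\tau_2^{-1/p} = w$, whence $|w|^p = |\tau_1|^p/\tau_2 \le 1$ by (2), giving $D_{\jc_p}\subset [-1,1]$. Conversely, for any $w\in(-1,1)$ the pair $(\tau_1,\tau_2) = (w,1)$ satisfies $|\tau_1|^p < \tau_2$, so by (2) it lies in $D_{\lm_p^*}^\circ$ and yields $\jc_p(w) < \infty$. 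Since $(-1,1)$ is open and contained in $D_{\jc_p}\subset[-1,1]$, we conclude $D_{\jc_p}^\circ = (-1,1)$. For (1), I would use a deterministic H\"older bound on $\wnpth$ that avoids (2) entirely: for $p\ge 2$ the dual exponent $q = p/(p-1)\in[1,2]$, so the power-mean inequality gives $\nrm{\thn}_{n,q} \le n^{1/q - 1/2} \ltwonrm{\thn} = n^{1/q - 1/2}$ since $\thn\in\sphn$. Combining this with H\"older's inequality and $\lpnrm{\xnp}\le 1$ yields $|\wnpth| = n^{1/p - 1/2}|\langle \xnp, \thn\rangle_n| \le n^{1/p + 1/q - 1} = 1$. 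Thus $\wnpth \in [-1,1]$ deterministically for every $n$ and every $\theta\in\seq$, which forces $\iq_{p,\mu_2} \equiv +\infty$ on $\R\setminus[-1,1]$ and hence $D_{\iq_{p,\mu_2}}^\circ \subset (-1,1)$.
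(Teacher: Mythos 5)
Your proposal is correct, and parts (1) and (3) are essentially the paper's own arguments: for (1) the paper also deduces that the support of $\wnpth$ is contained in $[-1,1]$ via H\"older (phrased there as the inclusion $n^{1/p}\bnp \subseteq n^{1/2}\mathbb{B}_{n,2}$, which is the same inequality you use) and then passes to the rate function, and for (3) the paper likewise reads off the domain from the contraction map $(\tau_1,\tau_2)\mapsto \tau_1\tau_2^{-1/p}$, which you simply spell out in more detail. The genuine difference is in part (2). The paper invokes the classical fact \cite[Lemma 2.4]{de1985large} in its \emph{equality} form: the closure of $D_{\lm_p^*}$ equals the closure of the convex hull of the support of the law of $(Y,|Y|^p)$, which is exactly $\{|\tau_1|^p\le\tau_2\}$; since $D_{\lm_p^*}$ is convex, taking interiors (interior of a convex set equals the interior of its closure) gives both inclusions at once, in two lines. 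You only use the inclusion direction of that fact (equivalently, Jensen plus the LDP lower bound) and then prove the reverse inclusion separately via the Legendre-type duality of \cite{rockafellar1970convex}, identifying $D_{\lm_p^*}^\circ$ with the range of $\nabla\lm_p$, i.e.\ with the moment pairs of the exponentially tilted measures, and sweeping out the region by an intermediate-value argument in the one-parameter family obtained from Lemma \ref{lem-tailcont}. This route is sound (Lemma \ref{lem-lamfacts} supplies strict convexity and essential smoothness on the open domain $\R\times(-\infty,\tfrac1p)$, so the gradient-range theorem applies), and it has the virtue of being constructive --- it tells you which tilts realize which $(\tau_1,\tau_2)$, which is in the spirit of how the minimizers are used later in Sect.~\ref{sec-atyp}. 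But note that your ``short intermediate-value argument'' hides the real analytic content: you need the ratio $|m(s)|^p/q(s)$ of tilted moments (with $m(s)=\E[Y e^{sY}]/\E[e^{sY}]$, $q(s)=\E[|Y|^pe^{sY}]/\E[e^{sY}]$, $Y\sim\mu_p$) to be continuous, to vanish at $s=0$, and crucially to tend to $1$ as $s\to\infty$; the last point is a Laplace-type asymptotic ($m(s)\sim s^{1/(p-1)}$, $q(s)\sim s^{p/(p-1)}$) that should be stated and justified rather than absorbed into ``IVT.'' With that step made explicit your argument is complete; the paper's appeal to the equality in de Acosta's lemma avoids this computation entirely.
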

\begin{proof}
We first prove 1. By H\"older's inequality, for $x\in \R^n$ and $p > 2$,
\begin{align*}
  \sum_{i=1}^n x_i^2 \le \left(\sum_{i=1}^n |x_i|^p\right)^{2/p} \left(\sum_{i=1}^n 1 \right)^{(p-2)/p} \quad \Rightarrow \quad  \|x\|_{n,2} \le \|x\|_{n,p}\, n^{\tfrac{1}{2} - \tfrac{1}{p}}.
\end{align*}
Thus, $n^{1/p}\bnp \subseteq n^{1/2} \mathbb{B}_{n,2}$, and they intersect at ``corners" of the form $(\pm 1, \pm 1, \dots, \pm 1) \in \R^n$. As a consequence,
\begin{equation*}
  \sup \{|\langle x,y\rangle_n| : x\in n^{1/p} \mathbb{B}_{n,p}  , y \in n^{1/2} \mathbb{B}_{n,2} \} = n.
\end{equation*}
This shows that the supports of the laws of $\wnpbth$ of \eqref{def-bwnpbth} and $\wnpth$ of \eqref{def-wthetan} are both equal to $[-1,1]$, and hence,  $D_{\iq_{p,\mu_2}}^\circ \subset  (-1,1)$.

Now we prove 2. Note that $\lm_p^*$ is the Cram\'er rate function for the  sequence of sums of i.i.d.\ random variables $\frac{1}{n}\sum_{i=1}^n \eta_i$, where in our case $\eta_i = (Y_i,|Y_i|^p)\in\R^2$ for $Y_i \sim \mu_p$, $i\in \N$. A classical fact from large deviation theory says that the closure of the domain of the Legendre transform of the log mgf of a probability measure $\nu\in\mathcal{P}(\R^d)$ is equal to the closure of the convex hull of the support of $\nu$ \cite[Lemma 2.4]{de1985large}. In our setting, this says that 
\begin{equation*}
  \overline{D_{\lm_p^*}}  = \overline{\text{conv}\{(\tau_1,\tau_2)\in\R\times\R_+ : |\tau_1|^p = \tau_2 \} } = \{(\tau_1,\tau_2)\in\R\times\R_+ : |\tau_1|^p \le \tau_2 \}.
\end{equation*}
Since $D_{\lm_p^*}$ is convex, this implies 2.

Lastly, the fact that $\jc_p$ is obtained from $\lm_p^*$ via the contraction principle under the map $(\tau_1,\tau_2)\mapsto \tau_1\tau_2^{-1/p}$ shows that $D_{\jc_p}^\circ = (-1,1)$.
\end{proof}

The preceding lemma explains why in Theorem \ref{th-atyp}, we limit our results to the case $w\in (-1,1)$. In the following lemma, we show that the relevant variational problems achieve their optima within these domains.

\begin{lemma}\label{lem-attained} Fix $p\in (1,\infty)$.
\begin{enumerate}
\item Let $(\tau_1,\tau_2)\in D_{\lm_p^*}$. Then, in the variational problem (i.e., the Legendre transform) that defines $\lm_p^*$,
\begin{equation}\label{lmtr}
  \lm_p^*(\tau_1,\tau_2) \doteq \sup_{t_1 \in \R, \,t_2<1/p} \left\{t_1\tau_1 + t_2\tau_2 - \lm_p(t_1,t_2) \right\},
\end{equation}
the supremum is uniquely attained.

\item Let $w\in D_{\iq_{p,\mu_2}}$. Then, in the variational problem of \eqref{ieqdefn} that defines $\iq_{p,\mu_2}$, the infimum is (not necessarily uniquely) attained.
\end{enumerate}
\end{lemma}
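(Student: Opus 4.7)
The proof splits into two essentially independent claims, both of which I will handle using standard convex analysis.

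\textbf{Part 1.} My plan is to invoke the three properties of $\lm_p$ already established in Lemma \ref{lem-lamfacts}: strict convexity on its effective domain $D_{\lm_p} = \R \times (-\infty, 1/p)$, lower semi-continuity, and essential smoothness. These are precisely the hypotheses that place $\lm_p$ among the closed proper convex functions covered by Theorem 26.5 of Rockafellar's \emph{Convex Analysis}, which asserts that $\nabla \lm_p$ is a bijection from $D_{\lm_p}^\circ$ onto $D_{\lm_p^*}^\circ$ with continuous inverse $\nabla \lm_p^*$. Consequently, for any $(\tau_1,\tau_2) \in D_{\lm_p^*}^\circ$ (which by Lemma \ref{lem-domains} equals $\{(\tau_1,\tau_2)\in\R\times\R_+:|\tau_1|^p < \tau_2\}$), the supremum in \eqref{lmtr} is attained at $t^* = \nabla \lm_p^*(\tau_1,\tau_2)$, and strict convexity of $\lm_p$ on $D_{\lm_p}^\circ$ guarantees that this maximizer is unique.

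\textbf{Part 2.} Fix $w \in D_{\iq_{p,\mu_2}}$, so $\iq_{p,\mu_2}(w) < \infty$. The plan is to recast the two-variable constrained infimum defining $\iq_{p,\mu_2}(w)$ as a one-variable unconstrained infimum on $(0,\infty)$. Solving the constraint $\tau_1\tau_2^{-1/p} = w$, $\tau_2 > 0$, as $\tau_1 = w\tau_2^{1/p}$, I will work with
\[
\phi_w(\tau_2) \doteq \lmq_{p,\mu_2}^*\bigl(w\tau_2^{1/p},\tau_2\bigr), \qquad \tau_2 > 0,
\]
so that $\iq_{p,\mu_2}(w) = \inf_{\tau_2 > 0} \phi_w(\tau_2)$. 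It then suffices to show that $\phi_w$ is lower semi-continuous and coercive on $(0,\infty)$, which forces the infimum to be attained at some $\tau_2^* \in (0,\infty)$, giving the minimizer $(w(\tau_2^*)^{1/p}, \tau_2^*)$ of the original problem.

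Lower semi-continuity of $\phi_w$ is inherited from that of $\lmq_{p,\mu_2}^*$. For coercivity as $\tau_2 \to \infty$: by the G\"artner-Ellis theorem together with Lemma \ref{lem-quenchedess}, $\lmq_{p,\mu_2}^*$ is a \emph{good} rate function on $\R^2$, so its sublevel sets are compact; since $\|(w\tau_2^{1/p},\tau_2)\| \to \infty$ as $\tau_2 \to \infty$, this forces $\phi_w(\tau_2) \to \infty$. For coercivity as $\tau_2 \to 0^+$: choosing $t_1 = 0$ and $t_2 \to -\infty$ in the formula from Lemma \ref{lem-tailcont} gives $\lmq_{p,\mu_2}(0,t_2) = -\tfrac{1}{p}\log(1-pt_2) \to -\infty$, so $\lmq_{p,\mu_2}^*(0,0) = -\inf \lmq_{p,\mu_2} = +\infty$, and lower semi-continuity of $\lmq_{p,\mu_2}^*$ at $(0,0)$ delivers $\liminf_{\tau_2\to 0^+}\phi_w(\tau_2) = +\infty$. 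The only genuinely delicate point, and hence the main obstacle, is ensuring that a minimizing sequence cannot escape to the boundary $\tau_2 = 0$ where the constraint degenerates; the blow-up of $\lmq_{p,\mu_2}^*$ at the origin is precisely what precludes this.
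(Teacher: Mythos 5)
Your proof is correct, and Part 1 is essentially the paper's argument: both rest on the strict convexity, lower semi-continuity and essential smoothness of $\lm_p$ from Lemma \ref{lem-lamfacts} together with Legendre-type duality in Rockafellar (the paper cites Theorems 23.5 and 26.3 where you cite 26.5; the content is the same). Note that, like the paper, you implicitly treat points of $D_{\lm_p^*}$ as interior points; this is harmless here because the boundary of $\{|\tau_1|^p\le\tau_2\}$ carries infinite rate, but neither write-up says so. Part 2 uses the same reduction as the paper, which also sets $g_w(\tau)\doteq\lmq_{p,\mu_2}^*(w\tau^{1/p},\tau)$ and argues via lower semi-continuity plus compactness of sublevel sets of $g_w$ in $(0,\infty)$; where you differ is the coercivity mechanism. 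The paper uses the symmetry and convexity of $\lmq_{p,\mu_2}^*(\cdot,\tau)$ in its first argument to obtain the explicit bound $g_w(\tau)\ge\lmq_{p,\mu_2}^*(0,\tau)\ge\sup_{t_2<1/p}\{t_2\tau+\tfrac1p\log(1-pt_2)\}$, a function of $\tau$ that diverges both as $\tau\to\infty$ and (through the $-\log\tau$ term) as $\tau\to0^+$. You instead invoke goodness of $\lmq_{p,\mu_2}^*$ (legitimate: it follows from $0\in D_{\lmq_{p,\mu_2}}^\circ$ in Lemma \ref{lem-quenchedess}, and is asserted in the proof of Proposition \ref{prop-quegen}) to rule out escape to infinity, and the observation $\lmq_{p,\mu_2}^*(0,0)=+\infty$ combined with lower semi-continuity to rule out escape to $\tau_2=0$. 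Both mechanisms are sound; the paper's explicit bound is self-contained and handles the two ends simultaneously, while your version makes the $\tau_2\to0^+$ boundary issue explicit --- a point the paper's text only covers implicitly, since it comments only on the $\tau\to\infty$ end even though its bound also blows up at $0$.
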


\begin{proof}\quad
\begin{enumerate}
\item A classical result in convex analysis is that the supremum defining the Legendre transform of a strictly convex function is uniquely attained  at a single point (see, e.g., Theorem 23.5 and Theorem 26.3 of \cite{rockafellar1970convex}). Since Lemma \ref{lem-lamfacts} states that $\lm_p$ is strictly convex, the supremum in \eqref{lmtr} is uniquely attained.

\item For $\tau > 0$ and $w\in (-1,1)$, let $g_w(\tau) \doteq \lmq_{p,\mu_2}^*(w\tau^{1/p},\tau)$. Note that for $w\in(-1,1)$, we can rewrite \eqref{ieqdefn} as 
\begin{equation*}
\iq_{p,\mu_2}(w) = \inf_{\tau_2 > 0} g_w(\tau_2).
\end{equation*}
Note that $g_w$ is lower semi-continuous due to the lower semi-continuity of $\lmq_{p,\mu_2}^*$ and the continuity of the map $\tau\mapsto (w\tau^{1/p},\tau)$. To show that the infimum is attained, it suffices to show boundedness of lower level sets, which implies compactness since $g_w$ is lower semi-continuous. Note that for all $\tau > 0$, the function $\lmq_{p,\mu_2}^*(\cdot,\tau)$ is symmetric about 0 and convex, and hence minimized at 0. As a consequence, for all $w\in (-1,1)$, 
\begin{align*}
g_w(\tau) \ge \lmq_{p,\mu_2}^*(0,\tau) &= \sup_{t_1\in \R, t_2<\frac{1}{p}}\left\{t_2\tau - \lmq_{p,\mu_2}(t_1,t_2)\right\}\\
   &\ge \sup_{t_2<\frac{1}{p}} \left\{ t_2\tau - \lmq_{p,\mu_2}(0,t_2)\right\} \\
   &= \sup_{t_2 < \frac{1}{p}} \left\{ t_2 \tau + \tfrac{1}{p}\log(1-pt_2)\right\}\\ 
   &= \tfrac{1}{p} (\tau + 1 - \log \tau),
\end{align*}
where the equality in the third line follows from Lemma \ref{lem-tailcont} and the fact that $\lmq_{p,\mu_2}(0,\cdot) = \lm_p(\cdot)$ by the definition of  $\lmq_{p,\mu_2}$ in \eqref{lampdefn}. Since $\lim_{\tau\rightarrow\infty}(\tau + 1 - \log \tau) = \infty$, we find that $\lim_{\tau\ra \infty} g_w(\tau) = \infty$, so $g_w$ has bounded level sets.
\end{enumerate}

\end{proof}

%%%%%%%%%%%%%%%%%%
% ATYP \ QUE
%%%%%%%%%%%%%%%%%%
\subsection{Comparison of quenched and unweighted LDPs}\label{ssec-atypq}

In this section, we present the proof of Theorem \ref{th-atyp}, which entails a comparison of the log mgfs for the quenched and ``Cram\'er"-type LDPs. We begin by setting some notation that will allow us to state two lemmas that identify conditions under which  a log mgf is ``more" or ``less" convex than $t\mapsto t^2$.

Let $\beta >0$, and let $\mu_{p,\beta}(dy)$ be the absolutely continuous probability measure on $\R$ with density $f_{p,\beta}(y) \doteq C_{p,\beta} f_p(y /\beta^{1/p})$ for the appropriate normalization constant $C_{p,\beta}$. Note that $\mu_p =\mu_{p,1}$ and elementary calculations show that $C_{p,\beta} = C_{p,1} \beta^{-1/p}$. For $p\in[1,\infty)$, we have $\mu_{p,\beta}(dy) =  C_{p,\beta} e^{-|y|^p / (p\beta)} dy$, and for $p=\infty$, we have $\mu_{\infty,\beta}(dy)=\mu_\infty(dy)=\1_{[-1,1]}(y)\,dy$.

\begin{lemma}[Theorem 8 of \cite{barthe2003extremal}]\label{lem-mommon}
The map $\R_+\ni t \mapsto \log \mathrm{M}_{\mu_{p,1/p}}(\sqrt{t})$ is concave for $p\in[2,\infty]$ and convex for $p\in[1,2]$.
\end{lemma}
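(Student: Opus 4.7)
\emph{Proof plan.} Set $h := \log \mathrm{M}_{\mu_{p,1/p}}$, so the target function is $g(t) = h(\sqrt{t})$. The chain rule yields
\begin{equation*}
g''(t) = \frac{s\,h''(s) - h'(s)}{4\,s^3},\qquad s = \sqrt{t},
\end{equation*}
and the symmetry of $\mu_{p,1/p}$ forces $h'(0) = 0$, so that $h'(s) = \int_0^s h''(u)\,du$ and therefore $s\,h''(s) - h'(s) = \int_0^s (h''(s) - h''(u))\,du$. Since $h''(s) = \mathrm{Var}_{\mu_s}(Y)$ is the variance of the exponentially tilted measure $\mu_s(dy) \propto e^{sy}\mu_{p,1/p}(dy)$, this reduces the claim to showing that $s \mapsto \mathrm{Var}_{\mu_s}(Y)$ is non-decreasing in $s \ge 0$ for $p \in [1,2]$ and non-increasing for $p \in [2,\infty]$.

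For the convex case $p \in [1,2]$, the plan is to invoke the classical scale-mixture-of-Gaussians representation. Since $x \mapsto e^{-x^{p/2}}$ is completely monotone on $[0,\infty)$ for $p/2 \in (0,1]$, Bernstein's theorem produces a positive measure $\nu_p$ on $(0,\infty)$ with $e^{-|y|^p} = \int_0^\infty e^{-\lambda y^2}\,\nu_p(d\lambda)$. Performing the Gaussian integral in $y$ inside $\mathrm{M}_{\mu_{p,1/p}}(s)$ then gives
\begin{equation*}
\mathrm{M}_{\mu_{p,1/p}}(\sqrt{t}) \;=\; C_p\int_0^\infty \sqrt{\pi/\lambda}\;e^{t/(4\lambda)}\,\nu_p(d\lambda),
\end{equation*}
which, as a function of $t$, is the moment generating function of a positive measure and is therefore log-convex by Cauchy--Schwarz. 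This yields the convexity of $g$, degenerating to linearity at $p = 2$ where $\nu_2$ is a point mass.

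For the concave case $p \in [2,\infty]$ the scale-mixture route fails. The endpoint $p = \infty$ can be handled directly via the Hadamard product $\sinh(\sqrt{t})/\sqrt{t} = \prod_{k \ge 1}(1 + t/(\pi k)^2)$, which exhibits $g(t)$ as a sum of concave logarithms. For general $p \in (2,\infty)$, I would attack $\mathrm{Var}_{\mu_s}(Y)$ by analyzing its derivative $h'''(s) = \E_{\mu_s}[(Y - \E_{\mu_s} Y)^3]$: heuristically, since the potential $V(y) = |y|^p$ satisfies $V'''(y) > 0$ for $y > 0$ and $p > 2$, tilting biases $\mu_s$ toward the region where the local stiffness $V''$ is large, flattening the right tail relative to the left and producing negative skewness. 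The main obstacle is making this sign statement rigorous; the concrete route is systematic integration by parts against $V$ and $V''$, producing identities such as $\E_{\mu_s}[(Y - a)^3 V'(Y)] = 3 h''(s) + s h'''(s)$ (with $a = \E_{\mu_s} Y$) that can be combined with moment bounds to pin down the sign of $h'''(s)$. This is essentially the calculation carried out in \cite[Theorem 8]{barthe2003extremal}, on which the present lemma relies.
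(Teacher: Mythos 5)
The paper does not prove this lemma; it is invoked as Theorem 8 of Barthe--Koldobsky (\cite{barthe2003extremal}) and no argument is reproduced in the text, so there is no ``paper's own proof'' to compare against. Your proposal is therefore being judged as a free-standing proof attempt, and as such it has one real gap.

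The reduction is correct: the chain-rule identity $g''(t) = \bigl(s\,h''(s) - h'(s)\bigr)/(4s^3)$ with $s=\sqrt{t}$, the use of $h'(0)=0$ to rewrite $s\,h''(s) - h'(s) = \int_0^s (h''(s)-h''(u))\,du$, and the identification $h''(s) = \mathrm{Var}_{\mu_s}(Y)$ all check out, so the problem does reduce to monotonicity of the tilted variance. The convex case $p\in[1,2]$ via Bernstein's theorem (complete monotonicity of $e^{-x^{p/2}}$ for $p/2\le 1$, giving $e^{-|y|^p}$ as a Gaussian scale mixture, and hence $\mathrm{M}_{\mu_{p,1/p}}(\sqrt{t})$ as a Laplace transform of a positive measure, hence log-convex by Cauchy--Schwarz) is complete and clean, and arguably slicker than establishing monotonicity of $\mathrm{Var}_{\mu_s}$ directly. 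The $p=\infty$ endpoint via the Hadamard product $\sinh(\sqrt{t})/\sqrt{t} = \prod_{k\ge 1}(1 + t/(\pi k)^2)$ is also a complete proof of concavity for that case.

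The gap is the case $p\in(2,\infty)$, which is the substantive content of the lemma as used in the paper (the atypicality theorem, Theorem~\ref{th-atyp}, invokes strict concavity for $p>2$). You reduce it to a sign statement, $h'''(s) = \E_{\mu_s}\bigl[(Y-\E_{\mu_s}Y)^3\bigr] \le 0$ for $s\ge 0$, i.e.\ nonpositive skewness of the tilted generalized-normal, and the integration-by-parts identity $\E_{\mu_s}[(Y-a)^3V'(Y)] = 3h''(s) + s\,h'''(s)$ is indeed correct; but the ``stiffness/skewness'' heuristic is not turned into an inequality, and a single Stein-type identity by itself does not pin the sign of $h'''$. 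You acknowledge this by deferring back to \cite{barthe2003extremal}. So the proposal establishes the lemma on $p\in[1,2]\cup\{\infty\}$ but does not establish it on $(2,\infty)$; if you want a self-contained argument for the concave range, the missing step is precisely this skewness bound (or, equivalently, that $s\mapsto\mathrm{Var}_{\mu_s}(Y)$ is non-increasing for potentials $V=|y|^p$ with $p>2$), and that is where the work in Barthe--Koldobsky actually lies.
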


We can mold this lemma to apply to the function $\lm_p$ of \eqref{lampdefn0}.

\begin{lemma}\label{lem-strict}
Let $p\in [1,\infty)$ and $t_2 < \frac{1}{p}$. The map  $\R_+ \ni t_1 \mapsto \lm_p(\sqrt{t_1},\, t_2)$ is concave but not linear for $p>2$, linear for $p=2$, and convex but not linear for $p < 2$. 
\end{lemma}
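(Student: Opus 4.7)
\medskip

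\noindent\textbf{Proof plan.} The plan is to reduce the claim to Lemma \ref{lem-mommon} via the explicit formula for $\lm_p$ provided by Lemma \ref{lem-tailcont}. First, I would apply that lemma to write, for $t_2 < 1/p$ and $t_1 \ge 0$,
\begin{equation*}
\lm_p(\sqrt{t_1}, t_2) \;=\; -\tfrac{1}{p}\log(1 - p t_2) + \log \mathrm{M}_{\mu_p}\!\left( \tfrac{\sqrt{t_1}}{(1-pt_2)^{1/p}} \right).
\end{equation*}
The first summand is constant in $t_1$, and the argument of $\mathrm{M}_{\mu_p}$ equals $\sqrt{c^2 t_1}$ with $c \doteq (1-p t_2)^{-1/p} > 0$. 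Thus, the concavity/convexity of $t_1 \mapsto \lm_p(\sqrt{t_1}, t_2)$ on $\R_+$ is the same as that of the map $g_p: s \mapsto \log\mathrm{M}_{\mu_p}(\sqrt{s})$, up to a positive affine reparametrization $s = c^2 t_1$.

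Next, I would connect $\mu_p$ to the measure $\mu_{p, 1/p}$ appearing in Lemma \ref{lem-mommon}. Since $\mu_{p, 1/p}$ has density proportional to $e^{-|y|^p}$ while $\mu_p$ has density proportional to $e^{-|y|^p / p}$, the scaling $Y \mapsto p^{1/p} Y$ sends a $\mu_{p, 1/p}$-distributed random variable to a $\mu_p$-distributed one. Consequently $\mathrm{M}_{\mu_p}(u) = \mathrm{M}_{\mu_{p, 1/p}}(p^{1/p} u)$, and hence $g_p(s) = \log \mathrm{M}_{\mu_{p, 1/p}}(\sqrt{p^{2/p} s})$, again a positive affine reparametrization. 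Applying Lemma \ref{lem-mommon} to $\mu_{p, 1/p}$, I conclude that $g_p$, and therefore $t_1 \mapsto \lm_p(\sqrt{t_1}, t_2)$, is concave on $\R_+$ for $p \in [2,\infty)$ and convex on $\R_+$ for $p \in [1,2]$.

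It remains to handle the ``not linear'' assertion. For $p=2$, the map is explicitly linear: since $\mu_2$ is standard Gaussian, $\log \mathrm{M}_{\mu_2}(u) = u^2/2$, so $\lm_2(\sqrt{t_1}, t_2) = -\tfrac{1}{2}\log(1-2t_2) + \tfrac{t_1}{2(1-2t_2)}$ is affine in $t_1$. For $p \ne 2$, suppose for contradiction that $t_1 \mapsto \lm_p(\sqrt{t_1}, t_2)$ is affine. By the reduction above, this forces $u \mapsto \log \mathrm{M}_{\mu_p}(u)$ to be of the form $\alpha u^2 + \beta$ for $u \ge 0$, and by the symmetry of $\mu_p$ also for all $u \in \R$. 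Since $\mathrm{M}_{\mu_p}(0) = 1$, we get $\beta = 0$, so $\mathrm{M}_{\mu_p}(u) = e^{\alpha u^2}$ is the mgf of a centered Gaussian. This contradicts the fact that $\mu_p$ is not Gaussian for $p \ne 2$, so the map must be strictly concave (for $p > 2$) or strictly convex (for $p < 2$), and in particular non-affine.

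The argument is largely bookkeeping once Lemma \ref{lem-mommon} is in hand; the only point requiring care is the ``not linear'' clause, where I would either run the Gaussian-characterization argument above or, alternatively, verify non-vanishing of the second derivative of $g_p$ at a convenient point. The main obstacle is keeping the two changes of variables (the one eliminating $t_2$ and the one rescaling $\mu_p$ to $\mu_{p,1/p}$) aligned with the normalization convention of Lemma \ref{lem-mommon}.
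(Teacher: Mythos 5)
Your proposal is correct and follows essentially the same route as the paper: reduce via the explicit formula of Lemma \ref{lem-tailcont} and a positive rescaling to the concavity/convexity statement of Lemma \ref{lem-mommon}, then rule out linearity for $p\ne 2$ by observing that an affine $t_1\mapsto\lm_p(\sqrt{t_1},t_2)$ would force the log mgf of (a dilate of) $\mu_p$ to be quadratic, hence $\mu_p$ Gaussian. The only cosmetic difference is that the paper phrases the rescaling through the family $\mu_{p,\beta}$ with $\beta=(1-pt_2)^{-1}$, while you rescale $\mu_p$ to $\mu_{p,1/p}$ directly; the substance is identical (and your closing remark about strict concavity is unnecessary for the statement, which only requires non-linearity).
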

\begin{proof}
It is easy to see that $\mathrm{M}_{\mu_{p,\beta}}(t) = \mathrm{M}_{\mu_{p,1/p}}(t\beta^{1/p})$. Together with   Lemma \ref{lem-mommon}, this implies that for all $\beta > 0$, the map $t\mapsto \log\mathrm{M}_{\mu_{p,\beta}}(\sqrt{t})$ is concave for $p\in[2,\infty]$ and convex for $p\in[1,2]$. For $t_1 \in \R$, $t_2< \frac{1}{p}$, we consider the case $\beta = (1-pt_2)^{-1}$ and apply Lemma \ref{lem-tailcont} to see that 
\begin{equation}\label{lmpexpand}
\lm_p(t_1,t_2) = -\tfrac{1}{p}\log(1-pt_2) + \log \mathrm{M}_{\mu_{p,(1-pt_2)^{-1}}}(t_1).
\end{equation}
This proves the concavity (resp., convexity) of $t_1 \mapsto \lm_p (\sqrt{t_1}, t_2)$ for $p \ge 2$ (resp., $p \le 2$).

It remains to show that linearity holds if and only if $p=2$. Note that for all $\beta > 0$,  $\mu_{2,\beta}$ is a Gaussian measure with mean 0 and variance $\beta$; thus, for $t \in \R_+$, we have 
\begin{equation*}
\log\mathrm{M}_{\mu_{2,\beta}}(\sqrt{t}) = \tfrac{\beta}{\sqrt{2}}  t.
\end{equation*}
Conversely, if $t_1\mapsto \lm_p(\sqrt{t_1},t_2)$ is linear, then \eqref{lmpexpand} implies that $t_1\mapsto \log\mathrm{M}_{\mu_{p,(1-pt_2)^{-1}}}(t_1)$ is quadratic, so $\mu_{p,(1-pt_2)^{-1}}$ must be Gaussian, hence $p=2$.
\end{proof}

We apply the concavity and convexity of the preceding lemma to prove inequalities for the function $\lmq_{p,\nu}$ defined in \eqref{lampdefn}.

\begin{lemma}\label{lem-mgfineq}
Let  $\nu\in\mathcal{P}(\R)$ be non-degenerate (i.e., not a Dirac mass at a single point).  If $p\in (2,\infty)$ and $m_2(\nu)\le 1$, then
\begin{equation*}
  \lmq_{p,\nu}(t_1,t_2) \le \lm_p(t_1,t_2), \quad t_1\in \R, \,t_2 < \tfrac{1}{p},
\end{equation*}
with equality if and only if $t_1 =0$. If $p\in (1,2)$ and $m_2(\nu) \ge 1$, then
\begin{equation*}
    \lmq_{p,\nu}(t_1,t_2) \ge \lm_p(t_1,t_2), \quad t_1\in \R, \,t_2 < \tfrac{1}{p},
\end{equation*}
with equality if and only if $t_1 =0$.
\end{lemma}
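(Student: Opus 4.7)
The plan is to reduce the claim to a one-dimensional Jensen's inequality applied to the functions studied in Lemma \ref{lem-strict}. Starting from the definition \eqref{lampdefn},
\begin{equation*}
  \lmq_{p,\nu}(t_1,t_2) = \int_\R \lm_p(t_1 u, t_2)\,\nu(du),
\end{equation*}
I would exploit the symmetry of $\mu_p$ (and hence of $\lm_p(\,\cdot\,, t_2)$) to rewrite the integrand as a function of $t_1^2 u^2$. Concretely, set $g_{t_2}(s) \doteq \lm_p(\sqrt{s}, t_2)$ for $s\ge 0$, so that $\lm_p(t_1 u, t_2) = g_{t_2}(t_1^2 u^2)$. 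By Lemma \ref{lem-strict}, for $t_2 < 1/p$ the function $g_{t_2}$ is concave (but not linear) when $p>2$ and convex (but not linear) when $p<2$.

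For $p\in(2,\infty)$, Jensen's inequality applied to the concave function $g_{t_2}$ under the pushforward of $\nu$ by $u\mapsto t_1^2 u^2$ yields
\begin{equation*}
  \lmq_{p,\nu}(t_1,t_2) = \int_\R g_{t_2}(t_1^2 u^2)\,\nu(du) \le g_{t_2}\!\left(t_1^2 m_2(\nu)\right) = \lm_p\bigl(|t_1|\sqrt{m_2(\nu)},\,t_2\bigr).
\end{equation*}
Because $\lm_p(\,\cdot\,, t_2)$ is symmetric and convex by Lemma \ref{lem-lamfacts}, it is nondecreasing on $[0,\infty)$; the hypothesis $m_2(\nu)\le 1$ then gives $\lm_p(|t_1|\sqrt{m_2(\nu)},t_2) \le \lm_p(t_1,t_2)$, which chains with the Jensen bound to the desired inequality. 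The case $p\in(1,2)$ is symmetric: $g_{t_2}$ is convex so the Jensen step reverses, and under $m_2(\nu)\ge 1$ the monotonicity step also reverses, producing the opposite inequality.

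For the equality characterization, if $t_1=0$ then both sides collapse to $\lm_p(0,t_2)$. For the converse, I would upgrade the ``not linear'' conclusion of Lemma \ref{lem-strict} to genuine strict concavity (resp.\ strict convexity) of $g_{t_2}$ on $[0,\infty)$ by invoking real-analyticity of $t\mapsto \log \mathrm{M}_{\mu_p}(t)$ through the representation in Lemma \ref{lem-tailcont}: an analytic concave function that is linear on any interval would be linear everywhere, contradicting Lemma \ref{lem-strict}. Strict concavity then forces equality in Jensen to occur only when $t_1^2 u^2$ is $\nu$-a.s.\ constant, which combined with the non-degeneracy of $\nu$ in the appropriate sense (so that $|u|$ is not $\nu$-a.s.\ constant) leaves only $t_1 = 0$.

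The main obstacle I anticipate is Step 3, the equality clause. The Jensen inequality becomes strict only once ``not linear'' is promoted to strict concavity/convexity, and there remains the borderline case where $\nu$ is concentrated on $\{-a,a\}$: there $t_1^2 u^2$ is constant and Jensen is an equality, so one must instead use strict monotonicity of $\lm_p(\,\cdot\,,t_2)$ together with $m_2(\nu) < 1$ (resp.\ $>1$), or else interpret the non-degeneracy hypothesis as excluding symmetric two-point measures with $m_2(\nu)=1$. The rest of the argument is a routine assembly of convexity facts already collected in Lemmas \ref{lem-lamfacts}, \ref{lem-tailcont}, and \ref{lem-strict}.
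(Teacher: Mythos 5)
Your argument is the paper's argument: Jensen's inequality applied to the concave (for $p>2$) or convex (for $p<2$) map $s\mapsto\lm_p(\sqrt{s},t_2)$ from Lemma \ref{lem-strict}, followed by the monotonicity of $\lm_p(\cdot,t_2)$ in $|t_1|$ combined with $m_2(\nu)\le 1$ (resp.\ $\ge 1$); the inequality part matches the paper's proof exactly. Where you differ is the equality clause, and there your treatment is actually more careful than the paper's one-line justification (``not linear, hence Jensen is strict unless $t_1^2X^2$ is degenerate''): promoting non-linearity to strict concavity via analyticity of $t\mapsto\log\mathrm{M}_{\mu_p}(t)$ is a sound step, and the borderline case you flag is a genuine one. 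For $\nu$ supported on $\{-1,+1\}$ --- which is non-degenerate in the stated sense and has $m_2(\nu)=1$ --- the symmetry of $\lm_p$ in its first argument gives $\lmq_{p,\nu}(t_1,t_2)=\lm_p(t_1,t_2)$ for \emph{every} $t_1$, so the ``equality if and only if $t_1=0$'' claim fails as literally stated; the paper's proof glosses over this by implicitly identifying degeneracy of $t_1^2X^2$ with $t_1=0$. Your two proposed repairs (invoke strict monotonicity of $\lm_p(\cdot,t_2)$ when the degenerate value of $|X|$ is strictly less than $1$, i.e.\ $m_2(\nu)<1$, or strengthen the hypothesis so that $|X|$ is non-degenerate under $\nu$) are both adequate, and neither affects how the lemma is used in the paper, where it is applied only with $\nu=\mu_2$ or with optimizers having finite relative entropy with respect to $\mu_2$, hence absolutely continuous and in particular not two-point measures.
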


\begin{proof}
Fix  $p\in (2,\infty)$ and non-degenerate $\nu\in\mathcal{P}(\R)$ such that $m_2(\nu) \le 1$. Let $X\sim \nu$. Due to the concavity of $t_1\mapsto \lm_p(\sqrt{t_1},t_2)$ from Lemma \ref{lem-strict}, Jensen's inequality, and the fact that $t_1\mapsto \lm_p(t_1,t_2)$ is symmetric and increasing for $t_1>0$,
\begin{equation*}
\lmq_{p,\nu}(t_1,t_2) = \E_\nu\left[\lm_p((t_1^2X^2)^{1/2},\,t_2)\right] \le \lm_p\left(\left(\E_\nu[t_1^2X^2]\right)^{1/2}, \,t_2\right)  \le \lm_p(t_1,t_2). 
\end{equation*}
Since $t_1\mapsto \lm_p(\sqrt{t_1},t_2)$ is not linear for $p >2$, it follows that the first inequality above is an equality if and only if $t_1 = 0$ (i.e., when the random variable $t_1^2X^2$ is degenerate). The result for $p\in (1,2)$ and $m_2(\nu) \ge 1$ follows from similar calculations.
\end{proof}

\begin{remark}
The primary argument in the preceding proof of Lemma \ref{lem-mgfineq} is the concavity (or convexity) of $\Lambda \circ \sqrt{\cdot}$ and Jensen's inequality. A similar combination of tools was employed in \cite{barthe2003extremal}, but to a different end; in particular, on pages 2, 16, and 19 of \cite{barthe2003extremal},  Jensen's inequality is applied to a log-concave function $f$ and a vector $v\in \R^n$ to obtain the inequality
\begin{equation*}
\prod_{i=1}^n  f(v_i)^{1/n} \le f\left(\frac{1}{n}\sum_{i=1}^n v_i \right).
\end{equation*}
In that setting, this yields an upper bound on the volume of a slab orthogonal to any $\theta^{(n)} \in \sphn$ --- an upper bound that is attained by the slab orthogonal to $\iota^{(n)}$. On the other hand, we use Jensen's inequality in a slightly different way (with respect to  a general measure instead of a discrete measure) to show that the precise rate function for projections onto $\sigma$-a.e.\ $\theta\in\seq$ differs (i.e., $<$ rather than just $\le$) from the rate function for projections onto $\iota$.
\end{remark}

\begin{proof}[Proof of Theorem \ref{th-atyp}]
As observed in Remark \ref{rmk-shao}, as a consequence of the representation in Lemma \ref{lem-jointrep} and Lemma \ref{lem-unifequiv}, $(\sw_\iota^{(n,p)})_{n\in\N}$ satisfies the same LDP as the sequence $(\widetilde{\sw}_\iota^{(n,p)})_{n\in\N}$ of self-normalized sums defined in \eqref{shaosum}. The LDP for $(\widetilde{\sw}_\iota^{(n,p)})_{n\in\N}$ with rate function $\jc_p$ of \eqref{cramratedefn} follows from Cram\'er's theorem in $\R^2$ for    $\frac{1}{n}\sum_{i=1}^n (\ynp_i, |\ynp_i|^p)$, $n\in \N$, and the contraction principle applied to the map $\bar T_p$ of \eqref{bartp}.

As for comparing  the quenched and self-normalized rate functions, let $p\in(2,\infty)$, $(\tau_1,\tau_2)\in D_{\lm_p^*}$, and define 
\begin{equation*}
(  t_1^{\tau_1,\tau_2},t_2^{\tau_1,\tau_2}) \doteq \arg\max_{t_1\in \R, t_2<\tfrac{1}{p}} \left\{ t_1\tau_1 + t_2\tau_2 - \lm_{p}(t_1,t_2) \right\},
\end{equation*}
where the supremum is uniquely attained due to Lemma \ref{lem-attained}. Then, it follows from  the definition of the Legendre transform and Lemma \ref{lem-mgfineq} that 
\begin{align}
\lmq_{p,\mu_2}^*(\tau_1,\tau_2) &\ge t_1^{\tau_1,\tau_2}\tau_1 + t_2^{\tau_1,\tau_2}\tau_2 - \lmq_{p,\mu_2}( t_1^{\tau_1,\tau_2}, t_2^{\tau_1,\tau_2})\notag  \\
 &\stackrel{(\star)}{\ge}  t_1^{\tau_1,\tau_2}\tau_1 + t_2^{\tau_1,\tau_2}\tau_2 - \lm_{p}( t_1^{\tau_1,\tau_2}, t_2^{\tau_1,\tau_2}) \label{rateineqptwise} \\
  &= \lm_p^*(\tau_1,\tau_2).\notag
\end{align}
Note that Lemma \ref{lem-mgfineq} shows that the inequality $(\star)$ is an equality if and only if $t_1^{\tau_1,\tau_2}=0$. Due to the strict convexity of $\lm_p$, we have $( t_1^{\tau_1,\tau_2},t_2^{\tau_1,\tau_2}) = \nabla \lm_p^*(\tau_1,\tau_2)$. Since $\lm_p$ is essentially smooth (resp., symmetric in its first argument), $\lm_p^*$ is strictly convex (resp., symmetric in its first argument). Therefore,  $t_1^{\tau_1,\tau_2}=\partial_{\tau_1} \lm_p^*(\tau_1,\tau_2) = 0$  if and only if $\tau_1=0$.

Recall from Lemma \ref{lem-domains} that $D_{\iq_{p,\mu_2}}^\circ  \subset (-1,1) = D_{\jc_p}^\circ$. For $w\in (-1,1) \setminus D_{\iq_{p,\mu_2}}$, we have $\jc_p(w) < \iq_{p,\mu_2}(w) = \infty$. For $w\in D_{\iq_{p,\mu_2}}$, let 
\begin{equation*}
 (\tau_1^w,\tau_2^w) \in \arg \min_{\substack{\tau_1\in \R, \tau_2 > 0 : \\ \tau_1\tau_2^{-1/p}=w}}\lmq_{p,\mu_2}^*(\tau_1,\tau_2),
\end{equation*}
where a minimizer exists due to Lemma \ref{lem-attained}(2). Then, it follows from \eqref{rateineqptwise} and the definition of $\jc_p$ from \eqref{cramratedefn}, that
\begin{equation*}
\infty >   \iq_{p,\mu_2}(w) =  \lmq_{p,\mu_2}^*(\tau_1^w,\tau_2^w) \stackrel{(\ddagger)}{\ge} \lm_p^*(\tau_1^w,\tau_2^w)  \ge  \inf_{\substack{\tau_1\in \R, \tau_2 > 0 : \\ \tau_1\tau_2^{-1/p}=w}} \lm_p^*(\tau_1,\tau_2) =  \jc_p(w).
\end{equation*}
The assumption that  $w\in D_{\iq_{p,\mu_2}}$   implies that  $(\tau_1^w,\tau_2^w)\in D_{\lm_p^*}$. Thus, the inequality $(\ddagger)$ is strict if and only if the corresponding inequality \eqref{rateineqptwise} is strict, which is the case if and only if $\tau_1^w \ne 0$.  If $w\ne 0$, then the constraint $w=\tau_1\tau_2^{-1/p}$ implies that $\tau_1^w\ne 0$, so $(\ddagger)$ is  a strict inequality. On the other hand, if $w=0$, then $\iq_{p,\mu_2}(0) = 0 = \jc_p(0)$. This completes the proof for $p >2$.

The proof is essentially identical for $p<2$, with convexity replacing concavity. The identification in the case $p=2$ follows from Theorem \ref{th-p2}, which states that the rate function associated with $(\sw_\theta^{(n,2)})_{n\in \N}$ is the same for all $\theta \in \seq$, in particular for $\theta^{(n)} = \iota^{(n)} = \tfrac{1}{\sqrt{n}}(1,1,\dots,1)$.
\end{proof}

%%%%%%%%%%%%%%%%%%
% ATYP \ ANN
%%%%%%%%%%%%%%%%%%
\subsection{Comparison of annealed and unweighted LDPs}\label{ssec-atypa}

Using similar methods as for Theorem \ref{th-atyp}, combined with the limit  log mgf $\lmbar_p$  of \eqref{pressfunc}, we obtain the following result which compares the sequence of fixed directions $\iota$ with the sequence of random directions $\Theta$.

\begin{proposition}
For $p\in (2,\infty)$,
\begin{equation*}
  \ia_p(w) \ge \jc_p(w), \quad w\in (-1,1),
\end{equation*}
with equality if and only if $w =0$.
\end{proposition}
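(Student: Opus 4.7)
The plan is to mirror the proof of Theorem \ref{th-atyp}, with the annealed limit log mgf $\lmbar_p$ of \eqref{pressfunc} playing the role played there by the quenched log mgf $\lmq_{p,\mu_2}$. The crucial new ingredient is the variational representation $\lmbar_p(t_1,t_2) = \sup_{\nu}\{\lmq_{p,\nu}(t_1,t_2) - \mathbb{H}(\nu)\}$ from Lemma \ref{lem-lmgfvar}: combined with $\mathbb{H}(\nu) \ge 0$ and the pointwise comparison $\lmq_{p,\nu} \le \lm_p$ for non-degenerate $\nu$ with $m_2(\nu) \le 1$ (Lemma \ref{lem-mgfineq}), this will yield the bound $\lmbar_p \le \lm_p$ on which everything rests.

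First I would establish that $\lmbar_p(t_1,t_2) \le \lm_p(t_1,t_2)$ for all $t_1 \in \R$ and $t_2 < 1/p$, with strict inequality exactly when $t_1 \ne 0$. By Lemma \ref{lem-optprops} the supremum in the variational formula is uniquely attained at some $\nu^\circ$ with $m_2(\nu^\circ) \le 1$; finiteness of $\mathbb{H}(\nu^\circ)$ forces $\nu^\circ \ll \mu_2$, so $\nu^\circ$ is non-degenerate. For $p \in (2,\infty)$, Lemma \ref{lem-mgfineq} then gives $\lmq_{p,\nu^\circ}(t_1,t_2) \le \lm_p(t_1,t_2)$, strictly whenever $t_1 \ne 0$, and combined with $\mathbb{H}(\nu^\circ) \ge 0$ this yields the claim. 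At $t_1 = 0$ the identity $\lmq_{p,\nu}(0,t_2) = \lm_p(0,t_2)$ together with $\mathbb{H}(\mu_2) = 0$ shows equality.

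Next, I would pass to Legendre transforms, which reverses the inequality to $\lmbar_p^* \ge \lm_p^*$. To upgrade to strict inequality on $\{\tau_1 \ne 0\}$, I would copy verbatim the computation \eqref{rateineqptwise} with $\lmbar_p$ replacing $\lmq_{p,\mu_2}$: plug the unique maximizer $(t_1^\tau,t_2^\tau)$ for $\lm_p^*(\tau_1,\tau_2)$ (existence and uniqueness from Lemma \ref{lem-attained}(1)) into the Legendre definition of $\lmbar_p^*(\tau_1,\tau_2)$ and invoke the strict inequality of the previous step. This works because $t_1^\tau \ne 0$ whenever $\tau_1 \ne 0$, as follows from the symmetry and strict convexity of $\lm_p^*$ in its first coordinate (Lemma \ref{lem-lamfacts}).

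Finally, I would infimize over the constraint $\{\tau_1 \tau_2^{-1/p} = w,\,\tau_2 > 0\}$ to obtain $\ia_p \ge \jc_p$, with equality at $w = 0$ (both sides vanish by symmetry and attainment of the minimum at zero). For $w \ne 0$, the constraint forces $\tau_1 \ne 0$ everywhere on the feasible curve; the infimum defining $\ia_p(w) = \inf_{\tau_2 > 0} \lmbar_p^*(w\tau_2^{1/p}, \tau_2)$ is attained at some $\tau_2^* > 0$, since $\lmbar_p^*$ is a good rate function (it equals $J_{R,p}$ from Proposition \ref{prop-altldpconvex}, as used in the proof of Theorem \ref{th-compar}) and $\tau_2 \mapsto \lmbar_p^*(w\tau_2^{1/p},\tau_2)$ is coercive at the boundary by an estimate analogous to the one in Lemma \ref{lem-attained}(2). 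Setting $\tau_1^* = w(\tau_2^*)^{1/p} \ne 0$ and applying the strict inequality from the preceding step then yields $\ia_p(w) = \lmbar_p^*(\tau_1^*,\tau_2^*) > \lm_p^*(\tau_1^*,\tau_2^*) \ge \jc_p(w)$. The main obstacle is this last attainment/coercivity argument, which is what converts the pointwise strict inequality between the Legendre transforms into a strict inequality between the contracted rate functions; everything else is a direct transcription of the proof of Theorem \ref{th-atyp}.
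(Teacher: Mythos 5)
Your proposal is correct and follows essentially the same route as the paper: it uses the variational representation of $\lmbar_p$ from Lemma \ref{lem-lmgfvar} together with Lemma \ref{lem-optprops} and Lemma \ref{lem-mgfineq} to get $\lmbar_p \le \lm_p$ with equality only at $t_1=0$, and then repeats the argument of Theorem \ref{th-atyp} with $\lmbar_p$ in place of $\lmq_{p,\mu_2}$, exactly as the paper does. The additional details you supply (non-degeneracy of the optimizer $\nu^\circ$, the identification $\ia_p(w)=\inf_{\tau_1\tau_2^{-1/p}=w}\lmbar_p^*(\tau_1,\tau_2)$ via the proof of Theorem \ref{th-compar}, and the attainment/coercivity analogue of Lemma \ref{lem-attained}(2)) are precisely the steps the paper leaves implicit, and they go through as you indicate.
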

\begin{proof}
Recall the definition of the limit log mgf $\lmbar_p$ given in \eqref{pressfunc}. Due to the variational representation stated in Lemma \ref{lem-lmgfvar} and Lemma \ref{lem-optprops}, there exists an optimal probability measure $\nu_p^\circ$ such that
\begin{equation}
  \lmbar_p(t_1,t_2) = \lmq_{p,\nu_p^\circ}(t_1,t_2) - \mathbb{H}(\nu_p^\circ). \label{nuorep}
\end{equation}
Note that $m_2(\nu_p^\circ)\le 1$, so by Lemma \ref{lem-mgfineq},
\begin{equation*}
\lmq_{p,\nu_p^\circ}(t_1,t_2) \le \lm_p(t_1,t_2), \quad t_1\in \R, t_2 < \tfrac{1}{p},
\end{equation*}
with equality if and only if $t_1 = 0$. Together with \eqref{nuorep}, this shows that 
\begin{equation*}
  \lmbar_p(t_1,t_2)  \le \lm_p(t_1,t_2) - \mathbb{H}(\nu_p^\circ) \le \lm_p(t_1,t_2),
\end{equation*}
with equality only if $t_1=0$ and $\nu_{p}^\circ = \mu_2$. From this inequality, the same considerations as in the proof of Theorem \ref{th-atyp}  --- except with $\lmq_{p,\mu_2}$ there  replaced by $\lmbar_p$  here --- complete the proof. 
\end{proof}

%%%%%%%%%%%%%%%%%%
% PRODUCT
%%%%%%%%%%%%%%%%%%
\section{Analogous results for product measures}\label{sec-prod} % [X]

In this section, we consider projections of a random vector distributed according to a product measure, and state the analogous ``product measure" versions of the ``$\ell^p$ ball" results of Sect. \ref{sec-main}. For $n\in\N$ and $\gamma \in \mathcal{P}(\R)$, let 
\begin{equation}
\xng = (\xng_1,\dots,\xng_n) \sim \gamma^{\otimes n},
\end{equation}
independent of  $\bthn$.  Let $\mu_\infty$ be the uniform measure on $[-1,1]$, whose density is the limit $f_\infty = \lim_{p\ra\infty} f_p$ of the densities $f_p$ defined in \eqref{fpdef}. The $p=\infty$ analogs of our results stated in Sect.\ \ref{sec-main} follow as a consequence of the results in this section with $\gamma=\mu_\infty$.

The results in the product measure case are proved using very similar arguments as in the proofs of the $\ell^p$ ball case for $p<\infty$, given in Sect.\ \ref{sec-annealed}--\ref{sec-atyp}. In fact, the arguments in this section are typically slightly simpler, because the \emph{a priori} independence of the coordinates of $X^{(n,\gamma)}$ eliminates the need to appeal to the representation of the uniform measure on the $\ell^p$ ball given in Sect.\ \ref{sec-equiv}. For this reason, we will mostly only sketch the proofs in this section, highlighting only the main differences.

\subsection{Annealed LDP}

For $n\in \N$ and $\gamma\in \mathcal{P}(\R)$, let 
\begin{align}
  \sw^{(n,\gamma)} &\doteq \tfrac{1}{n^{1/2}} \langle \xng , \bthn\rangle_n,\label{wgth}\\
\lma_\gamma(t_0,t_1) &\doteq \log  \int_\R \int_\R e^{t_0 z^2 + t_1  zx} \mu_2(dz) \gamma(dx), \quad  t_0, t_1\in \R, \label{phigamdef}\\
  \ia_{\gamma}(w) &\doteq \inf_{\substack{\tau_0 > 0, \tau_1\in \R \, :\\ \tau_0^{-1/2}\tau_1 = w}} \lma_{\gamma}^*(\tau_0,\tau_1). 
\end{align}

\begin{theorem}\label{th-aldpprod}
Let $\gamma$ lie in the space  $\mathcal{T}_2$ defined in  \eqref{tpdef}. Then, the sequence $(W^{(n,\gamma)})_{n\in \N}$ satisfies an LDP with the quasiconvex good rate function $\ia_\gamma$.
\end{theorem}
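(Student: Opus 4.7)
The plan is to mimic the strategy of Theorem \ref{th-aldp}, but to take advantage of the independence of the coordinates of $X^{(n,\gamma)}$ to bypass the probabilistic representation machinery of Sect.~\ref{sec-equiv}. Specifically, I would invoke the representation $\bthn \eqdist Z^{(n)}/\|Z^{(n)}\|_{n,2}$ (see Lemma \ref{lem-jointrep}), where $Z^{(n)}=(Z_1,\dots,Z_n)$ are i.i.d.\ standard Gaussian random variables, independent of $X^{(n,\gamma)}$. This immediately yields
\[
  \sw^{(n,\gamma)} \eqdist \frac{\frac{1}{n}\sum_{i=1}^{n}\xng_i Z_i}{\bigl(\frac{1}{n}\sum_{i=1}^{n} Z_i^{2}\bigr)^{1/2}} = T(S^{(n,\gamma)}),\quad\text{where } S^{(n,\gamma)} \doteq \frac{1}{n}\sum_{i=1}^{n}(Z_i^{2},\xng_i Z_i),
\]
and $T:(0,\infty)\times\R \to \R$ is the continuous map $T(\tau_0,\tau_1)=\tau_0^{-1/2}\tau_1$. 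Notice that $\lma_\gamma$ of \eqref{phigamdef} is precisely the log mgf of the i.i.d.\ summand $(Z_1^{2},X_1 Z_1)$, so the main task is to apply Cram\'er's theorem in $\R^2$ to $(S^{(n,\gamma)})_{n\in\N}$ and then pull back the LDP through $T$ via the contraction principle.

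The key step is to verify the Cram\'er hypothesis that $0 \in D_{\lma_\gamma}^\circ$, which is precisely where the hypothesis $\gamma \in \mathcal{T}_2$ enters. By \eqref{tpdef}, $\log \mathrm{M}_\gamma(s)\le C(1+s^{2})$ for some constant $C$, so by Fubini,
\[
  \lma_\gamma(t_0,t_1) = \log\int_{\R} e^{t_0 z^{2}}\,\mathrm{M}_\gamma(t_1 z)\,\mu_2(dz) \le \log\int_{\R} e^{(t_0+Ct_1^{2})z^{2}+C}\,\mu_2(dz),
\]
which is finite whenever $t_0+Ct_1^{2}<\tfrac12$; in particular, $\lma_\gamma$ is finite on an open neighborhood of the origin. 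Cram\'er's theorem then yields an LDP for $(S^{(n,\gamma)})_{n\in\N}$ in $\R^{2}$ with the good convex rate function $\lma_\gamma^{*}$. Since $Z_1^{2}\ge 0$, the support of the law of the summand is contained in $[0,\infty)\times\R$, so $D_{\lma_\gamma^{*}}\subset[0,\infty)\times\R$ by \cite[Lemma 2.4]{de1985large}. Moreover, the constraint $\tau_0>0$ is handled by observing that $\lma_\gamma^{*}(0,\tau_1)=+\infty$ unless $\tau_1=0$ (since $Z_1^{2}$ is a.s.\ positive), which ensures that $T$ can be applied and the contraction principle then yields the LDP for $(\sw^{(n,\gamma)})_{n\in\N}$ with rate function $\ia_\gamma$.

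Finally, for quasiconvexity and symmetry (which the theorem asserts along with goodness), the plan is to reuse the argument of Lemma \ref{lem-reduction}. Goodness is automatic since Cram\'er's theorem produces a good rate function and $T$ is continuous. Symmetry of $\ia_\gamma$ follows from the symmetry of $\mu_2$: the change of variables $z\mapsto -z$ in \eqref{phigamdef} gives $\lma_\gamma(t_0,t_1)=\lma_\gamma(t_0,-t_1)$, hence $\lma_\gamma^{*}(\tau_0,\tau_1)=\lma_\gamma^{*}(\tau_0,-\tau_1)$, and this symmetry carries over to $\ia_\gamma$ via the representation $\ia_\gamma(w)=\inf_{\tau_0>0}\lma_\gamma^{*}(\tau_0,w\tau_0^{1/2})$. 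Combined with the convexity of $\lma_\gamma^{*}$, the symmetry in $\tau_1$ implies that $\tau_1\mapsto\lma_\gamma^{*}(\tau_0,\tau_1)$ is minimized at $0$ and monotone on each half-line, so the same monotonicity argument as in the proof of Lemma \ref{lem-reduction} shows that level sets $\{w:\ia_\gamma(w)\le c\}$ are intervals containing $0$, i.e.\ $\ia_\gamma$ is quasiconvex. I expect the only genuinely delicate part to be the verification that $0\in D_{\lma_\gamma}^\circ$ via the subgaussian tail bound $\gamma\in\mathcal{T}_2$; everything else is either a direct appeal to Cram\'er's theorem and the contraction principle or a symmetry/convexity bookkeeping argument.
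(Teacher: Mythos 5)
Your proposal is correct and follows essentially the same route as the paper's proof: represent $\bthn$ as a normalized Gaussian vector via Lemma \ref{lem-jointrep}, apply Cram\'er's theorem in $\R^2$ to $S^{(n,\gamma)}$ (with $\gamma\in\mathcal{T}_2$ guaranteeing $0\in D_{\lma_\gamma}^\circ$, exactly the bound you wrote), and contract through $T(\tau_0,\tau_1)=\tau_0^{-1/2}\tau_1$, with quasiconvexity and symmetry handled as in Lemma \ref{lem-reduction}. The only small imprecision is your parenthetical that $\lma_\gamma^*(0,\tau_1)=+\infty$ \emph{unless} $\tau_1=0$: in fact $\lma_\gamma^*(0,0)=+\infty$ as well (take $t_1=0$ and $t_0\to-\infty$, since $\lma_\gamma(t_0,0)=-\tfrac12\log(1-2t_0)\to-\infty$), so $D_{\lma_\gamma^*}\subset(0,\infty)\times\R$ and $T$ is continuous on the whole effective domain, which only strengthens your argument.
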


\begin{proof}
 Let $Z^{(n)}=(\zn_1,\dots,\zn_n)$ be a standard Gaussian random vector (i.e., distributed according to $\mu_2^{\otimes n}$), independent of $\xng$. Define
\begin{equation}\label{wgz}
  \widetilde{W}^{(n,\gamma)} \doteq \tfrac{1}{n^{1/2}} \left\langle \xng , \tfrac{Z^{(n)}}{\ltwonrm{Z^{(n)}}}\right\rangle_n,
\end{equation}
and consider the associated  sum of i.i.d.\ $\R^2$-valued random variables,
\begin{equation*}
S^{(n,\gamma)} \doteq \frac{1}{n}\sum_{i=1}^n \left( |\zn_i|^2, \xng_i\, \zn_i \right).
\end{equation*}
Note that $\lma_{\gamma}$ of \eqref{phigamdef} is the log mgf of the summands of $S^{(n,\gamma)}$. 

Since $\Theta^{(n)} \eqdist Z^{(n)}/\|Z^{(n)}\|_{n,2}$ as shown in Lemma \ref{lem-jointrep}, we have $W^{(n,\gamma)}\eqdist \widetilde W^{(n,\gamma)}$, so it suffices to prove an LDP for $(\widetilde{W}^{(n,\gamma)})_{n\in\N}$.  Note that $\widetilde W^{(n,\gamma)} = T(S^{(n,\gamma)})$, where $T:\R^2\ra\R$ is defined by
\begin{equation}\label{tdef}
  T(\tau_0,\tau_1) = \tau_0^{-1/2}\tau_1.
\end{equation}
It is straightforward to check that if $\gamma\in\mathcal{T}_2$, then $0\in D_{\lma_\gamma}^\circ$ (see the proof of Theorem \ref{th-aldp} in Sect. \ref{ssec-anng2} for a related calculation), so by Cram\'er's theorem, the sequence $(S^{(n,\gamma)})_{n\in \N}$ satisfies an LDP in $\R^2$ with the good rate function  $\lma_{\gamma}^*$. Due to the continuity of $T$ on $D_{\lma_\gamma^*}$,  the contraction principle yields the LDP for $(\widetilde{W}^{(n,\gamma)})_{n\in\N}$ with the desired rate function $\ia_\gamma$.
\end{proof}

\subsection{Quenched LDP and atypical projection directions}\label{ssec-queprod}

Recall the mgf $\mathrm{M}_\gamma$ of \eqref{logmgfdef}. For $n\in \N$ and $\gamma,\nu \in \mathcal{P}(\R)$, define
\begin{align}
   \sw_\theta^{(n,\gamma)} &\doteq \tfrac{1}{n^{1/2}} \langle \xng , \thn\rangle_n, \label{wgth2}\\
\lmq_{\gamma,\nu}( t_1) &\doteq \int_\mathbb{R}  \log\mathrm{M}_\gamma(t_1u) \nu(du), \quad t_1 \in \mathbb{R}, \label{laminfdefn}\\
  \iq_{\gamma,\nu}(w) &\doteq \lmq_{\gamma,\nu}^*(w). \label{iqgamdefn}
\end{align}

\begin{theorem}\label{th-qldpprod}
Let $\gamma \in \mathcal{T}_q$  for some $q > 1$. Then, for $\sigma$-a.e.\ $\theta \in \seq$, the sequence $(W_\theta^{(n,\gamma)})_{n\in \N}$ satisfies  an LDP with the convex good rate function $\iq_{\gamma,\mu_2}$.
\end{theorem}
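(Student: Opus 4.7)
The proof strategy parallels that of Theorem \ref{th-qldp}, but is simpler because the coordinates of $X^{(n,\gamma)}$ are already i.i.d., so no appeal to the probabilistic representation of Section \ref{sec-equiv} is needed. The plan is to verify the hypotheses of the G\"artner-Ellis theorem directly for the sequence $(\sw_\theta^{(n,\gamma)})_{n\in\N}$, after establishing the required convergence of an empirical measure for $\sigma$-a.e.\ $\theta$.

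First, I would compute the limit log mgf. For $t\in\R$, since $\sx^{(n,\gamma)}_1,\dots,\sx^{(n,\gamma)}_n$ are i.i.d.\ with common law $\gamma$,
\begin{equation*}
\tfrac{1}{n}\log\E\bigl[\exp(nt\,\sw_\theta^{(n,\gamma)})\bigr] = \tfrac{1}{n}\sum_{i=1}^n \log \mathrm{M}_\gamma\bigl(t\sqrt{n}\,\theta_i^{(n)}\bigr) = \int_\R \log\mathrm{M}_\gamma(tu)\, L_{n,\theta}(du),
\end{equation*}
where $L_{n,\theta}=\tfrac{1}{n}\sum_{i=1}^n \delta_{\sqrt{n}\theta_i^{(n)}}$ is the empirical measure introduced in Section \ref{ssec-pressure}. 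As in the proof of Theorem \ref{th-qldp}, Lemma \ref{lem-gliv} applied with $\mu=\mu_2$ and $f_n=n^{-1/2}\nrm{\cdot}_{n,2}$, together with the identification $\sigma=\zeta\circ\mathcal{R}^{-1}$, implies that for $\sigma$-a.e.\ $\theta\in\seq$ and all $r<\infty$, $\mathcal{W}_r(L_{n,\theta},\mu_2)\to 0$.

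Next, since $\gamma\in\mathcal{T}_q$ with $q>1$, Lemma \ref{lem-subgsn} gives that the map $\nu\mapsto \int_\R \log\mathrm{M}_\gamma(tu)\,\nu(du)$ is continuous with respect to the Wasserstein-$q/(q-1)$ topology on $\mathcal{P}_{q/(q-1)}(\R)$. Combining this with the previous paragraph, we conclude that for $\sigma$-a.e.\ $\theta\in\seq$ and all $t\in\R$,
\begin{equation*}
\lim_{n\to\infty}\tfrac{1}{n}\log\E\bigl[\exp(nt\,\sw_\theta^{(n,\gamma)})\bigr] = \lmq_{\gamma,\mu_2}(t).
\end{equation*}
Moreover, the definition of $\mathcal{T}_q$ combined with the fact that $\mu_2$ has finite moments of all orders gives a bound of the form $\lmq_{\gamma,\mu_2}(t)\le C|t|^{q/(q-1)}+C$, so $D_{\lmq_{\gamma,\mu_2}}=\R$, and essential smoothness and lower semi-continuity hold trivially, with $0$ lying in the interior.

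Applying the G\"artner-Ellis theorem (e.g., Theorem 2.3.6 of \cite{DemZeibook}) then yields the desired LDP with rate function $\lmq_{\gamma,\mu_2}^* = \iq_{\gamma,\mu_2}$, which is automatically convex and lower semi-continuous as a Legendre transform, and has compact level sets (hence is a good rate function) because $\lmq_{\gamma,\mu_2}$ is finite on all of $\R$. The only substantive step is the $\sigma$-a.e.\ Wasserstein convergence of $L_{n,\theta}$ to $\mu_2$, but this has already been established in the proof of Theorem \ref{th-qldp} and transfers verbatim; the rest is a more streamlined variant of the argument of Proposition \ref{prop-quegen}, without the need for the auxiliary variable $|Y_i|^p$ that complicated the $\ell^p$-ball setting.
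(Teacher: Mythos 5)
Your proof is correct and follows exactly the route the paper intends for Section \ref{sec-prod}: it parallels Proposition \ref{prop-quegen} and the proof of Theorem \ref{th-qldp}, with the empirical-measure convergence from Lemma \ref{lem-gliv} and the continuity from Lemma \ref{lem-subgsn}, but dispenses with the auxiliary $|Y_i|^p$ coordinate and the contraction step since the coordinates are already i.i.d.; the paper itself only sketches this and defers to \cite[Theorem 2.4]{gkr3}. The one spot you wave off --- differentiability of $\lmq_{\gamma,\mu_2}$, which is part of essential smoothness and is not literally trivial for a convex function --- does hold by the dominated-convergence argument of Lemma \ref{lem-quenchedess} with $q$ in place of $p$, so the claim stands.
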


A version of Theorem \ref{th-qldpprod} with weaker conditions can be found in \cite[Theorem 2.4]{gkr3}. The reader can also find in \cite[Theorem 2.5]{gkr3} a comparison of $\iq_{\gamma,\mu_2}$ and $(\log \mathrm{M}_\gamma)^*$; the latter is the large deviation rate function for the sequence of empirical means of $X^{(n,\gamma)}$, as given by Cram\'er's theorem.

\subsection{Variational formula}

\begin{theorem}\label{th-comparprod}
Let $\gamma \in \mathcal{T}_q$ for some $q>2$. Then, for all $w\in \R$,
\begin{align}
  \ia_{\gamma}(w) &= \inf_{\substack{\nu \in \mathcal{P}(\R):\\ m_2(\nu)\le 1}} \left\{\iq_{\gamma,\nu}(w) + H(\nu |\mu_2) + \tfrac{1}{2}\left(1- m_2(\nu)\right)  \right\}. \label{varform2}
\end{align}
In particular, this implies that for all $w\in \R$, $\ia_{\gamma}(w) \le \iq_{\gamma,\mu_2}(w)$.
\end{theorem}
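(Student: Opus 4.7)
The plan is to mirror the proof of Theorem \ref{th-compar} but with several simplifications afforded by the fact that the coordinates of $X^{(n,\gamma)}$ are already independent (so no probabilistic representation à la Sect.\ \ref{sec-equiv} is needed). The central object is the limit log mgf
\begin{equation*}
\lmbar_\gamma(t) \doteq \lim_{n\ra\infty} \tfrac{1}{n}\log \E\!\left[\exp(n t W^{(n,\gamma)})\right], \qquad t\in\R.
\end{equation*}
Conditioning on $\Theta$ and using the i.i.d.\ structure of $X^{(n,\gamma)}$,
\begin{equation*}
\tfrac{1}{n}\log\E[\exp(n t W^{(n,\gamma)})] \;=\; \tfrac{1}{n}\log\E\!\left[\exp\!\Big( n\!\int_\R \log \mathrm{M}_\gamma(tu)\, L_{n,\Theta}(du) \Big)\right].
\end{equation*}
First I would apply Varadhan's lemma to the Sanov-type LDP of Proposition \ref{prop-sanovcone} (at speed $n$) in the Wasserstein-$r$ topology with $r = q/(q-1) < 2$ (which is where $q>2$ enters). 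The continuity hypothesis on $\nu \mapsto \Psi_{\gamma,\nu}(t) = \int \log \mathrm{M}_\gamma(tu)\,\nu(du)$ follows from Lemma \ref{lem-subgsn} since $\gamma\in\mathcal{T}_q$, and the exponential moment hypothesis follows verbatim from the sub-independence argument in the proof of Lemma \ref{lem-lmgfvar} (Lemma \ref{lem-subindep}). The conclusion is the variational formula
\begin{equation*}
\lmbar_\gamma(t) \;=\; \sup_{\nu\in\mathcal{P}(\R)} \big\{ \Psi_{\gamma,\nu}(t) - \mathbb{H}(\nu) \big\}.
\end{equation*}

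Next I would take the Legendre transform and apply Sion's minimax theorem (Theorem \ref{th-minmax}) exactly as in Lemma \ref{lem-varforbarstar}, on the compact convex set $K_2 = \{\nu : m_2(\nu) \le 1\}$ (compactness by Lemma \ref{lem-compact}, using $r < 2$). Using that $\Psi_{\gamma,\nu}$ is convex and l.s.c.\ in $t$ and linear (hence concave and u.s.c.) in $\nu$, while $\mathbb{H}$ is convex and l.s.c., the inf/sup swap yields
\begin{equation*}
\lmbar_\gamma^*(\tau) \;=\; \inf_{\nu \in \mathcal{P}(\R)} \big\{ \iq_{\gamma,\nu}(\tau) + \mathbb{H}(\nu)\big\},
\end{equation*}
recognizing $\iq_{\gamma,\nu} = \Psi_{\gamma,\nu}^*$ via \eqref{iqgamdefn}, and noting the effective domain of $\mathbb{H}$ restricts the infimum to $K_2$.

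To conclude, it remains to identify $\lmbar_\gamma^* = \ia_\gamma$. Since Theorem \ref{th-aldpprod} supplies the LDP for $(W^{(n,\gamma)})_{n\in\N}$ with rate $\ia_\gamma$, Varadhan's lemma gives $\lmbar_\gamma = \ia_\gamma^*$, and hence $\lmbar_\gamma^* = \ia_\gamma^{**}$; we must therefore establish convexity (and l.s.c., automatic) of $\ia_\gamma$. This is the main obstacle and is the analog of Proposition \ref{prop-altldpconvex}. I would obtain it as follows: by the spherical invariance argument of Lemma \ref{lem-spherindep} (used already in the proof of Theorem \ref{th-aldpprod}),
\begin{equation*}
W^{(n,\gamma)} \eqdist \bthn_1 \cdot \Big( \tfrac{1}{n}\|X^{(n,\gamma)}\|_{n,2}^2 \Big)^{1/2},
\end{equation*}
with $\bthn_1$ independent of $X^{(n,\gamma)}$. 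The sequence $(\bthn_1)$ satisfies an LDP with the convex good rate function $J_2$ of \eqref{j2defn} \cite{BarGamLozRou10}, and $(\tfrac{1}{n}\|X^{(n,\gamma)}\|_{n,2}^2)$ satisfies a Cram\'er LDP with convex good rate function $\hat J_\gamma$ (since $\gamma \in \mathcal{T}_q \subset \mathcal{T}_2$). By independence these combine into a convex good rate function for the pair, and the contraction principle under $(a,s) \mapsto a\sqrt{s}$ gives $\ia_\gamma$; joint convexity of $(w,s) \mapsto J_2(w/\sqrt{s})$ (Lemma \ref{lem-jointcon}) and Lemma \ref{lem-infconvex} yield convexity of $\ia_\gamma$. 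This closes the loop: $\ia_\gamma = \ia_\gamma^{**} = \lmbar_\gamma^*$, which is exactly \eqref{varform2}. The comparison $\ia_\gamma \le \iq_{\gamma,\mu_2}$ is then immediate by plugging $\nu = \mu_2$ into \eqref{varform2} and using $\mathbb{H}(\mu_2) = 0$.
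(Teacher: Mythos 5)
Your proposal is correct and follows essentially the same route as the paper: Varadhan's lemma applied to the Sanov-type LDP for $L_{n,\Theta}$ (Proposition \ref{prop-sanovcone}), Sion's minimax theorem on $K_2$ to pass to Legendre transforms, and a convexity argument for the annealed rate function via the spherical decomposition $W^{(n,\gamma)} \eqdist \bthn_1 (\tfrac{1}{n}\|X^{(n,\gamma)}\|_{n,2}^2)^{1/2}$, exactly the ``straightforward modification of Proposition \ref{prop-altldpconvex}'' the paper invokes. The only cosmetic difference is that you identify $\ia_\gamma = \lmbar_\gamma^*$ via Varadhan plus Fenchel biconjugation (as in the paper's proof of Theorem \ref{th-compar}), whereas the paper's product-case proof cites \cite[Theorem 4.5.10]{DemZeibook} directly; these are equivalent given the convexity you establish.
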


To prove Theorem \ref{th-comparprod}, we first establish appropriate versions of the lemmas established in Sect. \ref{sec-rel}. Let $\gamma \in \mathcal{T}_q$ for some $q> 2$, and define the functional $\lmbar_\gamma:\R \ra \R$ as follows: \begin{equation} \label{pressfunc2}
  \lmbar_\gamma(t) \doteq   \lim_{n\ra\infty} \frac{1}{n} \log \mathbb{E} \left[\exp\left( \sum_{i=1}^n  t \sqrt{n}\bthn_i \xng_i  \right)\right] , \quad t\in \R.
\end{equation}

\begin{lemma}\label{lem-lmbar}
Let $\gamma \in \mathcal{T}_p$. Then, 
\begin{equation} \label{mgfvar2}
   \lmbar_{\gamma}(t) = \sup_{\nu \in \mathcal{P}(\mathbb{R})} \left\{ \lmq_{\gamma,\nu}(t) - \mathbb{H}(\nu)\right\}, \quad t\in \R.
\end{equation}
In addition, $\lmbar_\gamma(t) < \infty$ for all $t\in \R$.
\end{lemma}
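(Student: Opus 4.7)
The plan is to follow the proof of Lemma \ref{lem-lmgfvar} essentially line-by-line, with the two-variable $\lm_p$ replaced by the one-variable $\log \mathrm{M}_\gamma$. First, by the independence of $\Theta$ and $X^{(n,\gamma)}$ and the product structure $X^{(n,\gamma)}\sim\gamma^{\otimes n}$, conditioning on $\Theta$ gives
\begin{equation*}
\E\!\left[\exp\Bigl(\sum_{i=1}^n t\sqrt{n}\bthn_i X_i^{(n,\gamma)}\Bigr)\right]
  = \E\!\left[\prod_{i=1}^n \mathrm{M}_\gamma\bigl(t\sqrt{n}\bthn_i\bigr)\right]
  = \E\!\left[\exp\bigl(n\psi_{\gamma,t}(L_{n,\Theta})\bigr)\right],
\end{equation*}
where $\psi_{\gamma,t}(\nu)\doteq \int_\R \log\mathrm{M}_\gamma(tu)\,\nu(du) = \lmq_{\gamma,\nu}(t)$. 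The variational formula \eqref{mgfvar2} would then follow from Varadhan's integral formula applied to the LDP for $(L_{n,\Theta})_{n\in\N}$ in Wasserstein-$r$ topology supplied by Proposition \ref{prop-sanovcone}, provided I can verify continuity of $\psi_{\gamma,t}$ in some Wasserstein-$r$ topology with $r<2$ and an exponential moment condition of the form $\limsup_n n^{-1}\log \E[e^{\kappa n\psi_{\gamma,t}(L_{n,\Theta})}]<\infty$ for some $\kappa>1$.

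The continuity hypothesis is immediate: taking $\gamma\in\mathcal{T}_q$ for some $q>2$ (as assumed throughout Sect.~\ref{sec-prod}), Lemma \ref{lem-subgsn} yields continuity of $\psi_{\gamma,t}$ in the Wasserstein-$q/(q-1)$ topology, and $q>2$ forces $q/(q-1)<2$ as required. For the exponential moment condition, the definition \eqref{tpdef} of $\mathcal{T}_q$ supplies a constant $C=C_{q,t}<\infty$ for which
\begin{equation*}
\exp\bigl(\kappa\log\mathrm{M}_\gamma(t\sqrt{n}\bthn_i)\bigr) \le e^{\kappa C}\,\exp\bigl(\kappa C\,|\sqrt{n}\bthn_i|^{q/(q-1)}\bigr),
\end{equation*}
and the right-hand side is non-negative and non-decreasing as a function of $|\bthn_i|$. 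Hence the sub-independence of $(|\bthn_1|,\dots,|\bthn_n|)$ from Lemma \ref{lem-subindep} applies to the upper bound and, combined with the exchangeability of $(\bthn_1,\dots,\bthn_n)$, gives
\begin{equation*}
\E\!\left[e^{\kappa n\psi_{\gamma,t}(L_{n,\Theta})}\right]
  \le e^{n\kappa C}\,\E\!\left[e^{\kappa C|\sqrt{n}\bthn_1|^{q/(q-1)}}\right]^n.
\end{equation*}
Exactly as in the proof of Lemma \ref{lem-lmgfvar}, the representation $\sqrt{n}\bthn_1 \eqdist \sqrt{n}Z_1/\|Z^{(n)}\|_{n,2}$ from Lemma \ref{lem-jointrep} together with the SLLN gives $\sqrt{n}\bthn_1\to Z_1\sim\mu_2$ almost surely, so Fatou's lemma yields $\limsup_n \log\E[e^{\kappa C|\sqrt{n}\bthn_1|^{q/(q-1)}}] \le \log\E[e^{\kappa C|Z_1|^{q/(q-1)}}]$, which is finite for any $\kappa>1$ precisely because $q/(q-1)<2$.

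For the finiteness assertion $\lmbar_\gamma(t)<\infty$, I would read it off the variational formula just established. Since $\mathbb{H}(\nu)=+\infty$ whenever $m_2(\nu)>1$, the supremum in \eqref{mgfvar2} reduces to the set $K_2=\{\nu\in\mathcal{P}(\R):m_2(\nu)\le 1\}$ from Lemma \ref{lem-compact}, and on this set the $\mathcal{T}_q$ bound combined with the power-mean inequality $m_{q/(q-1)}(\nu)\le m_2(\nu)^{q/(2(q-1))}\le 1$ gives
\begin{equation*}
\lmq_{\gamma,\nu}(t) \le C + C|t|^{q/(q-1)}\,m_{q/(q-1)}(\nu) \le C\bigl(1+|t|^{q/(q-1)}\bigr),
\end{equation*}
uniformly in $\nu\in K_2$. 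The main obstacle I anticipate is the loss of symmetry compared to Lemma \ref{lem-lmgfvar}: unlike $\lm_p$, the map $u\mapsto\log\mathrm{M}_\gamma(tu)$ need not be symmetric or monotone in $|u|$ when $\gamma$ is not symmetric. This is resolved by the observation already used above, namely that although the integrand itself lacks these properties, the $\mathcal{T}_q$ upper bound is automatically symmetric and monotone in $|u|$, so the sub-independence of Lemma \ref{lem-subindep} can still be applied to the majorant rather than to the integrand.
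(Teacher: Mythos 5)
Your proof is correct and follows essentially the same route as the paper, which itself only sketches this lemma by deferring to the proof of Lemma \ref{lem-lmgfvar}: condition on $\Theta$ to rewrite the expectation in terms of $L_{n,\Theta}$, apply Varadhan's lemma with the LDP of Proposition \ref{prop-sanovcone}, get continuity from Lemma \ref{lem-subgsn}, and verify the exponential moment condition via the $\mathcal{T}_q$ tail bound, the sub-independence of Lemma \ref{lem-subindep}, and the a.s.\ convergence $\sqrt{n}\bthn_1 \to Z_1$ with Fatou. Your additional care in applying sub-independence to the symmetric, monotone majorant (since $u\mapsto \log\mathrm{M}_\gamma(tu)$ need not be symmetric for general $\gamma$) and your explicit bound giving $\lmbar_\gamma(t)<\infty$ fill in details the paper's sketch leaves implicit, but do not constitute a different approach.
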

\begin{proof}[Sketch of Proof]
The proof of Lemma \ref{mgfvar2} centers around Varadhan's lemma, and follows from  similar calculations as in the proof of Lemma \ref{lem-lmgfvar}, except with $\log \mathrm{M}_\gamma$ in place of $\lm_p$.
\end{proof}

\begin{lemma}\label{lem-optprops2}
Let $\gamma \in \mathcal{T}_p$, and for fixed $t\in \R$, let $\phi:\mathcal{P}(\R)\ra \R$ denote the functional being maximized in \eqref{mgfvar2},
\begin{equation*}
  \phi(\nu) \doteq \lmq_{\gamma,\nu}(t)- \mathbb{H}(\nu).
\end{equation*}
Then, $\phi$ is strictly concave and upper semi-continuous (with respect to the Wasserstein-$\tfrac{p}{p-1}$ topology on $\mathcal{P}_{p/(p-1)}(\R)$). As a consequence, the supremum in \eqref{mgfvar2} is uniquely attained at some optimal $\nu^\circ$ such that $m_2(\nu^\circ) \le 1$.
\end{lemma}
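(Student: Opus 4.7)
The plan is to mirror, step for step, the argument given for Lemma \ref{lem-optprops}, substituting $\log \mathrm{M}_\gamma(t u)$ for the map $\lm_p(t_1 u, t_2)$ used there. First I would restrict the variational problem to a convenient compact set. By the definition \eqref{hpdefn}, $\mathbb{H}(\nu) = +\infty$ whenever $\nu \notin \mathcal{P}(\R)$ or $m_2(\nu) > 1$, so the supremum in \eqref{mgfvar2} is unchanged if we restrict to the set $K_2 = \{\nu \in \mathcal{P}(\R) : m_2(\nu) \le 1\}$ from Lemma \ref{lem-compact}. Since $\gamma \in \mathcal{T}_p$ with $p > 2$ forces $\tfrac{p}{p-1} < 2$, that lemma gives compactness of $K_2$ in the Wasserstein-$\tfrac{p}{p-1}$ topology, which is the ambient topology throughout.

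Next I would verify the two structural properties. For strict concavity, observe from definition \eqref{laminfdefn} that $\nu \mapsto \lmq_{\gamma,\nu}(t)$ is a linear functional of $\nu$, so
\[
  \phi(\nu) = \lmq_{\gamma,\nu}(t) - \mathbb{H}(\nu)
\]
is the sum of a linear functional and the negative of the strictly convex rate function $\mathbb{H}$; hence $\phi$ is strictly concave on $K_2$. Strict convexity of $\mathbb{H}$ is exactly the assertion of Proposition \ref{prop-sanovcone}. For upper semi-continuity, the linear term $\nu \mapsto \int_\R \log \mathrm{M}_\gamma(tu)\,\nu(du)$ is continuous with respect to the Wasserstein-$\tfrac{p}{p-1}$ topology by Lemma \ref{lem-subgsn} applied with the given $\gamma \in \mathcal{T}_p$, while $-\mathbb{H}$ is upper semi-continuous by virtue of $\mathbb{H}$ being a good rate function (Proposition \ref{prop-sanovcone} again). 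The sum of a continuous function and an upper semi-continuous function is upper semi-continuous, so $\phi$ is upper semi-continuous on $K_2$.

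Combining the three ingredients yields the conclusion: an upper semi-continuous function on a nonempty compact set attains its supremum, giving existence of a maximizer $\nu^\circ \in K_2$; strict concavity on the convex set $K_2$ makes the maximizer unique; and the constraint $m_2(\nu^\circ) \le 1$ is automatic from membership in $K_2$. Since each ingredient has already been supplied by earlier results in the excerpt, I do not anticipate a genuine obstacle. The only step that differs from the proof of Lemma \ref{lem-optprops} is the continuity of the linear part: there it is pulled from Lemma \ref{lem-tailcont}, here from Lemma \ref{lem-subgsn}; the condition $\gamma \in \mathcal{T}_p$ with $p > 2$ is tailored precisely to make this substitution go through and to ensure the relevant Wasserstein topology is weaker than Wasserstein-$2$.
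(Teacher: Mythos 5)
Your proof is correct and follows essentially the same route as the paper, which itself only sketches this lemma by noting it is identical to the proof of Lemma \ref{lem-optprops} with Lemma \ref{lem-subgsn} supplying the continuity of $\nu\mapsto\lmq_{\gamma,\nu}(t)$ in place of Lemma \ref{lem-tailcont}. Your restriction to the compact convex set $K_2$, the decomposition into a continuous linear functional minus the strictly convex, lower semi-continuous $\mathbb{H}$, and the use of $\tfrac{p}{p-1}<2$ for $p>2$ are exactly the intended argument.
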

\begin{proof}[Sketch of Proof]
The proof is essentially identical to the proof of Lemma \ref{lem-optprops}, except the continuity of $\nu\mapsto \lmq_{\gamma,\nu}(t)$ is given by Lemma \ref{lem-subgsn} instead of Lemma \ref{lem-tailcont}.
\end{proof}

\begin{lemma} \label{lem-varforbarstar2}
Let $\gamma \in \mathcal{T}_p$. Then, for $\tau\in \R$,
\begin{equation}\label{mimag}
   \lmbar_{\gamma}^*(\tau) = \inf_{\nu \in \mathcal{P}(\mathbb{R})}  \left\{ \lmq_{\gamma,\nu}^*(\tau) + \mathbb{H}(\nu)\right\}.
\end{equation}
\end{lemma}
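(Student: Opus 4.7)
The plan is to mirror the proof of Lemma \ref{lem-varforbarstar}, applying Sion's minimax theorem (Theorem \ref{th-minmax}) to a saddle-point functional built from the variational representation of $\lmbar_\gamma$ in Lemma \ref{lem-lmbar}. Indeed, the proof is structurally simpler than in the $\ell^p$ ball case, because $\lmbar_\gamma$ and $\lmq_{\gamma,\nu}$ are one-parameter functions that are finite on \emph{all} of $\R$ (the latter because $\gamma \in \mathcal{T}_p$ and $\mathbb{H}(\nu) < \infty$ forces $m_{p/(p-1)}(\nu) \le m_2(\nu)^{1/(p-1)} \le 1$ when $p > 2$), so there is no effective-domain boundary issue to handle.

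First I would fix $\tau \in \R$ and introduce
\[
F(\nu, t) \doteq t\tau - \lmq_{\gamma,\nu}(t) + \mathbb{H}(\nu), \qquad \nu \in \mathcal{M}(\R),\ t \in \R,
\]
with the convention that $F(\nu, t) = +\infty$ when $\nu \in \mathcal{M}(\R)\setminus \mathcal{P}(\R)$. I would then apply Theorem \ref{th-minmax} with $\mathcal{X} = \mathcal{M}(\R)$ equipped with the Wasserstein-$p/(p-1)$ topology, $\mathcal{Y} = \R$, $C = K_2 = \{\nu \in \mathcal{P}(\R) : m_2(\nu) \le 1\}$, and $D = \R$. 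Convexity, nonemptiness, and compactness of $K_2$ are given by Lemma \ref{lem-compact}, which applies since $p/(p-1) < 2$ under the standing assumption $p > 2$. For fixed $t$, lower semi-continuity and convexity of $F(\cdot, t)$ on $C$ are immediate from Lemma \ref{lem-optprops2} applied to $\phi(\nu) = \lmq_{\gamma,\nu}(t) - \mathbb{H}(\nu)$, since $F(\cdot, t) = t\tau - \phi(\cdot)$. For fixed $\nu \in C$, concavity of $F(\nu, \cdot)$ on $\R$ is inherited from convexity of $t \mapsto \log \mathrm{M}_\gamma(tu)$ (any log mgf is convex and $t \mapsto tu$ is linear), and upper semi-continuity is automatic because a real-valued concave function on $\R$ is continuous.

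With the minimax hypotheses verified, the remaining step is a direct computation using Lemma \ref{lem-lmbar}:
\begin{align*}
\lmbar_\gamma^*(\tau)
&= \sup_{t \in \R} \bigl\{ t\tau - \lmbar_\gamma(t) \bigr\}
= \sup_{t \in \R} \left\{ t\tau - \sup_{\nu \in \mathcal{P}(\R)} \bigl[ \lmq_{\gamma,\nu}(t) - \mathbb{H}(\nu) \bigr] \right\} \\
&= \sup_{t \in D}\, \inf_{\nu \in C} F(\nu, t)
= \inf_{\nu \in C}\, \sup_{t \in D} F(\nu, t)
= \inf_{\nu \in \mathcal{P}(\R)} \bigl\{ \lmq_{\gamma,\nu}^*(\tau) + \mathbb{H}(\nu) \bigr\},
\end{align*}
where the passage from $\mathcal{P}(\R)$ to $K_2 = C$ and back is automatic because $\mathbb{H}(\nu) = +\infty$ outside $K_2$.

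The main (and only) obstacle is bookkeeping of the function-space topology: the Wasserstein index $p/(p-1)$ on $\mathcal{M}(\R)$ must simultaneously (i) make $K_2$ compact, which needs $p/(p-1) < 2$, i.e.\ $p > 2$, and (ii) make $\nu \mapsto \lmq_{\gamma,\nu}(t)$ continuous, which is supplied by Lemma \ref{lem-subgsn} precisely under the assumption $\gamma \in \mathcal{T}_p$. Beyond pinning down this coherence, the argument is a routine transcription of the two-parameter proof of Lemma \ref{lem-varforbarstar}.
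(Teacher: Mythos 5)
Your proposal mirrors the paper's own argument almost verbatim: the paper's sketch says to repeat the minimax argument of Lemma \ref{lem-varforbarstar} with $\mathcal{Y}=D=\R$ and the functional $F(\nu,t)=t\tau-\lmq_{\gamma,\nu}(t)+\mathbb{H}(\nu)$, and that is exactly what you do, with the hypotheses of Sion's theorem verified via Lemmas \ref{lem-compact}, \ref{lem-optprops2}, and \ref{lem-subgsn} and the final sup--inf interchange carried out correctly. One small slip in a side remark: the Lyapunov bound should read $m_{p/(p-1)}(\nu)\le m_2(\nu)^{p/(2(p-1))}$, not $m_2(\nu)^{1/(p-1)}$; since $m_2(\nu)\le 1$ on $K_2$, the conclusion $m_{p/(p-1)}(\nu)\le 1$ still holds either way, so this does not affect the argument.
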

\begin{proof}[Sketch of Proof]
The proof of Lemma \ref{lem-varforbarstar2} is similar to the proof of Lemma \ref{lem-varforbarstar}, where the main task is to verify the conditions of the Minimax Theorem (Theorem \ref{th-minmax}), in order to apply it to the variational formula \eqref{mgfvar2}. The main differences in this case are: we set $\mathcal{Y} = \mathbb{R}$, $D =  \mathbb{R}$, and for fixed $\tau$, the functional $F$ is set equal to   $F(\nu, t) \doteq t\tau -  \lmq_{\gamma,\nu}(t) + \mathbb{H}(\nu)$ for   $\nu \in C$ and $t \in D$. We omit the details.
\end{proof}

\begin{proof}[Proof of Theorem \ref{th-comparprod}]
A straightforward modification of the proof of Proposition \ref{prop-altldpconvex} shows that $(W^{(n,\gamma)})_{n\in\N}$ satisfies an LDP with a convex good rate function. Also note that the domain of the limit log mgf $\lmbar_\gamma$ is all of  $\R$. We utilize the following fact for large deviations in a topological vector space $\mathcal{X}$: if a given rate function is convex in $\mathcal{X}$, and the domain of the associated limit log mgf is the entire dual space $\mathcal{X}^*$, then the rate function can be identified with the Legendre transform of the limit log mgf (see, e.g., \cite[p.152, Theorem 4.5.10]{DemZeibook}). Therefore, the rate function for  $(W^{(n,\gamma)})_{n\in\N}$  is  $\lmbar_\gamma^*$, the Legendre transform of the limit  log mgf $\lmbar_\gamma$ defined in \eqref{pressfunc2}. This observation and the variational formula \eqref{mimag} complete the proof.  
\end{proof}

%%%%%%%%%%%%%%%%%%
% COMPAR \ ANALYS
%%%%%%%%%%%%%%%%%%
\section{Analysis of the variational problem}\label{sec-analysis}

In this section, we analyze the variational problems that relate the annealed and quenched rate functions. In Sect. \ref{ssec-infvar}, we analyze the variational problem of Theorem \ref{th-comparprod} for $\gamma=\mu_\infty$. In Sect. \ref{ssec-conj}, we formulate some conjectures for the variational problem of Theorem \ref{th-compar}.

\subsection{Comparison of quenched and annealed rate functions for $p=\infty$}\label{ssec-infvar}

Note that for $w=0$ and $p\in[2,\infty)$, the infimum  in the variational problem \eqref{varform1} is attained at $\mu_2$. Roughly speaking, this occurs because $w=0$ is the (LLN) limit of the random projection $\wnpbth$, and the Gaussian measure $\mu_2$ is the (LLN) limit of the empirical measure defined in \eqref{empir},
\begin{equation*}
 L_{n,\Theta} = \frac{1}{n}\sum_{i=1}^n \delta_{\sqrt{n}\bthn_i}  \Rightarrow \mu_2, \quad \text{ as } n\ra \infty.
\end{equation*}
For general $w\ne 0$, the minimizer (assuming it exists) may not necessarily be the Gaussian measure. 

For $p=2$, Lemma \ref{lem-var2} states that the infimum is attained at $\mu_2$ for \emph{all} $w\in \R$. This is because  the spherical symmetry of the uniform law on $\mathbb{B}_{n,2}$ is such that a projection onto a random direction has the same law as a projection onto a fixed direction (say, the canonical first coordinate $e_1^{(n)}$). In other words, large deviations of the random directions of projection play no role in the annealed large deviations, when $p=2$. 

In contrast, as clarified in Proposition \ref{prop-nongsn} below, the random directions of projection do play a role when the random vector to be projected is drawn according to the uniform measure on $[-1,1]^n$ instead of the uniform measure on $\mathbb{B}_{n,2}$.  That is, the unique minimizer of \eqref{varform2} is \emph{not} $\mu_2$, which  suggests the that deviations of the underlying ``environment" (the directions of projection) play a non-trivial role in the overall annealed large deviations.

\begin{lemma}\label{lem-unique}
For $\gamma \in \mathcal{T}_2$ and $w\in \R$ such that $\ia_\gamma(w) < \infty$, there exists a unique minimizer $\nu_{\gamma,w} \in \mathcal{P}(\R)$ that attains the infimum in   \eqref{varform2}.
\end{lemma}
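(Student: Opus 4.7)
The plan is to show that the objective functional
\[
G_{\gamma,w}(\nu) \doteq \iq_{\gamma,\nu}(w) + \mathbb{H}(\nu)
\]
is strictly convex and lower semi-continuous on the compact convex set $K_2 = \{\nu \in \mathcal{P}(\R) : m_2(\nu) \le 1\}$ of Lemma \ref{lem-compact}. Strict convexity will give uniqueness, and compactness plus lower semi-continuity will give existence of a minimizer.

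For strict convexity, I would first observe that $\nu \mapsto \lmq_{\gamma,\nu}(t) = \int_\R \log \mathrm{M}_\gamma(tu)\,\nu(du)$ is affine in $\nu$ for each $t$, by \eqref{laminfdefn}. Hence $\nu \mapsto \iq_{\gamma,\nu}(w) = \sup_t \{tw - \lmq_{\gamma,\nu}(t)\}$ is a supremum of affine functionals and is therefore convex in $\nu$ (though not necessarily strictly so). On the other hand, $\mathbb{H}$ is strictly convex on $K_2$: the term $H(\,\cdot\,|\mu_2)$ is strictly convex on $\{\nu \ll \mu_2\}$ by the standard strict convexity of relative entropy, and the correction $\tfrac{1}{2}(1-m_2(\nu))$ is affine, so their sum inherits strict convexity. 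Hence $G_{\gamma,w}$ is the sum of a convex and a strictly convex functional, and is strictly convex on $K_2$.

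For existence, take a minimizing sequence $(\nu_n)_{n\in\N}\subset K_2$; since $\iq_{\gamma,\nu_n}(w)\ge 0$ and $\ia_\gamma(w)<\infty$, we have $\sup_n \mathbb{H}(\nu_n) < \infty$. By Proposition \ref{prop-sanovcone}, $\mathbb{H}$ is a good rate function in the Wasserstein-$r$ topology for any $r<2$, so $\{\nu_n\}$ lies in a Wasserstein-$r$-compact set; extract a subsequence converging to some $\nu^\star\in K_2$. Lower semi-continuity of $\mathbb{H}$ in this topology is immediate from it being a good rate function. For $\iq_{\gamma,\cdot}(w)$, I would write it as a supremum of functionals $\nu\mapsto tw - \lmq_{\gamma,\nu}(t)$; each such functional is continuous in $\nu$ by Lemma \ref{lem-subgsn}, hence the sup is lower semi-continuous, and so $G_{\gamma,w}(\nu^\star)\le \liminf G_{\gamma,w}(\nu_n) = \ia_\gamma(w)$. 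Uniqueness then follows at once: if two distinct minimizers $\nu_1,\nu_2$ existed, strict convexity would yield $G_{\gamma,w}(\tfrac{1}{2}\nu_1+\tfrac{1}{2}\nu_2) < \ia_\gamma(w)$, a contradiction.

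The main technical obstacle will be aligning the topology in which $\nu \mapsto \lmq_{\gamma,\nu}(t)$ is continuous with the topology in which $K_2$ is compact. Under $\gamma\in\mathcal{T}_2$, Lemma \ref{lem-subgsn} yields continuity in Wasserstein-$2$, whereas Lemma \ref{lem-compact} gives compactness of $K_2$ only in Wasserstein-$r$ for $r<2$. This gap can be bridged by using the second-moment bound $m_2(\nu_n)\le 1$ along the minimizing sequence to upgrade weak convergence of $\nu_{n_k}\to\nu^\star$ (guaranteed along the extracted subsequence) to convergence of the relevant integrals $\int \log \mathrm{M}_\gamma(tu)\,\nu_{n_k}(du)$, via a uniform integrability argument exploiting the $|u|^2$-type upper bound inherent in the definition \eqref{tpdef} of $\mathcal{T}_2$; only upper semi-continuity of $\nu\mapsto\lmq_{\gamma,\nu}(t)$ is needed to secure lower semi-continuity of $\iq_{\gamma,\cdot}(w)$ along the subsequence, which is what the argument really requires.
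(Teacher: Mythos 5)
Your overall scheme is the same as the paper's: work in a Wasserstein-$r$ topology with $r<2$, use compactness and convexity of $K_2=\{\nu:m_2(\nu)\le 1\}$ from Lemma \ref{lem-compact}, convexity and lower semi-continuity of $\nu\mapsto\iq_{\gamma,\nu}(w)$ as a supremum of affine maps, and strict convexity of $\mathbb{H}$ from Proposition \ref{prop-sanovcone}; your minimizing-sequence formulation is only a cosmetic variant of ``a lower semi-continuous strictly convex function on a compact convex set has a unique minimizer.'' The gap is precisely in the step you flag as the technical obstacle. You claim that $m_2(\nu_n)\le 1$ together with the bound $\log\mathrm{M}_\gamma(tu)\le Ct^2u^2+C$ from \eqref{tpdef} gives enough uniform integrability to upgrade $\mathcal{W}_r$-convergence ($r<2$) to upper semi-continuity of $\nu\mapsto\lmq_{\gamma,\nu}(t)$ along the subsequence. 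This is false: quadratic growth is exactly the borderline case in which a uniform second-moment bound yields no uniform integrability. Take $\gamma=\mu_2\in\mathcal{T}_2$, so $\log\mathrm{M}_\gamma(tu)=t^2u^2/2$, and $\nu_n=(1-\tfrac1n)\delta_0+\tfrac1n\delta_{\sqrt n}\in K_2$. Then $\mathcal{W}_r(\nu_n,\delta_0)=n^{r/2-1}\to 0$ for every $r<2$, yet $\lmq_{\gamma,\nu_n}(t)=t^2/2$ for all $n$ while $\lmq_{\gamma,\delta_0}(t)=0$, so upper semi-continuity of $\lmq_{\gamma,\cdot}(t)$ fails; correspondingly $\iq_{\gamma,\nu_n}(w)=w^2/2$ for all $n$ while $\iq_{\gamma,\delta_0}(w)=+\infty$ for $w\ne 0$, which is exactly the harmful direction for the direct method (the objective can jump up at the $\mathcal{W}_r$-limit of a minimizing sequence). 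Adding the information that the entropy is bounded along the minimizing sequence does not rescue a uniform-integrability argument either: a bump of mass $p_n$ near $a_n$ with $p_na_n^2$ constant keeps $H(\,\cdot\,|\mu_2)$ and $m_2$ bounded while carrying a non-vanishing amount of $u^2$-mass to infinity.

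The way this is closed in the paper is simply by having the continuity of Lemma \ref{lem-subgsn} hold in the working topology: when $\gamma\in\mathcal{T}_q$ for some $q>2$ (the hypothesis of Theorem \ref{th-comparprod}, and satisfied by $\gamma=\mu_\infty$ in the application, Proposition \ref{prop-nongsn}), one has $\log\mathrm{M}_\gamma(s)\le C|s|^{q/(q-1)}+C$ with $q/(q-1)<2$, so $\nu\mapsto\lmq_{\gamma,\nu}(t)$ is genuinely continuous for the Wasserstein-$\tfrac{q}{q-1}$ topology, in which $K_2$ is compact by Lemma \ref{lem-compact}; then lower semi-continuity of $\iq_{\gamma,\cdot}(w)$ and the rest of your argument go through verbatim. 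Under the bare hypothesis $\gamma\in\mathcal{T}_2$, the topology mismatch you correctly identified is real, and the uniform-integrability patch you propose does not close it; you would need either the strengthened tail assumption $\gamma\in\mathcal{T}_q$, $q>2$, or a genuinely different argument handling the critical quadratic growth.
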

\begin{proof}
The idea is similar to Lemma \ref{lem-optprops2}, which considers the related variational problem \eqref{mgfvar2}. Let $r\in (1,2)$, and equip $\mathcal{P}_r(\R)$ with the Wasserstein-$r$ topology. By Lemma \ref{lem-compact},  it follows that the infimum in \eqref{varform2} is over a convex, compact set.  In addition, $\nu\mapsto\iq_{\gamma,\nu}(w)$ is convex and lower semi-continuous, since it is  the supremum of the maps $\nu\mapsto tw - \lmq_{\gamma,\nu}(t)$, which are continuous due to Lemma \ref{lem-subgsn}, and also clearly linear by definition. Moreover, $\mathbb{H}$ is lower semi-continuous and strictly convex due to Proposition \ref{prop-sanovcone}. Thus, the infimum in \eqref{varform2} is the infimum of a lower semi-continuous strictly convex function over a compact convex set, so the infimum is uniquely attained.
\end{proof}

\textsc{Notation.} Fix the following notational convention for the remainder of this section: replace $\mu_\infty$ by $\infty$ in our notation for the mgfs and rate functions (i.e., write $\mathrm{M}_\infty$, $\lmq_{\infty,\nu}$, $\lma_\infty$, $\iq_{\infty, \nu}$, $\ia_\infty$), as well as in our notation for the optimizing measure of Lemma \ref{lem-unique}, replace $\nu_{\mu_\infty,w}$ with $\nu_{\infty,w}$.

\begin{proposition}\label{prop-nongsn}
Let $p=\infty$. There exists $w_*\in (0,1)$ such that if $w_* \le |w| < 1$, then $\nu_{\infty,w} \ne \mu_2$; that is, for some $w\in (-1,1)$, the minimizer in \eqref{varform2} is not standard Gaussian. This implies that for such $w$,  the following strict inequality holds: $\ia_{\infty}(w) < \iq_{\infty,\mu_2}(w)$.
\end{proposition}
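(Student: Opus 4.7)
The plan is to find some $w_*\in(\sqrt{2/\pi},1)$ so that for $|w|\ge w_*$ we simultaneously have $\iq_{\infty,\mu_2}(w)=+\infty$ and $\ia_\infty(w)<+\infty$. Since Theorem~\ref{th-comparprod} (applied to $\gamma=\mu_\infty\in\mathcal{T}_q$ for every $q\ge1$) gives the variational representation \eqref{varform2}, and Lemma~\ref{lem-unique} guarantees a unique minimizer $\nu_{\infty,w}$, plugging in $\nu=\mu_2$ would yield $\ia_\infty(w)=\iq_{\infty,\mu_2}(w)+\mathbb{H}(\mu_2)=+\infty$, contradicting the finiteness of $\ia_\infty(w)$. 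The strict inequality $\ia_\infty(w)<\iq_{\infty,\mu_2}(w)$ follows simultaneously.

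For the first ingredient, I would bound the asymptotic slope of $\lmq_{\infty,\mu_2}$. The explicit formula $\mathrm{M}_{\mu_\infty}(s)=\sinh(s)/s$ combined with the trivial estimate $\mathrm{M}_{\mu_\infty}(s)\le e^{|s|}$ yields $\log\mathrm{M}_{\mu_\infty}(s)\le|s|$ for every $s\in\R$. Integrating this bound against $\mu_2$ and using $\int_\R|u|\,\mu_2(du)=\sqrt{2/\pi}$ gives $\lmq_{\infty,\mu_2}(t)\le|t|\sqrt{2/\pi}$. Hence, for any $w$ with $|w|>\sqrt{2/\pi}$, choosing $t$ of the same sign as $w$ and letting $|t|\to\infty$ in $tw-\lmq_{\infty,\mu_2}(t)\ge |t|(|w|-\sqrt{2/\pi})$ shows $\iq_{\infty,\mu_2}(w)=+\infty$.

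For the second ingredient, I would exhibit a single measure that witnesses finiteness of $\ia_\infty(w)$ via \eqref{varform2}. Let $\nu_\epsilon$ be the uniform measure on $[-1,-1+\epsilon]\cup[1-\epsilon,1]$. Elementary integration gives $m_2(\nu_\epsilon)=1-\epsilon+\epsilon^2/3<1$, $\int|u|\,\nu_\epsilon(du)=1-\epsilon/2$, and $H(\nu_\epsilon|\mu_2)=\tfrac12\log(2\pi)-\log(2\epsilon)+\tfrac12 m_2(\nu_\epsilon)<\infty$. The same slope-bound argument as above, with $\sqrt{2/\pi}$ replaced by $1-\epsilon/2$, shows that the smooth strictly convex function $\lmq_{\infty,\nu_\epsilon}$ has derivative tending to $\pm(1-\epsilon/2)$ at $\pm\infty$, so its Legendre transform $\iq_{\infty,\nu_\epsilon}(w)$ is finite whenever $|w|<1-\epsilon/2$. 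Given any $w\in(-1,1)$, choosing $\epsilon<2(1-|w|)$ produces $\ia_\infty(w)\le\iq_{\infty,\nu_\epsilon}(w)+\mathbb{H}(\nu_\epsilon)<+\infty$.

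Combining these two calculations, any $w_*\in(\sqrt{2/\pi},1)$ will do. I do not anticipate any significant obstacle; the conceptual core is the strict containment $D_{\iq_{\infty,\mu_2}}\subset[-\sqrt{2/\pi},\sqrt{2/\pi}]\subsetneq(-1,1)\subseteq D_{\ia_\infty}$, which quantifies the fact that atypical realizations of the projection direction $\bthn$ (measured by the entropy-type cost $\mathbb{H}$) contribute non-trivially to the annealed rate when $p=\infty$, in sharp contrast to the $p=2$ case treated in Lemma~\ref{lem-var2}.
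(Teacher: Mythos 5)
Your proof is correct and takes essentially the same route as the paper's: both hinge on showing that $\iq_{\infty,\mu_2}(w)=+\infty$ for $|w|>m_1(\mu_2)=\sqrt{2/\pi}$, while the variational formula \eqref{varform2} still yields a finite value by plugging in an absolutely continuous $\nu$ with $m_2(\nu)\le 1$, $H(\nu|\mu_2)<\infty$, and $m_1(\nu)>|w|$. Two small remarks: the paper's single example ($\nu_\circ$ uniform on $[-\sqrt{3},\sqrt{3}]$, with $m_1(\nu_\circ)=\sqrt{3}/2$) only directly covers $|w|<\sqrt{3}/2$, so your explicit family $\nu_\epsilon$ with $m_1(\nu_\epsilon)=1-\epsilon/2\to 1$ handles the full range more cleanly; on the other hand, where you appeal to ``the same slope-bound argument'' to conclude $\iq_{\infty,\nu_\epsilon}(w)<\infty$ for $|w|<1-\epsilon/2$, you in fact need the two-sided asymptotics (and hence strict convexity plus the matching lower bound) of $\lmq_{\infty,\nu_\epsilon}$ as recorded in Lemma~\ref{lem-asyms}, not merely the one-sided estimate $\log\mathrm{M}_\infty(s)\le|s|$ used in your first ingredient.
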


To prove this, we begin by analyzing the asymptotics of the function $\lmq_{\infty,\nu}$ defined in \eqref{laminfdefn}.

\begin{lemma}\label{lem-asyms} 
Let $\mathrm{M}_\infty$ be the log mgf of $\mu_\infty$, as defined in \eqref{logmgfdef}, and let $m_1(\cdot)$ be the first moment map, as defined in \eqref{qmomdef}. Then,
\begin{equation}\label{minf}
\lim_{|t|\ra\infty}  \frac{\log \mathrm{M}_\infty(t)}{|t|}  = 1.
\end{equation}
For $\nu\in\mathcal{P}(\R)$,
\begin{equation}\label{psinf}
\lim_{|t|\ra\infty}  \frac{\lmq_{\infty,\nu}(t)}{|t|}  = m_1(\nu).
\end{equation}
In addition, $\lmq_{\infty,\nu}$ is strictly convex. As a consequence, we have
\begin{equation} \label{dpsinf}
(-m_1(\nu), +m_1(\nu)) \subset   D_{\lmq_{\infty,\nu}^*} \subset [-m_1(\nu), +m_1(\nu)].
\end{equation}
\end{lemma}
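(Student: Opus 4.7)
The plan is to handle the four assertions in order, exploiting the explicit form of $\mathrm{M}_\infty$ for the first and then leveraging it for the rest.

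For the first limit, I would simply compute $\mathrm{M}_\infty(t) = \int_{-1}^{1} \tfrac{1}{2} e^{tx}\,dx = \sinh(t)/t$ for $t\neq 0$. Writing $\log \sinh(t) = |t| + \log\bigl(1 - e^{-2|t|}\bigr) - \log 2$, we immediately get $\log \mathrm{M}_\infty(t)/|t| = 1 - \log|t|/|t| + O(e^{-2|t|})/|t| \to 1$ as $|t|\to\infty$, yielding \eqref{minf}.

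For \eqref{psinf}, the key bound is the pointwise inequality $\log \mathrm{M}_\infty(s) \le |s|$ for all $s\in\R$, which follows from $e^{sx}\le e^{|s|}$ for $x\in[-1,1]$. Consequently $|\log \mathrm{M}_\infty(tu)|/|t| \le |u|$ (using also that $\log \mathrm{M}_\infty \ge 0$ by Jensen, since $\mu_\infty$ is centered). Combined with the pointwise limit $\log \mathrm{M}_\infty(tu)/|t| = |u|\cdot\log\mathrm{M}_\infty(tu)/|tu| \to |u|$ (using \eqref{minf}, and trivially for $u=0$), dominated convergence yields $\lmq_{\infty,\nu}(t)/|t| \to m_1(\nu)$ when $m_1(\nu)<\infty$. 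When $m_1(\nu)=\infty$, Fatou's lemma applied to the nonnegative integrands $\log\mathrm{M}_\infty(tu)/|t|$ gives $\liminf \ge m_1(\nu) = \infty$, and the desired limit (in the extended sense) follows.

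For strict convexity of $\lmq_{\infty,\nu}$ (assuming $\nu\ne\delta_0$, the only degenerate case where the statement is vacuous), I would use the standard fact that $\log \mathrm{M}_\infty$ is strictly convex on $\R$, since $\mu_\infty$ is a non-degenerate probability measure with finite moment generating function everywhere. Then for each $u\ne 0$, $t\mapsto \log\mathrm{M}_\infty(tu)$ is strictly convex in $t$, and integrating against $\nu$ (which puts positive mass away from $0$) preserves this.

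Finally, \eqref{dpsinf} follows by matching the asymptotic slope of $\lmq_{\infty,\nu}$ established in \eqref{psinf} to the Legendre transform. For the outer inclusion, if $|\tau|>m_1(\nu)$, then taking $\epsilon>0$ with $|\tau|>m_1(\nu)+\epsilon$ and choosing $t$ with the same sign as $\tau$ of large absolute value, we get $t\tau - \lmq_{\infty,\nu}(t)\ge |t|(|\tau|-m_1(\nu)-\epsilon)\to+\infty$, so $\lmq_{\infty,\nu}^*(\tau)=\infty$. For the inner inclusion, if $|\tau|<m_1(\nu)$, pick $\epsilon'>0$ with $|\tau|<m_1(\nu)-\epsilon'$; \eqref{psinf} gives some $T<\infty$ so that $\lmq_{\infty,\nu}(t)\ge (m_1(\nu)-\epsilon')|t|$ for $|t|\ge T$, whence $t\tau-\lmq_{\infty,\nu}(t)\le |t|(|\tau|-m_1(\nu)+\epsilon')<0$ outside $[-T,T]$; on the compact $[-T,T]$ the continuous function $t\mapsto t\tau-\lmq_{\infty,\nu}(t)$ is bounded, so $\lmq_{\infty,\nu}^*(\tau)<\infty$. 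The main subtlety is simply the care needed to treat $m_1(\nu)=\infty$ and the degenerate case $\nu=\delta_0$ in parts (2)--(3); everything else is a routine consequence of the explicit form of $\mathrm{M}_\infty$ and convex analysis.
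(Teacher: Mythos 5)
Your proof is correct and follows essentially the same route as the paper's: the explicit formula $\mathrm{M}_\infty(t)=\sinh(t)/t$ yields the linear growth \eqref{minf}, the integrated limit \eqref{psinf} is obtained by passing to the limit under the integral (you use the domination $0\le \log\mathrm{M}_\infty(s)\le |s|$ with dominated convergence and Fatou, where the paper uses L'H\^opital and monotone convergence), and the inclusions \eqref{dpsinf} come from the same two-sided comparison of the asymptotic slope of $\lmq_{\infty,\nu}$ with $|\tau|$. Your explicit treatment of the edge cases $\nu=\delta_0$ and $m_1(\nu)=\infty$ is a minor refinement that the paper's proof glosses over.
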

\begin{proof}
The limit \eqref{minf} follows from basic calculus. That is, applying the symmetry of $\log\mathrm{M}_\infty$, using the explicit expression $\log\mathrm{M}_\infty(t) = \log \left(\frac{\sinh t}{t}\right)$, and applying L'H\^opital's rule to compute the limit,
\begin{align*}
 \lim_{|t|\ra \infty} \frac{\log\mathrm{M}_\infty(t)}{|t|} &= \lim_{t\ra\infty} \frac{\log\mathrm{M}_\infty(t)}{t} = \lim_{t\ra\infty} (\log\mathrm{M}_\infty)'(t) = \lim_{t\ra\infty} \left(\coth t - \tfrac{1}{t}\right) = 1.
\end{align*}
As for the second limit \eqref{psinf}, by the monotone convergence theorem, for $\nu \in \mathcal{P}(\R)$,
\begin{equation*}
 \lim_{|t|\ra \infty} \frac{\lmq_{\infty,\nu}(t)}{|t|} =  \lim_{t\ra\infty} \int_{\R}|u|\left( \coth(tu) - \tfrac{1}{tu}\right)\nu(du) = m_1(\nu).
\end{equation*}

Note that  $\log\mathrm{M}_\infty$ is strictly convex due to basic properties of log mgfs, and therefore $\lmq_{\infty,\nu}$ is also strictly convex for all $\nu\in\mathcal{P}(\R)$, since integration with respect to $\nu$ is a linear functional.

We now prove the first inclusion of \eqref{dpsinf}.  The strict convexity of $\lmq_{\infty,\nu}$ and the asymptotic linearity given by \eqref{psinf} imply that for all $c < m_1(\nu)$, there exists some $t_c\in\R$ such that $\lmq_{\infty,\nu}(t) > c|t|$ for $|t| \ge t_c$. The upshot is that if $\epsilon > 0$ and $|w| < m_1(\nu) -\epsilon$, then
\begin{equation*}
\limsup_{|t|\ra \infty} \left[  tw - \lmq_{\infty,\nu}(t)\right] \le \limsup_{|t|\ra \infty}    |t|\,(|w| - m_1(\nu) + \epsilon)  = -\infty.
\end{equation*}
Hence, the map $F_{w,\nu}$ defined by $F_{w,\nu}(t)\doteq tw - \lmq_{\infty,\nu}(t)$ has compact upper level sets. Since $F_{w,\nu}$ is upper semi-continuous (due to the lower semi-continuity of $\lmq_{\infty,\nu}$), it follows that $F_{w,\nu}$ is bounded above in $\R$, implying that $\lmq_{\infty,\nu}^*(w) < \infty$ when $|w| < m_1(\nu) - \epsilon$. As this holds for all $\epsilon > 0$, we have that $(-m_1(\nu),+m_1(\nu)) \subset D_{\lmq_{\infty,\nu}^*}$.

To prove the the second inclusion of \eqref{dpsinf}, a similar argument as above shows that for $\epsilon > 0$, if $w > m_1(\nu) + \epsilon$, then
\begin{equation*}
\liminf_{t\ra+\infty} \left[  tw - \lmq_{\infty,\nu}(t)\right] \ge \liminf_{t\ra+\infty} t(w - m_1(\nu) - \epsilon) =+\infty,
\end{equation*}
and if $w < -(m_1(\nu) + \epsilon)$, then 
\begin{equation*}
\liminf_{t\ra-\infty} \left[  tw - \lmq_{\infty,\nu}(t)\right] \ge \liminf_{t\ra-\infty} t(w + m_1(\nu) + \epsilon) = +\infty.
\end{equation*}
Therefore, $\lmq_{\infty,\nu}^*(w) = \infty$ for $|w| > m_1(\nu) + \epsilon$. Because this holds for all $\epsilon > 0$, it follows that $D_{\lmq_{\infty,\nu}^*} \subset [-m_1(\nu),+m_1(\nu)]$.  
\end{proof}

\begin{remark} 
Note that $m_1(\mu_2) = \sqrt{2/\pi} \approx 0.798$, which lies on the boundary of the domain of $\iq_{\infty,\mu_2}$, as depicted in Figure \ref{fig-comp}.
\end{remark}

\begin{proof}[Proof of Proposition \ref{prop-nongsn}]
To show that the minimizer of the variational problem \eqref{varform2} is not $\mu_2$, it suffices to show that there exists \emph{some} measure $\nu_\circ  \in \mathcal{P}(\R)$ such that:
\begin{enumerate}[label=(\alph*)]
\item $\nu_\circ$ is absolutely continuous with respect to Lebesgue measure;
\item $m_2(\nu_\circ)\le 1$;
\item $m_1(\nu_\circ) > m_1(\mu_2)$.
\end{enumerate}
There exist several such measures, but for a concrete example, consider the uniform measure on $[-\sqrt{3},\sqrt{3}]$. Given any measure $\nu_\circ$ satisfying (a), (b), and (c), it follows that $H(\nu_\circ | \mu_2) < \infty$, and the definition of $\iq_{\gamma,\nu}$ in \eqref{iqgamdefn} and Lemma \ref{lem-asyms} imply that for $w\in(-1,1)$ such that $m_1(\mu_2) < |w| < m_1(\nu_\circ)$, we have
\begin{equation*}
\iq_{\infty,\nu_\circ} = \lmq_{\infty,\nu_\circ}^*(w) < \infty = \lmq_{\infty,\mu_2}^*(w) = \iq_{\infty,\mu_2}.
\end{equation*}
Therefore, the functional $\nu\mapsto \iq_{\infty,\nu}(w) + H(\nu|\mu_2) + \frac{1}{2}(1-m_2(\nu))$ is finite when $\nu=\nu_0$ but infinite when $\nu=\mu_2$, which proves the proposition.
\end{proof}

%%%%%%%%%%%%%%%%%%
% COMPAR \ CONJ
%%%%%%%%%%%%%%%%%%
\subsection{Conjectures regarding the variational problem}\label{ssec-conj}

We believe that Proposition \ref{prop-nongsn} can be extended to all $w\ne 0$ in the domain of $\ia_\infty$, and that an analogous result should hold for all $p\in(2,\infty)$ as well as for products of measures other than $\gamma = \mu_\infty$. To be precise, we mean:

\begin{conjecture}\label{conj-var}
Let $p\in(2,\infty)$. For $w\in(-1,1)\setminus \{0\}$, the minimizer in \eqref{varform1} is not $\mu_2$. Similarly, for $\gamma \in \mathcal{T}_p$  and $w\in D_{\ia_{\gamma}} \setminus \{0\}$, the minimizer in \eqref{varform2} is not $\mu_2$. This implies that except at $w=0$, the annealed rate function lies strictly below the quenched rate function.
\end{conjecture}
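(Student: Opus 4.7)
The plan is to establish the conjecture by a first–variation / KKT analysis showing that $\mu_2$ fails to be a critical point of the functional being minimized in \eqref{varform1} (resp.\ \eqref{varform2}), and therefore cannot be its minimizer. This avoids the need to produce an explicit competitor and produces strict inequality $\ia_p(w) < \iq_{p,\mu_2}(w)$ directly.

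Fix $p \in (2,\infty)$ and $w \in (-1,1) \setminus \{0\}$ (the case $\iq_{p,\mu_2}(w) = \infty$ is trivial, so assume it is finite). Set $F(\nu) := \iq_{p,\nu}(w) + \mathbb{H}(\nu)$. First I identify the optimal dual data at $\mu_2$: by Lemma \ref{lem-attained}(2) the infimum $\iq_{p,\mu_2}(w) = \inf_{C_w} \lmq_{p,\mu_2}^*$ is attained at some $(\tau_1^*,\tau_2^*)$ with $\tau_1^*(\tau_2^*)^{-1/p} = w$, and by Lemma \ref{lem-attained}(1) the subsequent Legendre supremum is uniquely attained at some $(t_1^*,t_2^*)$. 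The first-order condition $\tau_1^* = \partial_{t_1}\lmq_{p,\mu_2}(t_1^*,t_2^*)$, together with the evenness of $\lm_p(\,\cdot\,,t_2)$ (symmetry of $\mu_p$), forces $t_1^* = 0 \Leftrightarrow \tau_1^* = 0$; hence $w\ne 0$ implies $t_1^*\ne 0$.

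Next, for a signed measure $\eta$ absolutely continuous w.r.t.\ $\mu_2$ with bounded density and $\int d\eta = 0$, I compute the first variation of $F$ at $\mu_2$ along $\nu_s = \mu_2 + s\eta$. A double envelope–theorem application (one for the infimum over $C_w$, one for the Legendre supremum), using the linearity of $\nu \mapsto \lmq_{p,\nu}(t)$, yields
\begin{equation*}
\left.\frac{d}{ds} F(\nu_s)\right|_{s=0} \;=\; -\!\int \!\left[ \lm_p(t_1^* u,\, t_2^*) + \tfrac{1}{2} u^2 \right] d\eta(u),
\end{equation*}
since the first variation of $H(\,\cdot\,|\mu_2)$ at $\mu_2$ vanishes, while $\tfrac{1}{2}(1 - m_2(\nu))$ contributes the $\tfrac{1}{2}u^2$ term. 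If $\mu_2$ were a local minimizer subject to the active constraint $m_2(\nu) \le 1$, KKT would require constants $c\in\R$ and $\lambda \ge 0$ with
\begin{equation*}
\lm_p(t_1^* u,\, t_2^*) \;=\; c + (\lambda - \tfrac{1}{2})\,u^2 \quad \text{for $\mu_2$-a.e.\ }u,
\end{equation*}
i.e., $v \mapsto \lm_p(t_1^* \sqrt{v}, t_2^*)$ must be affine in $v \ge 0$. But Lemma \ref{lem-strict} asserts that $v \mapsto \lm_p(\sqrt{v}, t_2^*)$ is strictly concave and \emph{not} linear for $p > 2$; combined with $t_1^* \ne 0$, this contradicts the required affine form. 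Hence $\mu_2$ is not critical for $F$, one can select $\eta$ producing a strictly negative first variation, and $\ia_p(w) \le F(\nu_s) < F(\mu_2) = \iq_{p,\mu_2}(w)$ for all sufficiently small $s > 0$.

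The main obstacle I anticipate is rigorously justifying the envelope step — specifically, local $C^1$ dependence of the primal–dual solutions $(\tau^*(\nu), t^*(\nu))$ near $\mu_2$ — which should follow from the implicit function theorem applied to the optimality system, exploiting the strict convexity and essential smoothness of $\lmq_{p,\nu}$ (Lemmas \ref{lem-lamfacts}, \ref{lem-quenchedess}) and the smoothness of $C_w$ for $w\ne 0$. A secondary concern is ensuring $\nu_s \ge 0$ along the perturbation, handled by taking $\eta$ with bounded density and compact support. The product–measure case in Theorem \ref{th-comparprod} is treated identically with $\lm_p$ replaced by $\log \mathrm{M}_\gamma$; the needed non-affinity of $u \mapsto \log \mathrm{M}_\gamma(t_1^* u)$ in $u^2$ holds whenever $\gamma$ is non-Gaussian, which is automatic for non-degenerate $\gamma \in \mathcal{T}_p$ with $p > 2$ since any centered Gaussian with $\sigma > 0$ violates the tail bound $\log \mathrm{M}_\gamma(t) \le C|t|^{p/(p-1)} + C$ defining $\mathcal{T}_p$.
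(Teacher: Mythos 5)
This statement is left as a \emph{conjecture} in the paper: the authors prove only the much weaker Proposition \ref{prop-nongsn} (the case $p=\infty$, and only for $|w|\ge w_*$ near the edge of the domain), via the exact tail asymptotics of Lemma \ref{lem-asyms}, and they explicitly remark that the general case would require a new approach. So there is no proof in the paper to compare against, and your argument has to stand on its own. Your strategy --- a first-variation argument at $\mu_2$, with non-stationarity forced by the strict non-affineness of $t_1\mapsto\lm_p(\sqrt{t_1},t_2)$ from Lemma \ref{lem-strict} (the same ingredient that drives Lemma \ref{lem-mgfineq} and Theorem \ref{th-atyp}) --- is a sensible and genuinely local route that, if completed, would give the full conjecture, including the part ($0<|w|<w_*$, general $p$, general $\gamma$) that the paper's global-asymptotics method cannot reach.

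However, as written it is a sketch with two genuine gaps. First, you invoke Lemma \ref{lem-attained}(1) to get the inner maximizer $(t_1^*,t_2^*)$, but that lemma concerns $\lm_p^*$, not $\lmq_{p,\mu_2}^*$; what you actually need is that the outer minimizer $(\tau_1^*,\tau_2^*)$ produced by Lemma \ref{lem-attained}(2) lies in the range of $\nabla\lmq_{p,\mu_2}$ (equivalently in $D_{\lmq_{p,\mu_2}^*}^\circ$), so that the supremum in $\lmq_{p,\mu_2}^*(\tau_1^*,\tau_2^*)$ is attained and the relation $\tau^*=\nabla\lmq_{p,\mu_2}(t^*)$ (hence $t_1^*\neq 0$ when $w\neq0$) is legitimate; nothing in the paper rules out a boundary minimizer, and this must be argued (strict convexity and essential smoothness of $\lmq_{p,\mu_2}$, which are themselves only partly recorded in Lemma \ref{lem-quenchedess}, are the natural tools). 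Second, the envelope step you flag as the ``main obstacle'' is the analytic crux and is not resolved: since the outer infimum can be handled for free by using the fixed feasible point $(\tau_1^*,\tau_2^*)$ for every $\nu_s$, what is really needed is the one-sided upper expansion $\lmq_{p,\nu_s}^*(\tau_1^*,\tau_2^*)\le\lmq_{p,\mu_2}^*(\tau_1^*,\tau_2^*)-s\int_\R\lm_p(t_1^*u,t_2^*)\,d\eta(u)+o(s)$, which is a Danskin-type statement requiring that near-maximizers of the perturbed Legendre problem remain in a compact set and converge to $t^*$ (uniform coercivity in $t_1$ and steepness as $t_2\uparrow\tfrac1p$); deferring this to ``the implicit function theorem'' without verifying, e.g., nondegeneracy of the Hessian of $\lmq_{p,\mu_2}$ at $t^*$ leaves the hardest step open. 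Two simplifications would tighten the argument: choose $\eta$ with density orthogonal in $L^2(\mu_2)$ to $\mathrm{span}\{1,u^2\}$, so that $m_2(\nu_s)=1$ and no KKT multiplier is needed (the separation step is then elementary because that span is finite dimensional); and note your treatment of the product case is indeed easier (no contraction step), with the exclusion of Gaussians from $\mathcal{T}_p$ for $p>2$ handled correctly.
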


This would require a new approach since: (i) our current proof relies on the exact asymptotics of Lemma \ref{lem-asyms} for the case $p=\infty$, which makes generalization to other product measures difficult; and (ii) for general $\ell^p$ balls, the variational problem is more complicated, due to the additional contraction step.

One possible approach to Conjecture \ref{conj-var} would be to analyze the intermediate variational problems \eqref{mgfvar} and \eqref{mgfvar2}. In the case $p=\infty$, it is possible to establish the following lemma:

\begin{lemma}\label{lem-intoptnon}
Let $F_t(\nu) \doteq \lmq_{\infty,\nu}(t) - \mathbb{H}(\nu)$. There exists $t_* >0$ such that if $|t| \ge t_*$, then the maximizer of \eqref{mgfvar2} is not the standard Gaussian. That is, for some probability measure $\nu_t\ne \mu_2$, we have $F_t(\nu_t) > F_t(\mu_2)$.
\end{lemma}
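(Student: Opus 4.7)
The plan is to exploit the asymptotic linearity of $\lmq_{\infty,\nu}(t)$ established in Lemma \ref{lem-asyms}: for large $|t|$, $\lmq_{\infty,\nu}(t)$ grows like $|t| \, m_1(\nu)$, so the functional $F_t(\nu) = \lmq_{\infty,\nu}(t) - \mathbb{H}(\nu)$ for large $|t|$ should be close to maximized by measures with large first absolute moment $m_1(\nu)$, subject to the entropy penalty $\mathbb{H}(\nu)$. Since $\mu_2$ has a \emph{finite} first moment ($m_1(\mu_2) = \sqrt{2/\pi} < 1$), we should be able to beat $F_t(\mu_2)$ for large $|t|$ by exhibiting a competitor $\nu_\circ \in \mathcal{P}(\R)$ with $m_1(\nu_\circ) > m_1(\mu_2)$ and $\mathbb{H}(\nu_\circ) < \infty$.

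Concretely, I would take $\nu_\circ$ to be the uniform measure on $[-\sqrt{3}, \sqrt{3}]$. A direct computation gives $m_1(\nu_\circ) = \tfrac{\sqrt{3}}{2} \approx 0.866 > \sqrt{2/\pi} = m_1(\mu_2)$ and $m_2(\nu_\circ) = 1$, so $\mathbb{H}(\nu_\circ) = H(\nu_\circ | \mu_2)$. Since $\nu_\circ$ has a bounded density with respect to $\mu_2$ on its (compact) support, this relative entropy is finite.

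With this choice fixed, I apply the asymptotics \eqref{psinf} of Lemma \ref{lem-asyms} to each of $\nu_\circ$ and $\mu_2$, obtaining
\begin{equation*}
F_t(\nu_\circ) - F_t(\mu_2) = \lmq_{\infty,\nu_\circ}(t) - \lmq_{\infty,\mu_2}(t) - \mathbb{H}(\nu_\circ) = |t|\bigl(m_1(\nu_\circ) - m_1(\mu_2)\bigr) + o(|t|) - \mathbb{H}(\nu_\circ)
\end{equation*}
as $|t| \to \infty$, where I have used $\mathbb{H}(\mu_2) = 0$. Because $m_1(\nu_\circ) > m_1(\mu_2)$ strictly and $\mathbb{H}(\nu_\circ) < \infty$ is a fixed finite constant, the right-hand side tends to $+\infty$ with $|t|$. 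Hence there exists $t_* > 0$ such that whenever $|t| \ge t_*$, we have $F_t(\nu_\circ) > F_t(\mu_2)$, so $\mu_2$ cannot be the maximizer.

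There is no serious obstacle here; the only substantive ingredient beyond Lemma \ref{lem-asyms} is the (elementary) observation that the constraint $m_2(\nu) \le 1$ does \emph{not} pin down $m_1(\nu)$ — indeed, among mean-zero measures with unit variance, the Gaussian fails to maximize $m_1$, and one can explicitly produce a more concentrated-on-its-tails alternative (such as the uniform on $[-\sqrt{3}, \sqrt{3}]$) with strictly larger $m_1$ and finite entropy relative to $\mu_2$. That the extra drift $m_1(\nu_\circ) - m_1(\mu_2) > 0$ amplifies linearly in $|t|$ while the entropy cost $\mathbb{H}(\nu_\circ)$ is $|t|$-independent is precisely what drives the result.
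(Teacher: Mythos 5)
Your argument is correct: the competitor $\nu_\circ=\mathrm{Unif}[-\sqrt{3},\sqrt{3}]$ indeed has $m_1(\nu_\circ)=\sqrt{3}/2>\sqrt{2/\pi}=m_1(\mu_2)$, $m_2(\nu_\circ)=1$ (so $\mathbb{H}(\nu_\circ)=H(\nu_\circ|\mu_2)<\infty$, since its density relative to $\mu_2$ is bounded on the compact support), and $\mathbb{H}(\mu_2)=0$; applying the asymptotics \eqref{psinf} of Lemma \ref{lem-asyms} to the two fixed measures $\nu_\circ$ and $\mu_2$ then gives $F_t(\nu_\circ)-F_t(\mu_2)=|t|\bigl(m_1(\nu_\circ)-m_1(\mu_2)\bigr)+o(|t|)-\mathbb{H}(\nu_\circ)\to+\infty$, which settles the claim for $|t|\ge t_*$. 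This is, however, a different route from the one sketched in the paper: there, $F_t$ is rewritten via the entropy of $\nu$ and Lemma \ref{lem-asyms} is used to replace \eqref{mgfvar2}, for large $|t|$, by a \emph{penalized maximum entropy problem} that is then solved by exact calculations, the point being to exhibit (at least asymptotically) the structure of the optimizer of the limiting problem rather than merely to rule out $\mu_2$. Your argument is more elementary and self-contained — it needs no approximation-and-transfer step and no optimization, only a single explicit competitor (the same measure the paper deploys in the proof of Proposition \ref{prop-nongsn}) together with the linear-in-$|t|$ growth from Lemma \ref{lem-asyms} — but it yields no information about what the true maximizer $\nu_t$ looks like, whereas the paper's sketched approach, if carried out, would. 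Both proofs share the essential ingredient that the entropy cost is $t$-independent while the gain from a larger first absolute moment scales linearly in $|t|$.
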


\begin{proof}[Sketch of Proof]
First, we can rewrite $F_t$ and \eqref{mgfvar2} in terms of the \emph{entropy} of $\nu$. Then, Lemma \ref{lem-asyms} can be applied to transform \eqref{mgfvar2} into a penalized maximum entropy problem, amenable to exact calculations.
\end{proof}

The main issues with this approach are that the claim is for $t$ sufficiently large, and the ``optimal" measure is not identified. Nonetheless, this approach offers an alternative variational problem which may be simpler to analyze than \eqref{varform2}.

\clearpage
%%%%%%%%%%%%%%%%%%
% NOTATION
%%%%%%%%%%%%%%%%%%

\printnomenclature[1.5cm]
\label{notation}

%%%%%%%%%%%%%%%%%%
% REFERENCES
%%%%%%%%%%%%%%%%%%

\bibliography{ldpbib}
\bibliographystyle{imsart-number}

\addtocontents{toc}{\protect\setcounter{tocdepth}{-1}}

\end{document}